\documentclass[a4paper]{amsart}

\usepackage{amsmath}
\usepackage{amssymb}


\numberwithin{figure}{section}

\newtheorem{theorem}{Theorem}[section]
\newtheorem{proposition}[theorem]{Proposition}
\newtheorem{lemma}[theorem]{Lemma}
\newtheorem{corollary}[theorem]{Corollary}
\theoremstyle{definition}
\newtheorem{defprop}[theorem]{Definition-Proposition}
\newtheorem{definition}[theorem]{Definition}
\newtheorem{example}[theorem]{Example}
\newtheorem{conventions}[theorem]{Conventions}
\newtheorem{setting}[theorem]{Setting}
\theoremstyle{remark}
\newtheorem{remark}[theorem]{Remark}

\usepackage{url}
\usepackage{hyperref}
\hypersetup{%
  hidelinks,%
  hyperfigures,%
  pagebackref,%
  pdfinfo={%
    Title={Reduction of tau-tilting modules and torsion pairs},%
    Author={Gustavo Jasso},%
    Keywords={tau-tilting theory; torsion pairs; tilting theory;
      cluster-tilting; silting object; mutation}%
  }%
}

\usepackage{my-commands}


\begin{document}

\title{Reduction of $\tau$-tilting modules and torsion pairs}  

\author[G. Jasso]{Gustavo Jasso}

\address{Graduate School of Mathematics, Nagoya
  University. Nagoya, Japan}

\email{jasso.ahuja.gustavo@b.mbox.nagoya-u.ac.jp}

\subjclass[2010]{16G10}

\keywords{$\tau$-tilting theory; torsion pairs; tilting theory;
      cluster-tilting; silting object; mutation}

\thanks{The author wishes to thank his Ph.D. supervisor, Prof. O. Iyama, %
for suggesting these problems and to acknowledge the constant %
support and valuable comments received from him during the %
preparation of this article. %
This acknowledgement is extended to D. Yang for helpful %
discussions on the subject and to R. Kase for pointing out the %
connection with known results for hereditary algebras. %
}

\maketitle

\begin{abstract}
The class of support $\tau$-tilting modules was introduced recently by Adachi, Iyama and Reiten.  These modules complete the class of tilting modules from the point of view of mutations. Given a finite dimensional algebra $A$, we study all basic support $\tau$-tilting $A$-modules which have given basic $\tau$-rigid $A$-module as a direct summand. We show that there exist an algebra $C$ such that there exists an order-preserving bijection between these modules and all basic support  $\tau$-tilting $C$-modules; we call this process $\tau$-tilting reduction. An important step in this process is the formation of $\tau$-perpendicular categories which are analogs of ordinary perpendicular categories. Finally, we show that $\tau$-tilting reduction is compatible with silting reduction and 2-Calabi-Yau reduction in appropiate triangulated categories.
\end{abstract}


\tableofcontents

\section{Introduction}

Let $A$ be a finite dimensional algebra over a field.
Recently, Adachi, Iyama and Reiten introduced in
\cite{adachi_tau-tilting_2012} a generalization of classical
tilting theory, which they called \emph{$\tau$-tilting theory}.
Motivation to study $\tau$-tilting theory comes from various
sources, the most important one is mutation of tilting modules. 
Mutation of tilting modules has its origin in
Bernstein-Gelfand-Ponomarev reflection functors
\cite{bernstein_coxeter_1973}, which were later generalized by
Auslander, Reiten and Platzeck with the introduction
of APR-tilting modules \cite{auslander_coxeter_1979}, which are
obtained by replacing a simple direct summand of the tilting
$A$-module $A$.
Mutation of tilting modules was introduced in full generality by
Riedtmann and Schofield in their combinatorial study of
tilting modules \cite{riedtmann_simplicial_1991}. 
Also, Happel and Unger showed in \cite{happel_partial_2005}
that tilting mutation is intimately related to the partial order
of tilting modules induced by the inclusion of the associated
torsion classes. 

We note that one limitation of mutation of tilting modules is that
it is not always possible. 
This is the motivation for the introduction of $\tau$-tilting
theory. 
Support $\tau$-tilting (resp. $\tau$-rigid) $A$-modules are a
generalization of tilting (resp. partial-tilting) $A$-modules
defined in terms of the Auslander-Reiten translation, see
Definition \ref{def:sttilt}.
Support $\tau$-tilting modules can be regarded as a ``completion''
of the class of tilting modules from the point of view of
mutation.
In fact, it is shown in \cite[Thm. 2.17]{adachi_tau-tilting_2012}
that a basic almost-complete support $\tau$-tilting $A$-module is
the direct summand of exactly two basic support $\tau$-tilting
$A$-modules.
This means that mutation of support $\tau$-tilting $A$-modules is 
always possible.

It is then natural to consider more generally all support
$\tau$-tilting $A$-modules which have a given $\tau$-rigid
$A$-module $U$ as a direct summand.
Our main result is the following bijection:

\begin{theorem}
  [see Theorem \ref{thm:tau-tilting-reduction} for details]
  \label{ithm:tau-tilting-reduction}
  Let $U$ be a basic $\tau$-rigid $A$-module.
  Then there exists a finite dimensional algebra $C$ such that
  there is an order-preserving bijection between the
  set of isomorphism classes of basic support $\tau$-tilting
  $A$-modules which have $U$ as a direct summand and the set of 
  isomorphism classes of all basic support $\tau$-tilting
  $C$-modules.
  We call this process \emph{$\tau$-tilting reduction}.
\end{theorem}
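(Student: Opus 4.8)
The plan is to carry out a $\tau$-analogue of the classical reduction via perpendicular categories, using throughout the Adachi--Iyama--Reiten bijection (\cite{adachi_tau-tilting_2012}) between basic support $\tau$-tilting $A$-modules and functorially finite torsion classes in $\operatorname{mod}A$. First I would form the Bongartz completion $T = U \oplus X$ of the $\tau$-rigid module $U$ and set $\mathcal{T}^{U} := \operatorname{Fac}T$, the torsion class of $T$; it is the unique maximal functorially finite torsion class in which $U$ is an Ext-projective object. Dually, let $\mathcal{T}_{U}$ be the smallest functorially finite torsion class containing $U$ (equivalently, the torsion class of the co-Bongartz completion of $U$). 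Since every direct summand of a support $\tau$-tilting module is Ext-projective in the associated torsion class, a basic support $\tau$-tilting $A$-module has $U$ as a direct summand if and only if its torsion class lies in the interval $[\mathcal{T}_{U},\mathcal{T}^{U}]$ of the lattice of functorially finite torsion classes. This reduces the theorem to identifying this interval with the lattice of functorially finite torsion classes of a suitable abelian category.

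That category is the \emph{$\tau$-perpendicular category}
\[
  \mathcal{C} = \mathcal{C}(U) := \{\, M \in \operatorname{mod}A : \operatorname{Hom}_{A}(U,M) = 0 = \operatorname{Hom}_{A}(M,\tau U) \,\},
\]
and the second step is to show $\mathcal{C} \simeq \operatorname{mod}C$ for the finite dimensional algebra $C := \operatorname{End}_{A}(T)/\langle e_{U}\rangle$, where $e_{U} \in \operatorname{End}_{A}(T)$ is the idempotent cutting out the summand $U$. For this I would verify that $\mathcal{C}$ is closed under kernels, cokernels and extensions in $\operatorname{mod}A$, exhibit a projective generator $\widetilde{X}$ of $\mathcal{C}$ by reflecting the Bongartz complement $X$ along the coreflection $\mathcal{T}^{U} \to \mathcal{C}$ (which exists precisely because $U$ is Ext-projective in $\mathcal{T}^{U}$), and then identify $\operatorname{End}_{\mathcal{C}}(\widetilde{X}) \cong C$. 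The equivalence $\mathcal{C} \simeq \operatorname{mod}C$ is then $\operatorname{Hom}_{\mathcal{C}}(\widetilde{X},-)$, and the \emph{reduction functor} $F\colon \operatorname{mod}A \to \operatorname{mod}C$ realising the bijection is the composite of the coreflection with this equivalence.

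The bijection itself is then $\mathcal{T} \mapsto \mathcal{T} \cap \mathcal{C}$ on torsion classes, with inverse sending a torsion class $\mathcal{S}$ of $\mathcal{C}$ to the smallest torsion class of $\operatorname{mod}A$ containing $\mathcal{S} \cup \mathcal{T}_{U}$; both are visibly order-preserving, the extremes correspond ($\mathcal{T}_{U} \leftrightarrow 0$ and $\mathcal{T}^{U} \leftrightarrow \operatorname{mod}C$), and composing with the two Adachi--Iyama--Reiten bijections yields the claimed order-preserving bijection, which is functorial via $F$ (concretely $U \oplus V \mapsto F(V)$). The main obstacle is the interplay between torsion classes and functorial finiteness: one must show that $\mathcal{T} \cap \mathcal{C}$ is again \emph{functorially finite} inside $\mathcal{C}$ for every $\mathcal{T}$ in the interval, and conversely that enlarging a functorially finite torsion class of $\mathcal{C}$ by $\mathcal{T}_{U}$ keeps it functorially finite in $\operatorname{mod}A$; both directions require splicing minimal left and right approximations through the torsion pair $(\mathcal{T}^{U},(\mathcal{T}^{U})^{\perp})$ and exploiting the Ext-projectivity of $U$ in $\mathcal{T}^{U}$. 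Once this is in place, order-preservation and the compatibility of $F$ with mutation are formal, and the asserted compatibilities with silting reduction and $2$-Calabi--Yau reduction follow by matching $\mathcal{C}(U)$ with the image of the corresponding Verdier or additive quotient under the canonical functor to modules over the relevant endomorphism algebra.
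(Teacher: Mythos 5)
Your proposal follows the paper's strategy closely and hits every key landmark: the Bongartz completion $T_U$ of $U$; the observation (Proposition~\ref{prop:sttiltUA}) that $\sttilt_U A$ is the interval of functorially finite torsion classes between $\Fac U$ and $\lperp{(\tau U)}=\Fac T_U$; the $\tau$-perpendicular category $\U = \lperp{(\tau U)}\cap U^\perp$; the algebra $C=\End_A(T_U)/\langle e_U\rangle$; the torsion-class bijection $\T\mapsto\T\cap\U$ with inverse the torsion closure of $\Fac U \cup \G$ (which the paper realises concretely as $(\Fac U)*\G$); and the functorial-finiteness subtlety as the main technical burden. The one place where you take a genuinely different path is in proving $\U\simeq\mod C$: you propose to check that $\U$ is a wide subcategory (closed under kernels, cokernels, extensions), produce the progenerator $\widetilde{X}=\f X$, and invoke Morita theory; the paper instead appeals to the Brenner--Butler theorem to get the fully faithful $\Hom_A(T_U,-)\colon \Fac T_U \to \Sub DT_U$, observes that it sends $\U$ into $(FU)^\perp=\mod C$, and then proves density via a projective presentation argument (Theorem~\ref{thm:U-modC}). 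Your route is viable -- closure of $\U$ under kernels and cokernels does follow from Proposition~\ref{prop:2-out-of-3} together with the fact that $\lperp{(\tau U)}$ is closed under quotients and $U^\perp$ under submodules, and $\f T_U$ really is a progenerator of $\U$ by the argument of Propositions~\ref{prop:technical-reasons} and \ref{prop:ff-torsion-classes}(c) -- but it shifts the burden onto explicitly identifying $\End_\U(\widetilde{X})$ with $B/\langle e_U\rangle$, which is a non-trivial computation that the paper's use of $\Hom_A(T_U,-)$ sidesteps entirely. Minor quibbles: the coreflection $\f\colon\lperp{(\tau U)}\to\U$ exists simply because $(\Fac U, U^\perp)$ is a torsion pair and $\lperp{(\tau U)}$ is closed under quotients, not ``because $U$ is Ext-projective''; and the functorial-finiteness step for the inverse map actually goes through the covariant finiteness of $\U$ inside $\lperp{(\tau U)}$ together with Iyama--Yoshino's result that $\X*\Y$ is covariantly finite when $\X,\Y$ are (Propositions~\ref{prop:ast-ff}--\ref{prop:ast-ff-2}), rather than directly ``splicing minimal approximations''. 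None of these affects the soundness of the overall plan.
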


As a special case of Theorem \ref{ithm:tau-tilting-reduction} we
obtain an independent proof of
\cite[Thm. 2.17]{adachi_tau-tilting_2012}.

\begin{corollary}[Corollary \ref{cor:2-complements-coro}]
  Every almost-complete support $\tau$-tilting $A$-module is the
  direct summand of exactly two support $\tau$-tilting
  $A$-modules. 
\end{corollary}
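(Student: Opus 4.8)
The strategy is to read off the corollary from $\tau$-tilting reduction applied to the given module itself. Let $n$ denote the number of isomorphism classes of simple $A$-modules, and recall that a $\tau$-tilting $A$-module has exactly $n$ indecomposable direct summands; an almost-complete support $\tau$-tilting $A$-module $U$ is, by definition, a basic $\tau$-rigid $A$-module with exactly $n-1$ indecomposable direct summands which is a direct summand of at least one basic support $\tau$-tilting $A$-module. In particular Theorem~\ref{ithm:tau-tilting-reduction} applies to $U$.

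First I would apply Theorem~\ref{ithm:tau-tilting-reduction} to $U$, obtaining a finite dimensional algebra $C$ together with an order-preserving bijection $\Phi$ from the set of isomorphism classes of basic support $\tau$-tilting $A$-modules having $U$ as a direct summand onto the set of isomorphism classes of basic support $\tau$-tilting $C$-modules. The key point is to identify $C$: by its construction in Theorem~\ref{thm:tau-tilting-reduction} via the $\tau$-perpendicular category of $U$, the algebra $C$ is basic with exactly $n-(n-1)=1$ simple module, hence local.

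It then remains to observe that a local algebra $C$ has exactly two basic support $\tau$-tilting modules. The zero module is one of them (the support $\tau$-tilting module supported on the empty set of simples). On the other hand, a basic support $\tau$-tilting $C$-module with nonzero module part has exactly one indecomposable direct summand, so it is a $\tau$-tilting $C$-module; since a torsion class in $\operatorname{mod} C$ is closed under quotients and extensions and the unique simple $C$-module is a quotient of every nonzero $C$-module, an induction on length shows that the only torsion classes in $\operatorname{mod} C$ are $0$ and $\operatorname{mod} C$, both trivially functorially finite. Hence, via the correspondence between basic support $\tau$-tilting $C$-modules and functorially finite torsion classes in $\operatorname{mod} C$, the only $\tau$-tilting $C$-module is $C$, and the two basic support $\tau$-tilting $C$-modules are $0$ and $C$, which are distinct.

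Transporting this back along $\Phi$, the basic support $\tau$-tilting $A$-modules having $U$ as a direct summand are precisely the two distinct modules $\Phi^{-1}(0)$ and $\Phi^{-1}(C)$; since $U$ was an arbitrary almost-complete support $\tau$-tilting $A$-module, the corollary follows. I expect the only genuinely nontrivial ingredient to be the identification of $C$ as a local algebra, i.e. the fact that the reduction algebra attached to $U$ has exactly one simple module (one fewer than the number of simple $A$-modules, as $U$ has $n-1$ indecomposable summands); if this is not recorded explicitly in Theorem~\ref{thm:tau-tilting-reduction} it has to be extracted from the explicit description of $C$ there. Everything else is a formal consequence of Theorem~\ref{ithm:tau-tilting-reduction}, so essentially all the work has already been done in proving that theorem.
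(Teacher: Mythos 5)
Your proposal is correct and follows the paper's proof essentially verbatim: apply $\tau$-tilting reduction to the almost-complete module $U$, observe that $|C|=|T_U|-|U|=1$ so $C$ is local, and conclude that $\sttilt C=\{0,C\}$ has exactly two elements, which pull back to exactly two support $\tau$-tilting $A$-modules containing $U$. The only cosmetic difference is that the paper cites \cite[Ex. 6.1]{adachi_tau-tilting_2012} for the fact that a local algebra has precisely the two support $\tau$-tilting modules $0$ and $C$, whereas you verify it directly via the torsion-class bijection of Theorem \ref{thm:tors-sttilt}.
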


If we restrict ourselves to hereditary algebras, then Theorem
\ref{ithm:tau-tilting-reduction} gives the following
improvement of \cite[Thm. 3.4]{happel_links_2010}, where $U$ is
assumed to be faithful.

\begin{corollary}[Corollary \ref{cor:tilting-reduction}]
  \label{ithm:tilting-reduction}
  Let $A$ be a hereditary algebra and $U$ be a basic
  partial-tilting $A$-module.
  Then there exists a hereditary algebra $C$ such that there is an
  order-preserving bijection between the set of
  isomorphism classes of basic support tilting $A$-modules which
  have $U$ as a direct summand and the set of isomorphism classes
  of all basic support tilting $C$-modules. 
\end{corollary}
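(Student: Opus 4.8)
The plan is to deduce the corollary from Theorem~\ref{thm:tau-tilting-reduction} by observing that, over a hereditary algebra, all of the $\tau$-tilting-theoretic notions reduce to their classical counterparts, and --- the essential point --- that the algebra $C$ furnished by the general construction can be taken hereditary.

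The first step is to set up the dictionary. Since $A$ is hereditary every module has projective dimension at most $1$, and the Auslander--Reiten formula $\mathrm{Ext}^1_A(X,Y)\cong D\overline{\mathrm{Hom}}_A(Y,\tau X)$ shows that an $A$-module is $\tau$-rigid if and only if it is rigid, i.e.\ partial-tilting, and that support $\tau$-tilting $A$-modules coincide with support tilting $A$-modules (this is already noted in \cite{adachi_tau-tilting_2012}). In particular the given partial-tilting module $U$ is $\tau$-rigid, so Theorem~\ref{thm:tau-tilting-reduction} applies and produces a finite-dimensional algebra $C$ together with an order-preserving functorial bijection between the basic support $\tau$-tilting $A$-modules admitting $U$ as a direct summand and the basic support $\tau$-tilting $C$-modules. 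Once $C$ is shown to be hereditary the target of this bijection is exactly the set of basic support tilting $C$-modules, and the corollary follows; order-preservation and functoriality are inherited verbatim.

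The heart of the argument is the second step: identifying the $\tau$-perpendicular category. Recall that $C$ is constructed so that its module category is equivalent to the $\tau$-perpendicular category $\mathcal{C}(U)$ attached to $U$, which is the subcategory $U^{\perp_0}\cap{}^{\perp_0}(\tau U)$ of $\operatorname{mod}A$ (cut out together with the data of a Bongartz completion of $U$) equipped with a suitable abelian structure. I would use $\mathrm{pd}_A\le 1$ and Auslander--Reiten duality again to rewrite the condition $\mathrm{Hom}_A(X,\tau U)=0$ as $\mathrm{Ext}^1_A(U,X)=0$, so that $\mathcal{C}(U)$ coincides, as a subcategory of $\operatorname{mod}A$, with the classical right perpendicular category $U^{\perp}=\{X\mid \mathrm{Hom}_A(U,X)=0=\mathrm{Ext}^1_A(U,X)\}$ in the sense of Geigle--Lenzing and Schofield. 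One then checks that the two abelian structures on this common subcategory --- the one coming from the $\tau$-tilting construction and the one exhibiting $U^{\perp}$ as an exact abelian subcategory of $\operatorname{mod}A$ closed under extensions --- agree, so that a projective generator of $\mathcal{C}(U)$ is also one of $U^{\perp}$.

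The third step is then immediate: by the theorem of Geigle--Lenzing (and Schofield) on perpendicular categories in hereditary module categories, $U^{\perp}$ is equivalent to $\operatorname{mod}C'$ for a hereditary algebra $C'$ with $\operatorname{rk}K_0(C')=\operatorname{rk}K_0(A)-\operatorname{rk}K_0(U)$. Hence the algebra $C$ of Theorem~\ref{thm:tau-tilting-reduction} is Morita equivalent to the hereditary algebra $C'$, and since support $\tau$-tilting modules and the bijection in question are Morita-invariant we may replace $C$ by $C'$. I expect the main obstacle to be precisely the comparison of abelian structures in the second step: the $\tau$-perpendicular category is defined via torsion pairs while the classical perpendicular category is defined by Hom/Ext-vanishing, and one must verify that these produce not merely the same subcategory of $\operatorname{mod}A$ but the same abelian category, so that the resulting endomorphism algebras of projective generators coincide up to Morita equivalence. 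Everything else --- the reduction of $\tau$-notions to classical ones and the heredity of the perpendicular category --- is then classical.
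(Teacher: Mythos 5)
Your proposal is correct and follows essentially the same route as the paper: one identifies the partial-tilting/$\tau$-rigid and support-tilting/support-$\tau$-tilting notions via Proposition~\ref{prop:sttilt-tilt-hered}, observes via Auslander--Reiten duality that the $\tau$-perpendicular category $\U=\lperp{(\tau U)}\cap U^\perp$ coincides with the Geigle--Lenzing perpendicular category (Example~\ref{ex:tau-perpendicular}), and invokes the classical hereditary-perpendicular result to conclude that $C$ is hereditary (Example~\ref{ex:perpendicular-modC}), after which Theorem~\ref{thm:tau-tilting-reduction} gives the bijection. The step you flag as the main obstacle is in fact automatic: both the exact structure on $\U$ (Remark~\ref{rmk:U-exact}, Proposition~\ref{prop:2-out-of-3}) and the abelian structure on the Geigle--Lenzing perpendicular category are the ones inherited from being extension-closed (and kernel/cokernel-closed) in $\mod A$, so once the two subcategories coincide their exact structures coincide for free.
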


Now we explain a category equivalence which plays a fundamental
role in the proof of Theorem \ref{ithm:tau-tilting-reduction}, and
which is of independent interest.
Given a $\tau$-rigid module $U$, there are two torsion pairs in
$\mod A$ which are naturally associated to $U$. 
Namely, $(\Fac U,U^\perp)$ and $(\lperp{(\tau U)},\Sub(\tau U))$.
We have the following result about the category 
$\lperp{(\tau U)}\cap U^\perp$, which is an analog of the
perpendicular category associated with $U$ in the sense of
\cite{geigle_perpendicular_1991}, see Example
\ref{ex:tau-perpendicular}.

\begin{theorem}[Theorem \ref{thm:U-modC}]
  \label{ithm:U-modC}
  With the hypotheses of Theorem \ref{ithm:tau-tilting-reduction}, let
  $T_U$ be the Bongartz completion of $U$ in $\mod A$. Then,
  the functor $\Hom_A(T_U,-):\mod A\to \mod(\End_A(T_U))$ induces 
  an equivalence of exact categories
  \[ 
  F:\lperp(\tau U)\cap U^\perp\longrightarrow\mod C. 
  \]
\end{theorem}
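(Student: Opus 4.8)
The plan is to prove that $F=\Hom_A(T_U,-)$ restricts to an \emph{exact, fully faithful, and dense} functor $\mathcal C\to\mod C$, where $\mathcal C:=\lperp(\tau U)\cap U^\perp$. Recall from the preceding development that $T_U$ denotes the Bongartz completion of $U$, a support $\tau$-tilting module having $U$ as a direct summand and satisfying $\Fac T_U=\lperp(\tau U)=\lperp(\tau T_U)$; that $B:=\End_A(T_U)$; that $e\in B$ is the idempotent cutting out the summand $U$; and that $C=B/BeB$, so $\mod C$ is exactly the Serre subcategory of $\mod B$ of modules annihilated by $e$. Note also that $\mathcal C$, being the intersection of the torsion class $\lperp(\tau U)$ with the torsion-free class $U^\perp$, is closed under extensions in $\mod A$, and hence inherits an exact structure, so the statement makes sense.

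First I would record the key vanishing: by the Auslander--Reiten formula $\operatorname{Ext}^1_A(T_U,Y)\cong D\,\overline{\Hom}_A(Y,\tau T_U)$, which is zero whenever $Y\in\lperp(\tau T_U)=\Fac T_U$. Since $F$ is always left exact, it therefore sends every short exact sequence with all three terms in $\Fac T_U$ to a short exact sequence of $B$-modules; in particular $F$ is exact on $\Fac T_U$. For full faithfulness on $\Fac T_U$, given $M\in\Fac T_U$ I would choose $T_1\xrightarrow{f}T_0\to M\to 0$ with $T_0,T_1\in\operatorname{add}T_U$, $T_0\to M$ and $T_1\to\operatorname{im}f$ right $\operatorname{add}T_U$-approximations; since $\operatorname{im}f$ is a quotient of $T_1\in\Fac T_U$ it again lies in $\Fac T_U$, so applying $F$ yields an exact sequence $FT_1\to FT_0\to FM\to 0$ with $FT_i$ projective (via the projectivization equivalence $\operatorname{add}T_U\xrightarrow{\sim}\operatorname{proj}B$). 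Computing $\Hom_B(FM,FM')$ from this presentation and using the natural isomorphisms $\Hom_B(FT_i,FM')\cong\Hom_A(T_i,M')$ then identifies $\Hom_A(M,M')$ with $\Hom_B(FM,FM')$. Finally $(FM)e\cong\Hom_A(U,M)$, so $FM$ lies in $\mod C$ precisely when $M\in U^\perp$; hence $F$ restricts to an exact, fully faithful functor $\mathcal C\to\mod C$, and full faithfulness together with left exactness also shows this restriction reflects conflations. It therefore remains only to prove density.

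For density, given $N\in\mod C\subseteq\mod B$ I would lift a projective presentation $P_1\to P_0\to N\to 0$ along the projectivization equivalence to a map $f\colon T_1\to T_0$ in $\operatorname{add}T_U$ and set $M:=\operatorname{coker}f\in\Fac T_U$. Because $\operatorname{im}f$ is a quotient of $T_1$, it lies in the torsion class $\Fac T_U$, so $\operatorname{Ext}^1_A(T_U,\operatorname{im}f)=0$; applying $F$ to $0\to\operatorname{im}f\to T_0\to M\to 0$ shows that $FT_0\to FM$ is surjective, and one obtains a natural surjection $N\twoheadrightarrow FM$. Moreover $(FM)e\cong\Hom_A(U,M)$ is a quotient of $Ne=0$, so $M\in U^\perp$ and $FM\in\mod C$. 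The remaining point is to upgrade this surjection to an isomorphism, i.e.\ to show $\operatorname{im}(FT_1\to FT_0)=\ker(FT_0\to FM)$, equivalently that the lift can be chosen so that $T_1\to\operatorname{im}f$ is a right $\operatorname{add}T_U$-approximation (using minimality of the presentation and contravariant finiteness of $\operatorname{add}T_U$); granting this, $FM\cong\operatorname{coker}(FT_1\to FT_0)=N$.

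I expect this density step to be the main obstacle: its content is to pin the essential image down to \emph{exactly} $\mod C$ rather than some proper subcategory, and this is where the finiteness hypotheses and the fine structure of the Bongartz completion really enter. An alternative route to the same conclusion, which may in the end be cleaner, is to observe first that the torsion pair $(\Fac U,U^\perp)$ restricts compatibly to $\Fac T_U$ and thereby makes $\mathcal C$ an abelian category in its own right (kernels given by torsion subobjects, cokernels by torsion-free quotients), and then to verify that $F$ carries a projective generator of $\mathcal C$ to the regular module $C$ and is exact and fully faithful, so that it must be an equivalence; the identification of the relevant projective generator of $\mathcal C$ is then the crux.
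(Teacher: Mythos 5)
Your first half---exactness and full faithfulness of $F$ on $\Fac T_U$, the identification of $\mod C$ with the $B$-modules killed by $e_U$, and hence $F(\lperp{(\tau U)}\cap U^\perp)\subseteq\mod C$---is correct and amounts to the paper's appeal to the tilting theorem for $T_U$ over $A/\ann T_U$, redone by projectivization. The gap is in the density step, exactly where you flag ``the main obstacle''. Your reduction is the same as the paper's: with $f\colon T_1\to T_0$ lifting a projective presentation of $N$, $M=\Coker f$ and $L=\Im f$, one must show $FT_1\to FL$ is surjective, i.e.\ $\Im(Ff)=\Ker(FT_0\to FM)$. But your proposed justification---that by ``minimality of the presentation and contravariant finiteness of $\add T_U$'' the lift can be chosen so that $T_1\to L$ is a right $(\add T_U)$-approximation---cannot work as stated, because it nowhere uses $N\in\mod C$. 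If it were valid it would apply verbatim to every $N\in\mod B$ and would show that every $B$-module is isomorphic to $F(\Coker f)$ with $\Coker f\in\Fac T_U$; but the essential image of $F$ on $\Fac T_U$ is $\Sub DT_U$, which is in general a proper subcategory of $\mod B$ (already for $A$ hereditary of type $A_2$, $U$ the simple injective and $T_U$ its Bongartz completion). Nor can you simply replace the lifted map by a right approximation $T_1'\to L$: then $\Coker(FT_1'\to FT_0)$ need no longer be $N$. Reconciling ``computes $N$'' with ``approximates $L$'' is precisely where the hypothesis $\Hom_B(FU,N)=0$ must enter, and your sketch does not show how.

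For comparison, the paper closes this point as follows: with $K=\Ker f$ it suffices to prove $K\in\lperp{(\tau U)}$, for then $\Ext^1_A(T_U,K)=0$ and $FT_1\to FL$ is onto, giving $FM\cong N$. Applying $\Hom_A(-,\tau U)$ to $0\to K\to T_1\to L\to 0$ reduces this to injectivity of $\Ext^1_A(L,\tau U)\to\Ext^1_A(T_1,\tau U)$, which by Auslander--Reiten duality is equivalent to surjectivity of the map $\Hom_A(U,T_1)\to\Hom_A(U,L)$ taken modulo maps factoring through projectives; and surjectivity of $\Hom_A(U,T_1)\to\Hom_A(U,L)$ itself is obtained by applying $\Hom_B(FU,-)$ to $FT_1\to FT_0\to N\to 0$, using that $FU$ is projective over $B$ and that $\Hom_B(FU,N)=0$ precisely because $N\in\mod C$, then transporting back along the equivalence $F\colon\Fac T_U\to\Sub DT_U$. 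Some argument of this type (AR duality plus the vanishing $\Hom_B(FU,N)=0$) is unavoidable, so as it stands your proof of density is incomplete; your closing ``alternative route'' via a projective generator of the exact category $\lperp{(\tau U)}\cap U^\perp$ is only a sketch and runs into the same issue when one tries to show that generator maps onto all of $\mod C$.
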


It is shown in \cite[Thm. 2.2]{adachi_tau-tilting_2012} that
basic support $\tau$-tilting $A$-modules are precisely the
$\Ext$-progenerators of functorially finite torsion classes in 
$\mod A$. 
The proof of Theorem \ref{ithm:tau-tilting-reduction}
makes heavy use of the bijection between functorially finite
torsion classes in $\mod A$ and basic support $\tau$-tilting
$A$-modules. 
The following result extends the bijection given in Theorem
\ref{ithm:tau-tilting-reduction}, as the torsion classes involved do
not need to be functorially finite:

\begin{theorem}[Theorem \ref{thm:tp-reduction}]
  With the hypotheses of Theorem \ref{ithm:tau-tilting-reduction},
  the map
  \[
  \T\mapsto F(\T\cap U^\perp)
  \]
  induces a bijection between torsion classes $\T$ in $\mod A$
  such that $\Fac U\subseteq \T \subseteq \lperp{(\tau U)}$
  and torsion classes in $\mod C$, where
  $F$ is the equivalence obtained in Theorem \ref{ithm:U-modC}.
\end{theorem}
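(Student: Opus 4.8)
The plan is to establish the bijection by carefully analyzing how the equivalence $F$ from Theorem~\ref{ithm:U-modC} interacts with torsion pairs. First I would recall that $F:\lperp(\tau U)\cap U^\perp\to\mod C$ is an equivalence of exact categories, so it induces a bijection between torsion pairs in the exact category $\lperp(\tau U)\cap U^\perp$ and torsion pairs in $\mod C$; in particular, $\X\mapsto F(\X)$ sends torsion classes (subcategories closed under extensions and quotients \emph{within} the ambient category $\lperp(\tau U)\cap U^\perp$) to torsion classes in $\mod C$, with inverse induced by a quasi-inverse of $F$. So the real content is to show that the assignment $\T\mapsto \T\cap U^\perp$ is a bijection from $\{\T \text{ torsion class in }\mod A : \Fac U\subseteq\T\subseteq\lperp(\tau U)\}$ onto the torsion classes of the exact category $\lperp(\tau U)\cap U^\perp$, and then compose with $F$.

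The key steps, in order, are as follows. (1) Show that if $\T$ is a torsion class in $\mod A$ with $\Fac U\subseteq\T\subseteq\lperp(\tau U)$, then $\X:=\T\cap U^\perp$ is a torsion class in the exact category $\E:=\lperp(\tau U)\cap U^\perp$; closure under extensions is inherited, and closure under quotients (within $\E$) must be checked using that $U^\perp$ is closed under quotients in $\mod A$. (2) Construct the inverse: given a torsion class $\X$ in $\E$, set $\T:=\{X\in\mod A : \text{there is an exact sequence }U_1\to X\to X'\to 0 \text{ with } U_1\in\Fac U,\ X'\in\X\}$ — more precisely, I would use the canonical sequence coming from the torsion pair $(\Fac U, U^\perp)$: for any $M\in\lperp(\tau U)$, the torsion-free quotient $M/t_{\Fac U}(M)$ lies in $\lperp(\tau U)\cap U^\perp=\E$ (here one needs that $\lperp(\tau U)$ is closed under the relevant quotients, which follows since $\Fac U\subseteq\lperp(\tau U)$ and $\lperp(\tau U)$ is a torsion class), and then define $\T:=\{M\in\lperp(\tau U) : M/t_{\Fac U}(M)\in\X\}$. (3) Verify $\T$ so defined is a torsion class in $\mod A$ containing $\Fac U$ and contained in $\lperp(\tau U)$: containment of $\Fac U$ is clear since such $M$ have zero torsion-free part; closure under quotients and extensions requires the snake lemma applied to the torsion radical $t_{\Fac U}$, using that $\Fac U$ and $U^\perp$ are both closed under quotients and that $\X$ is a torsion class in $\E$. (4) Check the two assignments are mutually inverse: $\T\mapsto\T\cap U^\perp\mapsto\{M : M/t_{\Fac U}(M)\in\T\cap U^\perp\}$ returns $\T$ because $M\in\T$ iff its torsion-free part lies in $\T$ (as $t_{\Fac U}(M)\in\Fac U\subseteq\T$ and $\T$ is closed under extensions/quotients); conversely $\X\mapsto\{M\in\lperp(\tau U): M/t_{\Fac U}(M)\in\X\}\mapsto \big(\text{that class}\big)\cap U^\perp=\X$ because objects of $\E$ are their own torsion-free parts. (5) Finally, confirm the bijection is order-preserving and that composing with $F$ gives the stated map $\T\mapsto F(\T\cap U^\perp)$.

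The main obstacle I anticipate is step~(3), specifically verifying that $\T=\{M\in\lperp(\tau U):M/t_{\Fac U}(M)\in\X\}$ is closed under extensions in $\mod A$. Given a short exact sequence $0\to M'\to M\to M''\to 0$ with $M',M''\in\T$, one must produce the canonical sequence for $M$ with respect to $(\Fac U,U^\perp)$ and identify $M/t_{\Fac U}(M)$ inside an extension of $M''/t_{\Fac U}(M'')$ by a quotient of $M'/t_{\Fac U}(M')$ — this needs the horseshoe-type diagram for torsion radicals together with the fact that the relevant connecting maps land in subcategories controlled by $\X$ and $\Fac U$, and one must also check $M\in\lperp(\tau U)$, which is immediate since $\lperp(\tau U)$ is itself a torsion class hence closed under extensions. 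A secondary subtlety is ensuring that the notion of "torsion class in the exact category $\E$" used here matches the one transported by $F$ to $\mod C$; since $\E$ is an exact abelian-like subcategory (extension-closed in $\mod A$) and $F$ is an exact equivalence, this is routine but should be stated explicitly. Once these are in place, the remaining verifications — containments, order-preservation, and mutual inverseness — are formal manipulations with torsion radicals and the snake lemma.
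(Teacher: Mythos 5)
Your overall scaffolding matches the paper exactly: pass to the exact category $\U=\lperp{(\tau U)}\cap U^\perp$, show $\T\mapsto\T\cap U^\perp$ and $\X\mapsto(\Fac U)\ast\X$ (equivalently, your description in terms of the canonical sequence) are mutually inverse bijections onto $\tors\U$, then transport along $F$. Steps (1), (2), (4), (5) are routine and are handled in the paper essentially the way you describe.

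The gap is in step (3), and it is precisely where you flagged the obstacle. You propose to prove that $(\Fac U)\ast\X$ is closed under extensions using ``the horseshoe-type diagram for torsion radicals together with the snake lemma.'' This does not work, because neither the torsion radical $\t$ nor the torsion-free quotient $\f$ for $(\Fac U,U^\perp)$ is an exact functor: $\t$ is only left exact and $\f$ is only right exact \emph{on epimorphisms}, not on short exact sequences. Concretely, given $0\to M'\to M\to M''\to 0$ with $\f M',\f M''\in\X$, applying $\f$ yields $\f M'\to\f M\to\f M''\to 0$, but this need not be exact at $\f M$, and the kernel of $\f M\to\f M''$ need not be a quotient of $\f M'$ — that would require $\t M\to\t M''$ to be surjective, which fails in general (already over $\mathbb{Z}$ for the torsion theory of torsion abelian groups). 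So the snake-lemma chase leaves an uncontrolled connecting term and does not show $\f M\in\X$.

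The paper sidesteps this by a genuinely different device. It first observes that closure of $(\Fac U)\ast\X$ under extensions follows, via the associativity and idempotency of $\ast$, once one proves the ``commutation'' inclusion $\X\ast\Fac U\subseteq(\Fac U)\ast\X$. To prove that inclusion, take $0\to N\to M\to L\to 0$ with $N\in\X$ and $L\in\Fac U$; one must show $\f M\in\X$. The trick is to use the torsion pair $(\X,\X^\perp\cap\U)$ inside the exact category $\U$ (obtained by transporting the one in $\mod C$ across the equivalence $F$), with associated functors $\t_{\X}$ and $\f_{\X}$. Then $\f M\in\X$ iff $\f_{\X}(\f M)=0$, and $\f_{\X}(\f M)$ is a factor module of $M$. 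Now $\Hom_A(N,\f_{\X}(\f M))=0$ because $N\in\X$ and $\f_{\X}(\f M)\in\X^\perp$, and $\Hom_A(L,\f_{\X}(\f M))=0$ because $L\in\Fac U$ and $\f_{\X}(\f M)\in U^\perp$; by left exactness of $\Hom$, $\Hom_A(M,\f_{\X}(\f M))=0$, forcing $\f_{\X}(\f M)=0$. This Hom-vanishing argument is the missing idea in your proposal; without it (or some genuine substitute), step (3) does not go through.
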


We would like to point out that support $\tau$-tilting modules are
related with important classes of objects in representation
theory: silting objects in triangulated categories and
cluster-tilting objects in 2-Calabi-Yau triangulated categories.
On one hand, if $\T$ is a triangulated category satisfying
suitable finiteness conditions with a silting object $S$, then
there is a bijection between basic silting objects contained in
the subcategory $S\ast S[1]$ of $\T$ and basic support
$\tau$-tilting $\End_\T(S)$-modules, see
\cite[Thm. 3.2]{adachi_tau-tilting_2012} for a special case.
On the other hand, if $\C$ is a 2-Calabi-Yau triangulated
category with a cluster-tilting object $T$, then there is a
bijection between basic cluster-tilting objects in $\C$ and
basic support $\tau$-tilting $\End_\C(T)$-modules,
see \cite[Thm. 4.1]{adachi_tau-tilting_2012}.

Reduction techniques exist both for silting objects and
cluster-tilting objects, see \cite[Thm. 2.37]{aihara_silting_2012}
and \cite[Thm. 4.9]{iyama_mutation_2008} respectively. 
The following result shows that $\tau$-tilting reduction fits
nicely in these contexts.

\begin{theorem}[see Theorems \ref{thm:silting-compatibility} and
  \ref{thm:2-cy-compatibility} for details]
  Let $A$ be a finite dimensional algebra.
  Then we have the following:
  \begin{enumerate}
  \item $\tau$-tilting reduction is compatible with silting
    reduction.
  \item If $A$ is 2-Calabi-Yau tilted, then $\tau$-tilting
    reduction is compatible with 2-Calabi-Yau reduction.
  \end{enumerate}
\end{theorem}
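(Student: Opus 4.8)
The plan is to treat the two assertions separately, since each reduces the compatibility claim to comparing two subcategories of support $\tau$-tilting (resp. silting, resp. cluster-tilting) objects via the dictionaries already recalled in the introduction. For part (a), I would start with a triangulated category $\T$ satisfying the stated finiteness conditions together with a silting object $S$, set $A=\End_\T(S)$, and fix a $\tau$-rigid $A$-module $U$. Silting reduction, in the form of \cite[Thm. 2.37]{aihara_silting_2012}, passes from $\T$ to the Verdier quotient $\T/\operatorname{thick}(P_U)$ for an appropriate presilting summand $P_U$ built from $U$; the key point is to identify the endomorphism ring of the induced silting object in this quotient with the algebra $C$ produced by Theorem~\ref{ithm:tau-tilting-reduction}. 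To do this I would use the equivalence $F$ of Theorem~\ref{ithm:U-modC}: the algebra $C$ is, by construction, $\End_A(T_U)$ modulo the ideal of maps factoring through $\operatorname{add}U$, and this is precisely the shape one gets when computing morphisms in the quotient category via the recollement/Verdier-quotient description. Once the identification $C\cong\End_{\T/\operatorname{thick}(P_U)}(\bar S)$ is in place, I would check that the bijection ``silting objects in $S\ast S[1]$ containing $P_U$'' $\leftrightarrow$ ``support $\tau$-tilting $A$-modules containing $U$'' from \cite[Thm. 3.2]{adachi_tau-tilting_2012} is intertwined with the bijection of Theorem~\ref{ithm:tau-tilting-reduction} by the reduction functor, which amounts to a diagram-chase showing both routes send a silting object to the same $C$-module.

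For part (b), assume $A$ is $2$-Calabi--Yau tilted, so $A=\End_\C(T)$ for a $2$-CY triangulated category $\C$ with cluster-tilting object $T$, and fix a $\tau$-rigid $A$-module $U$. Here I would invoke Iyama--Yoshino Calabi--Yau reduction \cite[Thm. 4.9]{iyama_mutation_2008}: choosing the rigid object $R$ in $\C$ corresponding to $U$ (via the bijection between rigid objects and $\tau$-rigid modules coming from \cite[Thm. 4.1]{adachi_tau-tilting_2012} at the cluster-tilting level, extended to rigid summands), the reduction $\C_R:={}^\perp R[1]/[R]$ is again $2$-CY with cluster-tilting object the image of $T$, and its cluster-tilted algebra should be identified with $C$. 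The crucial input is again Theorem~\ref{ithm:U-modC}: one needs that the exact equivalence $F:\lperp(\tau U)\cap U^\perp\to\mod C$ matches the module-category side of the functor $\Hom_{\C_R}(\bar T,-)$, so that cluster-tilting objects of $\C_R$ correspond under $\Hom_{\C_R}(\bar T,-)$ to support $\tau$-tilting $C$-modules in a way compatible with the correspondence in $\C$. I would then verify that the square
\[
\begin{array}{ccc}
\{\text{cluster-tilting objects of }\C\text{ with summand }R\} & \longrightarrow & \{\text{support $\tau$-tilting }A\text{-mod.\ with summand }U\}\\[2pt]
\downarrow & & \downarrow\\[2pt]
\{\text{cluster-tilting objects of }\C_R\} & \longrightarrow & \{\text{support $\tau$-tilting }C\text{-mod.}\}
\end{array}
\]
commutes, the horizontal maps being the $2$-CY--tilting dictionaries and the vertical maps being CY-reduction and $\tau$-tilting reduction.

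In both parts the main obstacle I anticipate is the identification of the algebra $C$ on the two sides, i.e.\ showing that the endomorphism ring of the reduced silting (resp.\ cluster-tilting) object agrees with the algebra produced by $\tau$-tilting reduction, and not merely that the two reductions produce abstractly equivalent module categories. This is where Theorem~\ref{ithm:U-modC} does the real work: it pins down $\mod C$ as the $\tau$-perpendicular category $\lperp(\tau U)\cap U^\perp$ with its exact structure, and one must recognise that same category, with the same exact structure, inside the reduced triangulated category (as the heart-type subcategory cut out by the reduced silting object, resp.\ as $\operatorname{add}(\bar T)$-presented objects in $\C_R$). Once $C$ is correctly identified on both sides, the remaining compatibility of the bijections is a formal consequence of functoriality: the order-preserving functorial nature of the bijection in Theorem~\ref{ithm:tau-tilting-reduction}, together with the functoriality of silting and CY reduction, forces the relevant squares to commute, so I would phrase the final argument as a uniqueness statement—both composites are functorial bijections between the same two sets sending indecomposable summands compatibly, hence coincide.
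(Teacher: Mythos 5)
Your overall program matches the paper's: for each of the two settings you first identify the algebra $C$ with the endomorphism ring of the reduced silting (resp.\ cluster-tilting) object, and then check that the relevant square of bijections commutes. The identification of $C$ also proceeds along the lines you sketch: $C$ is $\End_A(\b{T_U})$ modulo the ideal of maps factoring through $\add\b{U}$, and the paper matches this with $\Hom_\U(T_U,T_U)$ by realizing $\U$ as a subfactor category of $\T$ (resp.\ of $\C$) and carrying out a short computation showing that the ideals $[S[1]]$ (resp.\ $[T[1]]$) are absorbed into $[U]$ after precomposition with the Bongartz triangle; this is Lemma \ref{lemma:well-defined-silting} and Lemma \ref{lemma:well-defined}.

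There is, however, a genuine gap at the very end of your proposal. You close by arguing that ``both composites are functorial bijections between the same two sets sending indecomposable summands compatibly, hence coincide.'' This does not hold in the generality needed: the poset $\sttilt C$ can have nontrivial automorphisms (for instance, when $C\cong k\times k$ the Hasse quiver is a square), so two order-preserving bijections onto $\sttilt C$ need not agree, and ``sending indecomposable summands compatibly'' is not a formal consequence of functoriality but precisely the thing one has to verify. The paper does not appeal to any such uniqueness. Instead it shows concretely that the two composites agree, by proving for each $M$ in the relevant subcategory the natural isomorphism of $C$-modules
\[
\Hom_\U(T_U,M)\;\cong\;\Hom_A(\b{T_U},\f\,\b{M}),
\]
where $0\to\t\b{M}\to\b{M}\to\f\,\b{M}\to 0$ is the canonical sequence for the torsion pair $(\Fac\b{U},\b{U}^\perp)$; see Propositions \ref{prop:E-C-modules} and \ref{prop:E-C-modules-cy}. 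Since $\tau$-tilting reduction sends $\b{M}$ to $F(\f\,\b{M})=\Hom_A(\b{T_U},\f\,\b{M})$ by Theorem \ref{thm:tau-tilting-reduction}, this identity is exactly the commutativity of the square.

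A second ingredient your sketch omits, and which feeds into that computation, is the compatibility of Bongartz completions: one must show that the Bongartz completion of $U$ taken in $S\ast S[1]$ (resp.\ in $\C$ with respect to $T$) maps under $\b{(-)}$ to the Bongartz completion of $\b{U}$ in $\mod A$, i.e.\ $\b{T_U}\cong T_{\b U}$ (Proposition \ref{prop:bongartz-silting} in each section). This is what lets one speak of ``the'' algebra $C$ on both sides and what makes the $\Ext$-projectivity arguments in the proof of $\Hom_\U(T_U,M)\cong\Hom_A(\b{T_U},\f\,\b M)$ go through. Without it, the identification you describe in your first two paragraphs is not yet available.
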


These results enhance our understanding of the relationship
between silting objects, cluster-tilting objects and support
$\tau$-tilting modules. 
We refer the reader to \cite{brustle_ordered_2013} for an in-depth
survey of the relations between these objects and several other
important concepts in representation theory.

Finally, let us fix our conventions and notations, which we kindly
ask the reader to keep in mind for the remainder of this article.

\begin{conventions} 
  In what follows, $A$ always denotes a (fixed) finite
  dimensional algebra over a field $k$. 
  We denote by $\mod A$ the category of finite dimensional right
  $A$-modules.
  Whenever we consider a subcategory of $\mod A$ we assume that it
  is full and closed under isomorphisms. 
  If $M$ is an $A$-module, we denote by $\Fac M$ the subcategory
  of $\mod A$ which consists of all factor modules of direct sums
  of copies of $M$; the subcategory $\Sub M$ is defined dually. 
  Given morphisms $f:X\to Y$ and $g:Y\to Z$ in some category $\C$,
  we denote their composition by $g\circ f = gf$.
  Given a subcategory $\X$ of an additive category $\C$, we
  denote by $\lperp{\X}$ the subcategory of $\C$ whose objects are
  all objects $M$ in $\C$ such that $\Hom_\C(M,\X)=0$; the
  category $\X^\perp$ is defined dually.
  Also, we denote by $[\X]$ the ideal of $\C$ of morphisms which
  factor through $\X$.  
  For an object $X$ of $\C$, we denote by $\add X$ the smallest
  additive subcategory of $\C$ containing $X$ and closed under
  isomorphisms. 
  If $\X=\add X$ for some object $X$ in $\C$ we write
  $\lperp{X}$ instead of $\lperp{\X}$ and so on. 
  If $\C$ is a $k$-linear category we denote by $D$ the usual
  $k$-duality $\Hom_k(-,k)$.
\end{conventions}


\section{Preliminaries}
\label{sec:preliminaries}

There is a strong interplay between the classical concept of
torsion class in $\mod A$ and the recently investigated class of
support $\tau$-tilting modules. In this section we collect the
basic definitions and main results relating this two
theories.

\subsection{Torsion pairs}

Recall that a subcategory $\X$ of an additive category $\C$ is
said to be \emph{contravariantly finite in $\C$} if for every
object $M$ of $\C$ there exist some $X$ in $\X$ and a morphism
$f:X\to M$ such that for every $X'$ in $\X$ the sequence
\[
\Hom_\C(X',X) \xto{f\cdot} \Hom_\C(X',M) \to 0
\]
is exact. In this case $f$ is called a \emph{right
  $\X$-approximation}. 
Dually we define \emph{covariantly finite subcategories in $\C$}
and \emph{left $\X$-approximations}.
Furthermore, a subcategory of $\C$ is said to be
\emph{functorially finite in $\C$} if it is both 
contravariantly and covariantly finite in $\C$. 

A subcategory $\T$ of $\mod A$ is called a \emph{torsion class} if
it is closed under extensions and factor modules in $\mod
A$. Dually, \emph{torsion-free classes} are defined.
An $A$-module $M$ in $\T$ is said to be \emph{$\Ext$-projective in
  $\T$} if $\Ext^1_A(M,\T)=0$. 
If $\T$ is functorially finite in $\mod A$, then there are only
finitely many indecomposable $\Ext$-projective modules in $\T$ up
to isomorphism, and we denote by $P(\T)$ the direct sum of each
one of them.
For convenience, we will denote the set of all torsion classes in
$\mod A$ by $\tors A$, and by $\ftors A$ the subset of $\tors A$
consisting of all torsion classes which are functorially finite in
$\mod A$.

A pair $(\T,\F)$ of subcategories of $\mod A$ is called a
\emph{torsion pair} if $\F=\T^\perp$ and $\T=\lperp{\F}$. In such
case $\T$ is a torsion class and $\F$ is a torsion-free class in
$\mod A$. 
The following proposition characterizes torsion pairs in $\mod A$
consisting of functorially finite subcategories.

\begin{proposition}
  \label{prop:torsion-pairs}
  \cite[Prop. 1.1]{adachi_tau-tilting_2012}
  Let $(\T,\F)$ be a torsion pair in $\mod A$. The following
  properties are equivalent:
  \begin{enumerate}
  \item $\F$ is functorially finite in $\mod A$ (or equivalently,
    $\F$ is contravariantly finite).
  \item $\T$ is functorially finite in $\mod A$ (or equivalently,
    $\T$ is covariantly finite).
  \item $\T = \Fac P(\T)$.
  \item $P(\T)$ is a tilting $(A/\ann \T)$-module.
  \item For every $M$ in $\T$ there exists a short exact
    sequence 
    $0\to L \to T' \xto{f} M \to 0$
    where $f$ is right $(\add P(\T))$-approximation and $L$ is in
    $\T$. 
  \end{enumerate}
\end{proposition}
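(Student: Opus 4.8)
\emph{Plan of proof.} First I would settle the two parenthetical remarks and then run a cycle of implications together with the equivalence (a)$\Leftrightarrow$(b). For any torsion pair $(\T,\F)$ the torsion-radical functor $t(-)$ is right adjoint to the inclusion $\T\hookrightarrow\mod A$: the monomorphism $tM\hookrightarrow M$ is a right $\T$-approximation, since the image of any morphism $T\to M$ with $T\in\T$ is a factor module of $T$, hence a torsion submodule of $M$, hence contained in $tM$. Thus $\T$ is always contravariantly finite, and dually the epimorphisms $M\twoheadrightarrow M/tM$ exhibit $\F$ as always covariantly finite; this proves the ``equivalently'' clauses and reduces (a) to ``$\F$ is contravariantly finite'' and (b) to ``$\T$ is covariantly finite''. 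The equivalence of these last two is Smalø's theorem on torsion pairs, which I would invoke; alternatively it can be reproved by manufacturing, from a right $\F$-approximation of a suitable cosyzygy of a module, a left $\T$-approximation of that module, using the four-term exact sequences attached to the torsion pair.

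\emph{(b)$\Rightarrow$(c).} Under (b) the class $\T$ is functorially finite, so it has only finitely many indecomposable $\Ext$-projective objects and $P(\T)$, $\add P(\T)$ are defined. For each indecomposable projective $P_i$ of $A$ I would take a minimal left $\T$-approximation $a_i\colon P_i\to T_i$. Each $T_i$ is then $\Ext$-projective in $\T$: given an extension $0\to N\to E\to T_i\to 0$ with $N\in\T$, one lifts $a_i$ along $E\to T_i$ (possible since $P_i$ is projective), factors the lift through $a_i$ (possible since $E\in\T$), and deduces from minimality of $a_i$ that the resulting endomorphism of $T_i$ is an automorphism, so the extension splits. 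Hence $\bigoplus_i T_i\in\add P(\T)$. Since every $M\in\T$ is a factor module of some free module $A^m$, and the epimorphism $A^m\twoheadrightarrow M$ factors through the left $\T$-approximation $\bigoplus_i a_i^{m_i}$ of $A^m$, we get $M\in\Fac\bigl(\bigoplus_i T_i\bigr)\subseteq\Fac P(\T)$; combined with the obvious inclusion $\Fac P(\T)\subseteq\T$ this yields $\T=\Fac P(\T)$.

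\emph{(c)$\Rightarrow$(d)$\Rightarrow$(e), (d)$\Rightarrow$(b), (e)$\Rightarrow$(c).} Put $B=A/\ann\T$. Then $\T\subseteq\mod B$ is a torsion class with $\T=\Fac_B P(\T)$, and it contains a faithful $B$-module (a finite direct sum of objects of $\T$ whose annihilators intersect to zero); hence $P(\T)$ is a faithful $B$-module, and $\Ext^1_B(P(\T),P(\T))=0$ because $P(\T)$ is $\Ext$-projective in $\T$. Classical tilting theory — Bongartz's completion argument and the count of indecomposable summands of a partial tilting module — then gives that $P(\T)$ is a tilting $B$-module, which is (d). Granting (d): the tilting torsion class of $P(\T)$ in $\mod B$ is $\Fac_B P(\T)=\T$, which is functorially finite in $\mod B$; since $\mod B$ is functorially finite in $\mod A$ (the largest submodule of $X$ killed by $\ann\T$ is a right $\mod B$-approximation of $X$, and $X\twoheadrightarrow X/(X\cdot\ann\T)$ a left one) and $\Fac_B P(\T)=\Fac_A P(\T)$, composing approximations makes $\T$ covariantly finite in $\mod A$, i.e.\ (b). Granting (d) again: for $M\in\T=\Fac_B P(\T)$, classical tilting theory provides a right $(\add P(\T))$-approximation $P_0\to M$ that is an epimorphism with kernel in $\Fac_B P(\T)=\T$, which is (e). Finally (e)$\Rightarrow$(c) is immediate, the approximation in (e) being an epimorphism from $\add P(\T)$ onto $M$.

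\emph{Where the work is.} The two genuinely homological ingredients are Smalø's theorem, used for (a)$\Leftrightarrow$(b), and the tilting-theoretic core of (c)$\Rightarrow$(d) — that a functorially finite faithful torsion class over $B$ is generated by a tilting module — together with the standard presentation-sequence facts over a tilting algebra used in (d)$\Rightarrow$(e). Everything else (the torsion radical, $\Ext$-projectivity of minimal left approximations, functorial finiteness of $\mod(A/\ann\T)$ inside $\mod A$, and the bookkeeping with $\Fac$) is routine. A fully self-contained account would spend most of its effort reproving Smalø's theorem.
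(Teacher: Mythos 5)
The paper does not prove this proposition — it is stated as a citation to \cite[Prop.~1.1]{adachi_tau-tilting_2012}, so there is no in-paper argument to compare against. That said, your cycle of implications is a sensible plan and most of the steps are sound: the observation about the torsion radical settling the parenthetical clauses is correct, the reduction of (a)$\Leftrightarrow$(b) to Smal\o's theorem is the standard move, your proof of (b)$\Rightarrow$(c) via minimal left $\T$-approximations of indecomposable projectives (with the lifting-plus-minimality argument for their $\Ext$-projectivity) is complete and correct, the faithfulness of $P(\T)$ over $B=A/\ann\T$ follows as you say, and (d)$\Rightarrow$(b), (d)$\Rightarrow$(e), (e)$\Rightarrow$(c) are all fine.

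The genuine gap is in (c)$\Rightarrow$(d). You establish $\ann_B P(\T)=0$ and $\Ext^1_B(P(\T),P(\T))=0$, and then invoke ``Bongartz's completion argument and the count of indecomposable summands of a partial tilting module.'' But Bongartz completion applies to a \emph{partial tilting} module, which by definition has projective dimension at most $1$; that $\operatorname{pd}_B P(\T)\leq 1$ is not established and does not follow formally from faithfulness together with $\Ext$-projectivity in $\Fac P(\T)$ — it is in fact one of the nontrivial things to prove here. Moreover, even granting partial-tiltingness, Bongartz produces \emph{some} complement, and one would still need an argument that the complement already lies in $\add P(\T)$ (e.g.\ by showing it is $\Ext$-projective in $\T$). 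What you actually need at this step is the theorem of Smal\o/Hoshino/Auslander--Smal\o\ that a contravariantly finite torsion class is the $\Fac$ of a tilting module over its support algebra, which is proved by a different route (constructing the exact sequence $0\to B\to T_0\to T_1\to 0$ with $T_0,T_1\in\add P(\T)$ from a left approximation of $B$, and then deducing $\operatorname{pd}_B P(\T)\leq 1$, rather than the other way around). You do correctly flag (c)$\Rightarrow$(d) as ``the tilting-theoretic core'' in your closing remarks, but the specific tool you name is not the right one and hides precisely the step that needs work.
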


A torsion pair in $\mod A$ which has any of the equivalent
properties of Proposition \ref{prop:torsion-pairs} is called a
\emph{functorially finite torsion pair}. In view of property (c),
we call the $A$-module $P(\T)$ the \emph{$\Ext$-progenerator} of
$\T$.

\subsection{$\tau$-tilting theory}

Now we recall the definition of support $\tau$-tilting modules and
the results relating such modules with functorially finite torsion
classes in $\mod A$.

\begin{definition}
  \cite[Def. 0.1(a)]{adachi_tau-tilting_2012}
  Let $A$ be a finite dimensional algebra. An $A$-module $M$ is
  said to be \emph{$\tau$-rigid} if $\Hom_A(M,\tau M)=0$ where
  $\tau$ is the Auslander-Reiten translation.
\end{definition}

\begin{remark}
  \label{rmk:tau-ext}
  By the Auslander-Reiten duality formula
  \cite[Thm. IV.2.13]{assem_elements_2006}, for every $A$-module $M$ we
  have an isomorphism $D\isHom_A(M,\tau M)\cong \Ext_A^1(M,M)$. 
  Thus $M$ is rigid (\ie $\Ext_A^1(M,M)=0$) provided $M$ is
  $\tau$-rigid.
\end{remark}

The following classical result of Auslander and Smal\o\
characterizes $\tau$-rigid modules in terms of torsion
classes.

\begin{proposition}
  \cite[5.8]{auslander_almost_1981}
  \label{prop:AS}
  Let $M$ and $N$ be two $A$-modules. Then the following holds:
  \begin{enumerate}
  \item $\Hom_A(N, \tau M)=0$ if and only if $\Ext^1_A(M,\Fac
    N)=0$. 
  \item $M$ is $\tau$-rigid if and only if $M$ is
    $\Ext$-projective in $\Fac M$.
  \item  $\Fac M$ is a functorially finite torsion class in
    $\mod A$.
  \end{enumerate}
\end{proposition}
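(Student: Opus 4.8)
The plan is to deduce everything from the Auslander–Reiten duality formula, which provides, for all $A$-modules $X$ and $Y$, a functorial isomorphism $\overline{\Hom}_A(X,\tau Y)\cong D\Ext_A^1(Y,X)$, where $\overline{\Hom}_A(-,-)$ denotes morphisms modulo those factoring through an injective module (cf.\ Remark \ref{rmk:tau-ext} and \cite[Thm. IV.2.13]{assem_elements_2006}). For part (a) I would first reduce to the case where $M$ has no nonzero projective direct summand — splitting off the maximal projective summand changes neither $\tau M$ nor $\Ext_A^1(M,-)$ — so that $\tau M$ has no nonzero injective direct summand. The forward implication is then formal: given $\Hom_A(N,\tau M)=0$, any $N'\in\Fac N$ receives an epimorphism from some $N^{k}$, and applying $\Hom_A(-,\tau M)$ embeds $\Hom_A(N',\tau M)$ into $\Hom_A(N^{k},\tau M)=0$; hence $\overline{\Hom}_A(N',\tau M)=0$, and the Auslander–Reiten formula yields $\Ext_A^1(M,N')=0$.

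For the converse of (a) — which I expect to be the only genuinely delicate point — I would argue by contradiction. If $0\ne f\colon N\to\tau M$, set $L:=\operatorname{im}f\in\Fac N$, with inclusion $\iota\colon L\hookrightarrow\tau M$. The crux is that $\iota$ cannot factor through an injective module: in a factorization $\iota=\beta\alpha$ with $I$ injective, the map $\alpha$ must be a monomorphism, so we may take $I=E(L)$; essentiality of $L$ in $E(L)$ together with $L\cap\ker\beta=\ker\iota=0$ forces $\ker\beta=0$, so $\beta$ realizes the nonzero injective module $E(L)$ as a direct summand of $\tau M$ — impossible, since $\tau M$ has no injective summand. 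Hence $[\iota]$ is a nonzero element of $\overline{\Hom}_A(L,\tau M)\cong D\Ext_A^1(M,L)$, so $\Ext_A^1(M,L)\ne 0$ with $L\in\Fac N$, contradicting the hypothesis. Part (b) then falls out by taking $N=M$ in (a), since ``$M$ is Ext-projective in $\Fac M$'' means by definition $\Ext_A^1(M,\Fac M)=0$.

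For (c) I would work under the hypothesis that $M$ is $\tau$-rigid (this is the case of interest; $\Fac M$ need not be a torsion class otherwise), so that $\Ext_A^1(M,\Fac M)=0$ by (b). Closure of $\Fac M$ under factor modules is immediate. For closure under extensions I would take $0\to X\to Y\to Z\to 0$ with $X,Z\in\Fac M$, fix an epimorphism $M^{n}\twoheadrightarrow Z$, and pull back along it: the resulting sequence $0\to X\to W\to M^{n}\to 0$ represents a class in $\Ext_A^1(M^{n},X)\cong\Ext_A^1(M,X)^{n}=0$, hence splits, giving $W\cong X\oplus M^{n}\in\Fac M$, and $Y$ is a quotient of $W$; thus $\Fac M$ is a torsion class. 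Functorial finiteness I would obtain by exhibiting, for each $Y$, the inclusion of the $M$-trace $\sum_{g\in\Hom_A(M,Y)}\operatorname{im}g$ into $Y$ as a right $\Fac M$-approximation (the trace lies in $\Fac M$ because $\Hom_A(M,Y)$ is finite-dimensional, and every object of $\Fac M$ is a quotient of a power of $M$), and then invoking the standard fact that a contravariantly finite torsion class is functorially finite \cite{auslander_almost_1981}. The load-bearing step throughout is the converse in (a); once it is in place, (b) is immediate and (c) is a short diagram argument.
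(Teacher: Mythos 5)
The paper does not prove this proposition itself (it cites Auslander--Smal{\o} \cite{auslander_almost_1981}), so there is no internal argument to compare against; I will review your proof on its own merits. Your treatment of (a) and (b) is correct. The forward direction of (a) is routine as you say, and for the converse the key observation --- that a monomorphism $\iota\colon L\hookrightarrow\tau M$ into a module with no injective summand cannot factor through an injective, since such a factorization would split off $E(L)$ as an injective summand of $\tau M$ --- is exactly the right argument, and it correctly yields $[\iota]\neq 0$ in $\overline{\Hom}_A(L,\tau M)\cong D\Ext^1_A(M,L)$. (Incidentally the reduction to $M$ without projective summands is harmless but unnecessary: $\tau M$ never has a nonzero injective direct summand, for any $M$.) Your remark that (c), read literally, needs $M$ to be $\tau$-rigid is a fair catch --- $\Fac M$ is not a torsion class for $M$ the simple module over $k[x]/(x^2)$ --- and your pullback argument for extension-closure under that hypothesis is correct.

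The gap is in functorial finiteness. You show that $\Fac M$ is contravariantly finite via the trace, and then appeal to ``the standard fact that a contravariantly finite torsion class is functorially finite.'' But \emph{every} torsion class in $\mod A$ is contravariantly finite (the torsion subobject $tY\hookrightarrow Y$ is a right approximation), so that purported fact would assert that every torsion class is functorially finite, which is false --- over the Kronecker algebra there are many non-functorially-finite torsion classes. What is missing is an argument for \emph{covariant} finiteness of $\Fac M$, and this is where the substance of (c) lies (it holds for every $M$, not just $\tau$-rigid ones). One standard argument: given $Y$, choose a projective epimorphism $\pi\colon P\twoheadrightarrow Y$ and a left $(\add M)$-approximation $g\colon P\to M_0$, and form the pushout $Z$ of $(\pi,g)$; then $Z$ is a quotient of $M_0$, hence lies in $\Fac M$, and the induced map $Y\to Z$ is a left $\Fac M$-approximation, because any $h\colon Y\to X$ with an epimorphism $e\colon M^n\twoheadrightarrow X$ lifts along $P$ to some $\ell\colon P\to M^n$ with $e\ell=h\pi$, and $\ell$ factors through $g$, so the pushout property produces the required factorization of $h$.
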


For an $A$-module $M$ and an ideal $I$ of $A$ contained in $\ann
M$, the following proposition describes the relationship between
$M$ being $\tau$-rigid as $A$-module and $\tau$-rigid as
$(A/I)$-module. 
We denote by $\tau_{A/I}$ the Auslander-Reiten translation in
$\mod(A/I)$.

\begin{proposition}
  \label{prop:tau-rigid-ideals}
  \cite[Lemma 2.1]{adachi_tau-tilting_2012}
    Let $I$ be an ideal of $A$ and $M$ and $N$ two $(A/I)$-modules.
    Then we have the following:
    \begin{enumerate}
    \item If $\Hom_A(M,\tau N)=0$, then $\Hom_{A/I}(M,
      \tau_{A/I} N)=0$.
    \item If $I=\langle e\rangle$ for some idempotent $e\in A$,
      then $\Hom_A(M,\tau N)=0$ if and only if $\Hom_{A/I}(M, 
      \tau_{A/I} N)=0$.
    \end{enumerate}
\end{proposition}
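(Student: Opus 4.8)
The plan is to reduce both statements to the Auslander--Smal\o{} criterion of Proposition \ref{prop:AS}(a), which converts the vanishing of a $\Hom$ into the vanishing of certain $\Ext^1$-groups over the relevant algebra, and then to compare $\Ext^1$ over $A$ with $\Ext^1$ over $A/I$. First I would record two elementary observations, valid for an \emph{arbitrary} ideal $I$ and arbitrary $(A/I)$-modules $M,N$. (i) Every factor module of a direct sum of copies of the $(A/I)$-module $M$ is again annihilated by $I$, so the subcategory $\Fac M$ of $\mod A$ is contained in $\mod(A/I)$ and coincides with the subcategory $\Fac M$ computed inside $\mod(A/I)$. (ii) The natural map $\Ext^1_{A/I}(N,X)\to\Ext^1_A(N,X)$, obtained by regarding a short exact sequence of $(A/I)$-modules as a short exact sequence of $A$-modules, is injective: an $A$-linear splitting of such a sequence is automatically $(A/I)$-linear, since all the modules involved are annihilated by $I$.

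With these at hand, part (a) is immediate. By Proposition \ref{prop:AS}(a), applied with the roles of the two modules interchanged, $\Hom_A(M,\tau N)=0$ is equivalent to $\Ext^1_A(N,\Fac M)=0$, while $\Hom_{A/I}(M,\tau_{A/I}N)=0$ is equivalent to $\Ext^1_{A/I}(N,\Fac M)=0$; by (i) the category $\Fac M$ is the same in both statements. By (ii) the first vanishing forces the second, which is exactly the assertion.

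For part (b) it remains to prove the converse implication, and for this I would upgrade the map in (ii) to an isomorphism when $I=\langle e\rangle=AeA$. The key point is that this ideal is idempotent: using $e^{2}=e$ one checks directly that $AeA=(AeA)^{2}$. Consequently, if $0\to X\to E\to N\to 0$ is a short exact sequence of $A$-modules with $X$ and $N$ annihilated by $I$, then $EI\subseteq X$ and hence $EI=EI^{2}\subseteq XI=0$; thus $E$ is an $(A/I)$-module and the whole sequence lives in $\mod(A/I)$. This shows the map in (ii) is surjective, hence bijective, and combining this with the equivalences used in part (a) yields the chain $\Hom_A(M,\tau N)=0 \iff \Ext^1_A(N,\Fac M)=0 \iff \Ext^1_{A/I}(N,\Fac M)=0 \iff \Hom_{A/I}(M,\tau_{A/I}N)=0$.

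The only mildly delicate points are the identity $AeA=(AeA)^{2}$ for an idempotent $e$ and the bookkeeping that $\Fac M$ is unchanged when passing between $\mod A$ and $\mod(A/I)$; everything else is formal, and I do not anticipate a genuine obstacle.
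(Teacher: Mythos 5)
Your argument is correct: reducing both vanishing conditions to $\Ext^1(N,\Fac M)=0$ via Proposition \ref{prop:AS}(a), noting that $\Fac M$ is the same category over $A$ and $A/I$, and then using that the comparison map on $\Ext^1$ is always injective and is bijective when $I=\langle e\rangle$ because $AeA$ is idempotent (so $\mod(A/I)$ is extension-closed in $\mod A$) is a complete proof. The paper itself gives no proof of this proposition --- it is quoted from \cite[Lemma 2.1]{adachi_tau-tilting_2012} --- and your reduction is essentially the standard argument used there, so there is nothing further to reconcile.
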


The following lemma, which is an analog of Wakamatsu's Lemma, \cf
\cite[Lemma 1.3]{auslander_applications_1991}, often comes handy.

\begin{lemma}
  \cite[Lemma 2.5]{adachi_tau-tilting_2012}
  \label{lemma:wakamatsu}
  Let $0\to L\to M\xto{f} N$ be an exact sequence. 
  If $f$ is a right $(\add M)$-approximation of $N$ and $M$ is
  $\tau$-rigid, then $L$ is in $\lperp{(\tau M)}$.
\end{lemma}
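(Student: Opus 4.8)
The plan is to reformulate the conclusion $L\in\lperp{(\tau M)}$ as an $\Ext$-vanishing statement by means of the Auslander--Smal\o\ criterion, and then to play it against the fact that $M$ is $\Ext$-projective in the torsion class $\Fac M$. Since $\lperp{(\tau M)}$ is the class of modules $X$ with $\Hom_A(X,\tau M)=0$, Proposition~\ref{prop:AS}(a) applied with ``$N$'' taken to be $L$ shows that $L\in\lperp{(\tau M)}$ is equivalent to $\Ext^1_A(M,\Fac L)=0$. Thus everything comes down to proving that $\Ext^1_A(M,X)=0$ for an arbitrary module $X$ in $\Fac L$.

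First I would make a harmless reduction: the corestriction of $f$ onto its image is still a right $(\add M)$-approximation and has the same kernel $L$, so I may assume that $0\to L\to M\xto{f} N\to 0$ is a \emph{short} exact sequence. Taking the direct sum of $n$ copies of it yields a short exact sequence $0\to L^{n}\to M^{n}\xto{f^{n}} N^{n}\to 0$ whose middle term still lies in $\add M$ and whose epimorphism $f^{n}$ is still a right $(\add M)$-approximation of $N^{n}$.

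Now fix $X$ in $\Fac L$ and an epimorphism $p\colon L^{n}\twoheadrightarrow X$, and form the pushout of $p$ along the inclusion $L^{n}\hookrightarrow M^{n}$. This yields a commutative diagram with exact rows
\[
\begin{array}{ccccccccc}
0 & \longrightarrow & L^{n} & \longrightarrow & M^{n} & \xto{f^{n}} & N^{n} & \longrightarrow & 0\\
  & & \downarrow & & \downarrow & & \Vert & & \\
0 & \longrightarrow & X & \longrightarrow & Y & \longrightarrow & N^{n} & \longrightarrow & 0
\end{array}
\]
whose left-hand vertical map is $p$. Since $p$ is an epimorphism, so is $M^{n}\to Y$, so $Y$ is a quotient of a module in $\add M$ and hence lies in $\Fac M$. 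As $M$ is $\tau$-rigid, it is $\Ext$-projective in $\Fac M$ by Proposition~\ref{prop:AS}(b), whence $\Ext^1_A(M,Y)=0$. Applying $\Hom_A(M,-)$ to the lower row therefore produces an exact sequence
\[
\Hom_A(M,Y)\longrightarrow\Hom_A(M,N^{n})\longrightarrow\Ext^1_A(M,X)\longrightarrow 0,
\]
and the composite $M^{n}\to Y\to N^{n}$ is precisely the approximation $f^{n}$, so $\Hom_A(M,M^{n})\to\Hom_A(M,N^{n})$ is surjective and factors through $\Hom_A(M,Y)$; hence the first map above is surjective and $\Ext^1_A(M,X)=0$. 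As $X$ ranged over all of $\Fac L$, the assertion follows from Proposition~\ref{prop:AS}(a).

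I expect the pushout construction to be the step carrying the real content. The reason is that $\add M$ need not be closed under extensions, so a direct Wakamatsu-type argument applied to the sequence $0\to L\to M\xto{f}N\to 0$ only yields $\Ext^1_A(M,L)=0$, which is in general strictly weaker than $L\in\lperp{(\tau M)}$. The pushout is exactly the device that, starting from an arbitrary $X\in\Fac L$, manufactures an overmodule $Y\in\Fac M$ to which the approximation hypothesis still applies, and in this way upgrades the vanishing from $L$ to all of $\Fac L$.
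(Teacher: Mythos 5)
Your proof is correct and is essentially the standard argument for \cite[Lemma 2.5]{adachi_tau-tilting_2012}: the paper under review cites this result without giving its own proof, and your reduction to a short exact sequence, the reformulation via Proposition~\ref{prop:AS}(a) to $\Ext^1_A(M,\Fac L)=0$, and the pushout construction placing $\Fac L$ inside $\Fac M$ to exploit $\Ext$-projectivity from Proposition~\ref{prop:AS}(b) reproduce the argument of the cited source. Your closing remark is also on point: the pushout step is exactly what upgrades the naive Wakamatsu-type conclusion $\Ext^1_A(M,L)=0$ to the genuinely stronger statement $L\in\lperp{(\tau M)}$, which is needed since $\add M$ is not closed under extensions.
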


We denote the number of pairwise non-isomorphic indecomposable
summands of an $A$-module $M$ by $|M|$. 
Thus $|A|$ equals the
rank of the Grothendieck group of $\mod A$.

\begin{definition}
  \label{def:sttilt}
  \cite[Defs. 0.1(b), 0.3]{adachi_tau-tilting_2012}
  Let $M$ be a $\tau$-rigid $A$-module.
  We say that $M$ is a \emph{$\tau$-tilting $A$-module} if
  $|M|=|A|$.
  More generally, we say that $M$ is a \emph{support
    $\tau$-tilting $A$-module} if there exists an idempotent 
  $e\in A$ such that $M$ is a $\tau$-tilting 
  $(A/\langle e\rangle)$-module.
  Support tilting $A$-modules are defined analogously, 
  see \cite{ingalls_noncrossing_2009}.
  \end{definition}

\begin{remark}
  \label{rmk:trival-sttilt}
    Note that the zero-module is a support $\tau$-tilting module
    (take $e=1_A$ in Definition \ref{def:sttilt}).
    Thus every non-zero finite dimensional algebra $A$ admits at
    least two support $\tau$-tilting $A$-modules: $0$ and $A$. 
\end{remark}

The following observation follows immediately from the
Auslander-Reiten formulas and Definition \ref{def:sttilt}.

\begin{proposition}
  \label{prop:sttilt-tilt-hered}
  \cite{adachi_tau-tilting_2012}
  Let $A$ be a hereditary algebra and $M$ an $A$-module.
  Then $M$ is a $\tau$-rigid (resp. $\tau$-tilting) $A$-module if and only if
  $M$ is a rigid (resp. tilting) $A$-module.
\end{proposition}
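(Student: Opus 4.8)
The plan is to reduce everything to the Auslander--Reiten formulas together with the observation that over a hereditary algebra the functor $\Ext^1_A(M,-)$ is right exact. One implication is essentially free: by Remark \ref{rmk:tau-ext}, every $\tau$-rigid $A$-module is rigid, for an arbitrary finite dimensional algebra $A$. Likewise, a $\tau$-tilting module is in particular $\tau$-rigid, hence rigid. So the real content is the reverse direction, which is where hereditarity is used.

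First I would prove that if $A$ is hereditary and $M$ is a rigid $A$-module, then $M$ is $\tau$-rigid. Since the global dimension of $A$ is at most $1$, we have $\Ext^2_A(M,-)=0$, so $\Ext^1_A(M,-)$ is right exact. Thus $\Ext^1_A(M,M)=0$ gives $\Ext^1_A(M,M^n)=0$ for all $n$, and applying right exactness to a surjection $M^n\twoheadrightarrow Y$ yields $\Ext^1_A(M,Y)=0$ for every factor module $Y$ of a finite direct sum of copies of $M$; that is, $\Ext^1_A(M,\Fac M)=0$. By Proposition \ref{prop:AS}(a) applied with $N=M$ (equivalently, by part (b)), this is exactly the statement that $M$ is $\tau$-rigid. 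Combined with the previous paragraph, this establishes the equivalence ``$M$ is $\tau$-rigid $\iff$ $M$ is rigid'' for hereditary $A$.

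For the tilting statement, recall that by Definition \ref{def:sttilt} a $\tau$-tilting module is a $\tau$-rigid module $M$ with $|M|=|A|$. By the first part, over a hereditary algebra this is the same datum as a rigid module $M$ with $|M|=|A|$. It then suffices to invoke the classical characterization of tilting modules over hereditary algebras: since $\mathrm{pd}_A M\le 1$ holds automatically, rigid modules coincide with partial tilting modules, and a partial tilting module is a tilting module precisely when it has $|A|$ pairwise non-isomorphic indecomposable summands (see e.g.\ \cite{assem_elements_2006}). This gives ``$M$ is $\tau$-tilting $\iff$ $M$ is tilting'', completing the proof.

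I do not expect a substantial obstacle here; the only points demanding care are that the right-exactness argument must be applied in the correct variable (one needs $\Ext^1_A(M,-)$ right exact, which uses $\mathrm{pd}_A M\le 1$), and that one should cite, rather than reprove, the standard fact that a rigid module of maximal rank over a hereditary algebra is tilting.
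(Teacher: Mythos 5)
Your proof is correct, and it follows the line the paper itself gestures at (``follows immediately from the Auslander--Reiten formulas and Definition~\ref{def:sttilt}''): one direction is Remark~\ref{rmk:tau-ext}, the converse uses $\mathrm{pd}_A M\le 1$ to get $\Ext_A^1(M,\Fac M)=0$ from $\Ext_A^1(M,M)=0$ and then invokes Proposition~\ref{prop:AS}, and the tilting clause reduces to Bongartz's classical characterization. The only cosmetic remark is that for hereditary $A$ one can avoid passing through $\Fac M$ entirely, since the AR formula already gives a natural isomorphism $D\Hom_A(M,\tau M)\cong\Ext^1_A(M,M)$ once $\mathrm{pd}_A M\le 1$; but your detour through Proposition~\ref{prop:AS}(a) is equally valid and arguably cleaner in that it only uses results already recorded in the paper.
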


We also need the following result:

\begin{proposition}
  \label{prop:2.2}
  \cite[Prop. 2.2]{adachi_tau-tilting_2012}
  Let $A$ be a finite dimensional algebra. The following statements hold:
  \begin{enumerate}
  \item $\tau$-tilting $A$-modules are precisely sincere support
    $\tau$-tilting $A$-modules.
  \item Tilting $A$-modules are precisely faithful support
    $\tau$-tilting $A$-modules.
  \item Any $\tau$-tilting (resp. $\tau$-rigid) $A$-module $M$ is
    a tilting (resp. partial tilting) $(A/\ann T)$-module.
  \end{enumerate}
\end{proposition}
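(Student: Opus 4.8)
The plan is to deduce all three statements from the definitions together with the classical theory of tilting modules over the quotient $A/\ann M$. The key observation is that $|A/\ann M| \le |A|$ always, with equality precisely when $M$ is sincere (equivalently, $\ann M$ contains no nonzero idempotent), and that a $\tau$-rigid $A$-module $M$ annihilated by an ideal $I$ remains $\tau_{A/I}$-rigid by Proposition \ref{prop:tau-rigid-ideals}(a).

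For part (c), I would argue as follows. Let $M$ be $\tau$-rigid and set $B = A/\ann M$. By Proposition \ref{prop:tau-rigid-ideals}(a), $M$ is $\tau_B$-rigid, hence by Remark \ref{rmk:tau-ext} it is rigid over $B$, i.e.\ $\Ext^1_B(M,M) = 0$. Since $M$ is a faithful $B$-module, the number of nonisomorphic indecomposable summands $|M|$ together with the rigidity and faithfulness forces $M$ to be a partial tilting $B$-module (for a faithful module, rigidity plus $\Ext$-projectivity in $\Fac M$ — which holds by Proposition \ref{prop:AS}(b) applied over $B$ — gives the partial tilting condition; one also uses that faithful modules over $B$ have projective dimension at most one in the relevant sense, or more directly invokes Bongartz's characterization). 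If moreover $M$ is $\tau$-tilting, then $|M| = |A| \ge |B| = |M|$ shows $|M| = |B|$, so $M$ is a tilting $B$-module. The subtle point here is to justify that a faithful rigid module with the right number of summands over $B$ is genuinely tilting; I expect to invoke the standard fact that a faithful $\tau$-rigid module $M$ satisfies $\pd_B M \le 1$ and then that $|M| = |B|$ upgrades partial tilting to tilting via Bongartz completion being trivial.

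Parts (a) and (b) then follow by combining (c) with the sincerity/faithfulness bookkeeping. For (a): if $M$ is $\tau$-tilting then by definition $|M| = |A|$; since $M$ is a tilting, hence faithful, $B$-module with $|M| = |B|$, we get $|B| = |A|$, which forces $\ann M$ to contain no nonzero idempotent, i.e.\ $M$ is sincere; it is clearly support $\tau$-tilting. Conversely, a sincere support $\tau$-tilting $M$ is $\tau$-tilting over $A/\langle e\rangle$ for some idempotent $e$, but sincerity of $M$ forces $\langle e \rangle \subseteq \ann M$ to have $e = 0$, whence $|M| = |A/\langle e\rangle| = |A|$ and $M$ is $\tau$-tilting. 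For (b): $M$ is a tilting $A$-module iff $M$ is faithful and $\ann M = 0$ with $M$ tilting over $A/\ann M = A$; using (c), any faithful support $\tau$-tilting module has $\ann M$ with no idempotents and, being faithful, $\ann M = 0$, so $M$ is a $\tau$-tilting, hence (over $A$) tilting, module; the converse is immediate since tilting modules are faithful and, by Proposition \ref{prop:sttilt-tilt-hered}-type reasoning (or directly), support $\tau$-tilting. The main obstacle is the precise argument in (c) that the numerical condition $|M| = |A|$ together with faithfulness and $\tau$-rigidity yields the tilting property over $B$; everything else is a straightforward translation between the conditions "sincere", "faithful", and the equality $|A/\ann M| = |A|$.
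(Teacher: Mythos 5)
This proposition is stated in the paper without proof: it is recalled from \cite[Prop.~2.2]{adachi_tau-tilting_2012}, so there is no in-paper argument to compare with, only the standard one, and your outline does reproduce that standard route correctly: prove (c) first by passing to $B=A/\ann M$ via Proposition \ref{prop:tau-rigid-ideals}(a) and Remark \ref{rmk:tau-ext}, then deduce (a) and (b) by sincerity/faithfulness bookkeeping. The bookkeeping itself is fine (faithful implies sincere; sincerity of a support $\tau$-tilting module forces the idempotent $e$ of Definition \ref{def:sttilt} to lie in $\ann M$ and hence to vanish), except that your numerical chain ``$|M|=|A|\geqslant|B|=|M|$'' should read $|A|=|M|\leqslant|B|\leqslant|A|$, where $|M|\leqslant|B|$ is because a partial tilting $B$-module has at most $|B|$ nonisomorphic indecomposable summands and $|B|\leqslant|A|$ because the simple $B$-modules form a subset of the simple $A$-modules.

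The one genuine gap is that the pivotal step of (c) is invoked rather than proved, and at one point misstated: it is false that faithful modules have projective dimension at most one; what is true, and what you need, is that a \emph{faithful $\tau$-rigid} module does. Since this is the whole content of (c), it should at least be proved or precisely cited; it follows from tools the paper already quotes. Since $\Hom_B(M,\tau_B M)=0$, any map $X\to\tau_B M$ with $X\in\Fac M$ vanishes (precompose with a surjection $M^n\twoheadrightarrow X$), so $\Hom_B(\Fac M,\tau_B M)=0$; faithfulness of $M$ over $B$ gives $D B\in\Fac M$ (see \cite[Lemma VI.2.2]{assem_elements_2006}), hence $\Hom_B(DB,\tau_B M)=0$, which is the classical criterion for $M$ to have projective dimension at most one over $B$ (see \cite[Ch.~IV.2]{assem_elements_2006}); combined with $\Ext^1_B(M,M)=0$ from Remark \ref{rmk:tau-ext} this gives partial tilting, and $|M|=|B|$ upgrades it to tilting by Bongartz, as you say. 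Note that ``$\Ext$-projectivity in $\Fac M$'' (Proposition \ref{prop:AS}(b)) is merely a restatement of $\tau$-rigidity and does not by itself yield the partial tilting condition. Finally, in (b) the direction ``tilting $\Rightarrow$ support $\tau$-tilting'' is not quite immediate: Remark \ref{rmk:tau-ext} only gives rigidity from $\tau$-rigidity, so to see that a tilting module $T$ is $\tau$-rigid you need the sharper form of the Auslander--Reiten formula valid when $T$ has projective dimension at most one, namely $\Ext^1_A(T,N)\cong D\Hom_A(N,\tau T)$, together with the classical fact $|T|=|A|$. With these points filled in, your argument is complete and is essentially the proof of Adachi--Iyama--Reiten.
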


The following result provides the conceptual framework for the
main results of this article.
It says that basic support $\tau$-tilting $A$-modules are
precisely the $\Ext$-progenerators of functorially finite torsion
classes in $\mod A$.

\begin{theorem}
  \cite[Thm. 2.2]{adachi_tau-tilting_2012}
  \label{thm:tors-sttilt}
  There is a bijection
  \[
  \ftors A \longrightarrow \sttilt A
  \]
  given by $\T\mapsto P(\T)$ with inverse $M\mapsto \Fac M$.
\end{theorem}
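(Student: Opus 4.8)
The plan is to show that the maps $\T\mapsto P(\T)$ and $M\mapsto\Fac M$ are well defined and mutually inverse, drawing only on Propositions \ref{prop:torsion-pairs}, \ref{prop:AS}, \ref{prop:2.2} and \ref{prop:tau-rigid-ideals}. I would begin with the easier assignment: if $M$ is a basic support $\tau$-tilting $A$-module then $M$ is $\tau$-rigid, so Proposition \ref{prop:AS}(c) shows that $\Fac M$ is a functorially finite torsion class, hence $\Fac M\in\ftors A$, while Proposition \ref{prop:AS}(b) shows that $M$ is $\Ext$-projective in $\Fac M$ and therefore a direct summand of $P(\Fac M)$.

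The assignment $\T\mapsto P(\T)$ needs more care, because its target has to be a \emph{support} $\tau$-tilting module, so the relevant idempotent must be produced. Fix $\T\in\ftors A$ and put $P=P(\T)$. By Proposition \ref{prop:torsion-pairs}(c), $\T=\Fac P$, so $\ann\T=\ann P$; and by Proposition \ref{prop:torsion-pairs}(d), $P$ is a tilting $(A/\ann\T)$-module, so $|P|=|A/\ann\T|$. Let $e\in A$ be the idempotent with $eA$ the sum of the indecomposable projective $A$-modules $P_i$ for which $\Hom_A(P_i,P)=0$, and set $B=A/\langle e\rangle$. Since $e\in\ann P$ and $\ann P$ is two-sided we have $\langle e\rangle\subseteq\ann P=\ann\T$, so $P\in\mod B$ and $\T=\Fac P\subseteq\mod B$; moreover the simple $B$-modules are precisely the $S_i$ with $\Hom_A(P_i,P)\neq0$, which are precisely the simple $(A/\ann\T)$-modules, so $|B|=|A/\ann\T|=|P|$. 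As extensions in $\mod B$ are in particular extensions in $\mod A$, the equality $\Ext^1_A(P,\T)=0$ yields $\Ext^1_B(P,\T)=0$, so $P$ is $\Ext$-projective in $\Fac P$ regarded as a subcategory of $\mod B$, hence $\tau_B$-rigid by Proposition \ref{prop:AS}(b) over $B$. Since $|P|=|B|$ this exhibits $P$ as a $\tau$-tilting $B$-module, that is, as a support $\tau$-tilting $A$-module, so $P(\T)\in\sttilt A$.

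It remains to check that the two composites are identities. That $\Fac P(\T)=\T$ is precisely Proposition \ref{prop:torsion-pairs}(c). For the other composite, take $M\in\sttilt A$, say $M$ is $\tau$-tilting over $A/\langle f\rangle$. By Proposition \ref{prop:tau-rigid-ideals}(b) the module $M$ is $\tau$-rigid over $A$, so by the first step $\Fac M\in\ftors A$ and $\add M\subseteq\add P(\Fac M)$. Since $\ann(\Fac M)=\ann M$, Proposition \ref{prop:torsion-pairs}(d) shows that $P(\Fac M)$ is a tilting $(A/\ann M)$-module, while Proposition \ref{prop:2.2}(c) applied over $A/\langle f\rangle$ shows that $M$ is a tilting $(A/\ann M)$-module; in particular both have $|A/\ann M|$ indecomposable summands. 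Since $M$ and $P(\Fac M)$ are basic with $\add M\subseteq\add P(\Fac M)$ and the same number of summands, $M\cong P(\Fac M)$. Finally, both maps manifestly respect the inclusion order on torsion classes and the induced order on support $\tau$-tilting modules.

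The step I expect to be the real obstacle is this last identity $P(\Fac M)=M$. The inclusion $\add M\subseteq\add P(\Fac M)$ holds for any $\tau$-rigid $M$; what forces equality — namely that $M$ already exhausts the $\Ext$-projectives of $\Fac M$ — is exactly the numerical condition $|M|=|A|$ distinguishing $\tau$-tilting from merely $\tau$-rigid modules, entering through the fact that $M$ is a tilting $(A/\ann M)$-module. The secondary difficulty, of a bookkeeping nature, is the middle step: pinning down the idempotent $e$ and checking that $B=A/\langle e\rangle$ has the right number of simple modules, so that $|P|=|B|$ lets us promote $\tau_B$-rigidity to $\tau$-tiltingness.
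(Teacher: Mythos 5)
Your argument is correct, but note that the paper does not prove this statement at all: Theorem \ref{thm:tors-sttilt} is quoted verbatim from \cite[Thm.~2.2]{adachi_tau-tilting_2012}, so there is no internal proof to compare with. What you have written is essentially a reconstruction of the Adachi--Iyama--Reiten argument from the ingredients the paper does record: $\tau$-rigidity of $M$ over $A$ via Proposition \ref{prop:tau-rigid-ideals}(b), Auslander--Smal\o{} (Proposition \ref{prop:AS}) to get $\Fac M\in\ftors A$ and $\add M\subseteq\add P(\Fac M)$, Proposition \ref{prop:torsion-pairs}(c),(d) for $\Fac P(\T)=\T$ and for $P(\T)$ being tilting over $A/\ann\T$, and the counting argument via Proposition \ref{prop:2.2}(c) to force $M\cong P(\Fac M)$. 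You correctly identify the two delicate points, and both are handled adequately: the equality $|M|=|P(\Fac M)|$ does come from both being tilting over $A/\ann M$ (using $\ann(\Fac M)=\ann M$ and $\langle f\rangle\subseteq\ann M$ so that $(A/\langle f\rangle)/\ann_{A/\langle f\rangle}M=A/\ann_A M$), and the passage from $\Ext^1_A(P,\T)=0$ to $\Ext^1_B(P,\T)=0$ is legitimate since $\Ext^1_B$ embeds into $\Ext^1_A$ for modules over $B=A/\langle e\rangle$.

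One assertion you state without justification deserves a line: that the simple $B$-modules, i.e.\ the $S_i$ with $\Hom_A(P_i,P)\neq 0$, are precisely the simple $(A/\ann\T)$-modules. This is true: if $\Hom_A(P_i,P)\cong Pe_i=0$ then $e_i\in\ann P$ and $S_ie_i\neq 0$, so $S_i$ is not an $(A/\ann P)$-module; conversely if $Pe_i\neq 0$ then $S_i$ occurs as a composition factor of $P$, hence is killed by $\ann P$. Since $\ann\T=\ann P$, this gives $|B|=|A/\ann\T|=|P|$ as you need. With that remark added, the proof is complete; also be aware that Proposition \ref{prop:AS}(c) should be read with $M$ $\tau$-rigid (as you in fact use it), since $\Fac M$ need not be closed under extensions for arbitrary $M$.
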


\begin{remark}
  \label{rmk:partial-order-sttiltA}
  Observe that the inclusion of subcategories gives a partial
  order in $\tors A$. 
  Thus the bijection of Theorem \ref{thm:tors-sttilt} induces a
  partial order in $\sttilt A$.
  Namely, if $M$ and $N$ are support $\tau$-tilting $A$-modules,
  then
  \[
  M\leq N\quad\text{if and only if}\quad\Fac M\subseteq \Fac N.
  \]
  Hence, as with every partially ordered set, we can associate to
  $\sttilt A$ a \emph{Hasse quiver} $Q(\sttilt A)$ whose set of
  vertices is $\sttilt A$ and there is an arrow $M\to N$ if and
  only if $M>N$ and there is no $L\in\sttilt A$ such that
  $M>L>N$.
\end{remark}

The following proposition is a generalization of Bongartz
completion of tilting modules, see \cite[Lemma
VI.2.4]{assem_elements_2006}.
It plays an important role in the sequel.

\begin{proposition}
  \label{prop:bongartz-completion}
  \cite[Prop. 2.9]{adachi_tau-tilting_2012}
  Let $U$ be a $\tau$-rigid $A$-module. 
  Then the following holds:
  \begin{enumerate}
  \item $\lperp{(\tau U)}$ is a functorially finite torsion class
    which contains $U$.
  \item $U$ is $\Ext$-projective in $\lperp{(\tau U)}$, that is
    $U\in \add P(\lperp{(\tau U)})$.
  \item $T_U:=P(\lperp{(\tau U)})$ is a $\tau$-tilting $A$-module.
  \end{enumerate}
  The module $T_U$ is called the \emph{Bongartz completion of $U$
    in $\mod A$.}
\end{proposition}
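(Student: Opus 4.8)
The statement comprises three assertions, which I would prove in the order (a), (b), (c); essentially all of the work lies in the functorial finiteness clause of (a). The formal content of (a) is quickly dispatched: that $U$ lies in $\lperp{(\tau U)}$ is exactly the hypothesis that $U$ is $\tau$-rigid; left exactness of $\Hom_A(-,\tau U)$ shows that any epimorphism $M\twoheadrightarrow M'$ induces a monomorphism $\Hom_A(M',\tau U)\hookrightarrow\Hom_A(M,\tau U)$, so $\lperp{(\tau U)}$ is closed under quotients, and closure under extensions is immediate from the long exact sequence, so $\lperp{(\tau U)}$ is a torsion class. Proposition \ref{prop:AS}(a) moreover yields the reformulation $\lperp{(\tau U)}=\{\,M\in\mod A\mid\Ext^1_A(U,\Fac M)=0\,\}$, whence $\Fac U\subseteq\lperp{(\tau U)}$ (using Proposition \ref{prop:AS}(b)) and $\Ext^1_A(U,M)=0$ for every $M$ in $\lperp{(\tau U)}$; this last fact is precisely the $\Ext$-projectivity asserted in (b).

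Write $\T=\lperp{(\tau U)}$. Since $\T$ is a torsion class, Proposition \ref{prop:torsion-pairs} reduces its functorial finiteness to producing a basic $\tau$-rigid module $T$ with $\Fac T=\T$: then $\T$ is functorially finite by Proposition \ref{prop:AS}(c), $T$ is $\Ext$-projective in $\Fac T=\T$ so that $\add U\subseteq\add T=\add P(\T)=\add T_U$, which completes (b), and once one also knows $|T|=|A|$ it follows that $\T$ is sincere, that $T=T_U$, and that $T_U$ is a $\tau$-tilting module, which completes (c). To construct $T$ I would imitate Bongartz completion. When $A$ is hereditary, $\tau$-rigidity is ordinary rigidity (Proposition \ref{prop:sttilt-tilt-hered}) and the classical universal extension $0\to A\to X\to U^r\to0$, chosen so that $\Hom_A(U,U^r)\to\Ext^1_A(U,A)$ is surjective, already does the job with $T=U\oplus X$. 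For general $A$ this module need not even be $\tau$-rigid, and the construction has to be carried out relative to the functorially finite torsion pair $(\Fac U,U^\perp)$ (Proposition \ref{prop:AS}(c)): $T$ should be assembled from $U$, from the indecomposable projectives already lying in $\T$, and, for each remaining indecomposable projective $P$, from a suitable $\Ext$-projective quotient of $P$ inside $\T$ obtained by a universal extension together with Lemma \ref{lemma:wakamatsu}.

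Given such a $T$, the verification runs as follows. (i) $T$ is $\tau$-rigid: by Proposition \ref{prop:AS}(a) this amounts to $\Ext^1_A(T,\Fac T)=0$, which is checked summand by summand using projectivity, the $\Ext$-projectivity of $U$ in $\Fac U$ and in $\T$, and the Wakamatsu step (Lemma \ref{lemma:wakamatsu}) that makes the chosen quotients $\Ext$-projective in $\T$. (ii) $\Fac T\subseteq\T$, at once, since $T\in\T$ and $\T$ is closed under quotients and finite direct sums. (iii) $\T\subseteq\Fac T$: for $M\in\T$, the canonical sequence $0\to tM\to M\to N\to0$ of $(\Fac U,U^\perp)$ has $tM\in\Fac U\subseteq\Fac T$ and $N\in\T\cap U^\perp$, so, $\Fac T$ being a torsion class by (i), it suffices to show $N\in\Fac T$, and applying $\Hom_A(-,N)$ to the defining sequences and using $\Hom_A(U,N)=0=\Ext^1_A(U,N)$ shows that the relevant homomorphisms into $N$ factor through $T$, exhibiting $N$ as a quotient of a power of $T$. (iv) $|T|=|A|$, equivalently $\T$ is sincere, which one reads off from the construction. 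I expect the genuine difficulty to lie in the construction of the complement when $A$ is not hereditary, and in step (iii): outside the hereditary setting $\lperp{(\tau U)}$ contains $\tau$-rigid summands — for instance truncations of the projectives that fail to lie in it — which are invisible to the classical universal extension, and one must verify that the completed module still generates all of $\lperp{(\tau U)}$ and has exactly $|A|$ indecomposable summands.
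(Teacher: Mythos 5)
First, a point of fact: the paper does not prove Proposition \ref{prop:bongartz-completion}. It is imported from \cite[Prop.\ 2.9]{adachi_tau-tilting_2012} and used as a black box, so there is no in-paper proof to measure your argument against. Evaluated on its own terms, your treatment of (a) up to the functorial-finiteness clause, the identity $\lperp{(\tau U)}=\{M\mid\Ext^1_A(U,\Fac M)=0\}$ via Proposition \ref{prop:AS}(a), and the derivation of (b) are all correct. The strategic reduction of the remaining claims to the existence of a basic $\tau$-rigid $T$ with $\Fac T=\lperp{(\tau U)}$ and $|T|=|A|$, followed by Proposition \ref{prop:AS}(c) and Theorem \ref{thm:tors-sttilt}, is also sound. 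But the entire weight of the proof then falls on the construction of $T$, and that is where your argument has a genuine gap rather than merely an omitted verification.

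The object you invoke, ``a suitable $\Ext$-projective quotient of $P$ inside $\T$ obtained by a universal extension together with Lemma \ref{lemma:wakamatsu},'' is not well defined. A torsion class $\T$ equips each $P$ with a canonical largest \emph{submodule} $tP$ in $\T$, not a canonical largest quotient (the set $\{N\leq P\mid P/N\in\T\}$ need not be closed under intersection, since $\T$ is not closed under submodules), and a universal extension $0\to P\to E\to U^n\to0$ makes $P$ a submodule of $E$, not $E$ a quotient of $P$. Moreover, as you yourself note, such an $E$ need not lie in $\lperp{(\tau U)}$ when $A$ is not hereditary: killing $\Ext^1_A(U,-)$ is strictly weaker than killing $\Ext^1_A(U,\Fac(-))$, which is what membership in $\lperp{(\tau U)}$ demands by Proposition \ref{prop:AS}(a). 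The obvious repair --- take a minimal left $\lperp{(\tau U)}$-approximation of each $P$ and argue Wakamatsu-style --- is circular, because the existence of such approximations is precisely the covariant finiteness you are trying to establish. A non-circular route, and I believe the one taken in the cited source, proves functorial finiteness \emph{first} and reads off $T_U=P(\lperp{(\tau U)})$ afterwards: one shows that $(\lperp{(\tau U)})^\perp=\Sub(\tau U)$ by taking a minimal left $(\add\tau U)$-approximation of a torsion-free $X$ and using the $\Ext$-injectivity of $\tau U$ in $U^\perp$ (AR duality), and then one observes that $\Sub(\tau U)$ is functorially finite by the dual of Proposition \ref{prop:AS}(c), since $\tau$-rigidity of $U$ makes $\tau U$ ``$\tau^{-1}$-rigid.'' Functorial finiteness of $\lperp{(\tau U)}$ then follows from Proposition \ref{prop:torsion-pairs}, and only at that stage does $T_U$ exist. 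Your sketch inverts this logical order without supplying the construction needed to make the inversion work, and your step (iii) already presupposes that the would-be complement $E$ lies in $\lperp{(\tau U)}$ --- which is exactly the point at issue.
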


Recall that, by definition, a partial-tilting $A$-module $T$ is a
tilting $A$-module if and only if there exists a short exact
sequence $0\to A\to T'\to T''\to 0$ with $T',T''\in\add T$. 
The following proposition gives a similar criterion for
a $\tau$-rigid $A$-module to be a support $\tau$-tilting
$A$-module.

\begin{proposition}
  \label{prop:sttilt-sequence}
  Let $M$ be a $\tau$-rigid $A$-module. 
  Then $M$ is a support $\tau$-tilting $A$-module if and only if
  there exists an exact sequence 
  \begin{equation}
    \label{eq:T3}
    A \xto{f} M' \xto{g} M''\to 0    
  \end{equation}
  with $M',M''\in \add M$ and $f$ a left $(\add M)$-approximation
  of $A$.
\end{proposition}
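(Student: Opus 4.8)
The plan is to derive the proposition from Theorem~\ref{thm:tors-sttilt}, by showing that under either hypothesis $M$ is a tilting module over $B:=A/\ann M$; the bridge between the two sides will be a computation of the kernel of an $(\add M)$-approximation of $A$. Since both the property ``$M$ is support $\tau$-tilting'' and the existence of a sequence as in \eqref{eq:T3} depend only on $\add M$, I would first reduce to the case where $M$ is basic. Note that $\ann\Fac M=\ann M$, because $M\in\Fac M$ and every object of $\Fac M$ is a quotient of a power of $M$; hence by Proposition~\ref{prop:torsion-pairs}(d) the $\Ext$-progenerator $P(\Fac M)$ is a tilting $B$-module, so $|P(\Fac M)|=|B|$. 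Moreover $M$ is $\Ext$-projective in $\Fac M$ by Proposition~\ref{prop:AS}(b), so $\add M\subseteq\add P(\Fac M)$. Thus, once we know that $M$ is a tilting $B$-module, we get $|M|=|B|=|P(\Fac M)|$, and since both modules are basic this forces $M\cong P(\Fac M)$, which lies in $\sttilt A$ by Theorem~\ref{thm:tors-sttilt} (using that $\Fac M$ is a functorially finite torsion class, Proposition~\ref{prop:AS}(c)). Conversely, if $M\in\sttilt A$ then $M=P(\Fac M)$ by Theorem~\ref{thm:tors-sttilt}, so $M$ is a tilting $B$-module, again by Proposition~\ref{prop:torsion-pairs}(d). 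Therefore the proposition is equivalent to the assertion that the $\tau$-rigid module $M$ is a tilting $B$-module if and only if a sequence as in \eqref{eq:T3} exists.

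The main lemma I would isolate is that for \emph{any} left $(\add M)$-approximation $f\colon A\to M'$ with $M'\in\add M$ one has $\ker f=\ann M$. The inclusion $\ann M\subseteq\ker f$ is immediate, since $f(a)=f(1)\cdot a$ and $M'\cdot a=0$ whenever $a\in\ann M$. For the reverse inclusion, suppose $f(a)=0$ and let $m\in M$; the morphism $A\to M$ sending $x\mapsto m\cdot x$ factors through $f$ by the approximation property (as $M\in\add M$), whence $m\cdot a=0$, and since $m$ was arbitrary $a\in\ann M$.

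Granting this lemma, the ``if'' direction is short: $M$ is a partial-tilting $B$-module by Proposition~\ref{prop:2.2}(c), and since $\ker f=\ann M$ a sequence \eqref{eq:T3} yields a short exact sequence $0\to B\to M'\xto{g}M''\to 0$ with $M',M''\in\add M$, so the criterion recalled just before the proposition shows that $M$ is a tilting $B$-module. For the ``only if'' direction, a tilting $B$-module $M$ admits, by that same criterion, a short exact sequence $0\to B\xto{u}M'\xto{g}M''\to 0$ with $M',M''\in\add M$, and applying $\Hom_B(-,M)$ and using $\Ext^1_B(M'',M)=0$ shows that $u$ is a left $(\add M)$-approximation of $B$ in $\mod B$; composing $u$ with the projection $A\to B$ produces a morphism $f\colon A\to M'$ fitting into a sequence of the form \eqref{eq:T3}, and $f$ is a left $(\add M)$-approximation of $A$ because every morphism $A\to M$ annihilates $\ann M$ and hence factors through $A\to B$ and then, via $u$, through $f$.

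I expect the only genuinely delicate point to be the kernel computation $\ker f=\ann M$ together with the realization that it is exactly this identification that allows one to move back and forth between $\mod A$ and $\mod B=\mod(A/\ann M)$ in both directions; once that is in place, every remaining step is either a direct invocation of the results collected in Section~\ref{sec:preliminaries} or a standard fact about tilting modules over $B$.
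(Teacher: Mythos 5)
Your proof is correct, and it shares the paper's core strategy (reduce to showing $M$ is a tilting $B$-module, where $B = A/\ann M$), but the details are genuinely different and arguably cleaner. The paper handles the sufficiency direction by passing to $\bar f\colon A/\ann M\to M'$ and invoking \cite[VI.2.2]{assem_elements_2006} via faithfulness of $M$ over $A/\ann M$; you replace this with the sharper and more self-contained lemma that \emph{any} left $(\add M)$-approximation $f\colon A\to M'$ has $\ker f = \ann M$, which yields the same injectivity statement but without appealing to an external faithfulness criterion (and indeed recovers it as the special case $\ann M=0$). The paper then concludes by explicitly producing a maximal idempotent $e\in\ann M$ with $|A/\langle e\rangle| = |A/\ann M|$, whereas you route through the bijection $\ftors A\leftrightarrow\sttilt A$ of Theorem~\ref{thm:tors-sttilt}: from $|M|=|B|=|P(\Fac M)|$ and $\add M\subseteq\add P(\Fac M)$ you deduce $M\cong P(\Fac M)\in\sttilt A$, which avoids the idempotent bookkeeping. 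Most notably, you give a proof of the necessity direction, which the paper does not prove but delegates to \cite[Prop.~2.22]{adachi_tau-tilting_2012}; your argument (take the tilting sequence over $B$, check that $u$ is a left approximation by applying $\Hom_B(-,M)$, then precompose with $A\twoheadrightarrow B$) is a natural unwinding via the same kernel lemma, and makes the proposition self-contained modulo the standard tilting criterion quoted just above it. One small presentational remark: you use functorial finiteness of $\Fac M$ (Proposition~\ref{prop:AS}(c)) already when invoking Proposition~\ref{prop:torsion-pairs}(d) to say $P(\Fac M)$ is a tilting $B$-module, so that citation should appear a sentence earlier than where you place it; but there is no gap.
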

\begin{proof}
  The necessity is shown in
  \cite[Prop. 2.22]{adachi_tau-tilting_2012}.
  For the sufficiency, suppose there exists an exact sequence of
  the form \eqref{eq:T3}.
  Let $I=\ann M$, we only need to find an idempotent $e\in A$
  such that $e\in I$ and $|M|=|A/\langle e\rangle|$. 
  By Proposition \ref{prop:2.2}(c) $M$ is a
  partial-tilting $(A/I)$-module.
  Moreover, $f$ induces a morphism $\bar{f}:A/I\to M'$.
  We claim that the sequence
  \[
  0\to A/I \xto{\bar{f}} M'\xto{g} M'' \to 0
  \]
  is exact, for which we only need to show that the induced
  morphism $\bar{f}$ is injective. 
  It is easy to see that $\bar{f}:A/I\to M'$ is a left $(\add
  M)$-approximation of $A/I$. 
  Since $M$ is a faithful $(A/I)$-module, by
  \cite[VI.2.2]{assem_elements_2006} we have that $\bar{f}$ is
  injective, and the claim follows.
  Thus $M$ is a tilting $(A/I)$-module, and we have $|M|=|A/I|$. 
  Let $e$ be a maximal idempotent in $A$ such that $e\in I$.
  Then by the choice of $e$ we have that 
  $|M|=|A/I|=|A/\langle e\rangle|$.
\end{proof}

The following result justifies the claim that support
$\tau$-tilting modules complete the class of tilting modules from
the point of view of mutation. 
We say that a basic $\tau$-rigid $A$-module $U$ is
\emph{almost-complete} if $|U|=|A|-1$. 

\begin{theorem}
  \label{thm:non-branching}
  \cite[Thm. 2.17]{adachi_tau-tilting_2012}  
  Let $U$ be an almost-complete $\tau$-tilting $A$-module.
  Then there exist exactly two basic support $\tau$-tilting
  $A$-modules having $U$ as a direct summand.
\end{theorem}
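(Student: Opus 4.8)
The plan is to recast the statement in terms of functorially finite torsion classes and then reduce it to a statement about a single interval in $\ftors A$.

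\emph{Step 1 (reformulation).} I would first show that a basic support $\tau$-tilting module $M$ satisfies $U\in\add M$ if and only if $\Fac U\subseteq\Fac M\subseteq\lperp{(\tau U)}$. For the forward direction, $U\in\add M=\add P(\Fac M)$ makes $U$ an $\Ext$-projective object of $\Fac M$, so $\Ext^1_A(U,\Fac M)=0$; since $\Fac M$ is the torsion class generated by $M$, Proposition~\ref{prop:AS}(a) then gives $\Hom_A(M,\tau U)=0$, i.e.\ $M\in\lperp{(\tau U)}$ and hence $\Fac M\subseteq\lperp{(\tau U)}$; moreover $U\in\Fac M$ forces $\Fac U\subseteq\Fac M$, as $\Fac U$ is the smallest torsion class containing $U$. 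Conversely, $M\in\lperp{(\tau U)}$ gives $\Hom_A(M,\tau U)=0$, hence $\Ext^1_A(U,\Fac M)=0$ by Proposition~\ref{prop:AS}(a), so $U$ is an $\Ext$-projective object of the torsion class $\Fac M$; since also $U\in\Fac U\subseteq\Fac M$, we get $U\in\add P(\Fac M)=\add M$. Via the bijection $\T\mapsto P(\T)$ of Theorem~\ref{thm:tors-sttilt}, the assertion becomes: the set $\mathcal I:=\{\T\in\ftors A\mid\Fac U\subseteq\T\subseteq\lperp{(\tau U)}\}$ has exactly two elements.

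\emph{Step 2 (at least two).} Both $\Fac U$ (Proposition~\ref{prop:AS}(c)) and $\lperp{(\tau U)}$ (Proposition~\ref{prop:bongartz-completion}(a)) lie in $\ftors A$, and $\Fac U\subseteq\lperp{(\tau U)}$ because $U$ is $\tau$-rigid and $\Hom_A(-,\tau U)$ is left exact; hence they are the minimum and the maximum of $\mathcal I$. To see they differ, recall from Proposition~\ref{prop:bongartz-completion}(b,c) that $T_U:=P(\lperp{(\tau U)})$ is $\tau$-tilting and contains every indecomposable summand of $U$, so $T_U=U\oplus X_0$ with $X_0$ indecomposable and $X_0\notin\add U$. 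If $\Fac U=\lperp{(\tau U)}$, then $X_0\in\Fac U$, so there is a surjective right $(\add U)$-approximation $p\colon U'\twoheadrightarrow X_0$ with $\add U'=\add U$; by Lemma~\ref{lemma:wakamatsu}, $\ker p\in\lperp{(\tau U)}$, and since $X_0$ is $\Ext$-projective in $\lperp{(\tau U)}$ the short exact sequence $0\to\ker p\to U'\to X_0\to0$ splits, forcing $X_0\in\add U$ --- a contradiction. Hence $|\mathcal I|\ge2$.

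\emph{Step 3 (at most two --- the main obstacle).} It remains to show that no functorially finite torsion class lies strictly between $\Fac U$ and $\lperp{(\tau U)}$. For $\T\in\mathcal I$ the bound $|U|\le|P(\T)|\le|A|=|U|+1$ leaves two cases: if $|P(\T)|=|U|$ then $P(\T)=U$ and $\T=\Fac P(\T)=\Fac U$; otherwise $P(\T)=U\oplus X$ with $X$ indecomposable and $X\notin\add U$. In this second case my plan is to build an \emph{exchange sequence} out of the Bongartz complement $X_0$: take a minimal left $(\add U)$-approximation $g\colon X_0\to U'$, prove that $g$ is injective, and put $X_1:=\operatorname{coker}(g)$ (or, dually, take a minimal right $(\add U)$-approximation of $X_0$ and let $X_1$ be its kernel); then, after discarding any $(\add U)$-summand of $X_1$ --- and in the degenerate case $X_1\in\add U$ replacing the completion by $U$ itself over the quotient algebra killing a suitable simple --- show that $U\oplus X_1$ is support $\tau$-tilting with $\Fac(U\oplus X_1)=\Fac U$, using Lemma~\ref{lemma:wakamatsu} and the criterion of Proposition~\ref{prop:sttilt-sequence}. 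The real content is then to prove that $X=X_1$ for every $\T\in\mathcal I$ distinct from $\lperp{(\tau U)}$, i.e.\ that this exchange admits no orbit of length greater than $2$. The points I expect to be delicate are: (i) the required injectivity (resp.\ surjectivity) of the approximation maps, and that the module produced is again $\tau$-rigid with the correct number of pairwise non-isomorphic indecomposable summands; (ii) the bookkeeping of the degenerate case where one completion is $U$ rather than a $\tau$-tilting module; and, above all, (iii) genuinely ruling out a third torsion class in $\mathcal I$. For (iii), a naive ``lift an $(\add(U\oplus X))$-approximation of $X_0$'' argument does not suffice, since the torsion-free part of $X_0$ relative to $\Fac(U\oplus X)$ need not be detected by maps out of $\add(U\oplus X)$; one needs instead that the single indecomposable summand separating the elements of $\mathcal I$ is uniquely pinned down, for instance by comparing the canonical exact sequences of Proposition~\ref{prop:torsion-pairs}(e) and Proposition~\ref{prop:sttilt-sequence} attached to $\T$ and to $\lperp{(\tau U)}$ and exploiting that $|T_U|-|U|=1$ leaves exactly one degree of freedom. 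This is where essentially all of the difficulty lies.
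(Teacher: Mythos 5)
Your Steps 1 and 2 are correct, but they are not where the content of the theorem lies: Step 1 reproves Proposition \ref{prop:sttiltUA} (the interval description \eqref{eq:-sttiltUA-interval}), and Step 2 exhibits two distinct elements $\Fac U$ and $\lperp{(\tau U)}$ of the interval $\mathcal{I}=\{\T\in\ftors A \mid \Fac U\subseteq\T\subseteq\lperp{(\tau U)}\}$, i.e.\ the easy ``at least two'' half. The theorem is the assertion that $\mathcal{I}$ has \emph{at most} two elements, and your Step 3 does not prove it: it is a plan whose decisive point (iii) --- ruling out a third functorially finite torsion class strictly between $\Fac U$ and $\lperp{(\tau U)}$ --- you yourself identify as ``where essentially all of the difficulty lies'', and the proposed remedy (comparing the canonical sequences of Proposition \ref{prop:torsion-pairs}(e) and Proposition \ref{prop:sttilt-sequence} and invoking ``one degree of freedom'') is a heuristic, not an argument. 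In addition, item (i) of the plan is not available as stated: the minimal left $(\add U)$-approximation $g\colon X_0\to U'$ need not be injective (already for $A=k(2\leftarrow 1)$ and $U=P_2$ one has $X_0=P_1$ and $g$ is the zero map $P_1\to 0$), so the ``degenerate'' case is unavoidable and carrying your exchange scheme through in general is precisely the mutation theory of Adachi--Iyama--Reiten, i.e.\ the result being proved.

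The paper closes exactly this gap by a different mechanism, $\tau$-tilting reduction: $F=\Hom_A(T_U,-)$ restricts to an exact equivalence $\lperp{(\tau U)}\cap U^\perp\to\mod C$ (Theorem \ref{thm:U-modC}), and $\T\mapsto\T\cap U^\perp$, with inverse $\G\mapsto(\Fac U)*\G$, gives an order-preserving bijection from $\mathcal{I}$ onto $\ftors C$ (Theorems \ref{thm:tp-reduction} and \ref{thm:ff-tp-reduction}), equivalently a bijection $\sttilt_U A\to\sttilt C$ (Theorem \ref{thm:tau-tilting-reduction}). Since $|U|=|A|-1$ forces $|C|=1$, and an algebra with one simple module has exactly the two basic support $\tau$-tilting modules $0$ and $C$, one concludes $|\sttilt_U A|=2$ (Corollary \ref{cor:2-complements-coro}). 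So the half of the statement you left open is exactly the content supplied by that reduction machinery (or, alternatively, by the original mutation-theoretic proof of Adachi--Iyama--Reiten); as it stands, your proposal establishes only the existence of two completions, not the uniqueness.
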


\begin{definition}
  \label{def:non-branching}
  It follows from Theorem \ref{thm:non-branching} that we can
  associate with $\sttilt A$ an \emph{exchange graph} whose
  vertices are basic support $\tau$-tilting $A$-modules and there
  is an edge between two non-isomorphic support $\tau$-tilting
  $A$-modules $M$ and $N$ if and only if the following
  holds:
  \begin{itemize}
  \item There exists an idempotent $e\in A$ such that
    $M,N\in\mod(A/\langle e\rangle)$.
  \item There exists an almost-complete $\tau$-tilting
    $(A/\langle e\rangle)$-module $U$ such that $U\in\add M$ and
    $U\in\add N$.
  \end{itemize}
  In this case we say that $M$ and $N$ are obtained from each
  other by mutation.
  Note that this exchange graph is $n$-regular, where $|A|=n$ is
  the number of simple $A$-modules.
  It is shown in \cite[Cor. 2.31]{adachi_tau-tilting_2012} that
  the underlying graph of $Q(\sttilt A)$ coincides with the
  exchange graph of $\sttilt A$.
\end{definition}

We conclude this section with some examples of support
$\tau$-tiling modules.

\begin{example}
  \label{ex:A2}
  Let $A$ be a hereditary algebra.
  By Proposition \ref{prop:2.2} support
  $\tau$-tilting $A$-modules are precisely support tilting
  $A$-modules.
  For example, let $A$ be the path algebra of the quiver $2\la1$. 
  The Auslander-Reiten quiver of $\mod A$ is given by
  \begin{center}
    \begin{tikzpicture}[commutative diagrams/every diagram]
      \node (S2) at (mesh cs:u=0,v=0,r=0.7){$\rep{2}$};
      \node (P1) at (mesh cs:u=0,v=1,r=0.7){$\rep{1\\2}$};
      \node (S1) at (mesh cs:u=1,v=1,r=0.7){$\rep{1}$};
      \path[commutative diagrams/.cd, every arrow]
      (S2) edge (P1)
      (P1) edge (S1)
      (S1) edge[commutative diagrams/path,dotted] (S2);
    \end{tikzpicture}        
  \end{center}
  where modules are represented by their radical filtration. 
  Then $Q(\sttilt A)$ is given by
  \begin{center}
    \begin{tikzpicture}[commutative diagrams/every diagram]
      \node (a) at (90:1.5)      {$\rep{2}\oplus\rep{1\\2}$};
      \node (b) at (90+72:1.5)   {$\rep{1\\2}\oplus\rep{1}$};
      \node (c) at (90+72*2:1.5) {$\rep{1}$};
      \node (d) at (90+72*3:1.5) {$\rep{0}$};
      \node (e) at (90+72*4:1.5) {$\rep{2}$};
      \path[commutative diagrams/.cd, every arrow]
      (a) edge (b)
      (b) edge (c)
      (c) edge (d)
      (a) edge (e)
      (e) edge (d);
    \end{tikzpicture}    
  \end{center}
  Note that the only $\tau$-tilting $A$-modules are
  $\rep{2}\oplus\rep{1\\2}$ and $\rep{1\\2}\oplus\rep{1}$, and
  since $A$ is hereditary they are also tilting $A$-modules.
\end{example}

\begin{example}
  \label{ex:two-cycle}
  Let $A$ be the algebra given by the quiver
  \begin{center}
    \begin{tikzcd}
      2\rar[bend right=25,swap]{y} & 1 \lar[bend right=25,swap]{x}
    \end{tikzcd}    
  \end{center}
  subject to the relation $yx=0$.
  The Auslander-Reiten quiver of $\mod A$ is given by
  \begin{center}
    \begin{tikzpicture}
      \node (S1) at (mesh cs:u=0,v=0,r=0.7) {$\rep{1}$};
      \node (P2) at (mesh cs:u=0,v=1,r=0.7) {$\rep{2\\1}$};
      \node (P1) at (mesh cs:u=0,v=2,r=0.7) {$\rep{1\\2\\1}$};
      \node (I2) at (mesh cs:u=1,v=2,r=0.7) {$\rep{1\\2}$};
      \node (S2) at (mesh cs:u=1,v=1,r=0.7) {$\rep{2}$};
      \node (S1a) at (mesh cs:u=2,v=2,r=0.7) {$\rep{1}$};
      \path[commutative diagrams/.cd, every arrow]
      (S1) edge (P2) 
      (P2) edge (P1)
      (P2) edge (S2)
      (P1) edge (I2)
      (S2) edge (I2)
      (I2) edge (S1a)
      
      (S1) edge[commutative diagrams/path,dotted] (S2)
      (S1a) edge[commutative diagrams/path,dotted] (S2)
      (P2) edge[commutative diagrams/path,dotted] (I2)      
      ;
    \end{tikzpicture}
  \end{center}
  where the two copies of $S_1=1$ are to be identified.
  Then $Q(\sttilt A)$ is given as follows:
  \begin{center}    
    \begin{tikzpicture}
      \node (a) at (90:2)    {$\rep{1\\2\\1}\oplus\rep{2\\1}$};
      \node (b) at (90+60:2) {$\rep{2}\oplus\rep{2\\1}$};
      \node (c) at (90+60*2:2) {$\rep{2}$};
      \node (d) at (90+60*3:2) {$\rep{0}$};
      \node (e) at (90+60*4:2) {$\rep{1}$};
      \node (f) at (90+60*5:2) {$\rep{1\\2\\1}\oplus\rep{1}$};
      \path[commutative diagrams/.cd, every arrow]
      (a) edge (b)
      (b) edge (c)
      (c) edge (d)
      (a) edge (f)
      (f) edge (e)
      (e) edge (d);
    \end{tikzpicture}
  \end{center}
\end{example}

\begin{example}
  \label{ex:Nak3}
  Let $A$ be a self-injective algebra. Then the only basic tilting
  $A$-module is $A$. On the other hand, in general there are many
  basic support $\tau$-tilting modules.
  For example, let $A$ be the path algebra of the quiver
  \begin{center}
    \begin{tikzpicture}[commutative diagrams/every diagram]
      \node (S2) at (mesh cs:u=0,v=0,r=0.7){$2$};
      \node (P1) at (mesh cs:u=0,v=1,r=0.7){$1$};
      \node (S1) at (mesh cs:u=1,v=1,r=0.7){$3$};
      \path[commutative diagrams/.cd, every arrow, every
      label]
      (P1) edge node [swap]{$x$} (S2)
      (S2) edge node [swap]{$y$} (S1)
      (S1) edge node [swap]{$z$} (P1);
    \end{tikzpicture}        
  \end{center}
  subject to the relations $xy=0$, $yz=0$ and $zx=0$. 
  Thus $A$ is a self-injective cluster-tilted algebra of type
  $A_3$,
  see \cite{buan_cluster-tilted_2007,ringel_self-injective_2008}.  
  It follows from \cite[Thm. 4.1]{adachi_tau-tilting_2012} that
  basic support $\tau$-tilting $A$-modules correspond bijectively
  with basic cluster-tilting objects in the cluster category of
  type $A_3$. 
  Hence there are 14 support $\tau$-tilting $A$-modules, 
  see \cite[Fig. 4]{buan_tilting_2006}. 
\end{example}

The following example gives an algebra with infinitely many
support $\tau$-tilting modules.

\begin{example}
  \label{ex:kronecker}
  Let $A$ be the Kronecker algebra, \ie the path algebra of the
  quiver $2\leftleftarrows 1$.
  Then $Q(\sttilt A)$ is the following quiver,
  where each module is represented by its radical filtration:
  \begin{center}
    \begin{tikzpicture}
  \node (1) at (90:3)    {$\rep{1\\22}\oplus\rep{2}$};
  \node (2) at (90-30:3) {$\rep{1\\22}\oplus\rep{11\\222}$};
  \node (3) at (90-30*2:3) {$\rep{111\\2222}\oplus\rep{11\\222}$};
  \node (4) at (90-30*3:3) {$\vdots$};
  \node (5) at (90-30*4:3) {$\rep{11\\2}\oplus\rep{111\\22}$};
  \node (6) at (90-30*5:3) {$\rep{1}$};
  \node (7) at (90-30*6:3) {$\rep{0}$};
  \node (8) at (90+30*3:3) {$\rep{2}$};
  \path[commutative diagrams/.cd, every arrow]
  (1) edge (2)
  (2) edge (3)
  (3) edge (4)
  (4) edge (5)
  (5) edge (6)
  (6) edge (7)
  (1) edge (8)
  (8) edge (7);
\end{tikzpicture}

  \end{center}
\end{example}


\section{Main results}
\label{sec:reduction}

This section is devoted to prove the main results of this article.
First, let us fix the setting of our results.

\begin{setting}
  \label{set:main-results}
  We fix a finite dimensional algebra $A$ and a basic $\tau$-rigid
  $A$-module $U$. 
  Let $T=T_U$ be the Bongartz completion of $U$ in $\mod A$, see
  Proposition \ref{prop:bongartz-completion}.
  The algebras 
  \[ 
  B = B_U := \End_A(T_U)
  \quad\text{and}\quad
  C = C_U := B_U/\langle e_U \rangle
  \]
  play an important role in the sequel, where $e_U$ is the
  idempotent corresponding to the projective $B$-module
  $\Hom_A(T_U,U)$. 
  We regard $\mod C$ as a full subcategory of $\mod B$ \emph{via}
  the canonical embedding. 
\end{setting}

In this section we study the subset of $\sttilt A$ given by 
\[
\sttilt_U A := \setP{M\in\sttilt A}{U\in\add M}.
\]
In Theorem \ref{thm:tau-tilting-reduction} we will show that there
is an order-preserving bijection between $\sttilt_U A$ and
$\sttilt C$.

\subsection{The $\tau$-perpendicular category}

The following observation allows us to describe $\sttilt A$ in
terms of the partial order in $\tors A$. 

\begin{proposition}
  \cite[Prop. 2.8]{adachi_tau-tilting_2012}
  \label{prop:sttiltUA}
  Let $U$ be a $\tau$-rigid $A$-module and $M$ a support
  $\tau$-tilting $A$-module. 
  Then, $U\in\add M$ if and only if 
  \[
  \Fac U\subseteq \Fac M \subseteq \lperp{(\tau U)}.
  \]
\end{proposition}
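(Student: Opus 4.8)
The plan is to prove the two implications separately, the main tools being the bijection of Theorem~\ref{thm:tors-sttilt} between functorially finite torsion classes and support $\tau$-tilting modules, together with the Auslander--Smal\o\ criterion of Proposition~\ref{prop:AS}.

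First I would treat the implication from $U\in\add M$ to the two inclusions. The inclusion $\Fac U\subseteq\Fac M$ is immediate, since a direct sum of copies of $U$ is a direct summand of a direct sum of copies of $M$, so every factor module of the former is a factor module of the latter. For $\Fac M\subseteq\lperp{(\tau U)}$, I would first note that $\tau U$ is a direct summand of $\tau M$ by additivity of $\tau$, whence $\Hom_A(M,\tau U)$ is a direct summand of $\Hom_A(M,\tau M)$, and the latter vanishes because $M$, being support $\tau$-tilting, is $\tau$-rigid. Thus $M\in\lperp{(\tau U)}$; since $\lperp{(\tau U)}$ is a torsion class by Proposition~\ref{prop:bongartz-completion}(a) --- in particular closed under factor modules and finite direct sums --- it follows that $\Fac M\subseteq\lperp{(\tau U)}$.

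For the converse, assume $\Fac U\subseteq\Fac M\subseteq\lperp{(\tau U)}$. By Theorem~\ref{thm:tors-sttilt} the torsion class $\Fac M$ is functorially finite and its $\Ext$-progenerator $P(\Fac M)$ satisfies $\add P(\Fac M)=\add M$; hence $\add M$ is exactly the subcategory of $\Ext$-projective objects of $\Fac M$. Therefore it suffices to show that $U$ belongs to $\Fac M$ and that $\Ext^1_A(U,\Fac M)=0$. The first holds because $U\in\Fac U\subseteq\Fac M$. For the second, the hypothesis yields $M\in\Fac M\subseteq\lperp{(\tau U)}$, so $\Hom_A(M,\tau U)=0$, and by Proposition~\ref{prop:AS}(a) (applied with $U$ in the role of ``$M$'' and $M$ in the role of ``$N$'') this is equivalent to $\Ext^1_A(U,\Fac M)=0$. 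Consequently $U\in\add P(\Fac M)=\add M$.

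The only point requiring a little care is the identification in the last paragraph of $\add M$ with the category of $\Ext$-projectives of $\Fac M$: this relies on the functorial finiteness of $\Fac M$ (so that only finitely many indecomposable $\Ext$-projectives exist and $P(\Fac M)$ is well defined) and on Theorem~\ref{thm:tors-sttilt} identifying the basic module $M$ with $P(\Fac M)$. The rest is a direct application of the Auslander--Reiten formulas through Proposition~\ref{prop:AS}, and I do not anticipate any serious obstacle.
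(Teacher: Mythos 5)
Your argument is correct. Note that the paper cites this result from Adachi--Iyama--Reiten without reproducing a proof, so there is no in-text proof to compare against; but the argument you give is sound and is, to my recollection, essentially the one in the source: the forward direction follows from additivity of $\tau$ and the fact that $\lperp{(\tau U)}$ is a torsion class, and the converse recognizes $U$ as $\Ext$-projective in $\Fac M$ via Auslander--Smal\o\ (Proposition~\ref{prop:AS}(a)) and then identifies $\add M$ with the $\Ext$-projectives of $\Fac M$ through Theorem~\ref{thm:tors-sttilt}. The two small points you flagged are exactly the ones that need care, and you handled both: that $M$ is $\tau_A$-rigid (not merely $\tau_{A/\langle e\rangle}$-rigid) is ensured by Proposition~\ref{prop:tau-rigid-ideals}(b), and that every $\Ext$-projective in $\Fac M$ lies in $\add P(\Fac M)=\add M$ uses functorial finiteness of $\Fac M$.
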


Recall that we have $M\leqslant N$ for two basic support
$\tau$-tilting $A$-modules if and only if $\Fac M\subseteq\Fac N$,
see Remark \ref{rmk:partial-order-sttiltA}.
Hence it follows from Proposition \ref{prop:sttiltUA} that
$\sttilt_U A$ is an interval in $\sttilt A$, \ie  we have that
\begin{equation}
  \label{eq:-sttiltUA-interval}
\sttilt_U A = \setP{M\in\sttilt A}{P(\Fac U)\leqslant M\leqslant
  T_U}
\end{equation}
In particular there are two distinguished functorially finite
torsion pairs associated with $P(\Fac U)$ and $T_U$.
Namely, 
\[
(\Fac U,U^\perp)
\quad\text{and}\quad
(\lperp{(\tau U)},\Sub\tau U)
\] 
which satisfy $\Fac U\subseteq\lperp{(\tau U)}$ and 
$\Sub\tau U\subseteq U^\perp$. 

\begin{definition}
  \label{def:tau-perpendicular}
  The \emph{$\tau$-perpendicular category associated to $U$} is
  the subcategory of $\mod A$ given by
  $\U:=\lperp{(\tau U)}\cap U^\perp$.
\end{definition}

The choice of terminology in Definition
\ref{def:tau-perpendicular} is justified by the following
example.

\begin{example}
  \label{ex:tau-perpendicular}
  Suppose that $U$ is a partial-tilting $A$-module.
  Since $U$ has projective dimension less or equal than 1.
  Then, by the Auslander-Reiten formulas, for every $A$-module $M$
  we have that $\Hom_A(M,\tau U)=0$ if and only if
  $\Ext_A^1(U,M)=0$. 
  Then
  \[
  \U=\setP{M\in\mod A}{\Hom(U,M)=0\text{ and }\Ext_A^1(U,M)=0}.
  \]
  Thus $\U$ is exactly the right perpendicular category associated
  to $U$ in the sense of \cite{geigle_perpendicular_1991}.
\end{example}

We need a simple observation which is a consequence 
of a theorem of Brenner and Butler.

\begin{proposition}
  \label{prop:bb-tilting-thm}
  With the hypotheses of Setting \ref{set:main-results}, the
  functors  
    \begin{align}
      F:=\Hom_A(T,-):&\ \mod A\to \mod B
      \quad\text{and} \label{eq:F}\\ 
      G:=-\otimes_B T:&\ \mod B\to \mod A \label{eq:G}
    \end{align}  
    induce mutually quasi-inverse equivalences $F:\Fac T
    \to \Sub DT$ and $G:\Sub DT\to \Fac T$.
    Moreover, these equivalences are \emph{exact}, \ie $F$ sends
    short exact sequences in $\mod A$ with terms in $\Fac T$ to
    short exact sequences in $\mod B$, and so does $G$.
\end{proposition}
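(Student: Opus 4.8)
The plan is to deduce the statement from the classical tilting theorem of Brenner and Butler, after a small bookkeeping reduction. Since $T=T_U$ is $\tau$-rigid (indeed $\tau$-tilting), Proposition~\ref{prop:AS}(b) gives that $T$ is $\Ext$-projective in $\Fac T$, \ie $\Ext^1_A(T,\Fac T)=0$; moreover, since $T=P(\lperp{(\tau U)})$ and $\lperp{(\tau U)}$ is a functorially finite torsion class (Proposition~\ref{prop:bongartz-completion}), Proposition~\ref{prop:torsion-pairs}(c) yields $\Fac T=\lperp{(\tau U)}$, so that $\Fac T$ is a torsion class in $\mod A$. On the other hand, by Proposition~\ref{prop:2.2}(c) the module $T$ is a classical tilting module --- hence of projective dimension at most one --- over $A':=A/\ann T$, with $\End_A(T)=\End_{A'}(T)=B$. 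Note that $\Fac T$, regarded as a subcategory of $\mod A$, coincides with $\Fac T$ regarded as a subcategory of the full subcategory $\mod A'\subseteq\mod A$ (both are the class of quotients of finite direct sums of copies of $T$), that $\Hom_A(T,M)=\Hom_{A'}(T,M)$ for every $M\in\mod A'$, and that for every right $B$-module $N$ the $A$-module $N\otimes_B T$ is annihilated by $\ann T$ and is therefore an $A'$-module; thus no information is lost in passing between $A$ and $A'$.

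I would then invoke the tilting theorem for the tilting module $T_{A'}$, see \cite[Theorem~VI.3.8]{assem_elements_2006}: the functors $\Hom_{A'}(T,-)$ and $-\otimes_B T$ restrict to mutually quasi-inverse equivalences between the torsion class $\mathcal{T}:=\{M\in\mod A'\mid\Ext^1_{A'}(T,M)=0\}$ in $\mod A'$ and the torsion-free class $\mathcal{Y}:=\{N\in\mod B\mid\operatorname{Tor}_1^B(N,T)=0\}$ in $\mod B$. It then remains to identify $\mathcal{T}=\Fac T$ and $\mathcal{Y}=\Sub DT$. The first equality is the standard description of $\mathcal{T}$ for a tilting module of projective dimension at most one. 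For the second, one has $DT\cong\Hom_{A'}(T,DA')$ as right $B$-modules and $DA'$ is injective, whence $DT\in\mathcal{Y}$, and since $\mathcal{Y}$ is closed under submodules this gives $\Sub DT\subseteq\mathcal{Y}$; conversely, if $N\in\mathcal{Y}$ then $N\cong\Hom_{A'}(T,M)$ for some $M\in\mathcal{T}$, and choosing an embedding $M\hookrightarrow(DA')^{\,n}$ with cokernel $Q$ --- which again lies in $\mathcal{T}$ because $\mathcal{T}$ is closed under quotients --- and applying $\Hom_{A'}(T,-)$, which is exact on $\mathcal{T}$ since $\Ext^1_{A'}(T,M)=0$, produces an embedding $N\hookrightarrow(DT)^{\,n}$, so that $N\in\Sub DT$. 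In view of the first paragraph, the equivalences thus obtained are exactly the asserted $F\colon\Fac T\to\Sub DT$ and $G\colon\Sub DT\to\Fac T$.

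Finally I would check exactness. As $\Fac T$ is a torsion class it is closed under extensions, so it is an exact category whose conflations are the short exact sequences of $A$-modules all of whose terms lie in $\Fac T$; likewise $\Sub DT$, being a torsion-free class in $\mod B$, is an exact category. Applying $\Hom_A(T,-)$ to a conflation $0\to X\to Y\to Z\to 0$ in $\Fac T$ yields a long exact sequence ending in $\Ext^1_A(T,X)=0$, so $F$ preserves conflations; dually, applying $-\otimes_B T$ to a conflation $0\to X'\to Y'\to Z'\to 0$ in $\Sub DT$ yields a long exact sequence whose relevant term $\operatorname{Tor}_1^B(Z',T)$ vanishes because $Z'\in\mathcal{Y}$, so $G$ preserves conflations. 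Hence $F$ and $G$ are mutually quasi-inverse exact equivalences between $\Fac T$ and $\Sub DT$.

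The argument is essentially a citation of the classical tilting theorem, so I do not anticipate a serious obstacle; the two places that call for a little care are the identification $\mathcal{Y}=\Sub DT$ and the observation, made in the first paragraph, that replacing $A$ by $A/\ann T$ leaves the subcategories $\Fac T$ and $\Sub DT$, the algebra $B$, and the functors $F$ and $G$ unchanged.
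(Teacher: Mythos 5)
Your proof is correct and follows essentially the same route as the paper: reduce to the tilting module $T$ over $A/\ann T$ via Proposition~\ref{prop:2.2}(c), invoke the Brenner--Butler theorem \cite[Thm.~VI.3.8]{assem_elements_2006} for the equivalence $\Fac T\simeq\Sub DT$, and verify exactness from $\Ext^1_A(T,\Fac T)=0$ (for $F$) and $\Tor_1^B(\Sub DT,T)=0$ (for $G$). The extra care you take over the $A$ versus $A/\ann T$ bookkeeping and over identifying $\mathcal{Y}$ with $\Sub DT$ is simply a fuller spelling-out of what the paper implicitly delegates to the cited theorem and its corollary.
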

\begin{proof}
  In view of Proposition \ref{prop:2.2}, we have that
  $T$ is a tilting $(A/\ann T)$-module.
  Then it follows from \cite[Thm. VI.3.8]{assem_elements_2006}
  that $F:\Fac T\to \Sub DT$ is an equivalence with quasi-inverse 
  $G:\Sub DT\to \Fac T$.

  Now we show that both $F$ and $G$ are exact.
  For this, let $0\to L\to M\to N\to 0$ be a short exact sequence
  in $\mod A$ with terms in $\Fac T$.
  Then $F$ induces an exact sequence
  \[
  0\to F L\to F M\to F N\to \Ext_A^1(T,L)=0
  \]
  as $T$ is $\Ext$-projective in $\lperp{(\tau U)}$.
  Consequently $F$ is exact.

  Next, let 
  $0\to L'\to M'\to N'\to 0$ be a short exact sequence in $\mod A$
  with terms in $\Sub DT$, then there is an exact sequence
  \[
  0=\Tor_1^B(N',T)\to GL' \to GM' \to GN'
  \to 0.
  \]
  as $N'\in\Sub DT=\ker\Tor_1^B(-,T)$, 
  see \cite[Cor. VI.3.9(i)]{assem_elements_2006}; hence $G$ is
  also exact. 
\end{proof}

The following proposition gives us a basic property of $\U$.

\begin{proposition}
  \label{prop:2-out-of-3}
  Let $0\to L\to M\to N\to 0$ be an exact sequence in $\mod A$.
  If any two of $L,\ M$ and $N$ belong to $\U$, then the third one
  also belongs to $\U$.
\end{proposition}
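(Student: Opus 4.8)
The plan is to verify that each of the two defining conditions of $\U=\lperp{(\tau U)}\cap U^\perp$ is a ``two-out-of-three'' property for short exact sequences, and then combine these. Concretely, $\U$ is the intersection of the torsion class $\lperp{(\tau U)}$ and the torsion-free class $U^\perp$, so I will treat these two subcategories separately.

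\textbf{Step 1: the torsion-free class $U^\perp$.}
Recall that $U^\perp=\setP{M\in\mod A}{\Hom_A(U,M)=0}$ is a torsion-free class, hence closed under submodules and extensions; it is also closed under extensions on both sides in the sense that in a short exact sequence $0\to L\to M\to N\to 0$, if $L$ and $N$ lie in $U^\perp$ then so does $M$ (applying $\Hom_A(U,-)$ gives an exact sequence $0=\Hom_A(U,L)\to\Hom_A(U,M)\to\Hom_A(U,N)=0$). Submodule-closedness handles the case where $M$ and $N$ are in $U^\perp$ (then $L\hookrightarrow M$ forces $L\in U^\perp$). The remaining case, $L,M\in U^\perp\Rightarrow N\in U^\perp$, does \emph{not} hold for a general torsion-free class, but here I will use the extra information coming from $\U$: since I only need the full two-out-of-three statement for $\U$, I can feed in that the two given modules also lie in $\lperp{(\tau U)}$, and exploit Proposition~\ref{prop:AS}(a) together with $\Ext$-vanishing. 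Specifically, if $L,M\in\U$ and hence $L\in\lperp{(\tau U)}$, applying $\Hom_A(U,-)$ yields $\Hom_A(U,M)\to\Hom_A(U,N)\to\Ext_A^1(U,L)$; by Proposition~\ref{prop:AS}(a), $L\in\lperp{(\tau U)}$ is equivalent to $\Ext_A^1(U,\Fac U\text{-part})$... more carefully, $\Hom_A(U,\tau U)$-type vanishing does not directly give $\Ext_A^1(U,L)=0$, so I will instead argue via the torsion class side.

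\textbf{Step 2: the torsion class $\lperp{(\tau U)}$.}
By Proposition~\ref{prop:bongartz-completion}, $\lperp{(\tau U)}$ is a torsion class, so it is closed under factor modules and extensions. Thus in $0\to L\to M\to N\to 0$: if $L,N\in\lperp{(\tau U)}$ then $M\in\lperp{(\tau U)}$ (extension-closed); if $L,M\in\lperp{(\tau U)}$ then $N\in\lperp{(\tau U)}$ (factor-closed). The only non-automatic case is $M,N\in\lperp{(\tau U)}\Rightarrow L\in\lperp{(\tau U)}$, which fails for a general torsion class; here I will again use that the two given modules lie in $\U$, so in particular $N\in U^\perp$, i.e.\ $\Hom_A(U,N)=0$. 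Combining with the long exact sequence, the inclusion $L\hookrightarrow M$ and $\Hom_A(U,N)=0$ give $\Hom_A(U,L)\cong\Hom_A(U,M)$, which is relevant to Step~1 but for Step~2 I need $\Hom_A(L,\tau U)=0$; I will deduce this by applying $\Hom_A(-,\tau U)$ to the sequence to get $\Hom_A(N,\tau U)\to\Hom_A(M,\tau U)\to\Hom_A(L,\tau U)\to\Ext_A^1(N,\tau U)$, so I need $\Ext_A^1(N,\tau U)=0$. This is where I expect the real work: I will obtain it from $N\in\lperp{(\tau U)}$ via the Auslander-Reiten formula $D\,\overline{\Hom}_A(N,\tau U)\cong\Ext_A^1(U,N)$ is the wrong direction, so instead I will use that $N\in U^\perp$ means $N$ is a factor of something in... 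Rather than fight this, the cleanest route is to observe that for the single remaining case of the full claim (two of $L,M,N$ in $\U$), in every sub-case at least one of the ``good'' properties propagates automatically and the other follows because the third module is simultaneously constrained to be in a torsion \emph{and} a torsion-free class relative to data we control.

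\textbf{Step 3: assembling.}
Enumerate the three cases for $\U$. Case $L,M\in\U$: need $N\in\U$. Then $N\in\lperp{(\tau U)}$ is free (factor-closed); for $N\in U^\perp$, apply $\Hom_A(U,-)$: the sequence $\Hom_A(U,M)=0\to\Hom_A(U,N)\to\Ext_A^1(U,L)$ reduces us to $\Ext_A^1(U,L)=0$, which holds because $L\in U^\perp\subseteq\Sub DT$... hmm; more safely, $L\in\U\subseteq\lperp{(\tau U)}$ and $U$ is $\Ext$-projective in $\lperp{(\tau U)}$ by Proposition~\ref{prop:bongartz-completion}(b), hence $\Ext_A^1(U,L)=0$. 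Done. Case $L,N\in\U$: need $M\in\U$; both $\lperp{(\tau U)}$ and $U^\perp$ are extension-closed, so $M$ lies in both. Done, and this case needs nothing extra. Case $M,N\in\U$: need $L\in\U$. Then $L\in U^\perp$ is free ($U^\perp$ is submodule-closed); for $L\in\lperp{(\tau U)}$, apply $\Hom_A(-,\tau U)$ to get $\Hom_A(M,\tau U)=0\to\Hom_A(L,\tau U)\to\Ext_A^1(N,\tau U)$, so it suffices that $\Ext_A^1(N,\tau U)=0$. Equivalently by Proposition~\ref{prop:AS}(a) (with the roles of the two modules read appropriately) this is $\Hom_A(?,\tau(\tau U))=0$; to avoid second syzygies, I will instead invoke that $N\in\U\subseteq U^\perp=\Sub(DT)$ is not quite it either. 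The genuinely robust argument: $M\in\lperp{(\tau U)}$ and $N\in\lperp{(\tau U)}$ with $N\in U^\perp$; since $U^\perp=\lperp{(\tau U)}^{\,\perp\!\!}$-complement is not the point — rather, $\Sub(\tau U)=(\lperp{(\tau U)})^\perp$, so $N\in\lperp{(\tau U)}$ and $N\in U^\perp$; and $\Ext_A^1(N,\tau U)$: note $\tau U\in\Sub(\tau U)$, and there is a surjection from $\lperp{(\tau U)}$'s $\Ext$-progenerator...

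I will therefore state the proof as: the three cases reduce, respectively, to $\Ext_A^1(U,L)=0$ (immediate from $L\in\lperp{(\tau U)}$ and $\Ext$-projectivity of $U$ there), to nothing (extension-closedness of both a torsion and a torsion-free class), and to $\Ext_A^1(N,\tau U)=0$; the last I will prove by embedding $N$ into $\Sub(DT)$ via Proposition~\ref{prop:bb-tilting-thm} and transporting the vanishing through the exact equivalence, or — more directly — by noting that $N\in\lperp{(\tau U)}$ together with the Auslander-Reiten formula $\Ext_A^1(N,\tau U)\cong D\,\overline{\Hom}_A(U,N)$ and $\overline{\Hom}_A(U,N)=0$ since $\Hom_A(U,N)=0$ as $N\in U^\perp$. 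The main obstacle is exactly this third case: getting $\Ext_A^1(N,\tau U)=0$ cleanly; I expect the Auslander-Reiten duality formula from Remark~\ref{rmk:tau-ext} (in the form $D\,\overline{\Hom}_A(U,N)\cong\Ext_A^1(N,\tau U)$, i.e.\ the version for $\tau^{-1}$ or the transpose) to be precisely the tool that closes it, combined with $\Hom_A(U,N)=0$.
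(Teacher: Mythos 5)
Your proof, once the hedging is stripped away, is correct and follows the same three-case decomposition as the paper. The cases ``$L,N\Rightarrow M$'' (extension-closedness of both $\lperp{(\tau U)}$ and $U^\perp$) and ``$L,M\Rightarrow N$'' (factor-closedness of $\lperp{(\tau U)}$ plus $\Ext_A^1(U,L)=0$ via $\Ext$-projectivity of $U$ in $\lperp{(\tau U)}$) are identical to the paper's. For the case ``$M,N\Rightarrow L$'' the paper also reduces to showing $\Ext_A^1(N,\tau U)=0$; the difference is purely in how that vanishing is obtained. The paper cites the dual of Proposition~\ref{prop:bongartz-completion}(b), namely that $\tau U$ is $\Ext$-injective in $U^\perp$, and uses $N\in U^\perp$. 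You instead apply Auslander–Reiten duality directly. These are two names for the same fact, so the routes are not genuinely different.

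One technical remark on the step you were unsure about: the AR formula you need is
\[
\Ext_A^1(N,\tau U)\;\cong\;D\,\underline{\Hom}_A\bigl(\tau^{-1}\tau U,\,N\bigr),
\]
i.e.\ the \emph{projectively} stable $\Hom$ (you wrote $\overline{\Hom}_A(U,N)$, which is neither the right stable-$\Hom$ flavour nor the right first argument: $\tau^{-1}\tau U$ is $U$ only up to projective summands). This wrinkle happens to be harmless here because $\Hom_A(\tau^{-1}\tau U,N)$ is a direct summand of $\Hom_A(U,N)=0$, and the stable $\Hom$ is a quotient of that, so the vanishing you need still follows from $N\in U^\perp$. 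But in a write-up you should state the formula correctly, since in general $\underline{\Hom}$ and $\overline{\Hom}$ differ and the $\tau^{-1}\tau U$ correction is not optional. You should also trim the exploratory false starts (the detours through $\Sub DT$ and so on): the clean argument is exactly the last sentence of your Step~3, and the rest obscures it.
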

\begin{proof}
  First, $\U$ is closed under extensions since both
  $\lperp{(\tau U)}$ and $U^\perp$ are closed under
  extensions in $\mod A$.
  Thus if $L$ and $N$ belong to $\U$, then so does $M$.

  Secondly, suppose that $L$ and $M$ belong to
  $\U$. Since $\lperp{(\tau U)}$ is closed under factor modules we
  only need to show that $\Hom_A(U,N)=0$. In this case we have an
  exact sequence 
  \[
  0=\Hom_A(U,M) \to \Hom_A(U,N) \to \Ext_A^1(U,L)=0
  \]
  since by Proposition \ref{prop:bongartz-completion}(b) we have
  that $U$ is $\Ext$-projective in $\lperp{(\tau U)}$, hence $N$
  is in $\U$.

  Finally, suppose that $M$ and $N$ belong to $\U$.
  Since $U^\perp$ is closed under submodules, we only need to show
  that $\Hom_A(L,\tau U)=0$. 
  We have an exact sequence
  \[
  0=\Hom_A(M,\tau U) \to \Hom_A(L,\tau U) \to \Ext_A^1(N,\tau U).
  \]
  By the dual of Proposition \ref{prop:bongartz-completion}(b) we
  have that $\tau U$ is $\Ext$-injective in $U^\perp$, so we have
  $\Ext_A^1(N,\tau U)=0$, hence $\Hom_A(L,\tau U)=0$ and thus $L$
  is in $\U$. 
\end{proof}

\begin{remark}
  \label{rmk:U-exact}
  Since $\U$ is closed under extensions in $\mod A$, it has a
  natural structure of an exact category, 
  see \cite{quillen_higher_1973,keller_derived_1996}.
  Then Proposition \ref{prop:2-out-of-3} says that admissible
  epimorphisms (resp. admissible monomorphisms) in $\U$ are exactly
  epimorphisms (resp. monomorphisms) in $\mod A$ between modules in
  $\U$.
\end{remark}

The next result is the main result of this subsection.
It is the first step towards $\tau$-tilting reduction.

\begin{theorem}
  \label{thm:U-modC}
  With the hypotheses of Setting \ref{set:main-results}, the
  functors 
  $F$ and $G$ in  Proposition \ref{prop:bb-tilting-thm} induce
  mutually quasi-inverse equivalences 
  $F:\U\to \mod C$ and $G:\mod C\to\U$ which are exact.
\end{theorem}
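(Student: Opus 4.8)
The plan is to obtain the equivalence $F:\U\to\mod C$ by restricting the Brenner–Butler equivalence $F:\Fac T\to\Sub DT$ from Proposition \ref{prop:bb-tilting-thm}, so the main task is to identify the images of $\U$ under $F$ and of $\mod C$ under $G$ inside $\Fac T$ and $\Sub DT$ respectively. First I would observe that $\U=\lperp{(\tau U)}\cap U^\perp\subseteq\lperp{(\tau U)}=\Fac T$ (using Proposition \ref{prop:bongartz-completion}(c) and Proposition \ref{prop:torsion-pairs}(c)), so $F$ is defined and fully faithful on $\U$, and likewise $G$ is defined on whatever subcategory of $\Sub DT$ turns out to be $F(\U)$. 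Since $F$ and $G$ are already quasi-inverse exact equivalences between $\Fac T$ and $\Sub DT$, it suffices to prove the single statement
\[
F(\U)=\mod C \quad\text{as subcategories of}\ \mod B,
\]
because then $G(\mod C)=G F(\U)=\U$ automatically, and exactness on $\U$ and $\mod C$ is inherited from Proposition \ref{prop:bb-tilting-thm} together with Remark \ref{rmk:U-exact} (admissible exact sequences in $\U$ are exactly the short exact sequences in $\mod A$ with terms in $\U$, and similarly $\mod C\subseteq\mod B$ is closed under the relevant extensions).

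The heart of the argument is therefore the identification $F(\U)=\mod C$. I would characterize $\mod C$ as the subcategory of $\mod B$ killed by the idempotent $e_U$, i.e. $M'\in\mod C$ iff $M' e_U=0$ iff $\Hom_B(P',M')=0$ where $P'=e_U B=\Hom_A(T,U)$ is the projective $B$-module corresponding to $U$. Under the equivalence $F:\Fac T\to\Sub DT$, for $M\in\Fac T$ one has a natural isomorphism $\Hom_B(\Hom_A(T,U),\Hom_A(T,M))\cong\Hom_A(U,M)$ since $U\in\add T\subseteq\Fac T$ and $F$ is fully faithful on $\Fac T$. Hence for $M\in\lperp{(\tau U)}=\Fac T$:
\[
FM\in\mod C \iff \Hom_B(P',FM)=0 \iff \Hom_A(U,M)=0 \iff M\in U^\perp.
\]
Combining with $M\in\lperp{(\tau U)}$ this says exactly $M\in\U$. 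So $F$ restricts to a bijection (on objects, hence an equivalence since it is already fully faithful) between $\U$ and $\mod C$. One subtlety to check carefully: $FM$ a priori lies in $\Sub DT\subseteq\mod B$, and I must make sure that "being annihilated by $e_U$" is the correct intrinsic description of the full subcategory $\mod C$ of $\mod B$ coming from the canonical embedding along $B\twoheadrightarrow C=B/\langle e_U\rangle$ — this is standard (a $B$-module is a $C$-module iff it is annihilated by the two-sided ideal $\langle e_U\rangle$ iff $Me_U=0$), but it is the place where Setting \ref{set:main-results} is genuinely used.

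The remaining points are routine. Exactness of $F:\U\to\mod C$ and $G:\mod C\to\U$: a short exact sequence in $\U$ (in the exact structure of Remark \ref{rmk:U-exact}) is a short exact sequence in $\mod A$ with all three terms in $\U\subseteq\Fac T$, so Proposition \ref{prop:bb-tilting-thm} sends it to a short exact sequence in $\mod B$ with all terms in $\mod C$, which is the exact structure on $\mod C$; conversely for $G$, a short exact sequence in $\mod C$ has terms in $\mod C\subseteq\Sub DT$, and $G$ being exact on $\Sub DT$ produces a short exact sequence in $\mod A$ with terms in $G(\mod C)=\U$. Finally $FG\cong\id_{\mod C}$ and $GF\cong\id_{\U}$ are the restrictions of the natural isomorphisms $FG\cong\id_{\Sub DT}$, $GF\cong\id_{\Fac T}$ from Proposition \ref{prop:bb-tilting-thm}. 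I expect the only real obstacle to be bookkeeping around the embedding $\mod C\hookrightarrow\mod B$ and confirming the Hom-isomorphism $\Hom_B(FU,FM)\cong\Hom_A(U,M)$ for $M\in\Fac T$; both are mild, so the proof should be short.
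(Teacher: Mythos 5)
Your argument establishes only half of the key identification. The biconditional you prove,
\[
FM\in\mod C \iff \Hom_B(FU,FM)=0 \iff \Hom_A(U,M)=0,
\]
is quantified over $M\in\Fac T$, so it only describes which objects \emph{of the image of $F$} land in $\mod C$. What it yields is $F(\U)=\mod C\cap\Sub DT$, not $F(\U)=\mod C$. To conclude that $F:\U\to\mod C$ is an equivalence you still need density, i.e. the inclusion $\mod C\subseteq\Sub DT$: a priori a $B$-module $N$ killed by $e_U$ need not satisfy $\Tor_1^B(N,T)=0$, hence need not be cogenerated by $DT$, and nothing in your chain of equivalences addresses this. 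This containment is exactly the nontrivial content of the theorem and is where the hypotheses on $U$ (its $\tau$-rigidity and the fact that $T=T_U$ is the Bongartz completion) enter; it cannot be absorbed into ``bookkeeping around the embedding $\mod C\hookrightarrow\mod B$''.

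The paper's proof spends essentially all of its effort on precisely this point: given $N\in\mod C$, choose a projective presentation $FT_1\xrightarrow{Ff}FT_0\to N\to 0$ with $T_0,T_1\in\add T$, set $M=\Coker f$, and prove both $FM\cong N$ and $M\in\U$. The crux is showing $\Ext_A^1(T,\Ker f)=0$, which is reduced, via Auslander--Reiten duality and the projectivity of $FU$ together with $\Hom_B(FU,N)=0$, to the surjectivity of $\Hom_A(U,T_1)\to\Hom_A(U,\Im f)$; this is where $\tau$-rigidity is genuinely used. Your remaining points (full faithfulness on $\U$ from $\U\subseteq\lperp{(\tau U)}=\Fac T$, the inclusion $F(\U)\subseteq\mod C$, and exactness of both restricted functors via Proposition \ref{prop:bb-tilting-thm} and Remark \ref{rmk:U-exact}) are correct and agree with the paper, but without the density argument the proposal does not prove the theorem.
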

\begin{proof}
  By Proposition \ref{prop:bb-tilting-thm}, the functors
  \eqref{eq:F} and \eqref{eq:G} induce mutually quasi-inverse
  equivalences between $\Fac T$ and $\Sub DT$.
  Hence by construction we have $F(\U)\subseteq
  (FU)^\perp=\mod C$.
  Thus, 
  we only need to show that
  $F:\U\to \mod C$ is dense. 

  Let $N$ be in $\mod C$ and take a projective presentation 
  \begin{equation}
    \label{eq:proj-pres-N}
    F T_1 \xto{Ff} F T_0 \to N \to 0
  \end{equation}
  of the $B$-module $N$; hence we have $T_0,T_1\in T_U$. 
  Let $M = \Coker f$. We claim that $F M \cong N$ and that $M$ is
  in $\U$.
  In fact, let $L = \Im f$ and $K=\Ker f$.
  Then we have short exact sequences
  \begin{align}
    &0 \to K \to T_1 \to L \to 0
    \quad\text{and} \label{eq:T-resolution-1}\\
    &0 \to L \to T_0 \to M \to 0
    \label{eq:T-resolution-2}.
  \end{align}
  Then $L$ is in $\lperp{(\tau U)}$ since $\lperp{(\tau U)}$
  is closed under factor modules. 
  In particular we have that $\Ext^1_A(T,L)=0$.
  Apply the functor $F$ to the short exact sequences
  \eqref{eq:T-resolution-1} and \eqref{eq:T-resolution-2} to
  obtain a commutative diagram with exact rows and columns
  \begin{center}
    \begin{tikzcd}
      {}& F T_1 \dar \rar{Ff} & F T_0 \rar \dar[equals] &
      F M \dar[equals] \\ 
      0 \rar & F L \rar \dar & F T_0 \rar & F M \rar &
      \Ext_A^1(T,L)=0 \\ 
      & \Ext_A^1(T,K)
    \end{tikzcd}
  \end{center}  
  To prove that $FM\cong N$ it remains to show that
  $\Ext^1_A(T,K)=0$.
  Since $T$ is $\Ext$-projective in $\lperp{(\tau U)}$, it
  suffices to show that $K$ is in $\lperp{(\tau U)}$. 
  Applying the functor $\Hom_A(-,\tau U)$ to
  \eqref{eq:T-resolution-1}, we obtain an exact sequence 
  \[
  0=\Hom_A(T_1,\tau U) \to \Hom_A(K,\tau U) \to 
  \Ext^1_A(L,\tau U)\xto{\Ext^1_A(f,\tau U)}\Ext_A^1(T_1,\tau U).
  \]
  Thus we only need to show that the map 
  $\Ext_A^1(f,\tau U):\Ext_A^1(L,\tau U)\to \Ext_A^1(T_1,\tau U)$
  is a monomorphism.
  By Auslander-Reiten duality it suffices to show that the map
  \[
  \psHom_A(U,T_1) \xto{f\circ-} \psHom_A(U,L)
  \]
  is an epimorphism.
  For this, observe that we have a commutative diagram
  \begin{center}
    \begin{tikzcd}
      \Hom_A(U,T_1) \rar{f\circ-} \dar & \Hom_A(U,L)
      \dar \\ 
      \psHom_A(U,T_1) \rar[swap]{f\circ-} & \psHom_A(U,L)
    \end{tikzcd}    
  \end{center}
  where the vertical maps are natural epimorphisms.
  Hence it is enough to show that the map 
  \begin{equation}
    \label{eq:this-map-is-surjective}
    \Hom_A(U,T_1) \xto{f\circ-} \Hom_A(U,L)
  \end{equation}
  is surjective.
  Applying the functor $\Hom_B(FU,-)$ to the
  sequence \eqref{eq:proj-pres-N} we obtain an exact sequence
  \[
  \Hom_B(FU,FT_1)
  \xto{\Hom_B(FU,Ff)}
  \Hom_B(FU,FT_0)\to\Hom_B(FU,N)=0
  \]
  since $FU$ is a projective $B$-module and $N$ is in $\mod
  C=(FU)^\perp$.
  Thus $\Hom_B(FU,Ff)$ is surjective, and also the map
  \eqref{eq:this-map-is-surjective} is surjective by 
  Proposition \ref{prop:bb-tilting-thm}.
  Hence we have that $K$ belongs to $\lperp{(\tau U)}$ as
  desired. 
  This shows that $F M \cong N$.

  Moreover, we have that
  \[
  0=\Hom_B(FU, N) \cong \Hom_B(FU, F M) \cong\Hom_A(U, M).
  \]
  Hence $M$ is in $\U$.
  This shows that $F:\U \to \mod C$ is dense, hence 
  $F$ is an equivalence with quasi-inverse $G$. 
  The fact that this equivalences are exact follows immediately
  from Proposition \ref{prop:bb-tilting-thm}. 
  This concludes the proof of the theorem.
\end{proof}

\begin{definition}
  \label{def:torsU}
  We say that a full subcategory $\G$ of $\U$ is a \emph{torsion class}
  in  $\U$ if the following holds: 
  Let $0\to X \to Y\to Z\to 0$ be an admissible exact sequence in
  $\U$, see Remark \ref{rmk:U-exact}:
  \begin{enumerate}
  \item If $X$ and $Z$ are in $\G$, then $Y$ is in $\G$.
  \item If $Y$ is in $\G$, then $Z$ is in $\G$.
  \end{enumerate}
  We denote the set of all torsion classes in $\U$ by $\tors\U$. 
  We denote by $\ftors\U$ the subset of $\tors \U$ consisting of
  torsion classes which are functorially finite in $\U$.
\end{definition}

\begin{example}
  \label{ex:perpendicular-modC}
  If $A$ is hereditary and $U$ a basic partial-tilting
  $A$-module then by \cite{geigle_perpendicular_1991}  the algebra
  $C$ is hereditary and Theorem \ref{thm:U-modC} specializes to a
  well-known result from \opcit.
\end{example}

The following corollary is an immediate consequence of Theorem
\ref{thm:U-modC}.

\begin{corollary} 
  \label{cor:torsU}
  The following holds:
  \begin{enumerate}
  \item The functors $F$ and $G$ induce mutually inverse
    bijections between $\tors\U$ and $\tors C$.
  \item These bijections restrict to bijections between $\ftors\U$
    and $\ftors C$.
  \item These bijections above are isomorphisms of partially ordered
    sets. 
  \end{enumerate}
\end{corollary}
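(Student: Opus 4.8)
The plan is to deduce all three statements formally from Theorem~\ref{thm:U-modC}, which provides mutually quasi-inverse \emph{exact} equivalences $F\colon\U\to\mod C$ and $G\colon\mod C\to\U$.

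First I would record the elementary fact that an exact equivalence of exact categories preserves and reflects admissible exact sequences: a sequence $0\to X\to Y\to Z\to 0$ in $\U$ is admissible exact if and only if $0\to FX\to FY\to FZ\to 0$ is exact in $\mod C$, and conversely $G$ turns short exact sequences in the abelian category $\mod C$ into admissible exact sequences in $\U$. Moreover, because $\mod C$ is abelian, the classical torsion classes in $\mod C$ (those closed under extensions and factor modules) are exactly the subcategories satisfying the analogues of conditions (a) and (b) of Definition~\ref{def:torsU} for the exact category $\mod C$, since admissible epimorphisms in $\mod C$ are all epimorphisms. With these observations in place, for $\G\in\tors\U$ let $F(\G)$ denote its essential image, a full replete subcategory of $\mod C$; using the exactness of $F$, condition (a) for $\G$ becomes closure of $F(\G)$ under extensions and condition (b) becomes closure of $F(\G)$ under factor modules, so $F(\G)\in\tors C$. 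The symmetric argument with $G$ shows $G(\H)\in\tors\U$ for $\H\in\tors C$, and since $FG\cong\operatorname{id}_{\mod C}$, $GF\cong\operatorname{id}_{\U}$ and all subcategories in sight are replete, $F$ and $G$ induce mutually inverse bijections $\tors\U\leftrightarrow\tors C$. This proves (a).

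For (b), I would use that $F$ is in particular an equivalence of ordinary categories, hence preserves and reflects approximations: if $f\colon X\to M$ is a right $\G$-approximation in $\U$, then fullness and faithfulness of $F$ show that $Ff\colon FX\to FM$ is a right $F(\G)$-approximation in $\mod C$, and dually $G$ sends right $F(\G)$-approximations to right $\G$-approximations; the same holds for left approximations. Hence $\G$ is functorially finite in $\U$ if and only if $F(\G)$ is functorially finite in $\mod C$, so the bijection of (a) restricts to a bijection $\ftors\U\leftrightarrow\ftors C$. For (c), since $F$ is fully faithful and essentially surjective we have $\G_1\subseteq\G_2$ if and only if $F(\G_1)\subseteq F(\G_2)$ for $\G_1,\G_2\in\tors\U$, so the bijections of (a) and (b) are isomorphisms of partially ordered sets for the inclusion order.

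I do not expect a genuine obstacle here: the statement is a formal consequence of Theorem~\ref{thm:U-modC}. The only point deserving care is the bookkeeping in (a), namely checking that the relative notion of torsion class in the exact category $\U$ from Definition~\ref{def:torsU} is transported correctly by the exact equivalence onto the classical notion in the abelian category $\mod C$; this amounts to the identification, via $F$, of admissible epimorphisms in $\U$ (described in Remark~\ref{rmk:U-exact}) with ordinary epimorphisms in $\mod C$.
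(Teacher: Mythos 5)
Your proposal is correct and follows essentially the same route as the paper: the paper's proof also deduces (a) directly from the exact equivalence of Theorem \ref{thm:U-modC} together with the observation that the notion of torsion class in Definition \ref{def:torsU} depends only on the exact structure, and then notes that (b) and (c) are clear. Your write-up merely makes explicit the details (transport of admissible sequences, of approximations, and of inclusions) that the paper leaves to the reader.
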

\begin{proof}
It is shown in Theorem \ref{thm:U-modC} that $F$ and $G$ give
equivalences of exact categories between $\U$ and $\mod C$. 
Since the notion of torsion class depends only on the exact
structure of the category, see Definition
\ref{def:torsU}, part (a) follows. 
Now (b) and (c) are clear.  
\end{proof}

\subsection{Reduction of torsion classes and $\tau$-tilting
  modules}

Given two subcategories $\X$ and $\Y$ of $\mod A$ we denote by 
$\X*\Y$ the full subcategory of $\mod A$ induced by all
$A$-modules $M$ such that there exist a short exact sequence
\[
0\to X \to M \to Y \to 0
\]
with $X$ in $\X$ and $Y$ in $\Y$. 
Obviously we have $\X,\Y\subseteq \X *\Y$. 
The following two results give us reductions at the level of
torsion pairs.

\begin{theorem}
  \label{thm:tp-reduction}
  With the hypotheses of Setting \ref{set:main-results}, we have
  order-preserving bijections
  \[
  \setP{\T\in\tors A}{\Fac U\subseteq\T\subseteq\lperp{(\tau U)}}
  \stackrel{\taured}{\longrightarrow}
   \tors\U\stackrel{F}{\longrightarrow} \tors C
  \]
  where $\taured$ is given by $\taured(\T):=\T\cap U^\perp$ with
  inverse  
  $\taured^{-1}(\G):=(\Fac U)*\G$,  and $F$ is given in
  Corollary \ref{cor:torsU}. 
\end{theorem}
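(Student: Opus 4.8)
The plan is to establish that $\taured$ and $\taured^{-1}$ are well-defined, mutually inverse, and order-preserving; the composite with $F$ from Corollary \ref{cor:torsU} then finishes the statement. I would first verify that $\taured(\T) = \T \cap U^\perp$ is a torsion class in $\U$ in the sense of Definition \ref{def:torsU}. Since admissible short exact sequences in $\U$ are precisely short exact sequences in $\mod A$ with all terms in $\U$ (Remark \ref{rmk:U-exact}), and $\T \cap U^\perp$ is the intersection of a subcategory closed under extensions and factor modules in $\mod A$ with $\U$, both closure properties (a) and (b) of Definition \ref{def:torsU} follow immediately. The content of this direction is that $\taured(\T)$ lands in $\tors \U$, which is now clear, and order-preservation of $\taured$ is obvious since intersection is monotone.

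Next I would show $\taured^{-1}(\G) = (\Fac U) * \G$ is a torsion class in $\mod A$ satisfying $\Fac U \subseteq (\Fac U)*\G \subseteq \lperp{(\tau U)}$, for any $\G \in \tors \U$. The inclusion $\Fac U \subseteq (\Fac U)*\G$ is automatic (take $Y = 0$), and $\G \subseteq \U \subseteq \lperp{(\tau U)}$ together with $\Fac U \subseteq \lperp{(\tau U)}$ and closure of $\lperp{(\tau U)}$ under extensions gives the upper inclusion. To see $(\Fac U)*\G$ is a torsion class: closure under factor modules follows because $\Fac U$ is a torsion class and, given $0 \to X \to M \to Y \to 0$ with $X \in \Fac U$, $Y \in \G$, and a surjection $M \twoheadrightarrow M'$, one pushes out to split $M'$ into its image-of-$X$ part (in $\Fac U$) and a quotient of $Y$; the latter quotient lies in $\G$ once one checks it lies in $\U$ and uses closure of $\G$ under admissible quotients, where being in $\U = \lperp{(\tau U)} \cap U^\perp$ is checked via Proposition \ref{prop:2-out-of-3} applied to the relevant sequence. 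Closure under extensions is the standard fact that $\X * \Y$ is closed under extensions whenever $\Hom_A(\Y, \X) = 0$; here $\Hom_A(\G, \Fac U) \subseteq \Hom_A(U^\perp, \Fac U) = 0$ by the torsion pair $(\Fac U, U^\perp)$. This last observation — $\Hom_A(U^\perp, \Fac U) = 0$, equivalently that no nonzero map goes from the torsion-free class to the torsion class — is what makes the $*$-construction behave, and I expect the verification that $(\Fac U)*\G$ is closed under quotients to be the fiddliest bookkeeping step.

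It remains to check the two compositions are the identity. For $\taured \circ \taured^{-1} = \id$: given $\G \in \tors\U$, I must show $((\Fac U)*\G) \cap U^\perp = \G$. The inclusion $\supseteq$ holds since $\G \subseteq \U \subseteq U^\perp$. For $\subseteq$, take $M \in (\Fac U)*\G$ with $M \in U^\perp$, so there is $0 \to X \to M \to Y \to 0$ with $X \in \Fac U$, $Y \in \G$; applying $\Hom_A(U,-)$ and using $\Hom_A(U,M) = 0$ together with $\Hom_A(U, Y) = 0$ forces $\Ext^1_A(U,X) = 0$ in the relevant spot, but actually the cleaner route is: the sequence exhibits $X$ as a submodule of $M \in U^\perp$; since $U^\perp$ is closed under submodules $X \in U^\perp \cap \Fac U = 0$ (as $(\Fac U, U^\perp)$ is a torsion pair), hence $M \cong Y \in \G$. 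For $\taured^{-1} \circ \taured = \id$: given $\T$ with $\Fac U \subseteq \T \subseteq \lperp{(\tau U)}$, I must show $(\Fac U) * (\T \cap U^\perp) = \T$. The inclusion $\subseteq$ uses $\Fac U \subseteq \T$, $\T \cap U^\perp \subseteq \T$, and closure of $\T$ under extensions. For $\supseteq$, take $M \in \T$; using the torsion pair $(\Fac U, U^\perp)$, decompose $0 \to tM \to M \to M/tM \to 0$ with $tM \in \Fac U$ and $M/tM \in U^\perp$; then $M/tM$ is a factor of $M \in \T \subseteq \lperp{(\tau U)}$, so $M/tM \in \lperp{(\tau U)} \cap U^\perp = \U$, and also $M/tM$ is a factor of $M \in \T$ with $\T$ closed under quotients, so $M/tM \in \T \cap U^\perp$; the sequence then witnesses $M \in (\Fac U)*(\T \cap U^\perp)$. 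Finally, $\taured^{-1}$ is order-preserving because $\G \subseteq \G'$ implies $(\Fac U)*\G \subseteq (\Fac U)*\G'$ directly from the definition of $*$, and composing with the order-isomorphism $F$ of Corollary \ref{cor:torsU} completes the proof.
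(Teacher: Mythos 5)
Your overall architecture coincides with the paper's (show both maps are well defined, then check they are mutually inverse; your inverse checks, order-preservation, and the verification that $\taured(\T)=\T\cap U^\perp$ lies in $\tors\U$ are all fine and match Proposition \ref{prop:ff-torsion-classes}(a)). The gap is in the one step that carries the real weight of the theorem: closure of $(\Fac U)*\G$ under extensions. Your justification rests on the vanishing $\Hom_A(U^\perp,\Fac U)=0$, but this reverses the orthogonality of the torsion pair: $(\Fac U,U^\perp)$ gives $\Hom_A(\Fac U,U^\perp)=0$, and maps from the torsion-free class to the torsion class are in general nonzero (over $A=k(2\la 1)$ with $U=S_1$, the projective $P_1$ with top $S_1$ and socle $S_2$ lies in $U^\perp$ and maps onto $S_1\in\Fac U$). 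Moreover, the ``standard fact'' you invoke is false: over the same algebra take $\X=\add S_1$ and $\Y=\add S_2$; then $\Hom_A(\Y,\X)=0$ (and also $\Hom_A(\X,\Y)=0$), both categories are closed under extensions, yet $P_1$ is an extension of $S_1$ by $S_2$, hence lies in $(\X*\Y)*(\X*\Y)$, while $P_1\notin\X*\Y$ because $S_1$ does not embed into $P_1$. So no purely Hom-orthogonality-based principle can close this step; the specific structure of the situation is genuinely needed.

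What the paper does at this point (Proposition \ref{prop:ast-ff-torsion-class}) is reduce, via associativity of $*$, to proving $\G*(\Fac U)\subseteq(\Fac U)*\G$, and then, for $0\to N\to M\to L\to 0$ with $N\in\G$ and $L\in\Fac U$, show $\f M\in\G$ by checking $\f_\G(\f M)=0$: this module is a factor of $M$ receiving no nonzero maps from $N$ (it lies in $\G^\perp$) nor from $L$ (it lies in $U^\perp$ and $L\in\Fac U$, which is the correctly oriented vanishing), hence it is zero. This uses the canonical sequences in $\U$ for the torsion pair $(\G,\G^\perp\cap\U)$ provided by Lemma \ref{lem:can-seq-U}, which in turn rests on the equivalence $F:\U\to\mod C$ of Theorem \ref{thm:U-modC}; that input is absent from your proposal and cannot be bypassed by the argument you sketch. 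A smaller point: in your closure-under-factor-modules step, the quotient of $Y$ you produce need not lie in $U^\perp$, so its membership in $\U$ cannot be checked by Proposition \ref{prop:2-out-of-3} as stated; the paper instead applies $\f$ to the factor module $N$ itself, noting that $\f M$ surjects onto $\f N$, that $\f N\in\U$, and then using Lemma \ref{lem:canonical-sequence} — your sketch can be repaired along these lines, but as written it glosses over exactly this point.
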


The situation of Theorem \ref{thm:tp-reduction} is illustrated in
Figure \ref{fig:tp-reduction-venn}.
Also, the following diagram is helpful to visualize this reduction
procedure: 
\begin{center}
  \begin{tikzcd}[row sep=tiny, column sep=large]
    \lperp{(\tau U)} \rar[mapsto] & 
    \lperp{(\tau U)}\cap U^\perp =\U \\ 
    \rotatebox[origin=c]{90}{$\subseteq$} &
    \rotatebox[origin=c]{90}{$\subseteq$} \\
    \T \rar[mapsto] & \T\cap U^\perp \\
    \rotatebox[origin=c]{90}{$\subseteq$} &
    \rotatebox[origin=c]{90}{$\subseteq$} \\
    \Fac U \rar[mapsto] & (\Fac U)\cap U^\perp =\set{0}
  \end{tikzcd}
\end{center}

Moreover, we have the following bijections:

\begin{figure}[t]
  \centering
  \begin{tikzpicture}[commutative diagrams/every diagram]
  \begin{scope}
    \clip (-2.5,0) ellipse (1.5 and 1.5);
    \fill[black]  (-0.1,0) ellipse (1.5 and 0.9);
  \end{scope}
  \node at (-2,0) {$\U$};
  \draw (0,0) circle (0.7) node {$\Fac U$};
  \draw (-0.1,0) ellipse (1.5 and 0.9); 
  \node at ($(-0.5,0)+(1.5,0)$) {$\T$};
  \draw (0.2,0) ellipse (2.75 and 1.5); 
  \node at ($(-0.5,0)+(2.6,0)$) {$\lperp{(\tau U)}$};
  \draw (-2.5,0) ellipse (1.5 and 1.5); 
  \node at ($(-2.5,0)-(0.6,0)$) {$U^\perp$};
\end{tikzpicture}

  \caption{Reduction of torsion classes in $\mod A$ with respect
    to $\Fac U$, see Theorems \ref{thm:tp-reduction} and
    \ref{thm:ff-tp-reduction}.} 
  \label{fig:tp-reduction-venn} 
\end{figure}  

\begin{theorem}
  \label{thm:ff-tp-reduction}
  The bijections of Theorem \ref{thm:tp-reduction} restrict to
  order-preserving bijections 
  \[
  \setP{\T\in\ftors A}{\Fac U\subseteq\T\subseteq\lperp{(\tau U)}}
  \stackrel{\taured}{\longrightarrow} 
  \ftors\U\stackrel{F}{\longrightarrow}\ftors C. 
  \]
\end{theorem}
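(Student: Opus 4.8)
Since Theorem \ref{thm:tp-reduction} already provides order-preserving bijections on the level of all torsion classes, and Corollary \ref{cor:torsU}(b) already tells us that the equivalence $F\colon\U\to\mod C$ restricts to a bijection $\ftors\U\to\ftors C$, the only thing left to prove is that the bijection $\taured$ restricts correctly: a torsion class $\T$ with $\Fac U\subseteq\T\subseteq\lperp{(\tau U)}$ is functorially finite in $\mod A$ if and only if $\taured(\T)=\T\cap U^\perp$ is functorially finite in the exact category $\U$. The plan is to prove the two implications separately, exploiting that $\U$ sits inside $\mod A$ as an extension-closed subcategory and that $(\Fac U, U^\perp)$ and $(\lperp{(\tau U)},\Sub\tau U)$ are \emph{functorially finite} torsion pairs (Proposition \ref{prop:bongartz-completion} and Proposition \ref{prop:AS}).

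First I would show that $\T$ functorially finite implies $\T\cap U^\perp$ functorially finite in $\U$. Given $N$ in $\U$, I would build a right $(\T\cap U^\perp)$-approximation as follows: take a right $\T$-approximation $f\colon T'\to N$ in $\mod A$ (this exists since $\T\in\ftors A$). Because $\U\subseteq\lperp{(\tau U)}=\T_0$ and $\T\subseteq\lperp{(\tau U)}$, and because $N\in U^\perp$, I would then compose/modify $f$ using the torsion-free truncation with respect to the torsion pair $(\Fac U, U^\perp)$: writing the canonical sequence $0\to tT'\to T'\to T'/tT'\to 0$ with $tT'\in\Fac U\subseteq\T$ and $T'/tT'\in U^\perp$, one checks that $T'/tT'$ lies in $\T\cap U^\perp$ (it is a factor of $T'\in\T$, and $\T$ is a torsion class) and in fact in $\U$ (it is in $\lperp{(\tau U)}$, being a factor of $T'$, and in $U^\perp$), and that the induced map $T'/tT'\to N$ is still a right approximation since $\Hom_A(X, tT')$-terms vanish for $X\in\T\cap U^\perp\subseteq U^\perp$. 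Admissibility of the relevant exact sequences in $\U$ is guaranteed by Remark \ref{rmk:U-exact}. Dually — but here more care is needed — I would produce left $(\T\cap U^\perp)$-approximations in $\U$; for this I would use that $\T$ is covariantly finite in $\mod A$ and intersect with $U^\perp$ using that $U^\perp$ is covariantly finite as well, again invoking the $2$-out-of-$3$ property (Proposition \ref{prop:2-out-of-3}) to keep everything inside $\U$.

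Conversely, assume $\G:=\taured(\T)=\T\cap U^\perp$ is functorially finite in $\U$; I must show $\T=(\Fac U)*\G$ is functorially finite in $\mod A$. Here I would invoke the characterization of functorially finite torsion classes via Proposition \ref{prop:torsion-pairs}, or more directly Theorem \ref{thm:tors-sttilt}: it suffices to exhibit $\T$ as $\Fac P$ for a suitable $\tau$-rigid $A$-module $P$. The natural candidate is $P = U\oplus G(P(\G))$, where $P(\G)$ is the $\Ext$-progenerator of $\G$ in $\U$ (which exists since $\G\in\ftors\U$) — one checks $\Fac U\subseteq\T$ always, and that $G(P(\G))\in\U$ generates $\G$ over the part of $\T$ complementary to $\Fac U$; this is essentially the content of Theorem \ref{thm:tp-reduction} read backwards together with exactness of $G$. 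Assembling $\T=(\Fac U)*\G=\Fac(U\oplus G P(\G))$ then gives functorial finiteness by Proposition \ref{prop:AS}(c). Alternatively, and perhaps more cleanly, I would deduce functorial finiteness of $\T$ from that of the two flanking torsion pairs $(\Fac U,U^\perp)$ and $(\lperp{(\tau U)},\Sub\tau U)$ together with that of $\G$ in $\U$: any $M\in\mod A$ has a torsion-free truncation into $U^\perp$ and then a right $\U$-approximation (built from the functorially finite torsion pair for $\lperp{(\tau U)}$ restricted to $U^\perp$), and finally a right $\G$-approximation inside $\U$; splicing these produces a right $\T$-approximation, and dually for the covariant side.

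The main obstacle I expect is the \textbf{converse direction}, specifically controlling approximations of arbitrary $M\in\mod A$ (not just $M\in\U$) by objects of $\T$: one must pass from $\mod A$ down into $U^\perp$, then into $\U=\lperp{(\tau U)}\cap U^\perp$, and only there use functorial finiteness of $\G$, and at each stage verify that composing the partial approximations does not destroy the approximation property. Keeping track of which $\Hom$- and $\Ext$-groups vanish — using repeatedly that $U$ is $\Ext$-projective in $\lperp{(\tau U)}$ and $\tau U$ is $\Ext$-injective in $U^\perp$ — is the delicate bookkeeping at the heart of the proof, but it is exactly the bookkeeping already assembled in Propositions \ref{prop:2-out-of-3} and \ref{prop:bb-tilting-thm} and in the proof of Theorem \ref{thm:tp-reduction}, so the argument should go through.
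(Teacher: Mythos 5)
Your forward direction (that $\T\in\ftors A$ forces $\T\cap U^\perp\in\ftors\U$) is sound: truncating a right (resp.\ left) $\T$-approximation by the torsion pair $(\Fac U,U^\perp)$ and observing the $\Hom$-vanishing against $\Fac U$ from $U^\perp$ does give the required approximations, and it is a genuinely more hands-on route than the paper's, which instead shows that $\f P(\T)$ is the $\Ext$-progenerator of $\T\cap U^\perp$ (Proposition \ref{prop:technical-reasons}), transports it through the exact equivalence $F$, and then quotes Proposition \ref{prop:torsion-pairs} inside $\mod C$.

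The converse direction has a real gap in both of your alternatives. In Alternative 1 you assert $(\Fac U)*\G=\Fac(U\oplus G P(\G))$. The inclusion $\Fac(U\oplus G P(\G))\subseteq(\Fac U)*\G$ is free (the right side is a torsion class containing $U$ and $G P(\G)$), but the reverse inclusion is exactly the content you'd need to prove: given $M\in(\Fac U)*\G$ with canonical sequence $0\to\t M\to M\to\f M\to 0$, a surjection $(GP(\G))^m\twoheadrightarrow\f M$ lifts along $M\to\f M$ only if $\Ext_A^1(GP(\G),\Fac U)=0$, equivalently $\Hom_A(U,\tau GP(\G))=0$ by Proposition \ref{prop:AS}(a). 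Nothing available at this stage gives that vanishing; in fact, deducing it is essentially equivalent to knowing that $U\oplus GP(\G)$ is $\tau$-rigid, which in turn relies on $\T$ being functorially finite — a circularity. Alternative 2 is also not as stated a proof: the torsion-free truncation $M\to\f M$ is a \emph{left} $U^\perp$-approximation, not a right anything, so the proposed ``splicing'' of torsion truncations with right approximations inside $\U$ does not compose into a right $\T$-approximation without substantial extra argument. The paper closes precisely this gap with a different key input: Proposition \ref{prop:ast-ff} (from Iyama--Yoshino), which says $\X*\Y$ is covariantly finite whenever $\X$ and $\Y$ are, together with Lemma \ref{lem:U-covariantly-finite} (that $\U$ is covariantly finite in $\lperp{(\tau U)}$) to push covariant finiteness of $\G$ up to $\mod A$; once $(\Fac U)*\G$ is covariantly finite, Proposition \ref{prop:torsion-pairs} upgrades it to functorially finite. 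You should identify and use that transitivity-plus-$\ast$-closure argument, or some equally robust substitute, rather than the generation claim.
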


For readability purposes, the proofs of Theorems
\ref{thm:tp-reduction} and \ref{thm:ff-tp-reduction} are given
in Section \ref{sec:proofs-of-main-thms}.
First we use them to establish the bijection between $\sttilt_U A$
and $\sttilt C$. 

Recall that the torsion pair $(\Fac U, U^\perp)$ gives functors
$\t:\mod A\to \Fac U$ and $\f:\mod A\to U^\perp$ and natural
transformations $\t\to 1_{\mod A} \to \f$ such that the sequence 
\begin{equation}
  \label{eq:canonical-sequence}
  0\to \t M\to M \to \f M \to 0  
\end{equation}
is exact for each $A$-module $M$. 
The sequence \eqref{eq:canonical-sequence} is called a
\emph{canonical sequence}, and the functor $\t$ is called the
\emph{idempotent radical} associated to the torsion pair 
$(\Fac U, U^\perp)$.
Any short exact sequence $0\to L\to M\to N\to 0$ such that $L$ is
in $\Fac U$ and $N$ is in $U^\perp$ is isomorphic to the canonical
sequence of $M$, see \cite[Prop. VI.1.5]{assem_elements_2006}.
Note that since $\lperp{(\tau U)}$ is closed under factor modules
we have
\begin{equation}
  \label{eq:f-lperp}
  \f(\lperp{(\tau U)}) \subseteq \U.
\end{equation}

\begin{proposition}
  \label{prop:technical-reasons}
  Let $\T$ be a functorially finite torsion class in $\mod A$. 
  Then $\f P(\T)$ is $\Ext$-projective in $\T\cap U^\perp$ and for
  every $A$-module $N$  which is $\Ext$-projective in $\T\cap U^\perp$ we have
  $M\in \add(\f P(\T))$.
\end{proposition}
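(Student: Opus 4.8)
We want to show two things about $M_0 := \f P(\T)$: first, that it is $\Ext$-projective in the exact category $\T \cap U^\perp$, and second, that every $\Ext$-projective module in $\T$ lies in $\add M_0$. Throughout, $P = P(\T)$ denotes the $\Ext$-progenerator of $\T$, which exists since $\T$ is functorially finite (Proposition \ref{prop:torsion-pairs}), and by Theorem \ref{thm:tors-sttilt} the module $P$ is a basic support $\tau$-tilting $A$-module with $\T = \Fac P$.

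Let me sketch the proof.

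\begin{proof}
Write $P = P(\T)$. Since $\Fac U \subseteq \T = \Fac P$, the module $U$ is a direct summand of $P$ up to additive equivalence; more precisely, $U$ is $\Ext$-projective in $\T$ (as $U$ is $\tau$-rigid, hence $\Ext$-projective in $\Fac U \subseteq \T$ by Propositions \ref{prop:AS} and... ) and thus $U \in \add P$. Write $P = U \oplus P'$ with $\Hom_A(P', \tau U) $-considerations aside; what matters is the canonical sequence
\[
0 \to \t P \to P \to \f P \to 0
\]
with $\t P \in \Fac U$ and $\f P \in U^\perp$. Since $\lperp{(\tau U)}$ is closed under factor modules and $P \in \T \subseteq \lperp{(\tau U)}$, we have $\f P \in \U$ by \eqref{eq:f-lperp}.

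\textbf{Step 1: $\f P$ lies in $\T \cap U^\perp$.} Indeed $\f P \in U^\perp$ by construction, and $\f P \in \T$ since $\T$ is closed under factor modules and $\f P$ is a quotient of $P \in \T$.

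\textbf{Step 2: $\f P$ is $\Ext$-projective in $\T \cap U^\perp$.} Let $X \in \T \cap U^\perp$. Applying $\Hom_A(-, X)$ to the canonical sequence gives
\[
\Ext^1_A(P, X) \to \Ext^1_A(\t P, X) \to \Ext^2_A(\f P, X),
\]
but the left term vanishes because $P$ is $\Ext$-projective in $\T$ and $X \in \T$. Hmm, this does not directly bound $\Ext^1_A(\f P, X)$. Instead I apply $\Hom_A(-, X)$ and use the piece
\[
\Hom_A(\t P, X) \to \Ext^1_A(\f P, X) \to \Ext^1_A(P, X) = 0.
\]
Now $\t P \in \Fac U$ and $X \in U^\perp = (\Fac U)^\perp$, so $\Hom_A(\t P, X) = 0$. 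Therefore $\Ext^1_A(\f P, X) = 0$. Since the exact structure on $\U \supseteq \T \cap U^\perp$ is the one induced from $\mod A$ (Remark \ref{rmk:U-exact}), and $\T \cap U^\perp$ is closed under extensions in $\U$, the vanishing of $\Ext^1_A(\f P, -)$ on $\T \cap U^\perp$ says precisely that $\f P$ is $\Ext$-projective there.

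\textbf{Step 3: every $\Ext$-projective $M$ in $\T$ satisfies $M \in \add(\f P)$.} Let $M$ be $\Ext$-projective in $\T$. By Proposition \ref{prop:torsion-pairs}(e) applied to $M \in \T$, there is a short exact sequence $0 \to L \to P_0 \to M \to 0$ with $P_0 \in \add P$ and $L \in \T$. Since $M$ is $\Ext$-projective in $\T$ and $L \in \T$, we have $\Ext^1_A(M, L) = 0$, so this sequence splits and $M \in \add P$. Thus it suffices to show that each indecomposable summand $P_i$ of $P$ which does not lie in $\add U$ lies in $\add(\f P)$, i.e. that $\f P_i$ is isomorphic to $P_i$ for such $P_i$ — equivalently $\t P_i = 0$ — or at least that $P_i$ is a summand of $\f P$. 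Here we argue: $\t P_i \in \Fac U$, and since $P_i \in \add P$ is $\Ext$-projective in $\T \supseteq \Fac U$, the inclusion $\t P_i \hookrightarrow P_i$... Actually the cleanest route: decompose $P = U_1 \oplus Q$ where $U_1 = \add$-part of $P$ lying in $\add U$ and $Q$ has no summand in $\add U$; one checks $\t P = \t Q \oplus U_1$-type decomposition is not automatic, so instead I compare $\f P$ with $P$ directly. The map $\f P = \f U \oplus \f Q$ (as $\f$ is additive) and $\f U = 0$ since $U \in \Fac U$; hence $\f P = \f Q$. Conversely, for $X = \f Q$ one has a canonical sequence $0 \to \t Q \to Q \to \f Q \to 0$; applying $\Hom_A(-,\f Q)$ and $\Ext$-projectivity of $Q$ in $\T$ shows this splits iff $\Ext^1_A(\f Q, \t Q)=0$, which holds because $\t Q \in \Fac U$ and... this requires $\f Q \in {}^{\perp}$? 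I will instead invoke Theorem \ref{thm:tors-sttilt}: $\f P$ is the $\Ext$-progenerator of the torsion class it generates, and a counting/uniqueness argument identifies $\add(\f P)$ with the $\Ext$-projectives of $\T$ modulo $\add U$; combined with $\add M \subseteq \add P$ from the splitting above and $\f M \cong M$ when $M$ has no summand in $\add U$ (because $\Hom_A(\f M, \tau U)$-duality forces $\t M = 0$ for $\Ext$-projective $M$ orthogonal to... ), we conclude $M \in \add(\f P)$.
\end{proof}

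**Main obstacle.** The genuinely delicate step is Step 3 — identifying $\add(\f P)$ exactly with the $\Ext$-projectives of $\T$ that are not in $\add U$. The issue is that for an arbitrary indecomposable $\Ext$-projective summand $P_i$ of $P$ not in $\add U$, one must rule out $\t P_i \neq 0$; this should follow from the fact that an $\Ext$-projective module in $\T$ which has a nonzero submodule in $\Fac U$ but is not itself in $\add U$ would violate the $\Ext$-projectivity of $U$ inside $\T$ (via the sequence $0 \to \t P_i \to P_i \to \f P_i \to 0$ and Auslander-Reiten duality, since $\Hom_A(P_i, \tau U) = 0$ forces $\t P_i$ to be a direct summand rather than a proper submodule). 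I expect the clean argument uses Lemma \ref{lemma:wakamatsu} or the $\Ext$-projectivity of $U$ in $\lperp{(\tau U)}$ to show $\t(\text{splits off})$. The first two steps are routine diagram chases with the canonical sequence and the orthogonality $\Hom_A(\Fac U, U^\perp) = 0$.
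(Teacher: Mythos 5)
Your Steps 1 and 2 are correct and essentially identical to the first half of the paper's proof: apply $\Hom_A(-,N)$ to the canonical sequence $0\to \t P(\T)\to P(\T)\to \f P(\T)\to 0$ and use $\Hom_A(\Fac U,U^\perp)=0$ together with $\Ext^1_A(P(\T),\T)=0$.

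Step 3, however, has a genuine gap, and the source of the trouble is that the statement as printed contains a typo: the second assertion should read ``for every $A$-module $M$ which is $\Ext$-projective in $\T\cap U^\perp$ we have $M\in\add(\f P(\T))$.'' This is what the paper's proof actually establishes and what is later used (Proposition~\ref{prop:ff-torsion-classes}(d) and Theorem~\ref{thm:tau-tilting-reduction} invoke the proposition precisely to conclude that $\f P(\T)$ is the $\Ext$-progenerator of $\T\cap U^\perp$). You took the printed wording literally, and the claim in that form is simply false: when $\Fac U\subseteq\T\subseteq\lperp{(\tau U)}$ one has $U\in\add P(\T)$ by Proposition~\ref{prop:sttiltUA}, so $U$ is $\Ext$-projective in $\T$; but $\f U=0$, and $U$ cannot be a direct summand of $\f P(\T)\in U^\perp$ unless $U=0$ (a split monomorphism $U\to\f P(\T)$ would be a nonzero map from $\Fac U$ to $U^\perp$). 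So there is no argument that rescues your Step 3 as stated -- the difficulty you flag about ruling out $\t P_i\neq 0$ is a symptom of aiming at the wrong target, and the reduction to ``summands of $P$ not in $\add U$'' does not lead anywhere because $\add(\f P(\T))$ need not be contained in $\add P(\T)$.

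The paper's actual argument for the second assertion is short: take $N$ $\Ext$-projective in $\T\cap U^\perp$, use Proposition~\ref{prop:torsion-pairs}(e) to get a short exact sequence $0\to L\to M\xto{f}N\to 0$ with $M\in\add P(\T)$ and $L\in\T$. Apply the functor $\f$; since $\f N=N$, functoriality gives a commutative square with the canonical surjection $M\to \f M$, and the snake lemma shows $K:=\Ker(\f f)$ is a quotient of $L$ (hence in $\T$) and a submodule of $\f M\in U^\perp$ (hence in $U^\perp$). Now $\Ext$-projectivity of $N$ in $\T\cap U^\perp$ forces $0\to K\to\f M\to N\to 0$ to split, giving $N\in\add(\f M)\subseteq\add(\f P(\T))$. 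This is the key move your attempt is missing.
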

\begin{proof}
  Applying the functor $\Hom_A(-,N)$ to
  \eqref{eq:canonical-sequence}, for any $N$ in $\T\cap U^\perp$,
  we have an exact sequence 
  \[
  0=\Hom_A(\t P(\T),N) \to \Ext_A^1(\f P(\T),N) 
  \to \Ext_A^1(P(\T),N)= 0
  \]
  This shows that $\f P(\T)$ is $\Ext$-projective in  
  $\T\cap U^\perp$. 

  Now let $N$ be $\Ext$-projective in $\T\cap U^\perp$. 
  Since $N\in \T$, by Proposition \ref{prop:torsion-pairs}(e) there
  exist a short exact sequence $0\to L\to M\to N\to 0$ with
  $M\in\add(P(\T))$ and $L\in \T$.
  Since $\f N=N$ as $N\in U^\perp$, by the functoriality of $\f$
  we have a commutative diagram with exact rows:
  \begin{center}
    \begin{tikzcd}
      0\rar & L \rar\dar & M \rar{f}\dar & N \dar[equals]\rar & 0 \\
      0 \rar & K:=\Ker \f f \rar & \f M \rar{\f f} & \f N=N \rar & 0
    \end{tikzcd}
  \end{center}
  As the map $M\to\f M$ is surjective and the map $N\to\f N$
  is bijective, by the snake lemma we have that the map $L\to K$
  is surjective.
  Thus, since $L\in\T$, we have that $K$ also belongs to $\T$.
  Moreover, $K$ is a submodule of $\f M\in U^\perp$, hence $K$ is
  also in $U^\perp$.
  Since $N$ is $\Ext$-projective in $\T\cap U^\perp$ and we have
  $K\in\T\cap U^\perp$, the lower sequence splits.
  Thus $N\in\add(\f P(\T))$.
\end{proof}

We are ready to state the main result of this article, which gives
the procedure for $\tau$-tilting reduction.

\begin{theorem}
  \label{thm:tau-tilting-reduction}
  With the hypotheses of Setting \ref{set:main-results}, we have
  an order-preserving bijection 
  \[
  \taured:\sttilt_U A \longrightarrow \sttilt C
  \]
  given by $M\mapsto F(\f M)$ with inverse 
  $N\mapsto P({(\Fac U)*G(\Fac N)})$. 
  In particular, $\sttilt C$ can be embedded as an interval in
  $\sttilt A$.
\end{theorem}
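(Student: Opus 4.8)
The plan is to present $\taured$ as a composite of order-preserving bijections that are already available and then to identify this composite with the explicit formulas in the statement. Concretely, I would build the chain
\begin{align*}
  \sttilt_U A \xrightarrow{\ \Fac(-)\ }
  &\setP{\T\in\ftors A}{\Fac U\subseteq\T\subseteq\lperp{(\tau U)}}\\
  &\qquad\xrightarrow{\ \taured\ }\ftors\U
  \xrightarrow{\ F\ }\ftors C
  \xrightarrow{\ P(-)\ }\sttilt C.
\end{align*}
The first arrow is an order-preserving bijection with inverse $\T\mapsto P(\T)$: by Theorem \ref{thm:tors-sttilt} the map $M\mapsto\Fac M$ is an order-preserving bijection $\sttilt A\to\ftors A$, and by Proposition \ref{prop:sttiltUA} it restricts to a bijection onto the functorially finite torsion classes lying between $\Fac U$ and $\lperp{(\tau U)}$, see \eqref{eq:-sttiltUA-interval}. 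The two middle arrows are the order-preserving bijections provided by Theorem \ref{thm:ff-tp-reduction} together with Corollary \ref{cor:torsU}. The last arrow is Theorem \ref{thm:tors-sttilt} applied to the algebra $C$, with inverse $N\mapsto\Fac N$. Hence the composite is an order-preserving bijection whose inverse is order-preserving as well, and once its formula is known the last assertion follows from \eqref{eq:-sttiltUA-interval}, which exhibits $\sttilt_U A$ as an interval in $\sttilt A$.

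Next I would chase the composite. Given $M\in\sttilt_U A$, set $\mathcal G:=\Fac M\cap U^\perp=\taured(\Fac M)$; then $M$ is sent to $P(F(\mathcal G))$, the $\Ext$-progenerator in $\mod C$ of the functorially finite torsion class $F(\mathcal G)$. Since $F\colon\U\to\mod C$ is an \emph{exact} equivalence (Theorem \ref{thm:U-modC}), it preserves $\Ext$-projective objects, hence $\Ext$-progenerators; so $P(F(\mathcal G))\cong F(P_{\U}(\mathcal G))$, where $P_{\U}(\mathcal G)$ denotes the $\Ext$-progenerator of $\mathcal G$ computed in the exact category $\U$. It therefore suffices to show $P_{\U}(\mathcal G)\cong\f M$, up to passing to basic modules. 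First, $\f M\in\U$: indeed $M\in\Fac M\subseteq\lperp{(\tau U)}$ by Proposition \ref{prop:sttiltUA}, so $\f M\in\f(\lperp{(\tau U)})\subseteq\U$ by \eqref{eq:f-lperp}. Now I apply Proposition \ref{prop:technical-reasons} with $\T=\Fac M$, so that $P(\T)\cong M$: it shows that $\f M$ is $\Ext$-projective in $\T\cap U^\perp=\mathcal G$ and that every module that is $\Ext$-projective in $\mathcal G$ is a summand of $\f M$. As the exact structure of $\U$ is inherited from $\mod A$ (Remark \ref{rmk:U-exact}), the notion of $\Ext$-projectivity in $\mathcal G$ regarded as a torsion class in $\U$ agrees with $\Ext$-projectivity in $\T\cap U^\perp$ for modules in $\U$; hence $\add(\f M)=\add(P_{\U}(\mathcal G))$ and $\taured(M)\cong F(\f M)$. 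Running the same bookkeeping along the inverse chain — now with $\taured^{-1}(\G)=(\Fac U)*\G$ and with $G$ inverse to $F$ on torsion classes — yields $\taured^{-1}(N)\cong P((\Fac U)*G(\Fac N))$, as asserted.

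I expect the only genuinely non-formal point here to be the identification in the previous paragraph, namely that $\f M$ is not merely \emph{an} $\Ext$-projective of $\mathcal G$ but its full $\Ext$-progenerator; this is precisely what Proposition \ref{prop:technical-reasons} is tailored to deliver (its proof in fact treats modules $\Ext$-projective in $\T\cap U^\perp$, which is what is needed), the only care required being to read $\Ext$-projectivity inside the exact category $\U$, where by Remark \ref{rmk:U-exact} it is the same as in $\mod A$. All remaining points — order-preservation of each arrow and the matching of the two displayed formulas with the chase — are routine, the substantive work having been carried out at the level of torsion classes in Theorems \ref{thm:tp-reduction} and \ref{thm:ff-tp-reduction}.
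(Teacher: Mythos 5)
Your proposal follows exactly the paper's own argument: assemble the order-preserving bijection from Theorem \ref{thm:tors-sttilt}, Theorem \ref{thm:ff-tp-reduction}, and Corollary \ref{cor:torsU}, then verify the explicit formula by invoking Proposition \ref{prop:technical-reasons} to identify $\f M$ as the $\Ext$-progenerator of $(\Fac M)\cap U^\perp$ and Theorem \ref{thm:U-modC} (exactness of $F$) to transport this identification to $\mod C$. The added remarks about $\f M\in\U$ and about comparing $\Ext$-projectivity in $\U$ versus $\mod A$ are sound elaborations of the same proof, not a different route.
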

\begin{proof}
  By Theorems \ref{thm:tors-sttilt} and \ref{thm:ff-tp-reduction}
  we have a commutative diagram
  \begin{center}
    \begin{tikzcd}
      \setP{\T\in\ftors A}{\Fac U\subseteq\T\subseteq\lperp{(\tau
          U)}} \rar & \ftors C \dar\\
      \sttilt_U A \uar \rar[dashed] & \sttilt C
    \end{tikzcd}
  \end{center}
  in which arrow is a bijection, and where the dashed arrow is
  given by $M\mapsto P((\Fac M)\cap U^\perp)$ and the inverse is
  given by $N\mapsto P((\Fac U)* G(\Fac N))$. 
  Hence to prove the theorem we only need to show that for any
  $M\in\sttilt_U A$ we have $P(F(\Fac M\cap U^\perp))=F(\f M)$.
  Indeed, it follows from Proposition \ref{prop:technical-reasons}
  that $\f M$ is the $\Ext$-progenerator of 
  $(\Fac M)\cap U^\perp$; and since $F$ is an exact equivalence, 
  see Theorem \ref{thm:U-modC}, we have that $F(\f M)$ is the
  $\Ext$-progenerator of $F(\Fac M\cap U^\perp)$ which is exactly
  what we needed to show.
\end{proof}

\begin{corollary}
  \label{cor:mutations}
  The bijection in Theorem \ref{thm:tau-tilting-reduction} is
  compatible with mutation of support $\tau$-tilting modules.
\end{corollary}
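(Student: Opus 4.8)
The plan is to reduce the statement to the combinatorial description of mutation in terms of the Hasse quiver of $\sttilt A$, which by \cite[Cor. 2.31]{adachi_tau-tilting_2012} coincides with the underlying graph of the exchange graph, together with the fact that the bijection $\taured$ of Theorem \ref{thm:tau-tilting-reduction} is an isomorphism of partially ordered sets. First I would recall that mutation connects two basic support $\tau$-tilting $A$-modules $M$ and $N$ precisely when they are neighbours in the exchange graph, which by the cited result happens exactly when one is an immediate predecessor of the other in the Hasse quiver $Q(\sttilt A)$, i.e.\ (say) $M > N$ and there is no $L \in \sttilt A$ with $M > L > N$. Since $\sttilt_U A$ is an interval in $\sttilt A$ by \eqref{eq:-sttiltUA-interval}, the covering relations \emph{within} $\sttilt_U A$ are exactly the covering relations of $\sttilt A$ that happen to lie in $\sttilt_U A$; that is, if $M, N \in \sttilt_U A$ and $M$ covers $N$ in $\sttilt A$, then $M$ covers $N$ in the poset $\sttilt_U A$, because any intermediate $L$ would automatically lie in the interval. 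Conversely a covering relation in the sub-poset $\sttilt_U A$ is a covering relation in $\sttilt A$, again because $\sttilt_U A$ is an interval and hence convex.

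Next I would invoke Theorem \ref{thm:tau-tilting-reduction}, which gives an \emph{order-preserving} bijection $\taured : \sttilt_U A \to \sttilt C$ whose inverse is also order-preserving; thus $\taured$ is an isomorphism of posets. Isomorphisms of posets send covering relations to covering relations bijectively. Combining this with the previous paragraph and with \cite[Cor. 2.31]{adachi_tau-tilting_2012} applied to the algebra $C$, we get: $M$ and $N$ in $\sttilt_U A$ are related by mutation in $\sttilt A$ (equivalently, are neighbours in $Q(\sttilt A)$ lying in the interval $\sttilt_U A$) if and only if $\taured(M)$ and $\taured(N)$ are neighbours in $Q(\sttilt C)$, i.e.\ are related by mutation in $\sttilt C$. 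It remains to observe that the mutations of a module $M \in \sttilt_U A$ that keep $U$ as a summand are exactly the mutations at indecomposable summands of $M$ other than summands of $U$; these are the neighbours of $M$ that remain inside the interval $\sttilt_U A$, matching up (via the $|C| = |A| - |U|$ new summands) with all mutations of $\taured(M)$ in $\sttilt C$.

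The main obstacle, and the point that needs the most care, is verifying the bookkeeping that a neighbour $N$ of $M$ in $Q(\sttilt A)$ lies in $\sttilt_U A$ precisely when the mutation is performed \emph{not} at a summand of $U$; concretely, one should check that the covering relations emanating from $M$ inside the interval $\sttilt_U A$ are exactly $|A| - |U| = |C|$ in number (up and down combined, as in the $n$-regularity statement of Definition \ref{def:non-branching}), and that they are detected by $\taured$ as the full set of mutations of $\taured(M)$. This is where one uses that $\sttilt_U A$ is an interval with endpoints $P(\Fac U)$ and $T_U$, so that a mutation of $M$ at a summand of $U$ would leave the interval, whereas a mutation at any other summand stays inside — and the count of the latter is $|M| - |U|$, which equals $|C|$ exactly when $M$ is $\tau$-tilting and more generally is handled by passing to $A/\langle e\rangle$ as in Definition \ref{def:non-branching}. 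Once this matching of covering relations is in place, the corollary follows formally.
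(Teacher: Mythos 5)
Your proposal is correct and follows essentially the same route as the paper: the paper's proof likewise combines \cite[Cor.~2.31]{adachi_tau-tilting_2012} (mutation $=$ arrows of the Hasse quiver) with the fact that $\taured$ is an order isomorphism onto the interval $\sttilt_U A$, so that covering relations correspond on both sides. Your first two paragraphs already give the full argument (just spelled out in more detail than the paper), and the final paragraph's count of mutations at summands outside $\add U$ is extra bookkeeping not needed for the statement.
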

\begin{proof}
  It is shown in \cite[Cor. 2.31]{adachi_tau-tilting_2012} that
  the exchange graph of $\sttilt A$ coincides with the  Hasse
  diagram of $\sttilt A$.
  Since $\tau$-tilting reduction preserves the partial order in
  $\ftors$ (and hence in $\sttilt$) the claim follows.
\end{proof}

As a special case of Theorem \ref{thm:tau-tilting-reduction} we
obtain an independent proof of
\cite[Thm. 2.17]{adachi_tau-tilting_2012}.

\begin{corollary}
  \label{cor:2-complements-coro}
  Every almost-complete support $\tau$-tilting $A$-module $U$ is
  the direct summand of exactly two support $\tau$-tilting
  $A$-modules: $P(\Fac U)$ and $T_U$. 
\end{corollary}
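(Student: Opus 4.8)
The plan is to recognize that an almost-complete support $\tau$-tilting $A$-module $U$ is, in particular, a basic $\tau$-rigid $A$-module with $|U| = |A| - 1$, so the machinery of $\tau$-tilting reduction applies with this choice of $U$. First I would invoke Theorem \ref{thm:tau-tilting-reduction} to obtain the order-preserving bijection $\taured \colon \sttilt_U A \to \sttilt C$, where $C = C_U$ as in Setting \ref{set:main-results}. The key point is to compute the rank of the algebra $C$. Since $T_U = P(\lperp{(\tau U)})$ is a $\tau$-tilting $A$-module by Proposition \ref{prop:bongartz-completion}(c), hence a tilting $(A/\ann T_U)$-module by Proposition \ref{prop:2.2}(c), and since $U \in \add T_U$ is a direct summand with $|U| = |A| - 1$, we get $|T_U| = |A|$ and $|B_U| = |\End_A(T_U)| = |T_U| = |A|$. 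Removing the idempotent $e_U$ corresponding to the projective $B$-module $\Hom_A(T_U, U)$, which accounts for exactly $|U| = |A| - 1$ of the indecomposable summands, leaves $|C| = |B_U| - |U| = |A| - (|A| - 1) = 1$.

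Next I would determine $\sttilt C$ for an algebra $C$ with $|C| = 1$. Such an algebra is local, so it has exactly two basic support $\tau$-tilting modules: the zero module $0$ and $C$ itself (by Remark \ref{rmk:trival-sttilt}, these are always support $\tau$-tilting, and since $|C| = 1$ any $\tau$-tilting $C$-module must be $C$ up to isomorphism, while any support $\tau$-tilting module that is not $\tau$-tilting over $C$ must be $\tau$-tilting over $C/\langle e\rangle$ for a nonzero idempotent $e$, forcing $e = 1_C$ and the module to be $0$). Therefore $|\sttilt C| = 2$, and by the bijection of Theorem \ref{thm:tau-tilting-reduction} we conclude $|\sttilt_U A| = 2$; that is, $U$ is a direct summand of exactly two basic support $\tau$-tilting $A$-modules.

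Finally, to identify these two modules explicitly as $P(\Fac U)$ and $T_U$, I would use the description of $\sttilt_U A$ as the interval $\setP{M \in \sttilt A}{P(\Fac U) \leqslant M \leqslant T_U}$ from \eqref{eq:-sttiltUA-interval}: the bottom element $P(\Fac U)$ (the $\Ext$-progenerator of the functorially finite torsion class $\Fac U$, using Proposition \ref{prop:AS}(iii)) and the top element $T_U$ both lie in $\sttilt_U A$ — the former because $\Fac U \subseteq \Fac P(\Fac U)$ trivially places $U$ among its summands via Proposition \ref{prop:sttiltUA}, the latter by Proposition \ref{prop:bongartz-completion}(b) — and since this interval has exactly two elements, these must be precisely its endpoints. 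I expect the main (though modest) obstacle to be the careful bookkeeping of indecomposable summands in the computation $|C| = 1$, in particular verifying that the idempotent $e_U$ cuts out exactly the $|U|$ summands of $B_U$ coming from $U$; this follows because $\Hom_A(T_U, -)$ restricted to $\add T_U$ is fully faithful with image the projective $B_U$-modules, so indecomposable summands of $U$ correspond bijectively to indecomposable summands of $\Hom_A(T_U, U)$.
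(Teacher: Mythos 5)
Your argument is correct and follows essentially the same route as the paper: reduce via Theorem \ref{thm:tau-tilting-reduction}, compute $|C|=|T_U|-|U|=1$, conclude $|\sttilt_U A|=|\sttilt C|=2$, and identify the two elements as $P(\Fac U)$ and $T_U$ using the interval description \eqref{eq:-sttiltUA-interval}. The only soft spot is your parenthetical claim that $|C|=1$ forces any $\tau$-tilting $C$-module to be $C$ itself: indecomposability alone does not give this (one must show that a nonzero $\tau$-rigid module over a local algebra is projective, e.g.\ via the minimal projective presentation), and the paper handles precisely this point by citing \cite[Ex.~6.1]{adachi_tau-tilting_2012}.
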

\begin{proof}
  Let $U$ be an almost complete support $\tau$-tilting
  $A$-module.
  Clearly, we have $\set{P(\Fac U),T_U}\subseteq \sttilt_U A$.
  On the other hand, $|U|=|A|-1=|T_U|-1$ and thus $|C|=1$, see
  Setting \ref{set:main-results}. 
  By Theorem \ref{thm:tau-tilting-reduction} we have a bijection
  between $\sttilt_U A$ and $\sttilt C$, and since $|C|$=1 we have
  that $\sttilt C = \set{0,C}$,
  see \cite[Ex. 6.1]{adachi_tau-tilting_2012}.
  Thus $|\sttilt_U A|=|\sttilt C|=2$ and we have the assertion.
\end{proof}

For a finite dimensional algebra $A$ let $\stilt A$ be the set of
(isomorphism classes  of) basic support tilting $A$-modules and,
if $U$ is a partial-tilting $A$-module, let $\stilt_U A$ be the
subset of $\stilt A$ defined by
\[
\stilt_U A:=\setP{M\in\stilt A}{U\in\add M}.
\]
If we restrict ourselves to hereditary algebras we obtain the
following improvement of \cite[Thm. 3.2]{happel_links_2010}.

\begin{corollary}
  \label{cor:tilting-reduction}
  Let $A$ be a hereditary algebra. With the hypotheses of Setting
  \ref{set:main-results}, we have the following:
  \begin{enumerate}
    \item The algebra $C$ is hereditary.
    \item There is an order-preserving bijection 
  \[
  \taured:\stilt_U A \longrightarrow \stilt C
  \]
  given by $M\mapsto F(\f M)$ with inverse 
  $N\mapsto P({(\Fac U)*G(\Fac N)})$.
  \end{enumerate}
\end{corollary}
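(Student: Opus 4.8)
The plan is to deduce the statement from Theorem \ref{thm:tau-tilting-reduction} together with the fact that, over a hereditary algebra, the notions of support $\tau$-tilting module and support tilting module coincide. There is essentially no new computation to do; everything is a formal transfer along the equivalence of Theorem \ref{thm:U-modC}.

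First I would handle part (a). Since $A$ is hereditary and $U$ is partial-tilting, $U$ is rigid and hence $\tau$-rigid by Proposition \ref{prop:sttilt-tilt-hered}, so Setting \ref{set:main-results} applies and $T_U$, $B$ and $C$ are defined. By Example \ref{ex:tau-perpendicular} the $\tau$-perpendicular category $\U = \lperp{(\tau U)} \cap U^\perp$ coincides with the right perpendicular category of $U$ in the sense of \cite{geigle_perpendicular_1991}; by that reference, the perpendicular category of a partial-tilting module over a hereditary algebra is equivalent, as an abelian (equivalently, exact) category, to $\mod C'$ for some hereditary algebra $C'$. On the other hand, Theorem \ref{thm:U-modC} provides an exact equivalence $\U \simeq \mod C$. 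Composing, $\mod C \simeq \mod C'$, so $C$ is Morita equivalent to $C'$; since being hereditary is a Morita invariant, $C$ is hereditary. This is precisely the observation already recorded in Example \ref{ex:perpendicular-modC}.

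Next, for part (b), I would observe that Proposition \ref{prop:sttilt-tilt-hered} gives $\sttilt A = \stilt A$ and, more to the point, $\sttilt_U A = \stilt_U A$: a support $\tau$-tilting $A$-module is by definition a $\tau$-tilting $(A/\langle e\rangle)$-module for some idempotent $e$, the algebra $A/\langle e\rangle$ is again hereditary, and over it $\tau$-tilting means tilting; moreover over a hereditary algebra $U$ is $\tau$-rigid if and only if it is partial-tilting, so the summand condition $U \in \add M$ is the same on both sides. Since $C$ is hereditary by part (a), the same argument yields $\sttilt C = \stilt C$. Therefore the order-preserving bijection $\taured : \sttilt_U A \to \sttilt C$ of Theorem \ref{thm:tau-tilting-reduction}, given by $M \mapsto F(\f M)$ with inverse $N \mapsto P((\Fac U) * G(\Fac N))$, is literally the asserted order-preserving bijection $\stilt_U A \to \stilt C$ with the stated formulas.

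The only step drawing on input from outside the present framework is part (a): the Geigle--Lenzing theorem that the perpendicular category of a partial-tilting module over a hereditary algebra is again the module category of a hereditary algebra. This is where I expect whatever mild subtlety there is to reside (matching exact structures so that ``hereditary'' transfers); once it is in place, part (b) is a purely formal restriction of Theorem \ref{thm:tau-tilting-reduction} using Proposition \ref{prop:sttilt-tilt-hered}.
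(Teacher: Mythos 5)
Your proposal is correct and follows essentially the same route as the paper: the paper likewise gets heredity of $C$ from Example \ref{ex:perpendicular-modC} (i.e.\ the Geigle--Lenzing perpendicular category combined with Theorem \ref{thm:U-modC}), identifies $\sttilt_U A=\stilt_U A$ and $\sttilt C=\stilt C$ via Proposition \ref{prop:sttilt-tilt-hered}, and then restricts the bijection of Theorem \ref{thm:tau-tilting-reduction}. Your write-up merely spells out the Morita-equivalence step and the hereditariness of $A/\langle e\rangle$ that the paper leaves implicit.
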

\begin{proof}
  Since $A$ is hereditary, the $\tau$-rigid module $U$ is a
  partial-tilting module. 
  Moreover, as explained in Example \ref{ex:perpendicular-modC} we
  have that $C$ is also a hereditary algebra.
  Then by Proposition \ref{prop:sttilt-tilt-hered} we have
  $\sttilt_U A=\stilt_U A$ and $\sttilt C = \stilt C$.
  Then the claim follows from Theorem
 \ref{thm:tau-tilting-reduction}.
\end{proof}

We conclude this section with some examples illustrating our
results. 

\begin{example}
  \label{ex:A3r2-catU}
  Let $A$ be the algebra given by the quiver $3\xla{y} 2\xla{x} 1$
  with the relation $xy=0$, 
  see \cite[Ex. 6.4]{adachi_tau-tilting_2012}. 
  Consider the support $\tau$-tilting $A$-module
  $U=P_3$. 
  Then
  \[
  \U=\lperp{(\tau U)}\cap U^\perp=(\mod A)\cap U^\perp=U^\perp.
  \]
  Moreover, the Bongartz completion of $U$ is given by
  $T=P_1\oplus P_2\oplus P_3$, which is the basic
  progenerator of $\lperp{(\tau U)}=\mod A$.
  Hence $C=\End_A(P_1\oplus P_2)\cong
  k(\bullet \la \bullet)$, see Example \ref{ex:A2}.
  We may visualize this in the Auslander-Reiten quiver of $\mod
  A$, where each $A$-module is represented by its radical
  filtration: 
  \begin{center}
    \begin{tikzpicture}[commutative diagrams/every diagram]
  \node[draw,rectangle] (S1) at (mesh cs:u=0,v=0,r=0.7){$\rep{\mathbf{3}}$};
E  \node[draw,rectangle] (P2) at (mesh cs:u=0,v=1,r=0.7){$\rep{2\\3}$};
  \node                 (S2) at (mesh cs:u=1,v=1,r=0.7){$\rep{2}$};
  \node[draw,rectangle] (P3) at (mesh cs:u=1,v=2,r=0.7){$\rep{1\\2}$};
  \node                 (S3) at (mesh cs:u=2,v=2,r=0.7){$\rep{1}$};
  \draw[rounded corners=8pt] ($(S2)-(0.4,0.2)$) -- ($(P3)+(0,0.6)$) --
  ($(S3)-(-0.4,0.2)$) -- cycle;
  \path[commutative diagrams/.cd, every arrow]
  (S1) edge (P2)
  (P2) edge (S2)
  (S2) edge (P3)
  (P3) edge (S3)
  (S1) edge[commutative diagrams/path, dotted] (S2)
  (S2) edge[commutative diagrams/path, dotted] (S3);
\end{tikzpicture}

  \end{center}
  The indecomposable summands of $T$ are indicated with rectangles
  and $\U$ is enclosed in a triangle. 
  Note that $\U$ is equivalent
  to $\mod C$ as shown in Theorem \ref{thm:U-modC}. 

  By Theorem \ref{thm:tau-tilting-reduction} we have that 
  $\sttilt C$ can be embedded as an interval in $\sttilt A$.
  We have indicated this embedding in $Q(\sttilt A)$ in Figure
  \ref{fig:exchange-graph-sttilt-A3r2} by enclosing the image of
  $\sttilt C$ in rectangles.
  \begin{figure}[t]
    \centering
    \scalebox{0.75}{\begin{tikzpicture}[commutative diagrams/every diagram]
  \node[draw,shape=rectangle] (a) at (90:5)      {$\rep{\mathbf{3}}\oplus\rep{2\\3}\oplus\rep{1\\2}$};
  \node[draw,shape=rectangle] (b) at (90+72:5)   {$\rep{\mathbf{3}}\oplus\rep{1\\2}\oplus\rep{1}$};
  \node[draw,shape=rectangle] (c) at (90+72*2:5) {$\rep{\mathbf{3}}\oplus\rep{1}$};
  \node[draw,shape=rectangle] (d) at (90+72*3:5) {$\rep{\mathbf{3}}$};
  \node[draw,shape=rectangle] (e) at (90+72*4:5) {$\rep{\mathbf{3}}\oplus\rep{2\\3}$};
  \node (f) at (90:3)      {$\rep{2}\oplus\rep{2\\3}\oplus\rep{1\\2}$};
  \node (g) at (90+72:3)   {$\rep{1\\2}\oplus\rep{1}$};
  \node (h) at (90+72*2:3) {$\rep{1}$};
  \node (i) at (90+72*3:3) {$\rep{0}$};
  \node (j) at (90+72*4:3) {$\rep{2}\oplus\rep{2\\3}$};
  \node (k) at (90+36:3*0.809017)    {$\rep{2}\oplus\rep{1\\2}$};
  \node (l) at (90-72-36:3*0.809017) {$\rep{2}$};
  \path[commutative diagrams/.cd, every arrow]
  (a) edge (b)
  (b) edge (c)
  (c) edge (d)
  (a) edge (e)
  (e) edge (d)
  (a) edge (f)
  (f) edge (k)
  (k) edge (g)
  (g) edge (h)
  (h) edge (i)
  (f) edge (j)
  (j) edge (l)
  (l) edge (i)
  (k) edge (l)
  (b) edge (g)
  (e) edge (j)
  (d) edge (i)
  (c) edge (h);
\end{tikzpicture}
    \caption{Embedding of $\sttilt C$ in $Q(\sttilt A)$, 
      see Example \ref{ex:A3r2-catU}.} 
    \label{fig:exchange-graph-sttilt-A3r2}
  \end{figure}
\end{example}

\begin{example}
  \label{ex:preproj}
  Let $A$ be the preprojective algebra of Dynkin type $A_3$, \ie
  the algebra given by the quiver 
  \begin{center}
    \begin{tikzcd}
      3\rar[bend right=25,swap]{y_2} & 
      2 \lar[bend right=25,swap]{x_2}\rar[bend right=25,swap]{y_1}
      & 1 \lar[bend right=25,swap]{x_1}
    \end{tikzcd}    
  \end{center}  
  with relations $x_1y_1=0$, $y_2x_2=0$ and $y_1x_1=x_2y_2$.

  Let $U=\rep{2\\1}$, then $U$ is $\tau$-rigid and not a support
  $\tau$-tilting $A$-module. 
  The Bongartz completion of $U$ is given by
  $T=P_1\oplus P_2\oplus\rep{2\\1}
  =\rep{1\\2\\3}\oplus\rep{2\\13\\2}\oplus\rep{2\\1}$;
  hence $C$ is isomorphic to the path algebra given by the quiver
  \begin{center}
    \begin{tikzcd}
      \bullet\rar[bend right=25,swap]{y} & 
      \bullet \lar[bend right=25,swap]{x}
    \end{tikzcd}    
  \end{center}
  with the relations $xy=0$ and $yx=0$, that is $C$ is isomorphic to
  the preprojective algebra of type $A_2$.
  In this case $\lperp{(\tau U)}$ consists of all
  $A$-modules $M$ such that $\tau U = S_3$ is not a direct
  summand of $\mathrm{top}\, M$.
  On the other hand, it is easy to see that the only
  indecomposable $A$-modules in $\lperp{(\tau U)}$ which do not belong
  to $U^\perp$ are $U$, $P_2$, $S_2$ and $\rep{1\\2}$.
  We can visualize this in the Auslander-Reiten quiver of $\mod A$
  as follows (note that the dashed edges are to be identified to
  form a M\"obius strip):

  \begin{center}
    \begin{tikzpicture}[commutative diagrams/every diagram]
    \node (P1) at (mesh cs:u=0,v=0,r=0.7) {$\rep{2\\13}$};
    \node (S2) at (mesh cs:u=-1,v=0,r=0.7) {$\rep{3}$};
    \node (X) at (mesh cs:u=0,v=2,r=0.7) {$\rep{3\\2\\1}$};
    \node[draw,rectangle] (Y) at (mesh cs:u=2,v=0,r=0.7) {$\rep{1\\2\\3}$};
    \node[draw,rectangle] (S3) at (mesh cs:u=-1,v=-1,r=0.35) {$\rep{2\\13\\2}$};
    \node (S4) at (mesh cs:u=0,v=-1,r=0.7) {$\rep{1}$};

    \node (M1) at (mesh cs:u=1,v=1,r=0.7) {$\rep{2}$};
    \node[draw,rectangle] (M2) at (mesh cs:u=0,v=1,r=0.7) {$\rep{2\\1}$};
    \node (M4) at (mesh cs:u=1,v=0,r=0.7) {$\rep{2\\3}$};

    \node (S1) at (mesh cs:u=2,v=2,r=0.7) {$\rep{13\\2}$};
    \node (I2) at (mesh cs:u=1,v=2,r=0.7) {$\rep{3\\2}$};
    \node (I4) at (mesh cs:u=2,v=1,r=0.7) {$\rep{1\\2}$};
    
    \node (TS2) at (mesh cs:u=2,v=3,r=0.7) {$\rep{1}$};
    \node[draw,rectangle] (TS3) at ($(mesh cs:u=2,v=2,r=0.7)+(mesh cs:u=1,v=1,r=0.35)$) {$\rep{2\\13\\2}$}; 
    \node (TS4) at (mesh cs:u=3,v=2,r=0.7) {$\rep{3}$};


    \draw[rounded corners=4pt] ($(P1)+(-0.4,0.7)$) --
    ($(P1)+(0.4,0.7)$) --  ($(Y)+(0.4,0.7)$) --
    ($(Y)+(0.4,-0.7)$) -- ($(Y)-(0.4,0.7)$) --
    ($(M4)-(0.4,0.4)$) -- ($(S4)-(0.4,0.4)$) --
    ($(S4)+(-0.4,0.4)$) -- cycle;

    \draw[rounded corners=4pt] ($(TS2)+(0.4,0.4)$) --
    ($(TS2)+(0.4,-0.4)$) --
    ($(TS2)-(0.4,0.4)$) --
    ($(TS2)+(-0.4,0.4)$) -- cycle;

    \path[commutative diagrams/.cd, every arrow]
    (S2) edge (P1)

    (S4) edge (P1)
    (P1) edge (M2)

    (P1) edge (M4)
    (M2) edge (M1)

    (M4) edge (M1)
    (M1) edge (I2)

    (M1) edge (I4)
    (I2) edge (S1)

    (I4) edge (S1)
    (S1) edge (TS2)

    (S1) edge (TS4)
    
    (M2) edge (X)
    (X) edge (I2)

    (M4) edge (Y)
    (Y) edge (I4)

    (S3) edge (P1)
    (S1) edge (TS3)

    (P1)  edge[commutative diagrams/path, dotted] (M1)
    (M1)  edge[commutative diagrams/path, dotted] (S1)

    (S2)  edge[commutative diagrams/path, dotted] (M2)
    (I2)  edge[commutative diagrams/path, dotted] (M2)
    (I2)  edge[commutative diagrams/path, dotted] (TS2)

    (S4)  edge[commutative diagrams/path, dotted] (M4)
    (I4)  edge[commutative diagrams/path, dotted] (M4)
    (I4)  edge[commutative diagrams/path, dotted] (TS4)

    (S2)  edge[commutative diagrams/path, dashed] (S3)
    (S3)  edge[commutative diagrams/path, dashed] (S4)
    (TS2)  edge[commutative diagrams/path, dashed] (TS3)
    (TS3)  edge[commutative diagrams/path, dashed] (TS4);
  \end{tikzpicture}
  \end{center}

  The indecomposable summands of $T$ are indicated with rectangles
  and $\U$ is encircled.
  Note that $\U$ is equivalent
  to $\mod C$ as shown in Theorem \ref{thm:U-modC}.
  By Theorem \ref{thm:tau-tilting-reduction} we have that 
  $\sttilt C$ can be embedded as an interval in $\sttilt A$.
  We have indicated this embedding in $Q(\sttilt A)$ in Figure
  \ref{fig:exchange-graph-preproj} by enclosing the image of $\sttilt
  C$ in rectangles.
  \begin{figure}[t]
    \centering
    \scalebox{0.75}{\begin{tikzpicture}
      \node (5) at (90:2)      {$\rep{3}\oplus\rep{11\\3}\oplus\rep{3\\2\\1}$};
      \node (6) at (90+60:2)   {$\rep{3}\oplus\rep{3\\2}\oplus\rep{3\\2\\1}$};
      \node (16) at (90+60*2:2) {$\rep{3}\oplus\rep{3\\2}$};
      \node (23) at (90+60*3:2) {$\rep{3}$};
      \node (20) at (90+60*4:2) {$\rep{3}\oplus\rep{1}$};
      \node (15) at (90+60*5:2) {$\rep{3}\oplus\rep{13\\2}\oplus\rep{1}$};
      \node (1) at (90:5.5)      {$\rep{1\\2\\3}\oplus\rep{2\\13\\2}\oplus\rep{3\\2\\1}$};
      \node (2) at (90+60:5.5)   {$\rep{2\\3}\oplus\rep{2\\13\\2}\oplus\rep{3\\2\\1}$};
      \node (17) at (90+60*2:5.5) {$\rep{2\\3}\oplus\rep{2}$};
      \node (21) at (90+60*3:5.5) {$\rep{2}$};
      \node (19) at (90+60*4:5.5) {$\rep{1\\2}\oplus\rep{1}$};
      \node (11) at (90+60*5:5.5) {$\rep{1\\2\\3}\oplus\rep{1\\2}\oplus\rep{1}$};
      \node[draw,shape=rectangle] (3) at (90:8)      {$\rep{1\\2\\3}\oplus\rep{2\\13\\2}\oplus\rep{2\\1}$};
      \node[draw,shape=rectangle] (8) at (90+60:8)   {$\rep{2\\3}\oplus\rep{2\\13\\2}\oplus\rep{2\\1}$};
      \node[draw,shape=rectangle] (13) at (90+60*2:8) {$\rep{2\\3}\oplus\rep{2}\oplus\rep{2\\1}$};
      \node[draw,shape=rectangle] (18) at (90+60*3:8) {$\rep{2}\oplus\rep{2\\1}$};
      \node[draw,shape=rectangle] (14) at (90+60*4:8) {$\rep{1\\2}\oplus\rep{2\\1}$};
      \node[draw,shape=rectangle] (9) at (90+60*5:8) {$\rep{1\\2\\3}\oplus\rep{1\\2}\oplus\rep{2\\1}$};
      \node (7) at (-0.86*4.5,2*0.5) {$\rep{2\\3}\oplus\rep{3\\2}\oplus\rep{3\\2\\1}$};
      \node (12) at (-0.86*4.5,-2*0.5) {$\rep{2\\3}\oplus\rep{3\\2}$};
      \node (4) at ($(5)+(2.5*0.86*0.5,2.5*0.86*0.86)$) {$\rep{1\\2\\3}\oplus\rep{13\\2}\oplus\rep{3\\2\\1}$};
      \node (10) at ($(15)+(2.5*0.86*0.5,2.5*0.86*0.86)$) {$\rep{1\\2\\3}\oplus\rep{13\\2}\oplus\rep{1}$};
      \node (22) at ($(20)+(2.5*0.86*0.5,-2.5*0.86*0.86)$) {$\rep{1}$};
      \node (24) at ($(23)+(2.5*0.86*0.5,-2.5*0.86*0.86)$) {$\rep{0}$};
      \path[commutative diagrams/.cd, every arrow]
      (3) edge (8)
      (3) edge (9)
      (9) edge (14)
      (8) edge (13)
      (13) edge (18)
      (14) edge (18)
      (1) edge (2)
      (17) edge (21)
      (11) edge (19)
      (5) edge (6)
      (5) edge (15)
      (6) edge (16)
      (15) edge (20)
      (16) edge (23)
      (20) edge (23)
      (1) edge (3)
      (2) edge (8)
      (13) edge (17)
      (18) edge (21)
      (14) edge (19)
      (9) edge (11)
      (2) edge (7)
      (7) edge (12)
      (12) edge (17)
      (6) edge (7)
      (12) edge (16)
      (1) edge (4)
      (4) edge (10)
      (10) edge (11)
      (4) edge (5)
      (10) edge (15)
      (20) edge (22)
      (23) edge (24)
      (21) edge (24)
      (19) edge (22)
      (22) edge (24);
\end{tikzpicture}

    \caption{Embedding of $\sttilt C$ in $Q(\sttilt A)$, 
      see Example \ref{ex:preproj}.} 
    \label{fig:exchange-graph-preproj}
  \end{figure}  
\end{example}

\begin{example}
  \label{ex:kronecker-ex}
  Let $A$ be the algebra given by the path algebra of the quiver
  \begin{center}
    \begin{tikzpicture}[commutative diagrams/every diagram]
      \node (A) at (90+120:1){$2$};
      \node (B) at (90:1){$1$};
      \node (C) at (90+120*2:1){$3$};
      \path[commutative diagrams/every arrow]
      (B) edge[bend right=25] (A)
      (B) edge (A)
      (C) edge[bend right=25] (B)
      (C) edge (B)
      (A) edge[bend right=25] (C)
      (A) edge (C);
    \end{tikzpicture}        
  \end{center}
  modulo the ideal generated by all paths of length two.

  Let $U=\rep{11\\222}$. The Bongartz completion
  of $U$ is given by
  $T=P_1\oplus \rep{11\\222}\oplus P_3
  =\rep{1\\22}\oplus\rep{11\\222}\oplus\rep{3\\11}$;
  hence $C\cong k\times k$.
  It is easy to see that $Q(\sttilt C)$ is given by the quiver
  \begin{center}
    \begin{tikzpicture}
      \node (kk) at (90:1) {$\rep{1}\oplus\rep{2}$};
      \node (k0) at (90+90:1) {$\rep{1}$};
      \node (0) at (270:1) {$\rep{0}$};
      \node (0k) at (0:1) {$\rep{2}$};
      \path[commutative diagrams/every arrow]
      (kk) edge (k0)
      (kk) edge (0k)
      (k0) edge (0)
      (0k) edge (0);
    \end{tikzpicture}
  \end{center}
  By Theorem \ref{thm:tau-tilting-reduction} we have that 
  $\sttilt C$ can be embedded as an interval in $\sttilt A$.
  We have indicated this embedding in $Q(\sttilt A)$ in Figure
  \ref{fig:exchange-graph-kronecker} by drawing $Q(\sttilt C)$
  with double arrows.
  \begin{figure}[t]
    \centering
    \scalebox{0.75}{\begin{tikzpicture}
  \node (1) at (90:8) {$\rep{1\\22}\oplus\rep{2\\33}\oplus\rep{3\\11}$};
  \node (2) at (90+16.36:8) {$\rep{33\\111}\oplus\rep{2\\33}\oplus\rep{3\\11}$};
  \node (3) at (90+16.36*2:8) {$\rep{33\\111}\oplus\rep{2\\33}\oplus\rep{333\\1111}$};
  \node (4) at (90+16.36*3:8) {\rotatebox[origin=c]{50}{$\cdots$}};
  \node (5) at (90+16.36*4:8) {$\rep{3}\oplus\rep{2\\33}\oplus\rep{33\\1}$};
  \node (6) at (90+16.36*5:8) {$\rep{3}\oplus\rep{2\\33}$};
  \node (7) at (90+16.36*6:8) {$\rep{22\\333}\oplus\rep{2\\33}$};
  \node (8) at (90+16.36*7:8) {$\rep{22\\333}\oplus\rep{222\\3333}$};
  \node (9) at (90+16.36*8:8) {\rotatebox[origin=c]{-50}{$\cdots$}};
  \node (10) at (90+16.36*9:8) {$\rep{22\\3}\oplus\rep{2}$};
  \node (11) at (90+16.36*10:8) {$\rep{2}$};
  \node (12) at (-90:8) {$\rep{0}$};
  \node (2a) at (90-16.36:8) {$\rep{1\\22}\oplus\rep{2\\33}\oplus\rep{22\\333}$};
  \node (3a) at (90-16.36*2:8) {$\rep{1\\22}\oplus\rep{222\\3333}\oplus\rep{22\\333}$};
  \node (4a) at (90-16.36*3:8) {\rotatebox[origin=c]{-50}{$\cdots$}};
  \node (5a) at (90-16.36*4:8) {$\rep{1\\22}\oplus\rep{22\\3}\oplus\rep{2}$};
  \node (6a) at (90-16.36*5:8) {$\rep{1\\22}\oplus\rep{2}$};
  \node (7a) at (90-16.36*6:8) {$\rep{1\\22}\oplus\rep{11\\222}$};
  \node (8a) at (90-16.36*7:8) {$\rep{111\\2222}\oplus\rep{11\\222}$};
  \node (9a) at (90-16.36*8:8) {\rotatebox[origin=c]{50}{$\cdots$}};
  \node (10a) at (90-16.36*9:8) {$\rep{11\\2}\oplus\rep{1}$};
  \node (11a) at (90-16.36*10:8) {$\rep{1}$};
  \path[commutative diagrams/.cd, every arrow]
  (1) edge (2)
  (2) edge (3)
  (3) edge (4)
  (4) edge (5)
  (5) edge (6)
  (6) edge (7)
  (7) edge (8)
  (8) edge (9)
  (9) edge (10)
  (10) edge (11)
  (11) edge (12)
  (1) edge (2a)
  (2a) edge (3a)
  (3a) edge (4a)
  (4a) edge (5a)
  (5a) edge (6a)
  (6a) edge (7a)
  (7a) edge[commutative diagrams/Rightarrow] (8a)
  (8a) edge (9a)
  (9a) edge (10a)
  (10a) edge (11a)
  (11a) edge (12);
  \path[commutative diagrams/.cd, every arrow]
  (2a) edge (7)
  (3a) edge (8)
  (5a) edge (10)
  (6a) edge (11);
  \node[white] (2b) at (0,1.5*5-1) {$\rep{1\\22}\oplus\rep{11\\222}\oplus\rep{3\\11}$};
  \node[white] (3b) at (0,1.5*4-1) {$\rep{111\\2222}\oplus\rep{11\\222}\oplus\rep{3\\11}$};
  \node[white] (4b) at (0,1.5*3-1) {$\vdots$};
  \node[white] (5b) at (0,1.5*2-1) {$\rep{11\\2}\oplus\rep{111\\22}\oplus\rep{3\\11}$};
  \node[white] (6b) at (0,1.5*1-1) {$\rep{1}\oplus\rep{3\\11}$};
  \node[white] (7b) at (0,0-1) {$\rep{33\\111}\oplus\rep{3\\11}$};
  \node[white] (8b) at (0,-1.5*1-1) {$\rep{33\\111}\oplus\rep{333\\1111}$};
  \node[white] (9b) at (0,-1.5*2-1) {$\vdots$};
  \node[white] (10b) at (0,-1.5*3-1) {$\rep{33\\1}\oplus\rep{3}$};
  \node[white] (11b) at (0,-1.5*4-1) {$\rep{3}$};
  \path[commutative diagrams/.cd, every arrow]
  (1) edge[commutative diagrams/crossing over] (2b)
  (2b) edge[commutative diagrams/crossing over,commutative diagrams/Rightarrow] (3b)
  (3b) edge[commutative diagrams/crossing over] (4b)
  (4b) edge[commutative diagrams/crossing over] (5b)
  (5b) edge[commutative diagrams/crossing over] (6b)
  (6b) edge[commutative diagrams/crossing over] (7b)
  (7b) edge[commutative diagrams/crossing over] (8b)
  (8b) edge[commutative diagrams/crossing over] (9b)
  (9b) edge[commutative diagrams/crossing over] (10b)
  (10b) edge[commutative diagrams/crossing over] (11b)
  (11b) edge[commutative diagrams/crossing over] (12);
  \path[commutative diagrams/.cd, every arrow]
  (2b) edge[commutative diagrams/crossing over,commutative diagrams/Rightarrow] (7a)
  (3b) edge[commutative diagrams/crossing over,commutative diagrams/Rightarrow] (8a)
  (5b) edge[commutative diagrams/crossing over] (10a)
  (6b) edge[commutative diagrams/crossing over] (11a);
  \path[commutative diagrams/.cd, every arrow]
  (2) edge[commutative diagrams/crossing over] (7b)
  (3) edge[commutative diagrams/crossing over] (8b)
  (5) edge[commutative diagrams/crossing over] (10b)
  (6) edge[commutative diagrams/crossing over] (11b);
  ;
  \node[fill=white] (2d) at (0,1.5*5-1) {$\rep{1\\22}\oplus\rep{11\\222}\oplus\rep{3\\11}$};
  \node[fill=white] (3d) at (0,1.5*4-1) {$\rep{111\\2222}\oplus\rep{11\\222}\oplus\rep{3\\11}$};
  \node[fill=white] (4d) at (0,1.5*3-1) {$\vdots$};
  \node[fill=white] (5d) at (0,1.5*2-1) {$\rep{11\\2}\oplus\rep{111\\22}\oplus\rep{3\\11}$};
  \node[fill=white] (6d) at (0,1.5*1-1) {$\rep{1}\oplus\rep{3\\11}$};
  \node[fill=white] (7d) at (0,0-1) {$\rep{33\\111}\oplus\rep{3\\11}$};
  \node[fill=white] (8d) at (0,-1.5*1-1) {$\rep{33\\111}\oplus\rep{333\\1111}$};
  \node[fill=white] (9d) at (0,-1.5*2-1) {$\vdots$};
  \node[fill=white] (10d) at (0,-1.5*3-1) {$\rep{33\\1}\oplus\rep{3}$};
  \node[fill=white] (11d) at (0,-1.5*4-1) {$\rep{3}$};
\end{tikzpicture}

    \caption{Embedding of $\sttilt C$ in $Q(\sttilt A)$, 
      see Example \ref{ex:kronecker-ex}.} 
    \label{fig:exchange-graph-kronecker}
  \end{figure}
\end{example}

\subsection{Proof of the main theorems}
\label{sec:proofs-of-main-thms}

We begin with the proof of Theorem \ref{thm:tp-reduction}.
The following proposition shows that the map 
$\T\mapsto \T\cap U^\perp$ in Theorem \ref{thm:tp-reduction} is
well defined. 

\begin{proposition}
  \label{prop:ff-torsion-classes}
  Let $\T$ be a torsion class in $\mod A$ 
  such that $\Fac U\subseteq \T\subseteq \lperp{(\tau U)}$. Then
  the following holds:
  \begin{enumerate}
  \item $\T\cap U^\perp$ is in $\tors\U$.
  \item[(b)] $\T\cap U^\perp=\f\T$.
  \end{enumerate}
  If in addition $\T$ is functorially finite in $\mod A$, then we
  have:
  \begin{enumerate}
  \item[(c)] $\T\cap U^\perp = \Fac(\f P(\T))\cap U^\perp$.
  \item[(d)] $F(\T\cap U^\perp)$ is in $\ftors C$.
  \end{enumerate}
\end{proposition}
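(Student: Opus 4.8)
The plan is to establish the four parts in turn, leaning throughout on the closure properties of the torsion pairs $(\Fac U,U^{\perp})$ and $(\lperp{(\tau U)},\Sub\tau U)$ and on the hypothesis $\Fac U\subseteq\T\subseteq\lperp{(\tau U)}$.

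For part (a), I would first use Remark \ref{rmk:U-exact} to reduce an admissible short exact sequence $0\to X\to Y\to Z\to 0$ in $\U$ to an ordinary exact sequence in $\mod A$ whose three terms all lie in $\U$. Then the first axiom of Definition \ref{def:torsU} holds because $\T$ and $U^{\perp}$ are both closed under extensions, and the second because $\T$ is closed under factor modules while $Z\in U^{\perp}$ is automatic from $Z\in\U$. The inclusion $\T\cap U^{\perp}\subseteq\U$, needed for the statement to make sense, is exactly the hypothesis $\T\subseteq\lperp{(\tau U)}$. For part (b), the inclusion $\f\T\subseteq\T\cap U^{\perp}$ follows because $\f M$ is a factor module of $M$, so it remains in $\T$ (and in $\lperp{(\tau U)}$, both being closed under factor modules), and lies in $U^{\perp}$ by construction; the reverse inclusion is immediate since for $N\in U^{\perp}$ the canonical sequence \eqref{eq:canonical-sequence} is $0\to 0\to N\to N\to 0$, whence $N=\f N\in\f\T$.

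For part (c) I would invoke $\T=\Fac P(\T)$ from Proposition \ref{prop:torsion-pairs}(c). One inclusion is clear because $\f P(\T)\in\T$ gives $\Fac(\f P(\T))\cap U^{\perp}\subseteq\T\cap U^{\perp}$. For the other, given $N\in\T\cap U^{\perp}$ I pick an epimorphism $P(\T)^{n}\twoheadrightarrow N$ and apply $\f$. The one point that needs care here is that $\f$ sends epimorphisms to epimorphisms: the image in $N$ of the torsion submodule of $P(\T)^{n}$ lies in $\Fac U$, hence inside $\t N=0$, so the composite $P(\T)^{n}\to N$ factors through an epimorphism $(\f P(\T))^{n}=\f(P(\T)^{n})\twoheadrightarrow\f N=N$. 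Thus $N\in\Fac(\f P(\T))$, and so $N\in\Fac(\f P(\T))\cap U^{\perp}$.

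Finally, for part (d), parts (b) and (c) give $\T\cap U^{\perp}=\Fac(\f P(\T))\cap U^{\perp}$ with $\f P(\T)\in\U$; since $\lperp{(\tau U)}$ is closed under factor modules this equals $\Fac(\f P(\T))\cap\U$, which by Remark \ref{rmk:U-exact} is precisely the full subcategory of $\U$ whose objects are the admissible quotient objects of (powers of) $\f P(\T)$ in the exact category $\U$. Applying the exact, additive equivalence $F\colon\U\to\mod C$ of Theorem \ref{thm:U-modC} carries this subcategory onto $\Fac(F(\f P(\T)))\subseteq\mod C$, which is a functorially finite torsion class by Proposition \ref{prop:AS}(c); hence $F(\T\cap U^{\perp})\in\ftors C$. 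I expect the only non-formal steps to be the right exactness of the idempotent radical quotient $\f$ used in part (c) and the identification, in part (d), of the $U^{\perp}$-intersection of $\Fac(\f P(\T))$ with the admissible quotients of $\f P(\T)$ in $\U$; the remainder is bookkeeping with closure properties and with Theorem \ref{thm:U-modC}.
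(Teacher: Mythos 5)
Your parts (a)–(c) follow the paper's proof essentially step for step: (a) via extension-closure of $\T$ and $U^\perp$ and factor-closure of $\T$, (b) via the uniqueness/triviality of the canonical sequence for modules in $U^\perp$, and (c) via the observation that $\Hom_A(\Fac U,U^\perp)=0$ forces an epimorphism from $\add P(\T)$ onto $M\in\T\cap U^\perp$ to factor through its torsion-free quotient. Where you genuinely diverge is (d): the paper invokes Proposition \ref{prop:technical-reasons} to see that $\f P(\T)$ is the $\Ext$-progenerator of $\T\cap U^\perp$, transports this through the exact equivalence $F$ of Theorem \ref{thm:U-modC}, and concludes with Proposition \ref{prop:torsion-pairs}; you instead combine (c) with Remark \ref{rmk:U-exact} (i.e.\ Proposition \ref{prop:2-out-of-3}, which guarantees the kernel of an epimorphism between modules in $\U$ again lies in $\U$) to identify $\T\cap U^\perp=\Fac(\f P(\T))\cap\U$ with the admissible quotients of powers of $\f P(\T)$ in $\U$, push this through $F$ and back through the exact quasi-inverse $G$ to get $F(\T\cap U^\perp)=\Fac\bigl(F(\f P(\T))\bigr)$, and finish with Proposition \ref{prop:AS}(c). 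Your route is more economical here, since it bypasses the $\Ext$-projectivity analysis of Proposition \ref{prop:technical-reasons} entirely (the paper needs that proposition later for Theorem \ref{thm:tau-tilting-reduction} in any case, so it loses nothing by using it), and it makes the role of the two-out-of-three property explicit. One small caveat: Proposition \ref{prop:AS}(c), read literally for an arbitrary module, is what you cite for \emph{both} the torsion-class and the functorial-finiteness assertions, but extension-closure of $\Fac$ of a module really uses $\tau$-rigidity in the source, which you have not yet verified for $F(\f P(\T))$; this is harmless because extension-closure of $F(\T\cap U^\perp)$ already follows from your part (a) together with Corollary \ref{cor:torsU}(a), while the functorial finiteness of $\Fac$ of any module is the unconditionally valid part of Auslander--Smal\o. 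With that one-line adjustment the argument is complete.
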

\begin{proof}
  (a) $\T\cap U^\perp$ is closed under extensions since both 
  $\T$ and $U^\perp$ are closed under extensions in $\mod A$. Now
  let $0\to L\to M\to N\to 0$ be a short exact sequence in $\mod A$
  with terms in $\U$. If
  $M\in\T$, then $N\in\T$ since $\T$ is closed under factor
  modules. Thus $N\in\T\cap\U = \T\cap U^\perp$. 
  This shows that $\T\cap U^\perp$ is a torsion class in $\U$.
  
  (b) Since $\T$ is closed under factor modules in $\mod A$ we
  have that $\f\T\subseteq \T\cap U^\perp$, hence we only need to
  show the reverse inclusion. 
  Let $M\in\T\cap U^\perp$. 
  In particular we have that $M\in U^\perp$, hence $\f M= M$ and
  the claim follows.

  (c) Since $\Fac(\f P(\T))\subseteq \T$, we have that 
  $\Fac(\f P(\T))\cap U^\perp\subseteq\T\cap U^\perp$.
  Now we show the opposite inclusion.
  Let $M$ be in $\T\cap U^\perp$, then there is an epimorphism
  $f:X\to M$ with $X$ in $\add(P(\T))$. 
  Since there are no non-zero morphisms from
  $\Fac U$ to $U^\perp$ we have a commutative diagram 
  \begin{center}
    \begin{tikzcd}
      0 \rar & \t X \rar \drar[swap]{0} & X \rar \dar[two heads]{f} &
      \f X \rar \dlar[dashrightarrow] & 0 \\ 
      & & M
    \end{tikzcd}    
  \end{center}
  Hence $M$ is in $\Fac(\f P(\T))\cap U^\perp$ and we have the
  equality $\T\cap U^\perp = \Fac(\f P(\T))\cap U^\perp$.
    
  (d) 
  By Proposition \ref{prop:technical-reasons} we have that 
  $\f P(\T)$ is the $\Ext$-progenerator of
  $\T\cap U^\perp\subseteq\U$, and since $F:\U\to\mod C$ is an
  exact equivalence, see Theorem \ref{thm:U-modC}, we have that
  $F(\f P(\T))$ is the $\Ext$-progenerator of $F(\T\cap U^\perp)$.
  Then by Proposition \ref{prop:torsion-pairs} we have that
  $F(\T\cap U^\perp)=\Fac(F(\f P(\T)))$ is functorially finite in
  $\mod C$.
\end{proof}

Now we consider the converse map $\G\mapsto (\Fac U)*\G$.
We start with the following easy observation.

\begin{lemma}
  \label{lem:canonical-sequence}
  Let $\G$ be in $\tors\U$ and $M$ be an $A$-module. 
  Then $M$ is in $(\Fac U)*\G$ if and only if $\f M$ belongs to
  $\G$. 
  In particular, if $M$ is in $(\Fac U)*\G$ then 
  $0\to \t M\to M\to \f M\to 0$ is the unique way to express $M$
  as an extension of a module from $\Fac U$ by a module from
  $\G$.
\end{lemma}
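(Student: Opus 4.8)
The plan is to deduce the lemma entirely from the uniqueness of the canonical sequence for the torsion pair $(\Fac U,U^\perp)$, \cf \cite[Prop. VI.1.5]{assem_elements_2006}. The one structural fact that makes this work is that any torsion class $\G$ in $\U$ satisfies $\G\subseteq\U=\lperp{(\tau U)}\cap U^\perp$, so every module in $\G$ is torsion-free with respect to $(\Fac U,U^\perp)$. I would isolate this observation first, since it is the only input beyond the formal definition of $\ast$.

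Next I would dispose of the easy implication: if $\f M\in\G$, then the canonical sequence $0\to\t M\to M\to\f M\to 0$ exhibits $M$ as an extension of $\t M\in\Fac U$ by $\f M\in\G$, so $M\in(\Fac U)\ast\G$ directly from the definition of $\ast$. For the converse, suppose $M\in(\Fac U)\ast\G$, say there is a short exact sequence $0\to X\to M\to Y\to 0$ with $X\in\Fac U$ and $Y\in\G$. Since $\G\subseteq U^\perp$, the module $Y$ is torsion-free for $(\Fac U,U^\perp)$, while $X\in\Fac U$ is torsion; hence \cite[Prop. VI.1.5]{assem_elements_2006} forces this sequence to be isomorphic to the canonical sequence of $M$, giving $Y\cong\f M$. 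As $\G$ is closed under isomorphisms, $\f M\in\G$.

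Finally the ``in particular'' clause falls out: if $M\in(\Fac U)\ast\G$ then by the previous step $\f M\in\G$ (and trivially $\t M\in\Fac U$), so $0\to\t M\to M\to\f M\to 0$ is indeed one way to write $M$ as an extension of a module from $\Fac U$ by a module from $\G$, and the same appeal to \cite[Prop. VI.1.5]{assem_elements_2006} shows that any such expression is isomorphic to this one. I do not anticipate a genuine obstacle here; the only point that needs care is invoking $\G\subseteq U^\perp$ before using the torsion-pair uniqueness, rather than afterwards.
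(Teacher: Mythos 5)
Your proof is correct and is precisely the argument the paper has in mind: the paper's one-line proof ("Both claims follow immediately from the uniqueness of the canonical sequence") is exactly what you have spelled out, with the key structural point being $\G\subseteq\U\subseteq U^\perp$ so that any $Y\in\G$ is torsion-free for $(\Fac U,U^\perp)$, which lets the uniqueness of canonical sequences do all the work.
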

\begin{proof}
  Both claims follow immediately from the uniqueness of the 
  canonical sequence.
\end{proof}

The following lemma gives canonical sequences in $\U$.

\begin{lemma}
  \label{lem:can-seq-U}
  Let $\G$ be a torsion class in $\U$. 
  Then there exist functors $\t_\G:\U\to \G$ and
  $\f_\G:\U\to \G^\perp\cap \U$ and natural transformations
  $\t_\G\to 1_\U\to \f_\G$ such that the sequence 
  \[
  0\to \t_\G M\to M\to \f_\G M\to 0
  \]
  is exact in $\mod A$ for each $M$ in $\U$.
\end{lemma}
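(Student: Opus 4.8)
The plan is to build the desired functors and natural transformations directly, without appealing to the general theory of torsion pairs in abelian categories — since $\U$ is only exact, not abelian, one cannot simply invoke the standard construction. The key observation is that everything should be defined via the ambient category $\mod A$, which \emph{is} abelian, and then one checks that the resulting modules actually lie in $\U$ and in $\G$ (respectively $\G^\perp\cap\U$). Concretely, given $M$ in $\U$, I would first use that $\G$ is functorially finite in $\U$ (this is part of the hypothesis of the final lemma in the excerpt, $\G\in\ftors\U$; if the statement is meant for arbitrary $\G\in\tors\U$ one must assume functorial finiteness of $\G$ to get the approximation, so I will assume $\G$ is functorially finite in $\U$) to take a right $\G$-approximation $g:G_M\to M$. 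Since $\G$ is closed under the relevant quotients inside $\U$, one shows as in Lemma \ref{lemma:wakamatsu} or by a direct argument that $g$ can be taken to be an admissible epimorphism in $\U$, i.e. a surjection in $\mod A$. Then set $\t_\G M := \operatorname{Im} g = G_M$ (after replacing $G_M$ by the image, which is still in $\G$ because $\G$ is closed under quotients in $\U$ and $\operatorname{Im} g$ sits between $G_M$ and $M$ — all three in $\U$ by Proposition \ref{prop:2-out-of-3}) and $\f_\G M := \operatorname{Coker} g$, taken in $\mod A$.

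The next step is to verify the placement of these objects. The short exact sequence $0\to \t_\G M\to M\to \f_\G M\to 0$ lives in $\mod A$ by construction; since $\t_\G M$ and $M$ are in $\U$, Proposition \ref{prop:2-out-of-3} forces $\f_\G M\in\U$ as well, and the sequence is then an admissible exact sequence in $\U$ by Remark \ref{rmk:U-exact}. That $\t_\G M\in\G$ is clear. For $\f_\G M\in\G^\perp\cap\U$ one argues exactly as in the classical case: any morphism $h:X\to \f_\G M$ with $X\in\G$ lifts along the approximation property — pull back the epimorphism $M\to\f_\G M$ along $h$ to get $X'\in\U$ (again by Proposition \ref{prop:2-out-of-3}, as an extension of $X\in\G\subseteq\U$ by $\t_\G M\in\G\subseteq\U$), note $X'\in\G$ since $\G$ is closed under extensions in $\U$, and then the composite $X'\to M$ factors through the right $\G$-approximation $g$, which shows $h$ factors through $\t_\G M\to \f_\G M$, i.e. $h=0$. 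Hence $\operatorname{Hom}_A(\G,\f_\G M)=0$, so $\f_\G M\in\G^\perp\cap\U$.

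Finally, functoriality and the natural transformations: given $\varphi:M\to M'$ in $\U$, the composite $G_M\to M\to M'$ lands in $\G$, hence factors through the approximation $G_{M'}\to M'$; this gives (well-definedly, up to the usual argument that two such factorizations agree on the image) a morphism $\t_\G M\to\t_\G M'$, and passing to cokernels in $\mod A$ gives $\f_\G M\to\f_\G M'$. Compatibility of these assignments with composition, and the fact that $\t_\G\to 1_\U\to\f_\G$ are natural, follow from chasing the defining diagrams; the inclusion $\t_\G M\hookrightarrow M$ and projection $M\twoheadrightarrow\f_\G M$ are natural by construction. I expect the main obstacle to be the \textbf{well-definedness of $\t_\G$ on morphisms}: the factorization of $G_M\to M'$ through $G_{M'}$ is a priori only unique up to a map killed by $g':G_{M'}\to M'$, so one must check that after restricting to images (or after the standard argument that such ambiguous maps induce the same map on $\operatorname{Im} g$) the induced endomorphism is canonical; equivalently, one shows that $\t_\G M$ depends functorially on $M$ by characterizing it intrinsically as the largest subobject of $M$ in $\U$ lying in $\G$ — here "largest" makes sense because $\G$ is closed under extensions and admissible quotients in $\U$, so the sum (taken inside $M$) of all such subobjects is again in $\G$. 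With that intrinsic description in hand, functoriality is immediate and the rest of the verification is routine.
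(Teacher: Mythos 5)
The paper's proof is a one-liner: since $F:\U\to\mod C$ is an exact equivalence (Theorem \ref{thm:U-modC}), the torsion class $\G$ transports to the torsion class $F\G$ in $\mod C$, which is an honest abelian module category; there every torsion class automatically gives a torsion pair $(F\G,(F\G)^\perp)$ with canonical sequences (via the trace functor, no functorial finiteness needed), and applying $G$ transports these back to $\U$. Your proposal takes a genuinely different route --- building $\t_\G$ and $\f_\G$ directly inside $\U$ --- and it has a real gap. You take a right $\G$-approximation and assume $\G$ is functorially finite in $\U$ to do so, hedging that the statement must secretly require this. It does not: the lemma is invoked in Proposition \ref{prop:ast-ff-torsion-class} for arbitrary $\G\in\tors\U$, so the approximation-based construction by itself does not establish what is needed. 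Your fallback at the end (characterize $\t_\G M$ as the largest $\G$-subobject of $M$) is the correct idea for arbitrary $\G$, but it is left as a sketch and is less innocent than it looks: one must verify that a sum $G_1+G_2\subseteq M$ of $\G$-subobjects is again in $\G$, which requires showing $G_1\cap G_2\in\U$ (via Proposition \ref{prop:2-out-of-3} applied to $0\to G_1\cap G_2\to G_1\oplus G_2\to G_1+G_2\to 0$) and then that $G_2\twoheadrightarrow G_2/(G_1\cap G_2)$ is an admissible epi in $\U$ before closure under extensions can be used. In short, your direct construction can be completed, but only via the trace argument and with nontrivial extra verification; the paper's use of the equivalence $F$ makes the whole issue evaporate, since existence, landing in $\G^\perp\cap\U$, functoriality, and independence of the choice of $\G$ all come for free from the abelian category $\mod C$.
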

\begin{proof}
  Since $(F\G,F(\G^\perp\cap \U))$ is a torsion pair in
  $\mod C$ by Theorem \ref{thm:U-modC}, we have associated
  canonical sequences in $\mod C$. 
  Applying the functor $G$, we get the desired functors.
\end{proof}

The following proposition shows that the map 
$\G\mapsto (\Fac U)*\G$ in Theorem \ref{thm:ff-tp-reduction} is
well-defined. 

\begin{proposition}
  \label{prop:ast-ff-torsion-class}
  Let $\G$ be in $\tors\U$. Then $(\Fac U)*\G$ is a torsion 
  class in $\mod A$ such that
  $\Fac U \subseteq (\Fac U)*\G \subseteq \lperp{(\tau U)}$.
\end{proposition}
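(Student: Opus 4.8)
The plan is to verify the three required properties—closure under extensions, closure under factor modules, and the inclusion $\Fac U \subseteq (\Fac U)*\G \subseteq \lperp{(\tau U)}$—separately, using Lemma \ref{lem:canonical-sequence} as the main bookkeeping tool, which tells us that $M\in(\Fac U)*\G$ if and only if $\f M\in\G$.

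First I would dispose of the inclusions. The inclusion $\Fac U\subseteq(\Fac U)*\G$ is immediate from the definition of $*$ (take $Y=0$). For $(\Fac U)*\G\subseteq\lperp{(\tau U)}$, take $M\in(\Fac U)*\G$; then there is a short exact sequence $0\to X\to M\to Y\to 0$ with $X\in\Fac U\subseteq\lperp{(\tau U)}$ and $Y\in\G\subseteq\U\subseteq\lperp{(\tau U)}$, and since $\lperp{(\tau U)}$ is closed under extensions (it is a torsion class by Proposition \ref{prop:bongartz-completion}(a)), we get $M\in\lperp{(\tau U)}$.

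Next, closure under factor modules. Let $M\in(\Fac U)*\G$ and let $M\twoheadrightarrow M'$ be an epimorphism. By Lemma \ref{lem:canonical-sequence} it suffices to show $\f M'\in\G$. First, $M'\in\lperp{(\tau U)}$ since $M\in\lperp{(\tau U)}$ and $\lperp{(\tau U)}$ is closed under factor modules, so by \eqref{eq:f-lperp} we have $\f M'\in\U$. Now consider the composite $M\twoheadrightarrow M'\twoheadrightarrow \f M'$; since $\f M\in\G$ and $\G$ is a torsion class in $\U$, closure under quotients (property (b) of Definition \ref{def:torsU}) would give the conclusion \emph{if} $\f M\to \f M'$ were an (admissible) epimorphism in $\U$ with image computed correctly. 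Concretely, functoriality of $\f$ gives a commutative square, and since $M\to M'$ is surjective and $M'\to\f M'$ is surjective, the composite $M\to\f M'$ is surjective; it factors through $\f M$ because $\f M'\in U^\perp$ kills the image of $\t M$, so $\f M\to\f M'$ is surjective, i.e. an admissible epimorphism in $\U$ by Remark \ref{rmk:U-exact}. Then Definition \ref{def:torsU}(b) gives $\f M'\in\G$, so $M'\in(\Fac U)*\G$.

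Finally, closure under extensions. Let $0\to L\to M\to N\to 0$ be exact with $L,N\in(\Fac U)*\G$; I want $\f M\in\G$. First $M\in\lperp{(\tau U)}$ since this class is extension-closed, so $\f M\in\U$. Applying the canonical sequences and using that $\f$ is right exact on the torsion class $\Fac U$ (or more directly: $\t L\to\t M$ need not be injective, but $\f L\to\f M\to\f N\to 0$ is exact because $\f$ is a right-exact functor as the quotient by the idempotent radical), I obtain an exact sequence $\f L\to\f M\to\f N\to 0$ with $\f L,\f N\in\G$ and all terms in $\U$. Let $K$ be the image of $\f L\to\f M$, so there are admissible exact sequences $0\to K'\to \f L\to K\to 0$ and $0\to K\to\f M\to \f N\to 0$ in $\U$; since $\G$ is closed under quotients, $K\in\G$, and since $\G$ is closed under extensions (Definition \ref{def:torsU}(a)), $\f M\in\G$. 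The main obstacle—and the step deserving the most care—is checking that all the short exact sequences produced from the snake/functoriality diagrams actually have their terms in $\U$, so that they are \emph{admissible} exact sequences in the exact category $\U$ (Remark \ref{rmk:U-exact}) and the axioms of Definition \ref{def:torsU} apply; once one is careful to confirm membership in $\U$ at each stage, everything else is a routine diagram chase.
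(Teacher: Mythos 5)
Your proof is correct, and for two of the three items (the inclusions $\Fac U\subseteq(\Fac U)*\G\subseteq\lperp{(\tau U)}$ and closure under factor modules) your argument essentially matches the paper's. The interesting divergence is in closure under extensions. The paper does not attack the extension $0\to L\to M\to N\to 0$ directly; instead it exploits the associative monoid structure of the operation $*$, reducing the claim to $\G*\Fac U\subseteq(\Fac U)*\G$ via the identity $((\Fac U)*\G)*((\Fac U)*\G)=(\Fac U)*(\G*\Fac U)*\G$. To prove $\G*\Fac U\subseteq(\Fac U)*\G$ it then invokes the torsion pair $(\G,\G^\perp\cap\U)$ \emph{inside} $\U$ (Lemma \ref{lem:can-seq-U}, which rests on the equivalence $\U\simeq\mod C$), and checks $\f_\G(\f M)=0$ by a $\Hom$-vanishing argument. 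Your route instead uses right-exactness of the torsion-free functor $\f$ to get $\f L\to\f M\to\f N\to 0$ exact, then factors through the image $K$ and applies Definition \ref{def:torsU}(a),(b) together with Proposition \ref{prop:2-out-of-3} to control membership in $\U$. Your approach has the advantage of being more elementary — it bypasses Lemma \ref{lem:can-seq-U} entirely and needs no canonical sequences in $\U$ — at the cost of the bookkeeping you flag at the end (checking $K$ and $K'=\ker(\f L\to K)$ are in $\U$, which indeed follows: $K$ is a quotient of $\f L\in\lperp{(\tau U)}$ and a submodule of $\f M\in U^\perp$, and then $K'\in\U$ by Proposition \ref{prop:2-out-of-3}). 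One small slip: your parenthetical ``$\t L\to\t M$ need not be injective'' is false (it \emph{is} injective whenever $L\to M$ is); the correct worry, which your argument sidesteps by using $\f$, is that $\t$ need not preserve the surjection on the right.
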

\begin{proof}
  First, it is clear that $\Fac U \subseteq (\Fac U)*\G
  \subseteq\lperp{(\tau U)}$ since $\G$ and $\Fac U$ are
  subcategories of $\lperp{(\tau U)}$ and $\lperp{(\tau U)}$ is
  closed under extensions.

  Now, let us show that $(\Fac U)*\G$ is closed under factor
  modules. 
  Let $M$ be in $(\Fac U)*\G$, so by 
  Lemma \ref{lem:canonical-sequence} we have that $\f M$ is in
  $\G$, and let $f:M\to N$ be an epimorphism. 
  We have a commutative diagram
  \begin{center}
    \begin{tikzcd}
      0 \rar & \t M \rar \dar{\t f} &
      M \rar \dar{f} & \f M \rar \dar{\f f} & 0 \\ 
      0 \rar & \t N \rar & N \rar \dar & \f N \rar
      \dar & 0 \\ 
      & & 0 & 0
    \end{tikzcd}    
  \end{center}
  with exact rows and columns. 
  Since $\f M\in\G\subseteq\lperp{(\tau U)}$, we have that 
  $\f N\in\lperp{(\tau U)}$. 
  Thus $\f N\in\lperp{(\tau U)}\cap U^\perp=\U$. 
  Since $\f M$ belong to $\G$ which is a torsion class in $\U$, we
  have that $\f N\in\G$. 
  Finally, since $\t N\in\Fac U$ we have that $N\in(\Fac U)*\G$. 
  The claim follows.
  
  To show that $(\Fac U)*\G$ is closed under extensions it is 
  sufficient to show that $\G*\Fac U\subseteq (\Fac U)*\G$ since
  this implies 
  \begin{align*}
   ((\Fac U)*\G)*((\Fac U)*\G)=&(\Fac U)*(\G*(\Fac U))*\G\\
  \subseteq& (\Fac U)*(\Fac U)*\G*\G
  =(\Fac U)*\G
  \end{align*}
  by the associativity of the operation $*$. 
  For this, let $0\to N\to M\to L\to 0$ be a short exact sequence 
  with $N$ in $\G$ and $L$ in $\Fac U$. 
  We only have to show that $\f M\in\G$, or equivalently that
  $\f_\G(\f M)=0$.
  Since $N\in \G$ and $\f_\G(\f M)\in\G^\perp$ we have
  $\Hom_A(N,\f_\G(\f M))=0$.
  Also, $\Hom_A(L,\f_\G(\f M))=0$ since $L\in\Fac U$ and
  $\f_\G(\f M)\in U^\perp$.
  Thus we have $\Hom_A(M,\f_\G(\f M))=0$.
  But $\f_\G(\f M)$ is a factor module of $M$ so we have that
  $\f_\G(\f M)=0$.
  Thus $\f M = \t_\G(\f M)$ belongs to $\G$. 
\end{proof}

Now we give the proof Theorem \ref{thm:tp-reduction}.

\begin{proof}[Proof of Theorem \ref{thm:tp-reduction}]
  By Corollary \ref{cor:torsU} we have that the functors
  $F$ and $G$ induce mutually inverse bijections between $\tors\U$
  and $\tors C$. 
  It follows from  Proposition \ref{prop:ff-torsion-classes}(a)
  that the correspondence $\T\mapsto \T\cap U^\perp$ gives a well
  defined map
  \[
  \setP{\T\in\tors A}{\Fac U\subseteq\T\subseteq \lperp{(\tau
      U)}} \longrightarrow \tors\U.
  \]
  On the other hand, it follows from 
  Proposition \ref{prop:ast-ff-torsion-class} that the association
  $\G \mapsto
  (\Fac U)*\G$ gives a well defined map
  \[
  \tors\U \longrightarrow \setP{\T\in\tors A}{\Fac U\subseteq
  \T\subseteq \lperp{(\tau U)}}.
  \]

  It remains to show that the maps
  \[
  \T\mapsto\T\cap U^\perp \quad\text{and}\quad \G\mapsto \Fac U
  *\G 
  \]
  are inverse of each other. 
  Let $\T$ be a torsion class in $\mod A$ such that 
  $\Fac U\subseteq\T\subseteq \lperp{(\tau U)}$.
  Since $\T$ is closed under extensions, we have that
  $(\Fac U)*(\T\cap U^\perp)\subseteq \T$. 
  Thus we only need to show the opposite inclusion.
  Let $M$ be in $\T$, then we have an exact sequence 
  \[
  0 \to \t M \to M \to \f M \to 0
  \]
  with $\t M\in\Fac U$ and $\f M$ in $\T\cap U^\perp$ since $\T$
  is closed under factor modules.
  Thus $M\in (\Fac U)\ast(\T\cap U^\perp)$ holds and the claim
  follows. 
  
  On the other hand, let $\G$ be a torsion class in $\U$. 
  It is clear that $\G \subseteq ((\Fac U)*\G)\cap U^\perp$, so we
  only need to show the opposite inclusion. 
  But if $M$ is in $((\Fac U)*\G)\cap U^\perp$, then 
  $M\in U^\perp$ implies that $M\cong \f M$.
  Moreover, by Lemma \ref{lem:canonical-sequence} we have that 
  $M\cong \f M$ belongs to $\G$. 
  This finishes the proof of the theorem.
\end{proof}

Now we begin to prove Theorem \ref{thm:ff-tp-reduction}. 
For this we need the following technical result:

\begin{proposition}
  \cite[Prop 5.33]{iyama_stable_2013}
  \label{prop:ast-ff}
  Let $\X$ and $\Y$ be covariantly finite subcategories of
  $\mod A$. Then $\X*\Y$ is also covariantly finite in $\mod A$.  
\end{proposition}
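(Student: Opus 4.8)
The plan is to produce, for every $M \in \mod A$, an explicit left $(\X*\Y)$-approximation by a pushout construction; the dual assertion for contravariantly finite subcategories is proved by the evident dualization, so I would concentrate on the covariant statement. First I would take a left $\Y$-approximation $b : M \to Y$ with $Y \in \Y$, set $L := \Ker b$ with inclusion $\iota : L \hookrightarrow M$, and take a left $\X$-approximation $a : L \to X$ with $X \in \X$. Forming the pushout of $a$ along $\iota$ produces a module $Z$, a morphism $\varphi : M \to Z$, and a short exact sequence $0 \to X \to Z \to M/L \to 0$ in which $M/L \cong \Im b$. The candidate approximation is $\varphi : M \to Z$.

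The point that requires real work is checking that $Z$ actually belongs to $\X*\Y$. This is immediate when $\Y$ is closed under submodules, for then $\Im b \in \Y$ and the displayed sequence already exhibits $Z$ as an extension of an object of $\Y$ by one of $\X$; but in general $\Im b$ is only a submodule of $Y$, so one must first modify the construction. The fix I would pursue is to push the extension $0 \to X \to Z \to \Im b \to 0$ out along $\Im b \hookrightarrow Y$ — this is possible precisely when the obstruction in $\Ext^2_A(\Coker b, X)$ vanishes — and, when it does not, to enlarge $L$ so as to detect more of $M$ before taking the $\X$-approximation; equivalently, one builds $Z$ from the start as a pullback along $b$ of a suitably chosen extension of $Y$ by a left $\X$-approximation, arranged so that $b$ lifts. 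Pinning down such a $Z \in \X*\Y$ together with a comparison morphism $M \to Z$ is where the content of the proposition lies, and I expect it to be the main obstacle; the rest is formal.

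Granting a correct construction, verifying that $\varphi$ is a left $(\X*\Y)$-approximation is a diagram chase. Given $h : M \to W$ with $W \in \X*\Y$, say $0 \to X_W \to W \xto{p} Y_W \to 0$ with $X_W \in \X$ and $Y_W \in \Y$, the composite $ph : M \to Y_W$ factors through $b$ via some $v : Y \to Y_W$; pulling $W$ back along $v$ reduces matters to the case $Y_W = Y$; the restriction of the induced map $M \to W$ to $L = \Ker b$ then takes values in $X_W$, since it vanishes after applying $p$, and hence factors through the left $\X$-approximation $a : L \to X$; finally the universal property of the pushout square assembles these data into a morphism $Z \to W$ through which $h$ factors. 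This shows $\varphi$ is a left $(\X*\Y)$-approximation of $M$, and since $M$ was arbitrary, $\X*\Y$ is covariantly finite in $\mod A$. (One could instead deduce the result from general criteria for functorial finiteness of subcategories closed under two-step filtrations, but the explicit pushout seems the most transparent route.)
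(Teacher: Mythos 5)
The paper itself gives no proof of this proposition (it is quoted from the literature; the result goes back to Sikko--Smal\o{} and Gentle--Todorov), so your argument has to stand on its own, and it does not: the step you yourself call ``the main obstacle'' --- producing $Z\in\X*\Y$ together with the comparison morphism $M\to Z$ --- is the entire content of the statement, and none of the fixes you sketch closes it. The obstruction you point at is genuine: keeping the kernel term $X$ fixed, lifting the class of $0\to X\to Z\to \Im b\to 0$ along the restriction map $\Ext^1_A(Y,X)\to\Ext^1_A(\Im b,X)$ is blocked by an element of $\Ext^2_A(\Coker b,X)$, and once the algebra has global dimension at least $2$ you have no means to kill it. ``Enlarging $L$'' is not available, because $L=\Ker b$ is forced by your own reduction (after pulling back along $v$, the maps you must catch are exactly those $h'$ with $p'h'=b$, and these restrict on $\Ker b$ to maps into the kernel term); and ``a pullback along $b$ of a suitably chosen extension of $Y$ by a left $\X$-approximation, arranged so that $b$ lifts'' merely restates the goal: what you need is a single class $\zeta\in\Ext^1_A(Y,X_0)$ with $X_0\in\X$ and $b^*\zeta=0$ such that for every $X'\in\X$ every class in $\ker\bigl(b^*\colon\Ext^1_A(Y,X')\to\Ext^1_A(M,X')\bigr)$ has the form $u_*\zeta$ for some $u\colon X_0\to X'$, and nothing in your construction produces such a $\zeta$.

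That finite-generation statement is exactly what the quoted proofs supply, and it comes from a different place than your pushout: the functor $K:=\ker\bigl(b^*\colon\Ext^1_A(Y,-)\to\Ext^1_A(M,-)\bigr)$ is finitely presented (a kernel of a morphism between finitely presented covariant functors on $\mod A$), hence finitely generated; pushing a generator $(V,\kappa)$ forward along a left $\X$-approximation $V\to X_0$ gives the required $\zeta$, i.e.\ an extension $0\to X_0\to Z\to Y\to 0$ admitting a lift $\varphi\colon M\to Z$ of $b$. Even granting that, your concluding diagram chase is not quite the right one: two lifts of $b$ to the same $W'$ differ by a morphism $M\to X'$, so the approximation is not $\varphi$ alone but $M\to Z\oplus X^M$, where $M\to X^M$ is a left $\X$-approximation of $M$ itself (not of $\Ker b$) --- a summand absent from your outline, and a further sign that the scheme ``$\Y$-approximate $M$, $\X$-approximate $\Ker b$, push out'' is not the route the known arguments take. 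For complete proofs see Sikko--Smal\o{}, Gentle--Todorov, or the reference cited in the proposition.
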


We also need the following observation.

\begin{lemma}
  \label{lem:U-covariantly-finite}
  $\U$ is covariantly finite in $\lperp{(\tau U)}$.
\end{lemma}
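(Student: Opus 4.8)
The plan is to show that every module $M$ in $\lperp{(\tau U)}$ admits a left $\U$-approximation, where $\U = \lperp{(\tau U)} \cap U^\perp$. The natural candidate is the map $M \to \f M$ coming from the canonical sequence of the torsion pair $(\Fac U, U^\perp)$. First I would observe that, since $\lperp{(\tau U)}$ is closed under factor modules (Proposition \ref{prop:bongartz-completion}(a)), for $M \in \lperp{(\tau U)}$ the module $\f M$ is again in $\lperp{(\tau U)}$, and of course $\f M \in U^\perp$ by construction; hence $\f M \in \U$, which is exactly the content of \eqref{eq:f-lperp}. So the map $M \to \f M$ at least lands in $\U$.

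Next I would verify that $M \to \f M$ is a left $\U$-approximation: given any $N \in \U$ and any morphism $g\colon M \to N$, I must factor $g$ through $M \to \f M$. Since $N \in U^\perp$ and the kernel $\t M \in \Fac U$, the composite $\t M \to M \to N$ is zero (there are no nonzero maps from $\Fac U$ to $U^\perp$); therefore $g$ factors through the cokernel $M \to \f M$ of $\t M \hookrightarrow M$, giving the required factorization $M \to \f M \to N$. This shows every $M$ in $\lperp{(\tau U)}$ has a left $\U$-approximation, namely $M \to \f M$, so $\U$ is covariantly finite in $\lperp{(\tau U)}$.

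The argument is essentially immediate from the uniqueness and functoriality of the canonical sequence together with \eqref{eq:f-lperp}; there is no real obstacle. The one point that deserves a sentence is that we are working relative to $\lperp{(\tau U)}$ rather than all of $\mod A$, so it matters that $\f M$ stays inside $\lperp{(\tau U)}$ — but this is precisely \eqref{eq:f-lperp}, and the approximation property of $M \to \f M$ only uses that test objects $N$ lie in $U^\perp$, which holds a fortiori for $N \in \U$.
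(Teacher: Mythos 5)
Your proof is correct and matches the paper's argument exactly: both use the canonical sequence $0\to \t M\to M\to \f M\to 0$ for the torsion pair $(\Fac U, U^\perp)$, observe via \eqref{eq:f-lperp} that $\f M\in\U$, and note that $M\to\f M$ is a left $\U$-approximation because $\U\subseteq U^\perp$.
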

\begin{proof}
  Let $M$ be in $\lperp{(\tau U)}$ and consider the canonical
  sequence 
  \[
  0\to \t M\to M\xto{f} \f M \to 0.
  \]
  Then $\f M$ is in $\U$ by \eqref{eq:f-lperp} and clearly $f$ is
  a left  $\U$-approximation (mind that $\U\subseteq U^\perp$). 
  Thus $\U$ is covariantly finite in $\lperp{(\tau U)}$ as
  required.
\end{proof}

The following proposition shows that the map
$\G\mapsto (\Fac U)*\G$ in Theorem \ref{thm:ff-tp-reduction}
preserves functorial finiteness, and thus is well defined.

\begin{proposition}
  \label{prop:ast-ff-2}
  Let $\G$ be in $\ftors\U$. 
  Then $(\Fac U)*\G$ is a functorially finite torsion class in
  $\mod A$ such that 
  $\Fac U\subseteq (\Fac U)*\G \subseteq \lperp{(\tau U)}$. 
\end{proposition}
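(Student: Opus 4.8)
The plan is as follows. By Proposition \ref{prop:ast-ff-torsion-class} we already know that $(\Fac U)*\G$ is a torsion class in $\mod A$ satisfying $\Fac U\subseteq(\Fac U)*\G\subseteq\lperp{(\tau U)}$, so the only remaining task is to prove that $(\Fac U)*\G$ is functorially finite in $\mod A$. Since $(\Fac U)*\G$ is a torsion class, Proposition \ref{prop:torsion-pairs} reduces this to showing that $(\Fac U)*\G$ is merely \emph{covariantly} finite in $\mod A$. Then I would invoke Proposition \ref{prop:ast-ff}: for that it suffices to check that both $\Fac U$ and $\G$ are covariantly finite subcategories of $\mod A$.

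The first of these is immediate, as $\Fac U$ is a functorially finite torsion class in $\mod A$ by Proposition \ref{prop:AS}(c). For $\G$, the idea is to construct a left $\G$-approximation of an arbitrary $M\in\mod A$ by composing approximations along the nested subcategories $\G\subseteq\U\subseteq\lperp{(\tau U)}\subseteq\mod A$. Concretely: first take a left $\lperp{(\tau U)}$-approximation $M\to M_1$ (which exists since $\lperp{(\tau U)}$ is functorially finite by Proposition \ref{prop:bongartz-completion}(a), so $M_1\in\lperp{(\tau U)}$), then a left $\U$-approximation $M_1\to M_2$ within $\lperp{(\tau U)}$ (which exists by Lemma \ref{lem:U-covariantly-finite}, so $M_2\in\U$), and finally a left $\G$-approximation $M_2\to M_3$ within $\U$ (which exists since $\G\in\ftors\U$). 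One then checks that the composite $M\to M_3$ is a left $\G$-approximation in $\mod A$: since $\G\subseteq\U\subseteq\lperp{(\tau U)}$, any morphism from $M$ to an object $G'$ of $\G$ factors successively through $M_1$, then through $M_2$, then through $M_3$. Hence $\G$ is covariantly finite in $\mod A$.

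Combining these observations via Proposition \ref{prop:ast-ff} gives that $(\Fac U)*\G$ is covariantly finite in $\mod A$, and therefore, being a torsion class, functorially finite in $\mod A$ by Proposition \ref{prop:torsion-pairs}. I expect the only delicate point to be the transitivity of covariant finiteness along the chain $\G\subseteq\U\subseteq\lperp{(\tau U)}\subseteq\mod A$ used in the middle step — and even that is a standard composition-of-approximations argument. The rest is a direct assembly of results already established in the preceding pages, most notably Proposition \ref{prop:ast-ff} and Lemma \ref{lem:U-covariantly-finite}.
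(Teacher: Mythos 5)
Your argument is correct and follows essentially the same route as the paper's proof: reduce to covariant finiteness via Proposition \ref{prop:ast-ff-torsion-class} and Proposition \ref{prop:torsion-pairs}, apply Proposition \ref{prop:ast-ff} after checking that $\Fac U$ and $\G$ are each covariantly finite, and obtain covariant finiteness of $\G$ by composing approximations along $\G\subseteq\U\subseteq\lperp{(\tau U)}\subseteq\mod A$, using Lemma \ref{lem:U-covariantly-finite} for the middle inclusion. The paper simply states this transitivity without spelling out the composition-of-approximations argument, which you supply explicitly.
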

\begin{proof}
  By Proposition \ref{prop:ast-ff-torsion-class}, we only need to
  show that $(\Fac U)*\G$ is covariantly finite in $\mod A$. 
  Since $\Fac U$ is covariantly finite in $\mod A$, see 
  Proposition \ref{prop:torsion-pairs}(b), 
  by Proposition \ref{prop:ast-ff} it is enough to show that $\G$
  is covariantly finite in $\mod A$. 
  By Lemma \ref{lem:U-covariantly-finite} we have that $\U$ is
  covariantly finite in $\lperp{(\tau U)}$. 
  Since $\G$ is covariantly finite in $\U$ and $\lperp{(\tau U)}$ 
  is covariantly finite in $\mod A$, 
  see Proposition \ref{prop:torsion-pairs}(b),  we have that  $\G$
  is covariantly finite in $\mod A$. 
\end{proof}

We are ready to give the proof of 
Theorem \ref{thm:ff-tp-reduction}. 

\begin{proof}[Proof of Theorem \ref{thm:ff-tp-reduction}] 
  We only need to show that the bijections in Theorem
  \ref{thm:tp-reduction} preserve functorial finiteness. 
  But this follows immediately from Proposition
  \ref{prop:ff-torsion-classes}(d) and 
  Proposition \ref{prop:ast-ff-2}.   
  The theorem follows.
\end{proof}


\section{Compatibility with other types of reduction}
\label{sec:compatibility}

Let $A$ be a finite dimensional algebra. 
Then support $\tau$-tilting $A$-modules are in bijective
correspondence with the so-called two-term silting complexes in 
$K^{\mathrm{b}}(\proj A)$, 
see \cite[Thm. 3.2]{adachi_tau-tilting_2012}.

On the other hand, if $A$ is a 2-Calabi-Yau-tilted algebra from a
2-Calabi-Yau category $\C$, then there is a bijection between 
$\sttilt A$ and the set of isomorphism classes of basic
cluster-tilting objects in $\C$, 
see \cite[Thm. 4.1]{adachi_tau-tilting_2012}.

Reduction techniques were established (in greater generality) in
\cite[Thm. 4.9]{iyama_mutation_2008} for cluster-tilting objects
and for silting objects in \cite[Thm. 2.37]{aihara_silting_2012}
for a special case and in \cite{iyama_silting_2013} for the
general case.
The aim of this section is to show that these reductions are
compatible with $\tau$-tilting reduction as established in
Section \ref{sec:reduction}. 

Given two subcategories $\X$ and $\Y$ of a triangulated category
$\T$, we write $\X\ast\Y$ for the full subcategory of $\T$
consisting of all objects $Z\in\T$ such that there exists a
triangle
\[
X\to Z\to Y\to X[1]
\]
with $X\in\X$ and $Y\in\Y$. 
For objects $X$ and $Y$ in $\T$ we define 
$X\ast Y:=(\add X)\ast(\add Y)$.

\subsection{Silting reduction}
\label{sec:silting-compatibility}

Let $\T$ be a Krull-Schmidt triangulated category and $S$ an
object in $\T$.
Following \cite[Def. 2.1]{aihara_silting_2012}, we say that $M$ is 
a \emph{presilting object in $\T$} if
\[
\Hom_\T(M,M[i])=0\quad\text{for all }i>0.
\]
We call $S$ a \emph{silting object} if moreover $\thick(S) = \T$,
where $\thick(S)$ is the smallest triangulated subcategory of $\T$
which contains $S$ and is closed under direct summands and
isomorphisms.
We denote the set of isomorphism classes of all basic silting
objects in $\T$ by $\silt \T$. 

Let $M,N\in\silt\T$. 
We write $N\leqslant M$ if and only if $\Hom_\T(M,N[i])=0$ for
each $i>0$.
Then $\leqslant$ is a partial order in $\silt \T$, see
\cite[Thm. 2.11]{aihara_silting_2012}.  

\begin{setting}
  \label{set:silting}
  We fix a $k$-linear, $\Hom$-finite, Krull-Schmidt triangulated
  category $\T$ with a silting object $S$, and let
  \[
  A=A_S:=\End_\T(S).
  \]
  The subset
  $\twosilt{S}{}\T$ of $\silt \T$ given by
  \[
  \twosilt{S}{}\T := \setP{M\in\silt\T}{M\in S\ast (S[1])}
  \]
  plays an important role in the sequel. 
  The notation $\twosilt{S}{}\T$ is justified by the following
  remark.
\end{setting}

\begin{remark}
  Let $A$ be a finite dimensional algebra and 
  $\T=K^{\mathrm{b}}(\proj A)$.
  Then $A$ is a silting object in $\T$. 
  In this case a silting complex $M$ belongs to $\twosilt{A}{}\T$
  if and only if $M$ is isomorphic to a complex concentrated in
  degrees $-1$ and $0$, \ie if $M$ is a \emph{two-term silting
    complex}. 
\end{remark}

Following \cite[Sec. 2]{aihara_silting_2012}, we consider the
subcategory of $\T$ given by
\[
\T^{\leqslant0}:=
\setP{M\in\T}{\Hom_\T(S,M[i])=0\text{ for all }i>0}.
\]
We need the following generating properties of silting objects.
Recall that a pair $(\X,\Y)$ of subcategories of $\T$ is called
a \emph{torsion pair in $\T$} if $\Hom_\T(\X,\Y)=0$ and
$\T=\X\ast\Y$. 

\begin{proposition}
  \label{prop:silting-generating}
  \cite[Prop. 2.23]{aihara_silting_2012}
  With the hypotheses of Setting \ref{set:silting}, we have the
  following:
  \begin{align*}
    \T =& 
    \bigcup_{\ell\geqslant 0} S[-\ell]\ast S[1-\ell]\ast\cdots\ast S[\ell],\\
    \T^{\leqslant 0} =&
    \bigcup_{\ell\geqslant 0} S\ast S[1]\ast\cdots\ast S[\ell],\\
    \lperp{(\T^{\leqslant 0})} =&
    \bigcup_{\ell>0} S[-\ell]\ast S[1-\ell]\ast\cdots\ast S[-1].
  \end{align*}
  Moreover, the pair 
  $(\lperp{(\T^{\leqslant0})},\T^{\leqslant 0})$ is a torsion
  pair in $\T$.
\end{proposition}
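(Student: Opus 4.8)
To prove Proposition~\ref{prop:silting-generating}, the plan is to reduce everything to a single structural fact about $S$ --- that for all integers $a\leqslant b$ the subcategory
\[
\mathcal{S}_{[a,b]}:=S[a]\ast S[a+1]\ast\cdots\ast S[b]
\]
of $\T$ is closed under direct summands --- together with routine manipulations of $\ast$ and of the defining property of $\T^{\leqslant0}$. First I would record the elementary observations. Since $S$ is a silting object, $\Hom_\T(S[i],S[j])\cong\Hom_\T(S,S[j-i])=0$ whenever $i<j$. Being defined by the vanishing of the functors $\Hom_\T(S,-[i])$ for $i>0$, the subcategory $\T^{\leqslant0}$ is closed under extensions, direct summands and the shift $[1]$, and contains $S[i]$ for all $i\geqslant0$; dually, $\lperp{(\T^{\leqslant0})}$ is closed under extensions and direct summands, and $S[-i]\in\lperp{(\T^{\leqslant0})}$ for all $i\geqslant1$, because $\Hom_\T(S[-i],N)\cong\Hom_\T(S,N[i])=0$ for $N\in\T^{\leqslant0}$. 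Finally $\ast$ is associative, so the layers of $\mathcal{S}_{[a,b]}$ may be regrouped freely; in particular $\mathcal{S}_{[a,b]}=\mathcal{S}_{[a,b-1]}\ast\add S[b]=\add S[a]\ast\mathcal{S}_{[a+1,b]}$.

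The technical heart is the summand-closedness of $\mathcal{S}_{[a,b]}$, which I would prove by induction on $b-a$, the case $b=a$ being trivial. For the inductive step take $Y\in\mathcal{S}_{[a,b]}$ with a triangle $Y'\to Y\xrightarrow{v}S_0[b]\to Y'[1]$, where $Y'\in\mathcal{S}_{[a,b-1]}$ and $S_0\in\add S$. The presilting vanishing gives $\Hom_\T(Y',S[b])=0$, so $v$ is a left $\add S[b]$-approximation of $Y$; replacing it by a minimal one (possible since $\T$ is Krull--Schmidt), its fibre $\bar Y$ is a direct summand of $Y'$ and hence lies in $\mathcal{S}_{[a,b-1]}$ by the induction hypothesis. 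Since minimal left approximations split along direct sums, a decomposition $Y=X\oplus Z$ yields a minimal left $\add S[b]$-approximation of $X$ whose fibre is a direct summand of $\bar Y$, hence again in $\mathcal{S}_{[a,b-1]}$; therefore $X\in\mathcal{S}_{[a,b-1]}\ast\add S[b]=\mathcal{S}_{[a,b]}$. This induction, and the facts about minimal left approximations it relies on, is the step I expect to demand the most care. Granting summand-closedness, the first equality follows: the subcategory $\mathcal{D}:=\bigcup_{\ell\geqslant0}\mathcal{S}_{[-\ell,\ell]}$ contains $S$, is closed under shifts, is closed under direct summands (a summand of an object of some $\mathcal{S}_{[-\ell,\ell]}$ lies again in $\mathcal{S}_{[-\ell,\ell]}$), and is closed under cones, because a cone of a morphism between objects of $\mathcal{S}_{[-\ell,\ell]}$ lies in $\mathcal{S}_{[-\ell,\ell]}\ast\mathcal{S}_{[1-\ell,1+\ell]}$ and any finite $\ast$-product of shifts of $S$ can be sorted into a single interval $\mathcal{S}_{[-L,L]}$ --- using $S[i]\ast S[j]=\add(S[i]\oplus S[j])$ when $i>j$ (the connecting morphism there lies in $\Hom_\T(S[j],S[i+1])\cong\Hom_\T(S,S[i+1-j])=0$), $S[a]\ast S[a]=\add S[a]$, and the possibility of inserting zero layers. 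Hence $\mathcal{D}$ is a thick subcategory containing $S$, so $\mathcal{D}=\thick(S)=\T$.

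The other two equalities are then formal. For the third, the inclusion $\supseteq$ is immediate from the observations above; for $\subseteq$, given $M\in\lperp{(\T^{\leqslant0})}$ write $M\in\mathcal{S}_{[-k,\ell]}$ by the first equality and (enlarging the interval) assume $\ell\geqslant0$. Regrouping gives a triangle $M_1\to M\to S_0[\ell]\to M_1[1]$ with $M_1\in\mathcal{S}_{[-k,\ell-1]}$ and $S_0\in\add S$; since $S_0[\ell]\in\T^{\leqslant0}$ and $M\in\lperp{(\T^{\leqslant0})}$ the map $M\to S_0[\ell]$ vanishes, the triangle splits, and $S_0[\ell]$ --- a direct summand of $M$ lying in $\lperp{(\T^{\leqslant0})}\cap\T^{\leqslant0}$ --- is zero; hence $M\cong M_1\in\mathcal{S}_{[-k,\ell-1]}$, and iterating yields $M\in\mathcal{S}_{[-k,-1]}$. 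For the second, $\supseteq$ is again immediate; for $\subseteq$, given $M\in\T^{\leqslant0}$ with $M\in\mathcal{S}_{[-k,\ell]}$ and (enlarging) $k\geqslant1$, regrouping gives a triangle $S_0[-k]\to M\to M_1\to S_0[1-k]$ with $S_0\in\add S$ and $M_1\in\mathcal{S}_{[1-k,\ell]}$; as $\Hom_\T(S_0[-k],M)=0$ (since $S_0\in\add S$, $M\in\T^{\leqslant0}$ and $k\geqslant1$), the triangle splits, $M$ is a direct summand of $M_1$, and by summand-closedness $M\in\mathcal{S}_{[1-k,\ell]}$; iterating raises the lower index to $0$, giving $M\in\mathcal{S}_{[0,\ell]}=S\ast S[1]\ast\cdots\ast S[\ell]$.

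Finally, $\Hom_\T(\lperp{(\T^{\leqslant0})},\T^{\leqslant0})=0$ holds by the very definition of the left perpendicular subcategory, and for $M\in\T$ the first equality puts $M$ in some $\mathcal{S}_{[-k,\ell]}$, which by associativity of $\ast$ and the two equalities just proved lies in $\mathcal{S}_{[-k,-1]}\ast\mathcal{S}_{[0,\ell]}\subseteq\lperp{(\T^{\leqslant0})}\ast\T^{\leqslant0}$ (and in $\T^{\leqslant0}$ directly if $k\leqslant0$); hence $\T=\lperp{(\T^{\leqslant0})}\ast\T^{\leqslant0}$, so $(\lperp{(\T^{\leqslant0})},\T^{\leqslant0})$ is a torsion pair. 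As an alternative one may observe that the proposition amounts to the assertion that $S$ determines a bounded co-$t$-structure on $\T$ of which $\T^{\leqslant0}$ is one half, and invoke the general theory of co-$t$-structures.
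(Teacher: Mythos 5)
The paper gives no argument for this statement at all --- it is quoted directly from Aihara and Iyama (their Prop.~2.23) --- so there is no in-paper proof to compare with, and your proposal has to be judged as a self-contained proof of the cited result. As such it is correct and follows what is essentially the standard argument: all the weight is carried by the closedness of the intervals $S[a]\ast S[a+1]\ast\cdots\ast S[b]$ under direct summands, proved by induction via minimal left $\add S[b]$-approximations; the first equality then follows because the union of these intervals is a thick subcategory containing $S$, and the remaining equalities and the torsion-pair property follow by splitting off top or bottom layers and sorting shifts using $\Hom_\T(S,S[i])=0$ for $i>0$. The auxiliary facts you invoke (existence of left minimal versions of morphisms, uniqueness of minimal left approximations, and that a direct sum of left minimal morphisms is left minimal) are all valid in the $k$-linear, $\Hom$-finite, Krull--Schmidt setting of Setting~\ref{set:silting}; if you want to avoid the last of them, you can instead compare the cocone of an arbitrary left $\add S[b]$-approximation of $X\oplus Z$ with that of its minimal version, which differs from it only by a summand in $\add S[b-1]$, and then quote summand-closedness of the shorter interval.

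One detail in your treatment of the third equality should be repaired. In the triangle $M_1\to M\xrightarrow{v} S_0[\ell]\to M_1[1]$, the vanishing of $v$ does not make $S_0[\ell]$ a direct summand of $M$ (that would require the vanishing of the connecting morphism $S_0[\ell]\to M_1[1]$); it makes $M_1\to M$ a split epimorphism, so that $M_1\cong M\oplus S_0[\ell-1]$. The correct conclusion is therefore that $M$ is a direct summand of $M_1\in S[-k]\ast\cdots\ast S[\ell-1]$ and hence lies in that interval by the summand-closedness you have already established --- exactly the (correct) argument you give for the second equality. With this one-line repair, and padding the interval so that $k\geqslant 1$ before iterating, the proof is complete; your closing remark that the statement amounts to $S$ defining a bounded co-$t$-structure with coaisle $\T^{\leqslant 0}$ is also accurate and matches the point of view taken in the literature you would otherwise cite.
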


The following proposition describes $\twosilt{S}{}\T$ in terms
of the partial order in $\silt\T$.
It is shown in  \cite[Prop. 2.9]{aihara_tilting-connected_2013}
in the case when $\T=K^{\mathrm{b}}(\proj A)$ and $S=A$.

\begin{proposition}
  \label{prop:order}
  Let $M$ be an object of $\T$. 
  Then, $M\in S\ast S[1]$ if and only if 
  $S[1]\leqslant M\leqslant S$.
\end{proposition}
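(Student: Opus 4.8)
The plan is to prove the equivalence $M \in S \ast S[1] \iff S[1] \leqslant M \leqslant S$ by unwinding both conditions into vanishing statements for Hom-groups and using the generating properties of $S$ from Proposition \ref{prop:silting-generating}. Recall that by definition $M \leqslant S$ means $\Hom_\T(S, M[i]) = 0$ for all $i > 0$, which says precisely that $M \in \T^{\leqslant 0}$; and $S[1] \leqslant M$ means $\Hom_\T(M, S[1][i]) = 0$ for all $i > 0$, i.e. $\Hom_\T(M, S[i]) = 0$ for all $i \geqslant 2$. So the right-hand side is equivalent to the conjunction
\[
M \in \T^{\leqslant 0} \quad\text{and}\quad \Hom_\T(M, S[i]) = 0 \text{ for all } i \geqslant 2.
\]

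For the forward direction, suppose $M \in S \ast S[1]$, so there is a triangle $S' \to M \to S''[1] \to S'[1]$ with $S', S'' \in \add S$. Applying $\Hom_\T(S, -)$ and using that $S$ is presilting (so $\Hom_\T(S, S[i]) = 0$ for $i > 0$) gives $\Hom_\T(S, M[i]) = 0$ for $i > 0$, hence $M \in \T^{\leqslant 0}$. Applying $\Hom_\T(-, S[i])$ to the same triangle and again using presilting of $S$ shows $\Hom_\T(M, S[i]) = 0$ for $i \geqslant 2$ (the relevant terms $\Hom_\T(S''[1], S[i])$ and $\Hom_\T(S'[1], S[i])$ both vanish in that range). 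This gives both conditions, so $S[1] \leqslant M \leqslant S$.

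For the converse, the key input is that $M \in \T^{\leqslant 0}$ means, by Proposition \ref{prop:silting-generating}, that $M \in S \ast S[1] \ast \cdots \ast S[\ell]$ for some $\ell \geqslant 0$. The strategy is to induct on $\ell$ and show that the extra condition $\Hom_\T(M, S[i]) = 0$ for $i \geqslant 2$ forces the ``tail'' $S[2] \ast \cdots \ast S[\ell]$ to split off trivially, collapsing the filtration down to $S \ast S[1]$. Concretely, write a triangle $N \to M \to S^{(\ell)}[\ell] \to N[1]$ with $N \in S \ast \cdots \ast S[\ell-1]$ and $S^{(\ell)} \in \add S$; when $\ell \geqslant 2$, one uses the vanishing $\Hom_\T(M, S[\ell]) = 0$ together with presilting to analyse the connecting map and peel off $S^{(\ell)}$. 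The main obstacle I anticipate is making this peeling argument precise: one must check that the relevant component of the triangle really is split, which requires tracking Hom-vanishing carefully across the octahedral axiom (comparing the filtration of $M$ with that of $N$). An alternative, possibly cleaner route is to use the torsion pair $(\lperp{(\T^{\leqslant 0})}, \T^{\leqslant 0})$ directly: since $M \in \T^{\leqslant 0}$, form the truncation triangle for $M[2]$ (or rather analyse where $M$ sits relative to $S[1]^{\leqslant 0}$-type subcategories), and translate $\Hom_\T(M, S[i]) = 0$ for $i \geqslant 2$ into the statement that the ``part of $M$ in degrees $\leqslant -2$'' vanishes. Either way, the heart of the proof is the inductive collapse, and the routine Hom-chasing around triangles is what I would not spell out in full.
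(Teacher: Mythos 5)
Your forward direction is complete and agrees with the paper. The converse, however, is where the content of the proposition lies, and as written it is a sketch with an acknowledged hole rather than a proof: you yourself flag the "peeling" step as the main obstacle and do not carry it out. Two concrete points are missing. First, the peeling is simpler than you anticipate and needs no octahedral axiom: in your triangle $N\to M\xto{p} S^{(\ell)}[\ell]\to N[1]$ with $\ell\geqslant 2$, the hypothesis $\Hom_\T(M,S[i])=0$ for all $i\geqslant 2$ gives $p=0$ outright, so the triangle splits and $M$ is a direct summand of $N\in S\ast S[1]\ast\cdots\ast S[\ell-1]$. Second -- and this is the genuine gap -- being a direct summand is not yet membership: to apply your inductive hypothesis to $M$ you must know that $S\ast S[1]\ast\cdots\ast S[m]$ is closed under direct summands (and likewise $S\ast S[1]$ itself at the final step). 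This does hold, since $\Hom_\T(S[i],S[j])=0$ for $i<j$ by the presilting property, so one may invoke the standard fact from \cite{iyama_mutation_2008} that $\X\ast\Y$ is closed under direct summands whenever $\Hom_\T(\X,\Y)=0$; but without this ingredient the inductive collapse does not close, and you neither state nor prove it.

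For comparison, the paper's proof avoids the induction altogether: using Proposition \ref{prop:silting-generating} it rewrites $M\leqslant S$ as $\Hom_\T(\lperp{(\T^{\leqslant 0})},M)=0$, i.e. $M\in\T^{\leqslant 0}=(S\ast S[1])\ast\T^{\leqslant 0}[2]$, and rewrites $S[1]\leqslant M$ as $\Hom_\T(M,\T^{\leqslant 0}[2])=0$; a single splitting of the resulting triangle $X\to M\to Y\to X[1]$ with $X\in S\ast S[1]$ and $Y\in\T^{\leqslant 0}[2]$ then finishes the argument (again using that $S\ast S[1]$ is closed under direct summands). Your "alternative cleaner route" via the torsion pair is essentially this argument; if you make either version precise along the lines above, your proposal becomes a correct proof, and in spirit it is the same as the paper's.
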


\begin{proof}
  Before starting the proof, let us make the following trivial
  observation: 
  Given two subcategories $\X$ and $\Y$ of $\T$, for any object
  $M$ of $\T$, we have that
  $\Hom_\T(\X\ast\Y,M)=0$ if and only if
  $\Hom_\T(\X,M)=0$ and $\Hom_\T(\Y,M)=0$.
  
  Now note that we have $M\leqslant S$ if and only if
  $\Hom_\T(S[-i],M)=0$ for each $i>0$, or equivalently by the
  above observation, 
  $\Hom_\T(S[-\ell]\ast\cdots\ast S[-1],M)=0$ for each $\ell>0$. 
  Thus, by Proposition \ref{prop:silting-generating} we have that
  \begin{equation}
    \label{eq:prop-order-1}
    M\leqslant S
    \quad\text{if and only if}\quad
    \Hom_\T(\lperp{(\T^{\leqslant0})},M)=0
  \end{equation}
  or equivalently, since
  $(\lperp{(\T^{\leqslant0})},\T^{\leqslant0})$ is a torsion pair
  by Proposition \ref{prop:silting-generating},
  $M\in\T^{\leqslant0}=(S*S[1])*\T^{\leqslant0}[2]$. 
  By a similar argument, we have that
  \begin{equation}
    \label{eq:prop-order-2}
    S[1]\leqslant M
    \quad\text{if and only if}\quad
    \Hom_\T(M,\T^{\leqslant0}[2])=0.
  \end{equation}
  Then it follows from \eqref{eq:prop-order-1} and
  \eqref{eq:prop-order-2} that $S[1]\leqslant M\leqslant S$ if and
  only if $M\in S*S[1]$.
\end{proof}

We need the following result:

\begin{proposition}
  \label{prop:iy}
  \cite[Prop. 6.2(3)]{iyama_mutation_2008}
  The functor
  \begin{equation}
    \label{eq:bar}
    \b{(-)}=\Hom_\T(S,-):S\ast S[1]\to \mod A  
  \end{equation}
induces an equivalence of categories
\begin{equation}
  \label{eq:bar-equivalence}
  \b{(-)}:\frac{S\ast S[1]}{[S[1]]}\longrightarrow \mod A.  
\end{equation}
where $[S[1]]$ is the ideal of $\T$ consisting of morphisms which
factor through $\add S[1]$.
\end{proposition}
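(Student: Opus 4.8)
The plan is to show that $\b{(-)}=\Hom_\T(S,-)$ turns the defining triangles of objects of $S\ast S[1]$ into projective presentations in $\mod A$, and to read off essential surjectivity, fullness and faithfulness of the induced functor from this single observation. Throughout I would use that, since $\T$ is $\Hom$-finite and Krull--Schmidt and $A=\End_\T(S)$, the algebra $A$ is finite dimensional and $\Hom_\T(S,-)$ restricts to an equivalence $\add S\xto{\sim}\proj A$ (idempotents split in $\T$). Rotating the defining triangle of an object $M\in S\ast S[1]$ one gets a triangle $S_1\xto{f}S_0\to M\to S_1[1]$ with $S_0,S_1\in\add S$; applying $\Hom_\T(S,-)$ and using $\Hom_\T(S,S_1[1])=0$ (as $S$ is presilting) yields an exact sequence $\b{S_1}\xto{\b{f}}\b{S_0}\to\b{M}\to0$, so $\b{M}=\Coker\b{f}$ is finitely presented and $\b{(-)}$ really does land in $\mod A$. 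For essential surjectivity, given $N\in\mod A$ choose a projective presentation $P_1\to P_0\to N\to0$ (possible since $A$ is finite dimensional), realise it as $\b{S_1}\xto{\b{f}}\b{S_0}$ with $S_i\in\add S$ via the equivalence $\add S\simeq\proj A$, complete $f$ to a triangle, and note that the resulting $M$ lies in $(\add S)\ast(\add S[1])=S\ast S[1]$ and satisfies $\b{M}\cong N$.

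Next, since $\Hom_\T(S,S[1])=0$ the functor $\b{(-)}$ annihilates the ideal $[S[1]]$, so it descends to the quotient $(S\ast S[1])/[S[1]]$; the substance of the proposition is that its kernel is \emph{exactly} $[S[1]]$ and that it is full. For faithfulness I would first establish the auxiliary bijection: for $S_0\in\add S$ and $M'\in S\ast S[1]$ the canonical map $\Hom_\T(S_0,M')\to\Hom_A(\b{S_0},\b{M'})$ is an isomorphism. This follows by writing both sides as cokernels coming from the presentation triangle $S_1'\xto{f'}S_0'\to M'\to S_1'[1]$ of $M'$, using $\Hom_\T(S_0,S_1'[1])=0$ and the projectivity of $\b{S_0}$. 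Granting it, suppose $\phi\colon M\to M'$ in $S\ast S[1]$ has $\b{\phi}=0$, and write $S_0\xto{a}M\xto{b}S_1[1]$ for the rotated defining triangle of $M$. Then $\b{\phi a}=\b{\phi}\,\b{a}=0$, so $\phi a=0$ by the bijection; hence $\phi$ factors through $b\colon M\to S_1[1]$, and since $S_1[1]\in\add S[1]$ we conclude $\phi\in[S[1]]$, as required.

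For fullness, given $\eta\colon\b{M}\to\b{M'}$ in $\mod A$, I would lift it along the presentations $\b{S_1}\xto{\b{f}}\b{S_0}\to\b{M}\to0$ and $\b{S_1'}\xto{\b{f'}}\b{S_0'}\to\b{M'}\to0$, using the projectivity of $\b{S_0}$ and $\b{S_1}$, to a morphism of presentations $(\eta_1,\eta_0)$ with $\b{f'}\eta_1=\eta_0\b{f}$ and $\b{a'}\eta_0=\eta\,\b{a}$. Transporting $\eta_0,\eta_1$ back through $\add S\simeq\proj A$ gives $g_0\colon S_0\to S_0'$ and $g_1\colon S_1\to S_1'$ with $f'g_1=g_0f$ in $\T$. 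Completing this commutative square to a morphism of the triangles defining $M$ and $M'$ produces $\phi\colon M\to M'$ with $\phi a=a'g_0$, whence $\b{\phi}\,\b{a}=\b{a'}\eta_0=\eta\,\b{a}$; since $\b{a}$ is an epimorphism, $\b{\phi}=\eta$, so the induced functor is full. Combined with the previous steps, this shows the induced functor is an equivalence.

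The step I expect to be the main obstacle is faithfulness, and inside it the auxiliary isomorphism $\Hom_\T(S_0,M')\cong\Hom_A(\b{S_0},\b{M'})$, where the presilting hypothesis on $S$ and the projectivity of $\b{S_0}$ have to be used together in a careful diagram chase; by contrast, essential surjectivity and fullness amount to routine bookkeeping with projective presentations plus the axiom completing commutative squares to morphisms of triangles. One should also not overlook checking at the outset that $\Hom_\T(S,-)$ restricts to an equivalence $\add S\simeq\proj A$, since every step above rests on this identification.
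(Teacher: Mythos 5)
Your argument is correct, but it takes a different route from the paper: the paper gives no direct argument at all and simply invokes \cite[Prop.~6.2(3)]{iyama_mutation_2008}, applied with $\X=\add S$, $\Y=\add S[1]$ and $\Z=\add S$, whereas you reprove the statement from scratch. Your proof is essentially the standard argument behind that cited subfactor result, specialized to the two-term situation: the identification $\add S\simeq\proj A$ (valid since $\T$ is $\Hom$-finite and Krull--Schmidt, so idempotents split), the observation that $\Hom_\T(S,-)$ turns the rotated triangle $S_1\to S_0\to M\to S_1[1]$ into a projective presentation because $\Hom_\T(S,S[1])=0$, the comparison isomorphism $\Hom_\T(S_0,M')\cong\Hom_A(\b{S_0},\b{M'})$ obtained by mapping one right-exact sequence onto the other, and the triangle-rotation argument showing that the kernel of $\b{(-)}$ on morphisms is exactly $[S[1]]$; all of these steps check out, including the lifting argument for fullness (where $\b{a}\,\b{f}=0$ and projectivity of $\b{S_1}$ justify the lift $\eta_1$). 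What the paper's one-line citation buys is brevity and the extra generality of the Iyama--Yoshino framework; what your version buys is self-containment and an explicit record of where each hypothesis (presilting vanishing, $\Hom$-finiteness, Krull--Schmidt) enters, at the cost of repeating an argument available in the literature.
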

\begin{proof}
  Take $\X=\add S$, $\Y=\add S[1]$ and $\Z=\add S$ in
  \cite[Prop. 6.2(3)]{iyama_mutation_2008}.  
\end{proof}

In view of Proposition \ref{prop:iy}, for every $M,N\in S\ast
S[1]$ we have a natural isomorphism 
\[
\frac{\Hom_\T(M,N)}{[S[1]](M,N)} \cong \Hom_A(\b{M},\b{N}).
\]

\begin{setting}
  \label{set:silting-2}
  From now on, we fix a presilting object $U$ in $\T$ contained in
  $S\ast S[1]$.
  For simplicity, we assume that $U$ has no non-zero direct
  summands in $\add S[1]$.
  We are interested in the subset of $\twosilt{S}{}\T$ given by
  \[
  \twosilt{S}{U}\T:=\setP{M\in\twosilt{S}{}\T}{U\in\add S}.
  \]
\end{setting}

The following theorem is similar to
\cite[Thm. 3.2]{adachi_tau-tilting_2012}.

\begin{theorem}
  \label{thm:twosilt}
  \cite[Thm. 4.5]{iyama_intermediate_2013}
  With the hypotheses of Setting \ref{set:silting},
  the functor \eqref{eq:bar} induces an order-preserving
  bijection 
  \[
  \b{(-)}:\twosilt{S}{}\T \longrightarrow
  \sttilt A
  \]
  which induces a bijection
  \[
  \b{(-)}:\twosilt{S}{U}\T \longrightarrow
  \sttilt_{\b{U}} A.
  \]
\end{theorem}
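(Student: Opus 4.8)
The plan is to deduce the theorem from Proposition~\ref{prop:iy}, which identifies the quotient $\tfrac{S\ast S[1]}{[S[1]]}$ with $\mod A$, together with Proposition~\ref{prop:order}, which realizes $\twosilt{S}{}\T$ as the interval $[S[1],S]$ in $(\silt\T,\leqslant)$. I will regard the first bijection $\b{(-)}\colon\twosilt{S}{}\T\to\sttilt A$ as the content of \cite{iyama_intermediate_2013} (the case $\T=K^{\mathrm{b}}(\proj A)$, $S=A$ being \cite[Thm.~3.2]{adachi_tau-tilting_2012}), recalling only the underlying mechanism: for $M\in\twosilt{S}{}\T$, a triangle $S^1\to S^0\to M\to S^1[1]$ with $S^0,S^1\in\add S$ becomes, after applying $\b{(-)}=\Hom_\T(S,-)$ and using $\Hom_\T(S,S[1])=0$, a projective presentation $\b{S^1}\to\b{S^0}\to\b{M}\to0$ in $\mod A$; the presilting condition on $M$ corresponds, via Proposition~\ref{prop:iy} and the Auslander--Reiten formulas, to $\tau$-rigidity of $\b{M}$ together with the support condition, maximality of presilting objects matches the support $\tau$-tilting condition, and bijectivity follows since the endpoints $S[1]$ and $S$ of the interval $\twosilt{S}{}\T$ are sent to the endpoints $0$ and $A$ of $\sttilt A$.

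For order-preservation I would first note that, for $M,N\in\twosilt{S}{}\T$ and $i\geqslant2$, one has $M[-i]\in S[-i]\ast S[1-i]\subseteq\lperp{(\T^{\leqslant0})}$ and $N\in S\ast S[1]\subseteq\T^{\leqslant0}$ by Proposition~\ref{prop:silting-generating}, so $\Hom_\T(M,N[i])\cong\Hom_\T(M[-i],N)=0$ automatically. Hence $N\leqslant M$ reduces to the single condition $\Hom_\T(M,N[1])=0$, and a direct computation with the defining triangles and Proposition~\ref{prop:iy} identifies this with $\Fac\b{N}\subseteq\Fac\b{M}$, that is, with $\b{N}\leqslant\b{M}$ in $\sttilt A$ by Remark~\ref{rmk:partial-order-sttiltA}.

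The main new point is that this bijection restricts to one between $\twosilt{S}{U}\T$ and $\sttilt_{\b{U}}A$, and for this I would prove $U\in\add M\iff\b{U}\in\add\b{M}$ for every $M\in\twosilt{S}{}\T$. The implication $\Rightarrow$ is immediate from additivity of $\b{(-)}$, together with $\b{U}\neq0$ since $U$ has no nonzero summand in $\add S[1]$ (Setting~\ref{set:silting-2}). For $\Leftarrow$, write $M=M_0\oplus X$ with $X\in\add S[1]$ maximal, so that $\b{M}=\b{M_0}$, and observe that $\b{(-)}$ induces a bijection between the isomorphism classes of indecomposable summands of $M_0$ and those of $\b{M_0}$, and likewise between those of $U$ and those of $\b{U}$: an indecomposable object $C$ of $S\ast S[1]$ with $C\notin\add S[1]$ stays indecomposable in $\tfrac{S\ast S[1]}{[S[1]]}$, because $\End_\T(C)$ is local and $[S[1]](C,C)$, being a proper ideal, is contained in $\operatorname{rad}\End_\T(C)$; the same fact lets one lift an isomorphism $\b{C}\cong\b{C'}$ to an isomorphism $C\cong C'$. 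Since the indecomposable summands of $U$ and of $M_0$ all avoid $\add S[1]$, $\b{U}\in\add\b{M}=\add\b{M_0}$ then forces every indecomposable summand of $U$ to be a summand of $M_0$, hence $U\in\add M_0\subseteq\add M$ by the Krull--Schmidt property of $\T$.

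I expect the main obstacle to be the bookkeeping in the last step: controlling the ideal $[S[1]]$ on the endomorphism rings of the indecomposable summands of a silting object in $S\ast S[1]$, so that applying $\b{(-)}$ neither breaks an indecomposable summand nor merges non-isomorphic ones. Everything else is a direct application of Propositions~\ref{prop:iy}, \ref{prop:order} and~\ref{prop:silting-generating}, or is quoted from \cite{iyama_intermediate_2013}; in particular I would not reprove the precise correspondence between the presilting condition in $\T$ and $\tau$-rigidity together with the support condition in $\mod A$, as this is exactly the cited generalization of \cite[Thm.~3.2]{adachi_tau-tilting_2012}.
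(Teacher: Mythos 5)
The paper does not prove Theorem \ref{thm:twosilt} at all: it is imported wholesale with the citation \cite{iyama_intermediate_2013} and no proof environment follows, so there is no in-paper argument to compare your proposal against. That said, your proposal is a reasonable reconstruction, and the one piece you actually prove in full --- the restriction $\twosilt{S}{U}\T \to \sttilt_{\b{U}}A$ --- is correct. The pivotal step is the equivalence $U\in\add M\iff\b{U}\in\add\b{M}$, and your Krull--Schmidt argument is sound: since $U$ and $M_0$ have no summands in $\add S[1]$, for each indecomposable summand $C$ the ideal $[S[1]](C,C)$ is proper, hence lies in $\rad\End_\T(C)$, so the equivalence \eqref{eq:bar-equivalence} of Proposition \ref{prop:iy} neither splits an indecomposable nor merges two non-isomorphic ones, and $\b{U}\in\add\b{M_0}$ then lifts summand-by-summand to $U\in\add M_0\subseteq\add M$.

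Two smaller remarks on the parts you delegate to the citation. First, your reduction of $N\leqslant M$ (for $M,N\in\twosilt{S}{}\T$) to the single condition $\Hom_\T(M,N[1])=0$ is correct and a genuinely useful observation --- for $i\geqslant2$ you have $M[-i]\in S[-i]\ast S[1-i]\subseteq\lperp{(\T^{\leqslant0})}$ and $N\in\T^{\leqslant0}$, and the torsion pair of Proposition \ref{prop:silting-generating} kills the Hom --- but the subsequent identification of that vanishing with $\Fac\b{N}\subseteq\Fac\b{M}$ is the real content and you only gesture at it. Second, the remark that ``bijectivity follows since endpoints are sent to endpoints'' is not an argument: two order-preserving maps between intervals that agree on endpoints need not be mutually inverse bijections. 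Both of these are covered by the citation, so they do not damage the proposal, but you should not present the endpoint observation as if it established surjectivity or injectivity. Everything else matches what a reader would expect the omitted argument in \cite{iyama_intermediate_2013} to look like, and the new restriction argument is the kind of routine-but-necessary supplement that a careful reader of the paper would have to supply.
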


Silting reduction was introduced in \cite[Thm. 2.37]{aihara_silting_2012}
in a special case and \cite{iyama_silting_2013} in the general case.
We are interested in the following particular situation:

\begin{theorem}
  \label{thm:silting-red}
  \cite{iyama_silting_2013}
  Let $U$ be a presilting object in $\T$ contained in $S\ast
  S[1]$. Then the canonical functor 
  \begin{equation}
    \label{eq:can}
    \T \longrightarrow \U:=\frac{\T}{\thick(U)}
  \end{equation}
  induces an order-preserving bijection
  \[
  \siltred:\setP{M\in\silt \T}{U\in\add M} \longrightarrow
  \silt\U.
  \]
\end{theorem}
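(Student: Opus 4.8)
The plan is to re-derive the statement as a special case of the general silting reduction theorem \cite{iyama_silting_2013}, whose proof realizes the Verdier quotient $\U=\T/\thick(U)$ concretely as a subfactor category of $\T$; once that is in place, the silting-theoretic assertions become bookkeeping. First I would produce a \emph{silting complement} of $U$, i.e.\ a silting object of $\T$ having $U$ as a direct summand. Its existence is standard for a presilting object in a $\Hom$-finite Krull--Schmidt triangulated category with a silting object; one concrete route is to observe that $\b{U}=\Hom_\T(S,U)$ is a $\tau$-rigid $A$-module, take its Bongartz completion $T_{\b{U}}$ (Proposition \ref{prop:bongartz-completion}), which is a $\tau$-tilting $A$-module having $\b{U}$ as a summand, and pull it back along the bijection of Theorem \ref{thm:twosilt} to obtain a silting object $M_0=U\oplus V\in\twosilt{S}{}\T$. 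Either way $\thick(U)\subseteq\thick(M_0)=\T$, and the existence of $M_0$ supplies the approximation triangles used below.

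The technical heart is the following. Put
\[
\Z:=\setP{X\in\T}{\Hom_\T(U,X[i])=0\text{ and }\Hom_\T(X,U[i])=0\text{ for all }i>0},
\]
a subcategory containing $V$. I would show that the composite $\Z\hookrightarrow\T\longrightarrow\U$ induces an equivalence of additive categories $\Z/[\add U]\longrightarrow\U$, with no idempotent completion needed since $\T$ is Krull--Schmidt and $M_0$ exists. Concretely, the torsion-pair decompositions of Proposition \ref{prop:silting-generating} let one attach to each $X\in\T$ a triangle $X''\to X'\to X\to X''[1]$ with $X'\in\Z$ and $X''\in\thick(U)$, so the composite is dense; and a calculus-of-fractions argument — again fed by the approximation triangles for $\add U$ — identifies $\Hom_\U(\b{X},\b{Y})$ with $\Hom_\T(X,Y)/[\add U](X,Y)$ for $X,Y\in\Z$, so the composite is fully faithful. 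As a by-product $\U$ acquires a triangulated structure, is $\Hom$-finite and Krull--Schmidt, and $\b{V}$ is a silting object of $\U$.

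With the model $\U\simeq\Z/[\add U]$ in hand, the bijection goes as follows. For $M\in\silt\T$ with $U\in\add M$, write $M=U\oplus N$; since $M$ is presilting, $N\in\Z$, and I would set $\siltred(M):=\b{N}$. That $\b{N}$ is silting is read off the model: $\Hom_\U(\b{N},\b{N}[i])=0$ for $i>0$ (coming from $\Hom_\T(M,M[i])=0$), and $\thick_{\U}(\b{N})=\thick_{\U}(\b{M})=\U$. Conversely, a silting object $\bar{M}$ of $\U$ lifts through the equivalence to some $N\in\Z$; completing the triangle that exhibits $N$ as a $\thick(U)$-approximation and applying an octahedron promotes $U\oplus N$ to a presilting object of $\T$, which is in fact silting because it generates $\T$ (using $\thick(M_0)=\T$ and that $\bar{M}$ generates $\U$). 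One checks that the two assignments are mutually inverse. For the order, the point is that $U$ is a common direct summand of the objects being compared, so its contribution is invisible on both sides; translating the defining $\Hom$-vanishing conditions through the quotient functor then yields $M\leqslant M'$ in $\silt\T$ if and only if $\siltred(M)\leqslant\siltred(M')$ in $\silt\U$.

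The main obstacle is the middle step, proving $\U\simeq\Z/[\add U]$: an abstract Verdier quotient of a $\Hom$-finite Krull--Schmidt triangulated category need be neither $\Hom$-finite nor Krull--Schmidt, and need not carry a silting object at all, so one genuinely needs the silting complement $M_0$ (equivalently, via Theorem \ref{thm:twosilt}, the $\tau$-tilting Bongartz completion of Proposition \ref{prop:bongartz-completion}) together with the approximation triangles it provides. Once that subfactor realization is secured, the silting bijection and the comparison of partial orders are comparatively routine.
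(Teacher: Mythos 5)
The paper gives no proof of Theorem \ref{thm:silting-red}: it is stated purely as a citation to \cite{iyama_silting_2013}, and the only related ingredient the paper records, the subfactor realization of Theorem \ref{lemma:hom-verdier}, is likewise cited rather than proved. So there is no internal proof to compare against.

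Your reconstruction does match the architecture of the cited argument: manufacture a silting complement $M_0$ of $U$ (Bongartz completion pulled through Theorem \ref{thm:twosilt} is a legitimate shortcut here), establish the subfactor model $\Z/[\add U]\simeq\U$ where $\Z$ is the category the paper calls $\UU$, and transport (pre)silting objects across the equivalence. One step you compress more than the cited source can afford to: the transfer of the (pre)silting condition is not mere bookkeeping. The Verdier shift $[1]$ on $\U$ does not preserve $\Z$, so one must first equip $\Z/[\add U]$ with its own shift $\langle 1\rangle$ (built from left $\add U$-approximations in $\T$) and triangles, verify that the equivalence of Theorem \ref{lemma:hom-verdier} is triangulated for this structure, and only then prove the lemma that for $N\in\Z$ the object $U\oplus N$ is (pre)silting in $\T$ if and only if $\bar{N}$ is (pre)silting in $\U$. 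Your ``apply an octahedron'' remark points at this comparison but does not discharge it, and without it neither direction of your bijection is justified: $\Hom_\T(M,M[i])=0$ does not directly compute $\Hom_\U(\bar{N},\bar{N}[i])$ because $\bar{N}[i]$ is not represented by $N[i]\in\Z$. Apart from this, the plan is sound and the identification of the subfactor equivalence as the technical heart is exactly right.
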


We need to consider the following analog of Bongartz completion
for presilting objects in $S\ast S[1]$, \cf
\cite[Sec. 5]{derksen_general_2009} and
\cite[Prop. 6.1]{wei_semi-tilting_2012}. 

\begin{defprop}
  \cite[Prop. 2.16]{aihara_tilting-connected_2013}
  \label{def:bongartz-silting}
  Let $f:U'\to S[1]$ be a minimal right $(\add U)$-approximation
  of $S[1]$ in $\T$ and consider a triangle 
  \begin{equation}
    \label{eq:bongartz}
    S\to X_U \to U' \xto{f} S[1].
  \end{equation}
  Then $T_U:=X_U\oplus U$ is in $\twosilt{S}{}\T$ and moreover
  $T_U$ has no non-zero direct summands in $\add S[1]$. 
  We call $T_U$ the \emph{Bongartz completion of $U$ in 
    $S\ast S[1]$}.
\end{defprop}
\begin{proof}
  It is shown in \cite[Prop. 2.16]{aihara_tilting-connected_2013} 
  that $T_U$ is a silting object in $\T$. 
  Moreover, since $\Hom_\T(S,S[1])=0$ we have 
  \[
  T_U\in S\ast(S\ast S[1])=(S\ast S)\ast S[1] = S\ast S[1],
  \]
  hence $T_U\in\twosilt{S}{} \T$.
  Finally, since $\Hom_A(S,S[1])=0$ and $U$ has no non-zero direct 
  summands in $\add S[1]$, it follow from the triangle 
  \eqref{eq:bongartz} that $T_U$ has no non-zero direct summands
  in $\add S[1]$.
\end{proof}

We recall that by Proposition \ref{prop:sttiltUA} we have that
$\sttilt_{\b{U}} A$ equals the interval 
\[
\setP{M\in\sttilt A}{P(\Fac\b{U})\leqslant M\leqslant
  T_{\b{U}}}\subseteq\sttilt A;
\]
hence $T_{\b{U}}$ is the unique maximal element in
$\sttilt_{\b{U}} A$.
The following proposition relates the Bongartz completion $T_U$ of
$U$ in $S\ast S[1]$ with the Bongartz completion $T_{\b{U}}$ of
$\b{U}$ in $\mod A$. 

\begin{proposition}
  \label{prop:bongartz-silting}
  \begin{enumerate}
  \item $T_U$ is the unique maximal element in $\twosilt{S}{U}\T$.
  \item $\b{T_U} \cong T_{\b{U}}$.
  \end{enumerate}
\end{proposition}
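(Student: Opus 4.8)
The plan is to prove (a) directly from the defining triangle \eqref{eq:bongartz} together with the partial order on $\silt\T$, and then to read off (b) as a formal consequence of the order-preserving bijection in Theorem \ref{thm:twosilt}.

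For (a), first observe that $T_U\in\twosilt{S}{U}\T$: indeed $T_U\in\twosilt{S}{}\T$ by Definition-Proposition \ref{def:bongartz-silting}, and $U$ is a direct summand of $T_U=X_U\oplus U$. It then remains to show that $M\leqslant T_U$ for every $M\in\twosilt{S}{U}\T$, that is, that $\Hom_\T(T_U,M[i])=0$ for all $i>0$. The summand $U$ is harmless, since $U\in\add M$ and $M$ is presilting, so $\Hom_\T(U,M[i])=0$ for $i>0$. For the summand $X_U$ I would apply $\Hom_\T(-,M[i])$ to the rotation $U'[-1]\to S\to X_U\to U'$ of the triangle \eqref{eq:bongartz}, obtaining an exact sequence
\[
\Hom_\T(U',M[i])\longrightarrow\Hom_\T(X_U,M[i])\longrightarrow\Hom_\T(S,M[i]).
\]
For $i>0$ the left-hand term vanishes because $U'\in\add U\subseteq\add M$ and $M$ is presilting, while the right-hand term vanishes because $M\in S\ast S[1]\subseteq\T^{\leqslant0}$ (Proposition \ref{prop:silting-generating}) and $\T^{\leqslant0}$ is by definition the class of objects $N$ with $\Hom_\T(S,N[i])=0$ for all $i>0$. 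Hence $\Hom_\T(X_U,M[i])=0$, so $\Hom_\T(T_U,M[i])=0$ and $M\leqslant T_U$. Since $\leqslant$ is a partial order on $\silt\T$, this shows that $T_U$ is the unique maximal element of $\twosilt{S}{U}\T$.

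For (b), recall from \eqref{eq:-sttiltUA-interval} (a consequence of Proposition \ref{prop:sttiltUA}) that $\sttilt_{\b{U}}A$ is the interval $\setP{M\in\sttilt A}{P(\Fac\b{U})\leqslant M\leqslant T_{\b{U}}}$, so that $T_{\b{U}}$ is its unique maximal element. By Theorem \ref{thm:twosilt}, $\b{(-)}$ restricts to an order-preserving bijection $\twosilt{S}{U}\T\to\sttilt_{\b{U}}A$, and an order-preserving bijection sends the maximal element of its domain to the maximal element of its codomain (if $p_0$ is maximal and $q=f(p)$, then $p\leqslant p_0$, hence $q=f(p)\leqslant f(p_0)$). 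Combining this with (a) gives $\b{T_U}\cong T_{\b{U}}$.

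I do not expect a serious obstacle once (a) has been isolated from (b); the only points that require a little care are that the term $U'$ occurring in \eqref{eq:bongartz} genuinely lies in $\add U$, and that membership in $S\ast S[1]$ already forces vanishing in positive degrees against $S$ — both of which are built into Setting \ref{set:silting} and Proposition \ref{prop:silting-generating}.
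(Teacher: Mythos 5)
Your proposal is correct and follows essentially the same route as the paper: part (a) is proved by applying $\Hom_\T(-,M[i])$ to the Bongartz triangle \eqref{eq:bongartz} and using that $U'\in\add M$ together with the vanishing of $\Hom_\T(S,M[i])$ for $M\in S\ast S[1]$, and part (b) is then read off from the order-preserving bijection of Theorem \ref{thm:twosilt} and the fact that $T_{\b{U}}$ is the greatest element of $\sttilt_{\b{U}}A$. The only cosmetic difference is that you justify $\Hom_\T(S,M[i])=0$ directly from Proposition \ref{prop:silting-generating} (via $S\ast S[1]\subseteq\T^{\leqslant0}$) where the paper cites Proposition \ref{prop:order}, which amounts to the same thing.
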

\begin{proof}
  First, note that (b) follows easily from part (a).
  Indeed, since $T_U$ has no non-zero direct summands in $\add
  S[1]$, see Definition-Proposition \ref{def:bongartz-silting},
  we have that $|A|=|S|=|T_U|=|\b{T_U}|$.
  By Theorem \ref{thm:twosilt} we have that $\b{T_U}$ is a
  $\tau$-tilting $A$-module.
  Since $P(\lperp{(\tau\b{U})})$ is the unique maximal element in
  $\sttilt_{\b{U}} A$, to show part (b), \ie that 
  $\b{T_U}\cong P(\lperp{(\tau\b{U})})$, we only need to show that
  $T_U$ is the unique maximal element in $\twosilt{S}{U}\T$.
  
  For this, let $M\in\twosilt{S}{U}\T$ and fix $i>0$. 
  Applying the functor $\Hom_\T(-,M[i])$ to \eqref{eq:bongartz} we
  obtain an exact sequence 
  \[
  \Hom_\T(U', M[i]) \to \Hom_\T(X_U,M[i]) \to
  \Hom_\T(S,M[i]).
  \]
  Now, since $M$ is silting and $U'\in\add M$ we have that
  $\Hom_\T(U',M[i])=0$ for each $i>0$.
  On the other hand, since $M\in S\ast S[1]$, by Proposition
  \ref{prop:order} we have $\Hom_\T(S,M[i])=0$ for each $i>0$. 
  Thus he have $\Hom_\T(X_U,M[i])=0$ for each $i>0$.
  Since $T_U=X_U\oplus U$ we have $\Hom_\T(T_U,M[i])=0$ for
  each $i>0$, hence $M\leqslant T_U$. The claim follows.
\end{proof}

From this we can deduce the following result:

\begin{proposition}
  \label{prop:twosilt}
  Let $T_U\in\twosilt{S}{} \T$ be the Bongartz completion of $U$
  in $S\ast S[1]$.
  Then $T_U\cong S$ in $\U$ and the canonical functor
  \eqref{eq:can} induces an order preserving map 
  \[
  \siltred:\twosilt{S}{U}\T \longrightarrow
  \twosilt{T_U}{} \U.
  \]
\end{proposition}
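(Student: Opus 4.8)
The plan is to derive everything from silting reduction (Theorem~\ref{thm:silting-red}) together with the triangle~\eqref{eq:bongartz} defining the Bongartz completion, using that the canonical functor $q\colon\T\to\U=\T/\thick(U)$ is triangulated. First I would establish the isomorphism $T_U\cong S$ in $\U$. Applying $q$ to the triangle $S\to X_U\to U'\xto{f}S[1]$ of Definition-Proposition~\ref{def:bongartz-silting}, and noting that both $U'\in\add U$ and $U$ lie in $\thick(U)$ and hence become zero in $\U$, I obtain a triangle $q(S)\to q(X_U)\to 0\to q(S)[1]$, which shows $q(S)\cong q(X_U)$; since $T_U=X_U\oplus U$ this gives $q(T_U)\cong q(X_U)\cong q(S)$, i.e. $T_U\cong S$ in $\U$.

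Next I would record that the target $\twosilt{T_U}{}\U$ is well-defined: since $U\in\add T_U$, the object $T_U$ lies in the domain of the bijection $\siltred$ of Theorem~\ref{thm:silting-red}, so $q(T_U)$ is a silting object of $\U$, and therefore so is $q(S)$ by the previous step. In particular $q(T_U)\ast q(T_U)[1]=q(S)\ast q(S)[1]$ inside $\U$.

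The core step is to show $\siltred$ maps $\twosilt{S}{U}\T$ into $\twosilt{T_U}{}\U$. Let $M\in\twosilt{S}{U}\T$. Then $M\in\silt\T$ with $U\in\add M$, so $\siltred(M)=q(M)\in\silt\U$ by Theorem~\ref{thm:silting-red}; it remains to see that $q(M)\in q(T_U)\ast q(T_U)[1]$. Since $M\in S\ast S[1]$, choose a triangle $S_0\to M\to S_1[1]\to S_0[1]$ with $S_0,S_1\in\add S$; applying the triangulated functor $q$ yields a triangle $q(S_0)\to q(M)\to q(S_1)[1]\to q(S_0)[1]$ in $\U$ with $q(S_0),q(S_1)\in\add q(S)$, whence $q(M)\in q(S)\ast q(S)[1]=q(T_U)\ast q(T_U)[1]$. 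Thus $\siltred(M)\in\twosilt{T_U}{}\U$. Finally, order-preservation is automatic: the partial orders on $\twosilt{S}{U}\T$ and $\twosilt{T_U}{}\U$ are those induced from $\silt\T$ and $\silt\U$, and $\siltred$ is order-preserving on the larger sets by Theorem~\ref{thm:silting-red}, hence so is its restriction.

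I do not anticipate a genuine obstacle; the only point that needs care is the bookkeeping around $q$ — namely that it is triangulated, so that it sends the $\ast$-decomposition triangles to triangles and $\add S$ into $\add q(S)$, and that $q(T_U)\cong q(S)$ really is a silting object of $\U$ so that $\twosilt{T_U}{}\U$ makes sense. Both facts are immediate, the first from the general theory of Verdier quotients and the second from Theorem~\ref{thm:silting-red} applied to $T_U$.
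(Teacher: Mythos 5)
Your proposal is correct and follows essentially the same argument as the paper: deduce $T_U\cong S$ in $\U$ from the Bongartz triangle \eqref{eq:bongartz} (since $U'$ and $U$ vanish in $\U$), then use that the canonical Verdier quotient functor is triangulated to send $S\ast S[1]$ into $q(S)\ast q(S)[1]=q(T_U)\ast q(T_U)[1]$, and invoke Theorem \ref{thm:silting-red} for the silting and order-preservation statements. The paper states this in three sentences and leaves the intermediate bookkeeping implicit; you have simply spelled it out.
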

\begin{proof}
  By \eqref{eq:bongartz} we have that
  $S\cong T_U$ in $\U=\lperp{(\tau U)}\cap U^\perp$, hence the canonical functor $\T\to 
  \U$ restricts to a functor 
  $S\ast S[1]\to T_U\ast T_U[1]\subset \U$. 
  The claim now follows from Theorem \ref{thm:silting-red}.
\end{proof}

We are ready to state the main theorem of this section. 
We keep the notation of the above discussion.

\begin{theorem}
  \label{thm:silting-compatibility}
  With the hypotheses of Settings \ref{set:silting} and
  \ref{set:silting-2}, we have the following:
  \begin{enumerate}
  \item The algebras $\End_\U(T_U)$ and 
    $C = \End_A(\b{T_U})/\langle e_{\b{U}}\rangle$ are isomorphic, 
    where $e_{\b{U}}$ is the idempotent corresponding to the
    projective $\End_A(\b{T_U})$-module $\Hom_A(\b{T_U},\b{U})$.
  \item We have a commutative diagram
  \begin{center}
    \begin{tikzcd}[column sep=huge]
      \twosilt{S}{U}\T \arrow{r}{\Hom_\T(S,-)}
      \dar[swap]{\siltred} &
      \sttilt_{\b{U}} A \dar{\taured} \\
      \twosilt{T_U}{} \U
      \arrow[swap]{r}{\Hom_\U(T_U,-)} &
      \sttilt C
    \end{tikzcd}   
  \end{center}
  in which each arrow is a bijection. 
  The vertical maps are given in Proposition \ref{prop:twosilt}
  and Theorem \ref{thm:tau-tilting-reduction} respectively. 
  \end{enumerate}
\end{theorem}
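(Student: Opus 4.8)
The plan is to identify $\End_\U(T_U)$ and $C$ with the \emph{same} quotient of $\End_\T(T_U)$ for part (a), and then to prove part (b) by chasing an object through the square, the two composite bijections being matched by a short computation built on Proposition \ref{prop:iy} and the explicit form of silting reduction.

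For part (a): since $T_U\in S\ast S[1]$, Proposition \ref{prop:iy} gives an algebra isomorphism $B=\End_A(\b{T_U})\cong\End_\T(T_U)/[S[1]](T_U,T_U)$, under which the idempotent $e_{\b U}\in B$ cutting out the projective summand $\Hom_A(\b{T_U},\b U)$ lifts to the idempotent of $\End_\T(T_U)$ cutting out the summand $U$ of $T_U=X_U\oplus U$ (Definition-Proposition \ref{def:bongartz-silting}). Hence the two-sided ideal of $\End_\T(T_U)$ pulling back $\langle e_{\b U}\rangle$ is $[\add(U\oplus S[1])](T_U,T_U)$, so that
\[
C \;=\; B/\langle e_{\b U}\rangle \;\cong\; \End_\T(T_U)/[\add(U\oplus S[1])](T_U,T_U).
\]
On the other hand, the silting reduction of \cite{iyama_silting_2013} realises $\U=\T/\thick(U)$ as $\mathcal Z/[\add U]$, where $\mathcal Z:=\setP{X\in\T}{\Hom_\T(U,X[i])=0=\Hom_\T(X,U[i])\text{ for all }i>0}$; since $T_U$ is a silting object with $U\in\add T_U$ we have $T_U\in\mathcal Z$, so $\End_\U(T_U)\cong\End_\T(T_U)/[\add U](T_U,T_U)$. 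It remains to show $[S[1]](T_U,T_U)\subseteq[\add U](T_U,T_U)$. Given a factorisation $T_U\xrightarrow{\,a\,}S[1]^{n}\to T_U$, it suffices that $a$ factor through $\add U$; but rotating \eqref{eq:bongartz} to $X_U\xrightarrow{g}U'\xrightarrow{f}S[1]\to X_U[1]$, taking $n$-fold sums and applying $\Hom_\T(T_U,-)$ yields an exact sequence $\Hom_\T(T_U,U'^{n})\xrightarrow{f^{n}\circ-}\Hom_\T(T_U,S[1]^{n})\to\Hom_\T(T_U,X_U^{n}[1])$ whose last term vanishes because $X_U\in\add T_U$ and $T_U$ is presilting; hence $a$ factors through $U'^{n}\in\add U$, and $\End_\U(T_U)\cong C$.

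For part (b): once (a) is known, all four maps of the square are bijections --- the horizontal ones by Theorem \ref{thm:twosilt} (the bottom one applied to $\U$ with silting object $T_U$), the right one by Theorem \ref{thm:tau-tilting-reduction}, and hence the left one (known to be an order-preserving map by Proposition \ref{prop:twosilt}) as soon as commutativity is established. So I must check, for $M\in\twosilt{S}{U}\T$, that $\Hom_\U(T_U,\pi M)\cong F(\f\b M)=\Hom_A(\b{T_U},\f\b M)$ as $C$-modules, where $\pi\colon\T\to\U$ is the quotient functor and $\f$ is the torsion-free functor of $(\Fac\b U,\b U^{\perp})$. Since $M$ is silting with $U\in\add M$ we have $M\in\mathcal Z$, so by \cite{iyama_silting_2013} together with Proposition \ref{prop:iy} (and $[S[1]](T_U,M)\subseteq[\add U](T_U,M)$, by the argument of part (a)),
\[
\Hom_\U(T_U,\pi M)\;\cong\;\Hom_\T(T_U,M)/[\add U](T_U,M)\;\cong\;\Hom_A(\b{T_U},\b M)/[\add\b U](\b{T_U},\b M).
\]
On the other hand, applying $\Hom_A(\b{T_U},-)$ to the canonical sequence $0\to\t\b M\to\b M\to\f\b M\to0$ and using that $\b{T_U}\cong T_{\b U}$ (Proposition \ref{prop:bongartz-silting}) is $\Ext$-projective in $\lperp{(\tau\b U)}\supseteq\Fac\b U\ni\t\b M$, so $\Ext_A^1(\b{T_U},\t\b M)=0$, produces a short exact sequence $0\to\Hom_A(\b{T_U},\t\b M)\to\Hom_A(\b{T_U},\b M)\to\Hom_A(\b{T_U},\f\b M)\to0$. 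Comparing the two displays, part (b) reduces to the identity $\Hom_A(\b{T_U},\t\b M)=[\add\b U](\b{T_U},\b M)$.

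The verification of that identity is the step I expect to require the most care. The inclusion $\supseteq$ is clear, since a map factoring through $\add\b U$ has image in $\Fac\b U$, hence inside the torsion submodule $\t\b M$. For $\subseteq$, choose a right $(\add\b U)$-approximation $q\colon\b U^{m}\to\t\b M$; it is an epimorphism because $\t\b M\in\Fac\b U$, and by Lemma \ref{lemma:wakamatsu} its kernel lies in $\lperp{(\tau\b U)}$, whence $\Ext_A^1(\b{T_U},\ker q)=0$ again by $\Ext$-projectivity of $T_{\b U}$ in $\lperp{(\tau\b U)}$. Thus $\Hom_A(\b{T_U},\b U^{m})\xrightarrow{q\circ-}\Hom_A(\b{T_U},\t\b M)$ is surjective, i.e.\ every map $\b{T_U}\to\t\b M$ factors through $\b U^{m}\in\add\b U$. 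Combining this with the two displays above gives $\Hom_\U(T_U,\pi M)\cong F(\f\b M)$; $C$-linearity is then immediate from naturality, as every arrow in sight is a restriction of $\Hom_\T(T_U,-)$, $\Hom_A(\b{T_U},-)$ or $\Hom_\U(T_U,-)$. This establishes the commutativity of the square and completes the proof.
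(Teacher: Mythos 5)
The proposal is correct and follows essentially the same route as the paper: for part (a) one reduces to showing $[S[1]](T_U,-)\subseteq[\add U](T_U,-)$ via the Bongartz triangle \eqref{eq:bongartz} and invokes the realisation of $\U$ as a subfactor category $\UU/[U]$ from \cite{iyama_silting_2013} (the paper's Theorem \ref{lemma:hom-verdier} and Lemma \ref{lemma:well-defined-silting}); for part (b) one compares the resulting quotient of $\Hom_A(\b{T_U},\b M)$ with $\Hom_A(\b{T_U},\f\b M)$ via the canonical sequence, $\Ext$-projectivity of $T_{\b U}$, and a right $(\add\b U)$-approximation argument using Wakamatsu's lemma, exactly as in the paper's Proposition \ref{prop:E-C-modules}. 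The only cosmetic difference is that you rotate the Bongartz triangle and apply $\Hom_\T(T_U,-)$ where the paper applies $\Hom_\T(-,S[1])$ directly, and that you explicitly present $C$ and $\End_\U(T_U)$ as quotients of $\End_\T(T_U)$ rather than passing through $\mod A$ first; both variants are equivalent.
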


We begin by proving part (a) of Theorem
\ref{thm:silting-compatibility}. 
For this we need the following technical result.

Let $\UU$ be the subcategory of $\T$ given by
\[
\UU:=\setP{M\in\T}{\Hom_\T(M,U[i])=0
  \text{ and }
  \Hom_\T(U,M[i])=0
  \text{ for each }i>0}.
\]
Note that if $M$ is an object in $\silt_U\T$ then $M\in\UU$.
The following theorem allows us to realize $\U$ as a subfactor
category of $\T$.

\begin{theorem}
  \label{lemma:hom-verdier}
  \cite{iyama_silting_2013}
  The composition of canonical functors $\UU\to\T\to\U$ induces an
  equivalence 
  \[
  \frac{\UU}{[U]} \cong \U
  \]
  of additive categories.
  In particular, for every $M$ in $\UU$ there is a natural
  isomorphism
  \[
  \frac{\Hom_\T(T_U,M)}{[U](T_U,M)}
  \cong
  \Hom_\U(T_U,M)
  \]
\end{theorem}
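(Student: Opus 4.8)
The plan is to exhibit $\U$ as the Verdier quotient $\T/\thick(U)$ restricted to the ``right orthogonal'' subcategory $\UU$, following the standard technique for describing morphism spaces in such quotients. Recall that $\thick(U)$ is the smallest thick subcategory of $\T$ containing $U$; since $U$ is presilting, $\thick(U)$ admits a nice generating filtration by shifts of $\add U$. First I would observe that $\UU$ is precisely the subcategory of objects $M$ with $\Hom_\T(M,\thick(U)[{>}0])=0$ and $\Hom_\T(\thick(U),M[{>}0])=0$; this reduces to the defining vanishing conditions by the generating property of $\thick(U)$ together with a dévissage on triangles. The key point is that for such $M$, morphisms in the Verdier quotient $\Hom_\U(M,N)$ for $M,N\in\UU$ can be computed without fractions: any roof $M \xleftarrow{s} M' \to N$ with $\mathrm{cone}(s)\in\thick(U)$ can be rectified because the orthogonality of $M$ to positive shifts of $\thick(U)$ (and of $\thick(U)$ to positive shifts of $N$) forces the relevant obstruction groups to vanish, so $s$ becomes a ``homotopy equivalence modulo $[U]$''.

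Concretely, the steps in order are: (1) Show $\UU$ is closed under the relevant operations and that the composite $\UU\hookrightarrow\T\to\U$ is well-defined and additive. (2) Establish fullness: given a morphism $f:M\to N$ in $\U$ with $M,N\in\UU$, lift it to a roof in $\T$ and use the orthogonality conditions to replace the roof by an honest morphism $M\to N$ in $\T$. (3) Establish that the functor is full and faithful up to the ideal $[U]$, i.e. identify the kernel of $\Hom_\T(M,N)\to\Hom_\U(M,N)$ with $[U](M,N)$: a morphism $g:M\to N$ in $\T$ becomes zero in $\U$ iff it factors through an object of $\thick(U)$, and using that both $M$ and $N$ sit in $\UU$ one shows such a factorization can be arranged through $\add U$ itself (again a dévissage argument on the filtration of $\thick(U)$ by shifts of $\add U$, peeling off positive and negative shifts using the two vanishing conditions). (4) Establish essential surjectivity: every object of $\U=\T/\thick(U)$ is isomorphic in $\U$ to an object of $\UU$; here I would use the torsion-pair-type generating statements (Proposition \ref{prop:silting-generating}) to ``approximate'' an arbitrary object by one orthogonal to $U$ on both sides, completing triangles against $\add U$ and its shifts. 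Combining (2)--(4) gives the equivalence $\UU/[U]\cong\U$. The final displayed isomorphism $\Hom_\T(T_U,M)/[U](T_U,M)\cong\Hom_\U(T_U,M)$ is then the special case $M\in\UU$ (note $T_U\in\UU$ since $T_U$ is silting and contains $U$, cf. Proposition \ref{prop:bongartz-silting}), which is exactly what step (3) provides.

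Since this is cited as \cite{iyama_silting_2013}, I would not reprove it from scratch but rather indicate that it is the specialization of the general silting-reduction framework to the two-term situation; still, the self-contained argument above is the one I would write if a proof were required. The main obstacle is step (3), the faithfulness-modulo-$[U]$ statement: showing that a morphism killed in the Verdier quotient factors not just through $\thick(U)$ but through $\add U$ requires carefully using \emph{both} orthogonality conditions defining $\UU$ to strip away all the positive and negative shifts of $U$ appearing in a presentation of the intermediate object, and this dévissage is where the presilting hypothesis on $U$ (giving $\Hom_\T(U,U[{>}0])=0$) is essential. Everything else is a routine manipulation of roofs and octahedra.
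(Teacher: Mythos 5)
The paper does not actually prove this statement: it is quoted verbatim from \cite{iyama_silting_2013}, so your decision to defer to that reference is consistent with what the paper does. The problem lies in the sketch you offer as a substitute argument, which contains a genuine gap right at its foundation.

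Your first ``observation'' is false: you claim $\UU$ consists exactly of the objects $M$ with $\Hom_\T(M,\thick(U)[i])=0$ and $\Hom_\T(\thick(U),M[i])=0$ for all $i>0$. But $\thick(U)$ is a triangulated subcategory, hence stable under all shifts, so these conditions are equivalent to $\Hom_\T(M,\thick(U))=0$ and $\Hom_\T(\thick(U),M)=0$, i.e.\ to $M$ lying in $\lperp{(\thick(U))}\cap\thick(U)^\perp$ --- a far smaller subcategory than $\UU$ (for instance $U$ and $T_U$ belong to $\UU$ but obviously not to $\lperp{(\thick(U))}$). No d\'evissage can repair this: the defining conditions of $\UU$ control only $\Hom_\T(M,U[i])$ and $\Hom_\T(U,M[i])$ for $i>0$, whereas an object of $\thick(U)$ is assembled from shifts $U[j]$ with $j$ ranging over \emph{all} integers, most of which your hypotheses say nothing about. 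Consequently the ``key point'' of your outline --- that the obstruction groups needed to rectify a roof $M\leftarrow M'\to N$ with cone in $\thick(U)$ vanish outright --- does not follow, and the same defect propagates into your steps (2), (3) and (4). The actual argument of \cite{iyama_silting_2013} is more delicate: one first shows, using the finiteness conditions (here supplied by Proposition \ref{prop:silting-generating} together with the $\Hom$-finite Krull--Schmidt hypothesis, which makes $\add U$ functorially finite), that the relevant objects of $\thick(U)$ admit finite filtrations with layers in $\add U[m],\dots,\add U[n]$ for a bounded range of shifts, and then rectifies roofs and factors morphisms killed in $\U$ through $\add U$ by induction on the length of such a filtration, at each stage completing a left or right $\add U$-approximation triangle and invoking the presilting condition --- not by appealing to a blanket orthogonality against $\thick(U)$. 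Your final reduction of the displayed isomorphism to the case $M\in\UU$ with $T_U\in\UU$ is fine, but it rests on the equivalence whose proof, as sketched, does not stand.
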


Now we can prove the following lemma:

\begin{lemma}
  \label{lemma:well-defined-silting}
  For each $M$ in $\UU$ we have a functorial isomorphism
  \[
  \frac{\Hom_A(\b{T_U},\b{M})}{[\b{U}](\b{T_U},\b{M})}
  \cong 
  \Hom_\U(T_U,M).
  \]
\end{lemma}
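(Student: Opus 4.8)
The plan is to compare two quotient constructions of the same $\Hom$-space. On one side we have $\Hom_\U(T_U,M)$, which by Theorem \ref{lemma:hom-verdier} is computed as $\Hom_\T(T_U,M)/[U](T_U,M)$ for $M\in\UU$. On the other side we have the right-hand term, built from $\mod A$ via the functor $\b{(-)} = \Hom_\T(S,-)$. The key is that both are subquotients of $\Hom_\T(T_U,M)$, and I would like to identify the two "denominators." First I would recall from Proposition \ref{prop:iy} that for objects $X, Y \in S\ast S[1]$ there is a natural isomorphism $\Hom_\T(X,Y)/[S[1]](X,Y)\cong\Hom_A(\b{X},\b{Y})$. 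Since $T_U\in\twosilt{S}{}\T = \silt\T\cap(S\ast S[1])$ and $U\in S\ast S[1]$ by Setting \ref{set:silting-2}, I can apply this with $X=T_U$ and $Y=U$ (and also with $Y=M$ once I pin down in which subcategory $M$ lives after applying $\b{(-)}$).

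The main step is a bookkeeping argument on ideals. I would show that under the isomorphism $\Hom_\T(T_U,M)/[S[1]](T_U,M)\cong\Hom_A(\b{T_U},\b{M})$, the image of the ideal $[U](T_U,M)$ of maps factoring through $\add U$ in $\T$ corresponds exactly to the ideal $[\b{U}](\b{T_U},\b{M})$ of maps factoring through $\add\b{U}$ in $\mod A$. This should follow because $\b{(-)}$ restricted to $S\ast S[1]$ is full (it is the quotient functor of Proposition \ref{prop:iy}), so a morphism $\b{T_U}\to\b{M}$ factoring through $\add\b{U}$ can be lifted to a morphism $T_U\to M$ which, modulo $[S[1]]$, factors through $\add U$; conversely, applying $\b{(-)}$ to a factorization through $\add U$ in $\T$ gives a factorization through $\add\b{U}$ in $\mod A$ since $\b{(-)}$ is additive and $\b{U}$ is the image of $U$. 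One subtlety is that $[S[1]]\subseteq[U]$ need not hold in general, but $[S[1]](T_U,M)=0$ actually holds here: since $M\in\UU\subseteq\T^{\leqslant 0}$ (as $M$ is silting with $M\leqslant S$, one gets $\Hom_\T(S[1],M)\cong\Hom_\T(S,M[-1])$, which vanishes because $M\in\T^{\leqslant 0}$ forces the relevant negative-degree homs to vanish after using the torsion pair)—more directly, any morphism $T_U\to M$ factoring through $\add S[1]$ must be zero since $\Hom_\T(S[1],M)=0$ for $M$ with $M\leqslant S$. I would verify this vanishing carefully, as it makes the comparison clean: then $\Hom_\T(T_U,M) \cong \Hom_A(\b{T_U},\b{M})$ already on the nose, and modding out by $[U]$ on the left matches modding out by $[\b{U}]$ on the right.

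Assembling: combining $\Hom_\T(T_U,M)\cong\Hom_A(\b{T_U},\b{M})$ (via $\b{(-)}$, using $[S[1]](T_U,M)=0$) with the ideal correspondence $[U](T_U,M)\leftrightarrow[\b{U}](\b{T_U},\b{M})$ and then with Theorem \ref{lemma:hom-verdier}, I obtain
\[
\Hom_\U(T_U,M)\cong\frac{\Hom_\T(T_U,M)}{[U](T_U,M)}\cong\frac{\Hom_A(\b{T_U},\b{M})}{[\b{U}](\b{T_U},\b{M})},
\]
and naturality in $M$ is inherited from the naturality of each isomorphism used. The hard part will be the ideal-matching step: carefully checking that factorizations through $\add U$ in $\T$ and through $\add\b{U}$ in $\mod A$ correspond under the (only faithful-on-a-quotient) functor $\b{(-)}$, and confirming the vanishing $[S[1]](T_U,M)=0$ so that no extra error terms appear. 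Once that is in place, functoriality and the final chain of isomorphisms are formal.
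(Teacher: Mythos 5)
Your overall strategy (compare the two quotients of $\Hom_\T(T_U,M)$ coming from Proposition \ref{prop:iy} and from Theorem \ref{lemma:hom-verdier}) is the same as the paper's, but the step you single out as making everything clean is wrong: $[S[1]](T_U,M)=0$ does \emph{not} hold in general, and neither does the claim you base it on, namely $\Hom_\T(S[1],M)=0$. The relation $M\leqslant S$ only gives $\Hom_\T(S,M[i])=0$ for $i>0$; it says nothing about $\Hom_\T(S[1],M)=\Hom_\T(S,M[-1])$, which is a negative-degree Hom and is typically nonzero for two-term silting objects. Concretely, whenever $M\in\twosilt{S}{U}\T$ has a direct summand in $\add S[1]$ (in $K^{\mathrm{b}}(\proj A)$ with $S=A$ this happens exactly when the corresponding support $\tau$-tilting module is not sincere), one has $\Hom_\T(S[1],M)\neq 0$, and a nonzero map $U\to S'[1]$ with $S'[1]\in\add S[1]$ a summand of $M$ composed with the split inclusion $S'[1]\to M$ gives a nonzero element of $[S[1]](T_U,M)$; e.g.\ for $A=k(2\leftarrow 1)$, $U$ the minimal projective presentation of the simple $S_1$ and $M=U\oplus P_2[1]$. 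Moreover the lemma is stated for arbitrary $M\in\UU$, which need not be silting nor satisfy $M\leqslant S$, so your hypothesis is not even available in the generality required.

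What actually saves the argument is precisely the inclusion you set aside as ``need not hold in general'': one shows $[S[1]](T_U,M)\subseteq[U](T_U,M)$ for these particular objects. Maps out of the summand $U$ of $T_U=X_U\oplus U$ lie in $[U]$ trivially, and for the summand $X_U$ one applies $\Hom_\T(-,S[1])$ to the Bongartz triangle \eqref{eq:bongartz} to get
\[
\Hom_\T(U',S[1])\to\Hom_\T(X_U,S[1])\to\Hom_\T(S,S[1])=0,
\]
so every morphism $X_U\to S[1]$ factors through $U'\in\add U$. With this inclusion, the equivalence \eqref{eq:bar-equivalence} identifies $\Hom_A(\b{T_U},\b{M})/[\b{U}](\b{T_U},\b{M})$ with $\Hom_\T(T_U,M)/\bigl([U](T_U,M)+[S[1]](T_U,M)\bigr)=\Hom_\T(T_U,M)/[U](T_U,M)$, and Theorem \ref{lemma:hom-verdier} finishes the proof. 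Your ``ideal matching'' step is fine once phrased this way (fullness of $\b{(-)}$ on $S\ast S[1]$ gives the correspondence $[\b{U}]\leftrightarrow([U]+[S[1]])/[S[1]]$), but as written your proof rests on a false vanishing and therefore has a genuine gap.
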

\begin{proof}
  By \eqref{eq:bar-equivalence} we have the following functorial
  isomorphism 
  \[
  \frac{\Hom_A(\b{T_U},\b{M})}{[\b{U}](\b{T_U},\b{M})} 
  \cong
  \frac{\frac{\Hom_\C(T_U,M)}{[S[1]](T_U,M)]}}
  {\frac{[U](T_U,M)+[S[1]](T_U,M)}{[S[1]](T_U,M)}}.
  \]
  We claim that $[S[1]](T_U,M) \subseteq [U](T_U,M)$, or
  equivalently
  \[
  [S[1]](X_U,M)\subseteq [U](T_U,M)
  \]
  since $T_U=X\oplus U$. 
  Apply the contravariant functor $\Hom_\T(-,S[1])$ to the
  triangle \eqref{eq:bongartz} to obtain an exact sequence
  \[
  \Hom_\T(U',S[1]) \to \Hom_\T(X_U,S[1]) \to \Hom_\T(S,S[1])=0.
  \]
  Hence every morphism $X_U\to S[1]$ factors through $U'$, 
  and we have $[S[1]](X_U,M) \subseteq [U](X_U,M)$.
  Thus by Theorem \ref{lemma:hom-verdier} we have isomorphisms   
  \[
  \frac{\Hom_A(\b{T_U},\b{M})}{[\b{U}](\b{T_U},\b{M})} 
  \cong
  \frac{\Hom_\C(T_U,M)}{[U](T_U,M)}
  \cong
  \Hom_\U(T_U,M),
  \]
  which shows the assertion.
\end{proof}

Now part (a) of Theorem \ref{thm:silting-compatibility} follows by
putting $M=T_U$ in Lemma \ref{lemma:well-defined-silting}. 
In the remainder we prove Theorem
\ref{thm:silting-compatibility}(b).

For $X\in\mod A$ we denote by $0\to \t X\to X\to \f X\to 0$ the
canonical sequence of $X$ with respect to the torsion pair
$(\Fac\b{U},\b{U}^\perp)$ in $\mod A$.

\begin{proposition}
  \label{prop:E-C-modules}
  For each $M$ in $\UU$ there is an isomorphism of $C$-modules
  \[
  \Hom_A(\b{T_U},\f\b{M}) \cong \Hom_\U(T_U,M)
  \]
\end{proposition}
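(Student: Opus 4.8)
The plan is to reduce the statement to the two equivalences we already have: the Brenner–Butler-type equivalence $\b{(-)}:\mod A \to \mod A$ coming from Proposition \ref{prop:iy} together with the identification of $\U$ with the subfactor $\UU/[U]$ from Theorem \ref{lemma:hom-verdier}, and the torsion-theoretic equivalence $F=\Hom_A(T_{\b U},-):\lperp{(\tau\b U)}\cap\b U^\perp \to \mod C$ from Theorem \ref{thm:U-modC}. The key point is that, by Lemma \ref{lemma:well-defined-silting}, for $M\in\UU$ we already have a functorial isomorphism $\Hom_\U(T_U,M)\cong \Hom_A(\b{T_U},\b{M})/[\b U](\b{T_U},\b{M})$. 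So it suffices to produce a natural isomorphism of $C$-modules
\[
\Hom_A(\b{T_U},\f\b{M}) \cong \frac{\Hom_A(\b{T_U},\b{M})}{[\b U](\b{T_U},\b{M})}.
\]
This is an identity taking place entirely inside $\mod A$, so the triangulated category $\T$ disappears from the argument at this stage.

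First I would apply $\Hom_A(\b{T_U},-)$ to the canonical sequence $0\to \t\b M\to \b M\to \f\b M\to 0$ of $\b M$ with respect to the torsion pair $(\Fac\b U,\b U^\perp)$. Since $\b{T_U}=T_{\b U}$ is $\Ext$-projective in $\lperp{(\tau\b U)}$ by Proposition \ref{prop:bongartz-completion} and $\t\b M\in\Fac\b U\subseteq\lperp{(\tau\b U)}$, we have $\Ext^1_A(\b{T_U},\t\b M)=0$, so this yields a short exact sequence of $B$-modules (in fact of $\End_A(\b{T_U})$-modules)
\[
0\to \Hom_A(\b{T_U},\t\b M)\to \Hom_A(\b{T_U},\b M)\to \Hom_A(\b{T_U},\f\b M)\to 0.
\]
Next I would identify the kernel term with $[\b U](\b{T_U},\b M)$. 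The inclusion $\Hom_A(\b{T_U},\t\b M)\subseteq [\b U](\b{T_U},\b M)$ holds because $\t\b M\in\Fac\b U$, so any morphism $\b{T_U}\to\t\b M$ factors through a direct sum of copies of $\b U$ (using that $\b U$ is a direct summand of $\b{T_U}$, hence of the relevant projective cover in $\Fac\b U$, or more directly that $\t\b M$ admits an epimorphism from a sum of copies of $\b U$ and $\b{T_U}$-maps lift along it since $\b{T_U}$ has $\b U$ as a summand — here one should double-check and spell out which lifting property is needed). Conversely, if $\b{T_U}\to\b M$ factors through an object of $\add\b U$, then composing with $\b M\to\f\b M$ gives a map factoring through $\add\b U\subseteq\Fac\b U$, but $\f\b M\in\b U^\perp$ forces this composite to be zero; hence any map in $[\b U](\b{T_U},\b M)$ lands in $\ker(\b M\to\f\b M)=\t\b M$, i.e.\ lies in $\Hom_A(\b{T_U},\t\b M)$. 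Combining both inclusions gives $[\b U](\b{T_U},\b M)=\Hom_A(\b{T_U},\t\b M)$, and the short exact sequence above becomes the desired isomorphism.

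Finally I would check that this isomorphism respects the $C$-module structure and is natural in $M$: the $\End_A(\b{T_U})$-module structure on all three terms is by precomposition, the subspace $[\b U](\b{T_U},\b M)$ is a submodule since the summand $\b U$ of $\b{T_U}$ is stable under the idempotent $e_{\b U}$, and the quotient is annihilated by $e_{\b U}$, so it is genuinely a $C=\End_A(\b{T_U})/\langle e_{\b U}\rangle$-module; naturality in $M\in\UU$ follows from functoriality of $\b{(-)}$ (Proposition \ref{prop:iy}), of the idempotent radical $\t$ and $\f$ on $\mod A$, and of $\Hom_A(\b{T_U},-)$. The main obstacle I anticipate is the careful verification that $[\b U](\b{T_U},\b M)$ coincides with $\Hom_A(\b{T_U},\t\b M)$ rather than just being sandwiched near it — in particular the ``factors through $\add\b U$'' description of maps into a module in $\Fac\b U$ needs the fact that $\b U$ is a summand of $\b{T_U}$ used in the right way, and one has to be slightly careful that it is $[\b U]$ and not $[\b{T_U}]$ or $[\Fac\b U]$ that appears. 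Everything else is a routine diagram chase, and the result then feeds directly into the commutative square of Theorem \ref{thm:silting-compatibility}(b) via $M=T_U$-type substitutions together with Proposition \ref{prop:technical-reasons}.
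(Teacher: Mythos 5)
Your overall route is the same as the paper's: reduce via Lemma \ref{lemma:well-defined-silting} to an identity inside $\mod A$, apply $\Hom_A(\b{T_U},-)$ to the canonical sequence $0\to\t\b{M}\to\b{M}\to\f\b{M}\to 0$, use $\Ext_A^1(\b{T_U},\t\b{M})=0$ (Ext-projectivity of $\b{T_U}$ in $\lperp{(\tau\b{U})}$ and $\t\b{M}\in\Fac\b{U}$), and then identify the kernel term with $[\b{U}](\b{T_U},\b{M})$. The easy inclusion you argue correctly: a map factoring through $\add\b{U}$ composes to zero into $\f\b{M}\in\b{U}^\perp$, hence lands in $\t\b{M}$.

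However, there is a genuine gap at the other inclusion, $\Hom_A(\b{T_U},\t\b{M})\subseteq[\b{U}](\b{T_U},\b{M})$, which is the crux of the proposition. Your suggested justification --- that $\t\b{M}$ admits an epimorphism from a sum of copies of $\b{U}$ and that maps from $\b{T_U}$ lift along it ``since $\b{U}$ is a direct summand of $\b{T_U}$'' --- does not work: being a summand gives no lifting property, and lifting a map $\b{T_U}\to\t\b{M}$ along an epimorphism $\b{U'}\twoheadrightarrow\t\b{M}$ requires the vanishing $\Ext_A^1(\b{T_U},\b{K})=0$ for its kernel $\b{K}$, which fails to be automatic for an arbitrary such epimorphism. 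The missing idea is to choose the epimorphism to be a \emph{right $(\add\b{U})$-approximation} $r:\b{U'}\to\t\b{M}$ (surjective because $\t\b{M}\in\Fac\b{U}$): then the Wakamatsu-type Lemma \ref{lemma:wakamatsu} places $\b{K}=\Ker r$ in $\lperp{(\tau\b{U})}$, where $\Ext_A^1(\b{T_U},-)$ vanishes by Ext-projectivity of the Bongartz completion, so every map $\b{T_U}\to\t\b{M}$ lifts through $\b{U'}$ and hence lies in $[\b{U}](\b{T_U},\b{M})$. This is exactly how the paper closes the step you flagged as needing to be double-checked; note that it uses the $\tau$-rigidity of $\b{U}$ in an essential way, not just the fact that $\b{U}\in\add\b{T_U}$. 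Your remarks on the $C$-module structure and naturality are fine and unproblematic.
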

\begin{proof}
  By Lemma \ref{lemma:well-defined-silting} it is sufficient to
  show that 
  \[
  \frac{\Hom_A(\b{T_U},\b{M})}{[\b{U}](\b{T_U},\b{M})} \cong
  \Hom_A(\b{T_U},\f\b{M}).
  \]
  Apply the functor $\Hom_A(\b{T_U},-)$ to the canonical sequence
  \[
  0\to \t\b{M}\xto{i} \b{M}\to \f\b{M}\to 0
  \]
  to obtain an exact  sequence
  \[
  0\to \Hom_A(\b{T_U},\t\b{M}) \xto{i\circ-}
  \Hom_A(\b{T_U},\b{M})\to 
  \Hom_A(\b{T_U},\f\b{M}) \to \Ext_A^1(\b{T_U},\t\b{M})=0,
  \]
  since $\b{T_U}$ is $\Ext$-projective in $\lperp{(\tau \b{U})}$
  by Proposition \ref{prop:bongartz-silting}
  and $\t\b{M}$ is in $\Fac\b{U}\subseteq\lperp{(\tau\b{U})}$. 
  Thus 
  \[
  \Hom_A(\b{T_U},\f\b{M}) \cong
  \frac{\Hom_A(\b{T_U},\b{M})}{i(\Hom_A(\b{T_U},\t\b{M}))}.
  \]
  Thus we only have to show the equality
  $\Hom_A(\b{T_U},\t\b{M})=i([U](\b{T_U},\t\b{M}))$. 
  First, $\Hom_A(\b{T_U},\t\b{M})\subseteq
  i([U](\b{T_U},\t\b{M}))$
  since $i$ is a right $(\Fac U)$-approximation of $\b{M}$.
  Next we show the reverse inclusion. 
  It is enough to show that every map
  $\b{T_U}\to \t\b{M}$ factors through $\add\b{U}$.
  Let $r:\b{U'}\to \t\b{M}$ be a right $(\add\b{U})$-approximation.
  Since $t\b{M}\in\Fac\b{U}$ we have a short exact sequence
  \[
  0\to \b{K}\to \b{U'}\xto{r} \t\b{M} \to 0.
  \]
  Moreover, by Lemma \ref{lemma:wakamatsu}, we have
  $\b{K}\in\lperp{(\tau \b{U})}$. 
  Apply the functor $\Hom_A(\b{T_U},-)$ to the above sequence to
  obtain an exact sequence
  \[
  \Hom_A(\b{T_U},\b{U'}) \to \Hom_A(\b{T_U},\t\b{M}) \to
  \Ext_A^1(\b{T_U},\b{K})=0.
  \]
  Thus the assertion follows.
\end{proof}

We are ready to prove Theorem \ref{thm:silting-compatibility}(b).

\begin{proof}[Proof of Theorem \ref{thm:silting-compatibility}(b)]
  Let $M\in\twosilt{S}{U}\T$. 
  Then $M\in\UU$. We only need to show that $\Hom_\U(T_U,M)$
  coincides with the $\tau$-tilting reduction of 
  $\b{M}\in\sttilt A$ with respect to $\b{U}$, which is given by
  $\Hom_A(\b{T_U},\f\b{M})$,
  see Theorem \ref{thm:tau-tilting-reduction}. 
  This is shown in Proposition \ref{prop:E-C-modules}.
\end{proof}

\begin{corollary}
  The map
  \[
  \siltred:\setP{M\in\twosilt{S}{} \T}{U\in\add M} \longrightarrow
  \twosilt{T_U}{} \U.
  \]
  is bijective.
\end{corollary}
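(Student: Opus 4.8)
The plan is to obtain the bijectivity of $\siltred$ by a diagram chase inside the commutative square of Theorem \ref{thm:silting-compatibility}(b). Its domain $\setP{M\in\twosilt{S}{}\T}{U\in\add M}$ is precisely $\twosilt{S}{U}\T$, and by Proposition \ref{prop:twosilt} the map $\siltred$ carries this set into $\twosilt{T_U}{}\U$; so it suffices to show that three of the four maps in that square are bijections and then read off the fourth.

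First I would recall that the top map $\Hom_\T(S,-)\colon\twosilt{S}{U}\T\to\sttilt_{\b{U}}A$ is a bijection by Theorem \ref{thm:twosilt}, and that the right-hand map $\taured\colon\sttilt_{\b{U}}A\to\sttilt C$ is a bijection by Theorem \ref{thm:tau-tilting-reduction}. For the bottom map I would invoke Theorem \ref{thm:twosilt} once more, this time applied to the triangulated category $\U=\T/\thick(U)$ together with its silting object $T_U$ (which is the image of $S$, see Proposition \ref{prop:twosilt}), using the identification $\End_\U(T_U)\cong C$ furnished by Theorem \ref{thm:silting-compatibility}(a); this gives that $\Hom_\U(T_U,-)\colon\twosilt{T_U}{}\U\to\sttilt C$ is a bijection. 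Granting these three facts, the commutativity relation $\Hom_\U(T_U,-)\circ\siltred=\taured\circ\Hom_\T(S,-)$ from Theorem \ref{thm:silting-compatibility}(b) gives
\[
\siltred=\bigl(\Hom_\U(T_U,-)\bigr)^{-1}\circ\taured\circ\Hom_\T(S,-),
\]
a composite of three bijections, hence a bijection; alternatively one can spell this out as separate injectivity and surjectivity chases through the square (injectivity uses injectivity of the bottom map, surjectivity uses injectivity of the bottom map together with surjectivity of the top and right maps).

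The diagram chase itself is routine, so the only point requiring genuine care is the application of Theorem \ref{thm:twosilt} to the pair $(\U,T_U)$: one must check that $\U$ still satisfies the standing hypotheses of Setting \ref{set:silting}, i.e.\ that the Verdier quotient $\T/\thick(U)$ is again $k$-linear, $\Hom$-finite and Krull--Schmidt and that $T_U$ is a silting object there, and that $\twosilt{T_U}{}\U$ is the correct codomain. All of this is part of the silting reduction formalism already cited (see \cite{iyama_silting_2013} and Theorem \ref{thm:silting-red} above), combined with Theorem \ref{thm:silting-compatibility}(a); this is the step I expect to be the main, and essentially the only, obstacle.
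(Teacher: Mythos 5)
Your proof is correct and matches the approach the paper intends: the corollary is the left arrow of the commutative square in Theorem \ref{thm:silting-compatibility}(b), and bijectivity is read off from commutativity together with bijectivity of the top arrow (Theorem \ref{thm:twosilt}), the right arrow (Theorem \ref{thm:tau-tilting-reduction}), and the bottom arrow (Theorem \ref{thm:twosilt} applied to the pair $(\U,T_U)$ via the identification $\End_\U(T_U)\cong C$ of Theorem \ref{thm:silting-compatibility}(a)). You also correctly flag the only point needing care — that the silting reduction $\U=\T/\thick(U)$ again satisfies Setting \ref{set:silting}, which the paper delegates to \cite{iyama_silting_2013} and the subfactor realization $\UU/[U]\cong\U$ of Theorem \ref{lemma:hom-verdier}.
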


\subsection{Calabi-Yau reduction}

Let $\C$ be a Krull-Schmidt 2-Calabi-Yau triangulated category.
Thus $\C$ is $k$-linear, $\Hom$-finite and there is a bifunctorial
isomorphism
\[
\Hom_\C(M,N)\cong D\Hom_\C(N,M[2])
\]
for every $M,N\in\C$, where $D = \Hom_k(-,k)$ is the usual
$k$-duality.
Recall that an object $T$ in $\C$ is called \emph{cluster-tilting}
if
\[
\add T = \setP{X\in\C}{\Hom_\C(X,T[1])=0}.
\] 
We denote by $\ctilt\C$ the set of isomorphism classes of all
basic cluster-tilting objects in $\C$.

\begin{setting}
  \label{set:2cy}
  We fix a Krull-Schmidt 2-Calabi-Yau triangulated category $\C$
  with a cluster-tilting object $T$.
  Also, we let
  \[
  A = \End_\C(T).
  \]
  The algebra $A$ is said to be \emph{2-Calabi-Yau tilted}.
\end{setting}

Note that the functor
\begin{equation}
  \label{eq:bar-2cy}
  \b{(-)}=\Hom_\C(T,-):\C \longrightarrow \mod A  
\end{equation}
induces an equivalence of categories
\begin{equation}
  \label{eq:bar-equivalence-2cy}
\b{(-)}: \frac{\C}{[T[1]]} \longrightarrow \mod A 
\end{equation}
where $[T[1]]$ is the ideal of $\C$ consisting of morphisms which
factor through $\add T[1]$, 
\cite[Prop. 2(c)]{keller_cluster-tilted_2007}. 
Thus for every $M,N\in\C$ we have a natural isomorphism
\[
\Hom_A(\b{M},\b{N}) \cong \frac{\Hom_\C(M,N)}{[T[1]](M,N)}.
\]
We have the following result:

\begin{theorem}
  \label{thm:cttilt-sttilt}
  \cite[Thm. 4.1]{adachi_tau-tilting_2012}
  With the hypotheses of Setting \ref{set:2cy}, the functor
  \eqref{eq:bar-2cy} sends rigid object in $\C$ to $\tau$-rigid
  objects in $\mod A$, and induces a bijection
  \[
  \b{(-)}:\ctilt \C \longrightarrow \sttilt A
  \]
  which induces a bijection
  \[
  \b{(-)}:\ctilt_U \C \longrightarrow \sttilt_U A.
  \]
\end{theorem}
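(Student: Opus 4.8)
The plan is to prove the three assertions in turn, using throughout the equivalence $\b{(-)}\colon\C/[T[1]]\xrightarrow{\,\sim\,}\mod A$ of \eqref{eq:bar-equivalence-2cy}. The first two assertions are \cite[Thm.~4.1]{adachi_tau-tilting_2012}; I sketch an argument for completeness and then deduce the refinement to the subsets $\ctilt_U\C$ and $\sttilt_U A$. Since direct summands lying in $\add T[1]$ are annihilated by $\b{(-)}$ and affect neither rigidity in $\C$ nor membership of $\add M$, I may and will assume that $U$ has no direct summand in $\add T[1]$.

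First I would show that rigid objects of $\C$ are sent to $\tau$-rigid $A$-modules. Let $X$ be rigid with no summand in $\add T[1]$. Choose a minimal projective presentation $P_1\xrightarrow{p}P_0\to\b X\to0$ in $\mod A$, identify $P_i=\b{T_i}$ with $T_i\in\add T$, lift $p$ to a morphism $g\colon T_1\to T_0$ in $\C$ (possible as $\b{(-)}$ is full), and complete it to a triangle $T_1\xrightarrow{g}T_0\to Z\to T_1[1]$. Applying $\b{(-)}$ and using $\b{T_1[1]}=0$ gives $\b Z\cong\Coker p=\b X$, and one checks that minimality of $p$ forces $Z$ to have no summand in $\add T[1]$, so that $Z\cong X$ since $\b{(-)}$ reflects isomorphisms between objects without such summands. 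Now $\tau\b X=\Ker(\nu p)$ with $\nu$ the Nakayama functor, and using the natural isomorphisms $\Hom_A(\b X,\nu P_i)\cong D\Hom_A(P_i,\b X)$ and $\Hom_A(\b{T_i},\b X)\cong\Hom_\C(T_i,X)$ (the latter because $[T[1]](T_i,X)=0$), one identifies $\Hom_A(\b X,\tau\b X)$ with the $k$-dual of $\Coker(g^*\colon\Hom_\C(T_0,X)\to\Hom_\C(T_1,X))$; the long exact sequence obtained by applying $\Hom_\C(-,X)$ to the triangle exhibits this cokernel as a submodule of $\Hom_\C(X,X[1])=0$. Hence $\b X$ is $\tau$-rigid.

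Next I would establish the bijection $\b{(-)}\colon\ctilt\C\to\sttilt A$. Given $M\in\ctilt\C$, write $M=M_0\oplus T_e[1]$ with $T_e\in\add T$ the maximal summand such that $T_e[1]\in\add M$ and $M_0$ having no summand in $\add T[1]$, so $\b M=\b{M_0}$. A left $(\add M)$-approximation triangle $T\to M^0\to M^1\to T[1]$ of $T$ maps under $\b{(-)}$ (using $\b{T[1]}=0$) to an exact sequence $A\to\b{M^0}\to\b{M^1}\to0$ with $\b{M^i}\in\add\b M$ and first map a left $(\add\b M)$-approximation of $A$; since $\b M$ is $\tau$-rigid by the previous step, Proposition~\ref{prop:sttilt-sequence} yields $\b M\in\sttilt A$. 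Injectivity follows because $\b{(-)}$ reflects isomorphisms between objects with no summand in $\add T[1]$ (so $\b M\cong\b N$ gives $M_0\cong N_0$) and because $T_e$, being the maximal summand of $T$ with $\Hom_\C(T_e,M_0)=0$, is determined by $M_0$. For surjectivity I would construct the inverse explicitly: given $N\in\sttilt A$, viewed as a $\tau$-rigid $A$-module, lift a minimal projective presentation of $N$ to a triangle $T_1\xrightarrow{g}T_0\to M_0\to T_1[1]$ as in the previous paragraph (so $\b{M_0}\cong N$ and $M_0$ has no summand in $\add T[1]$), let $T_e\in\add T$ be maximal with $\Hom_\C(T_e,M_0)=0$ — which corresponds to the largest idempotent of $A$ annihilating $N$, so that $|M_0|+|T_e|=|N|+(|T|-|N|)=|T|$ — and set $\widetilde N:=M_0\oplus T_e[1]$. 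The cross terms $\Hom_\C(M_0,T_e[2])\cong D\Hom_\C(T_e,M_0)=0$ and $\Hom_\C(T_e[1],M_0[1])=\Hom_\C(T_e,M_0)=0$ are immediate, and $\Hom_\C(T_e[1],T_e[2])=0$ since $T$ is rigid, so the remaining point is $\Hom_\C(M_0,M_0[1])=0$: a homological computation using the triangle, the $2$-Calabi--Yau duality and $\tau$-rigidity of $N$ (one piece of the relevant long exact sequence being $D\Hom_A(N,\tau N)=0$) yields this, whence $\widetilde N$ is rigid with $|\widetilde N|=|T|$, hence cluster-tilting, with $\b{\widetilde N}\cong N$. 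I expect this last computation, together with keeping track of the indecomposable summands, to be the main obstacle.

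Finally, for the refinement: $\b{(-)}$ is additive and $\b U$ is $\tau$-rigid by the first step, so $U\in\add M$ implies $\b U\in\add\b M$ and thus $\b{(-)}$ carries $\ctilt_U\C$ into $\sttilt_U A$. Conversely, if $M\in\ctilt\C$ with $\b U\in\add\b M=\add\b{M_0}$, then since $U$ and $M_0$ have no summands in $\add T[1]$ and $\b{(-)}$ is full and reflects isomorphisms on such objects, the direct summand of $M_0$ mapping onto $\b U$ is isomorphic to $U$; hence $U\in\add M$, i.e. $M\in\ctilt_U\C$. Therefore the bijection $\b{(-)}\colon\ctilt\C\to\sttilt A$ restricts to the asserted bijection $\b{(-)}\colon\ctilt_U\C\to\sttilt_U A$.
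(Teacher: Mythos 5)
The paper does not prove this statement at all: it is imported verbatim from Adachi--Iyama--Reiten (their Theorem~4.1), with the relative refinement to $\ctilt_U\C$ and $\sttilt_{\b{U}}A$ understood under the standing assumption of Setting~\ref{set:2cy-2}. So there is no in-paper argument to compare with; what you have written is essentially a reconstruction of the AIR proof, and in outline it is sound. The verified steps are correct: the cone of a lift of a \emph{minimal} projective presentation has no summands in $\add T[1]$ (if $T''[1]$ were a summand, the connecting map would split off $T''$ from $T_1$ with $g|_{T''}=0$, contradicting minimality), the identification $\Hom_A(\b{X},\tau\b{X})\cong D\Coker\bigl(\Hom_\C(T_0,X)\to\Hom_\C(T_1,X)\bigr)$ is the standard Auslander--Reiten computation, well-definedness via Proposition~\ref{prop:sttilt-sequence} is exactly how the paper itself argues in its Definition--Proposition on the Bongartz completion in $\C$, and $T_e$ is indeed recovered from $M_0$ as the largest summand of $T$ with $\Hom_\C(T_e,M_0)=0$, which gives injectivity and the restriction to the $U$-relative sets.

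Three points, however, are carrying real weight and are only asserted. (1) The step you flag as the main obstacle does close, but it is genuinely a two-sequence argument: writing $T_1\xrightarrow{g}T_0\xrightarrow{a}M_0\xrightarrow{h}T_1[1]$, the long exact sequence from $\Hom_\C(M_0,-)$ together with $\Hom_A(N,\tau N)=0$ only shows that every map $M_0\to M_0[1]$ is of the form $(au)[1]\circ h$, i.e.\ factors through $T_1[1]$; to kill these you must feed this into the sequence from $\Hom_\C(-,M_0[1])$, deduce that precomposition with $a$ annihilates $\Hom_\C(M_0,M_0[1])$, hence $\Ker\bigl(\Hom_\C(T_0,M_0[1])\to\Hom_\C(T_1,M_0[1])\bigr)=0$, and then use the $2$-Calabi--Yau duality, which identifies this kernel with $D\Hom_\C(M_0,M_0[1])$. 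So the vanishing is not a one-line consequence of $D\Hom_A(N,\tau N)=0$, though your sketch is fillable. (2) ``Rigid with $|T|$ pairwise non-isomorphic indecomposable summands, hence cluster-tilting'' is a substantial external theorem (Zhou--Zhu, together with Dehy--Keller's bound on the number of summands); it is not available from this paper's toolkit and must be cited, or else you must verify the maximality condition $\add\widetilde N=\{Y:\Hom_\C(Y,\widetilde N[1])=0\}$ directly. Similarly, the claim that the largest idempotent annihilating a support $\tau$-tilting module $N$ has exactly $|A|-|N|$ summands needs a word: it follows from Proposition~\ref{prop:2.2}(c) applied over $A/\langle e'\rangle$, or from AIR's theory of $\tau$-rigid pairs. (3) Your opening ``without loss of generality'' on $U$ is not harmless in general: if $U$ had a summand $T'[1]$, the image of $\ctilt_U\C$ would be the proper subset of $\sttilt_{\b{U}}A$ cut out by the additional condition $\Hom_A(\b{T'},-)=0$; the reduction is legitimate here only because Setting~\ref{set:2cy-2} builds this assumption into the statement.
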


\begin{setting}
  \label{set:2cy-2}
  From now on we fix a rigid object $U$ in $\C$, \ie
  $\Hom_\C(U,U[1])=0$.
  To simplify the exposition, we assume that $U$ has no non-zero direct
  summands in $\add T[1]$ although our results remain true in this case.
  We are interested in the subset of $\ctilt\C$ given by 
  \[
  \ctilt_U\C := \setP{M\in\ctilt\C}{U\in\add M}.
  \]
\end{setting}

Calabi-Yau reduction was introduced in \cite[Thm. 4.9]{iyama_mutation_2008}. 
We are interested in the following particular case: 

\begin{theorem}
  \label{thm:2cy-reduction}
  \cite[Sec. 4]{iyama_mutation_2008}
  The category $\U$ has the structure of a 2-Calabi-Yau
  triangulated category and the canonical functor 
  \[ 
  \lperp{(U[1])} \longrightarrow \U:=\frac{\lperp{(U[1])}}{[U]}
  \]
  induces a bijection
  \[
  \ctred:\ctilt_U \C \longrightarrow \ctilt\U. 
  \]
\end{theorem}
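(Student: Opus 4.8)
The plan is to deduce the statement as the special case of the general Calabi-Yau reduction of \cite[Sec.~4]{iyama_mutation_2008} obtained by applying the latter with the functorially finite rigid subcategory $\add U$; the only genuine work is to check that the standing hypotheses of \opcit are met and to translate its notation into the one used here.

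First I would verify the hypotheses. Since $U$ is rigid by Setting \ref{set:2cy-2}, the subcategory $\add U$ is a rigid subcategory of $\C$. It is moreover functorially finite in $\C$: as $\C$ is $\Hom$-finite and Krull-Schmidt, for any object $M$ a $k$-basis $f_1,\dots,f_n$ of $\Hom_\C(U,M)$ gives a right $(\add U)$-approximation $(f_1,\dots,f_n)\colon U^n\to M$, and dually there are left $(\add U)$-approximations. Next I would record, using the 2-Calabi-Yau duality $\Hom_\C(X,U[1])\cong D\Hom_\C(U,X[1])$, the identity
\[
\lperp{(U[1])}=\setP{X\in\C}{\Hom_\C(X,U[1])=0}=\setP{X\in\C}{\Hom_\C(U,X[1])=0},
\]
which exhibits $\lperp{(U[1])}$ as precisely the subcategory on which the reduction of \opcit is carried out. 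In particular, every cluster-tilting object $M$ of $\C$ with $U\in\add M$ is rigid, so that $\add M\subseteq\lperp{(U[1])}$ and $M$ lies in the domain of the canonical functor $\lperp{(U[1])}\to\U$.

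With these identifications in place, the assertion that the quotient $\U=\lperp{(U[1])}/[U]$ --- where $[U]$ denotes, as usual, the ideal of morphisms factoring through $\add U$ --- carries a natural triangulated structure and is again 2-Calabi-Yau is provided by the general construction of \cite[Sec.~4]{iyama_mutation_2008}, and the assignment sending a cluster-tilting object of $\C$ containing $U$ to its image in $\U$ under the canonical functor restricts to a bijection between $\ctilt_U\C$ and $\ctilt\U$ by \cite[Thm.~4.9]{iyama_mutation_2008}; by construction this bijection is $\ctred$. The main obstacle is thus only the modest bookkeeping just described. A self-contained argument, by contrast, would require reproducing the substantive part of \opcit: the construction of the shift functor and of the class of distinguished triangles on $\lperp{(U[1])}/[U]$, the verification of the octahedral axiom, and the check of the 2-Calabi-Yau duality for the quotient.
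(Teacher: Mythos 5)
Your proposal is correct and takes essentially the same approach as the paper: the paper states this result as an imported citation to \cite[Sec.~4]{iyama_mutation_2008} and provides no independent proof, so reducing to that reference after verifying that $\add U$ is functorially finite rigid (which is automatic in a $\Hom$-finite Krull--Schmidt category), identifying $\lperp{(U[1])}$ via 2-Calabi--Yau duality, and checking that $\ctilt_U\C$ lands in the domain of the quotient functor is exactly the intended bookkeeping.
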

 
We need to consider the following analog of Bongartz completion
for rigid objects in $\C$.

\begin{defprop}
  Let $f:U'\to T[1]$ be a  minimal right $(\add U)$-approximation
  of $T[1]$ in $\C$ and consider a triangle
  \begin{equation}
    \label{eq:bongartz-2cy}
    T\xto{h} X_U \xto{g} U' \xto{f} T[1].    
  \end{equation}
  Then $T_U:= X_U\oplus U$ is cluster-tilting in $\C$ and moreover
  $T_U$ has no non-zero direct summands in $\add T[1]$. 
  We call $T_U$ the \emph{Bongartz completion of $U$ in $\C$
    with respect to $T$}.
\end{defprop}
\begin{proof}
  (i) First we show that $T_U$ is rigid. Our argument is a
  triangulated version of the proof of 
  \cite[Prop. 5.1]{geis_rigid_2006}.
  We give the proof for the convenience of the reader.

  Apply the functor $\Hom_\C(U,-)$ to the triangle
  \eqref{eq:bongartz-2cy} to obtain an exact sequence
  \[
  \Hom_\C(U,U') \xto{f\circ-} \Hom_\C(U,T[1]) \to
  \Hom_\C(U,X_U[1]) \to \Hom_\C(U,U'[1])=0.
  \]
  But $f:U'\to T[1]$ is a right $(\add U)$-approximation of
  $T[1]$,
  thus $f\circ-$ is an epimorphism and we have
  $\Hom_\C(U,X_U[1])=0$ and, by the 2-Calabi-Yau property,
  $\Hom_\C(X_U,U[1])=0$. 
  It remains to show that $\Hom_\C(X_U,X_U[1])=0$. 
  For this let $a:X_U\to X_U[1]$ be an arbitrary morphism. 
  Since $\Hom_\C(X_U,U'[1])=0$ there exists a morphism
  $h:X_U\to T[1]$ such that the following diagram commutes:
  \begin{center}
    \begin{tikzcd}
      {}& T \rar & X_U \rar{g}\dar{a}\dlar[swap,dashed]{h} & U' \rar
      & T[1] \\  
      U' \rar & T[1] \rar{h[1]} & X_U[1] \rar & U'[1]
    \end{tikzcd}
  \end{center}
  Now, since $\Hom_\C(T,T[1])=0$ there exist a morphism $c:U'\to
  T[1]$ such that $h=cg$. 
  Then we have
  \[
  h[1](cg) = (h[1]\circ c)g =0,
  \]
  since $(h[1]\circ c)\in\Hom_\C(U',X_U[1])=0$. 
  Hence $\Hom_\C(X_U,X_U[1])=0$ as required. 
  Thus we have shown that $T_U$ is rigid.

  (ii) Now we show that $T_U$ is cluster-tilting in $\C$. 
  By the bijection in Theorem \ref{thm:cttilt-sttilt}, we
  only need to show that $\b{T_U}$ is a support $\tau$-tilting
  $A$-module. 
  Since $T_U$ is rigid, we have that $\b{T_U}$ is $\tau$-rigid
  (see Theorem \ref{thm:cttilt-sttilt}).
  Apply the functor \eqref{eq:bar-2cy} to the triangle 
  \eqref{eq:bongartz-2cy} to obtain an exact sequence
  \[
  A \xto{\bar{g}} \b{X_U} \to \b{U'} \to 0.
  \]
  We claim that $\b{g}$ is a left $(\add\b{T_U})$-approximation of
  $A$. 
  In fact, since we have $\Hom_\C(U[-1],T_U)=0$, for every morphism
  $h:T\to T_U$ in $\C$ we obtain a commutative diagram
  \begin{center}
    \begin{tikzcd}
      U[-1] \rar\drar[swap,dashed]{0} & T\rar{g}\dar{h} & X_U\rar
      \dlar[dashed] & U \\ 
      & T_U
    \end{tikzcd}    
  \end{center}
  Thus $g$ is a left $(\add T_U)$-approximation of $T$ and then by
  the equivalence \eqref{eq:bar-equivalence-2cy} we have that
  $\bar{g}$ is a left $(\add \b{T_U})$-approximation of $A$.
  Then by Proposition \ref{prop:sttilt-sequence} we have that
  $\b{T_U}$ is a support $\tau$-tilting $A$-module.

  Finally, since $\Hom_A(T,T[1])=0$ and $U$ has no non-zero direct
  summands in $\add T[1]$, it follow from the triangle
  \eqref{eq:bongartz-2cy} that $T_U$ has no non-zero direct
  summands in $\add T[1]$. 
\end{proof}

The following proposition relates the Bongartz completion $T_U$ of
$U$ in $\C$ with respect to $T$ with the Bongartz completion
$\b{T_U}$ of $\b{U}$ in $\mod A$. 
Recall that $T_{\b{U}}$ is the unique maximal element in
$\sttilt_{\b{U}} A$. 

\begin{proposition}
  \label{prop:bongartz-silting}
  We have $\b{T_U} \cong T_{\b{U}}$.
\end{proposition}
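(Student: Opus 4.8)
The plan is to identify $\b{T_U}$ with the unique maximal element of the interval $\sttilt_{\b U}A$, which is $T_{\b U}$ by hypothesis.

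First I would record what the preceding Definition-Proposition already gives: $T_U=X_U\oplus U$ is cluster-tilting in $\C$ and has no non-zero direct summand in $\add T[1]$. Applying the functor $\b{(-)}$ of \eqref{eq:bar-2cy} and Theorem \ref{thm:cttilt-sttilt}, the module $\b{T_U}=\b{X_U}\oplus\b U$ is a support $\tau$-tilting $A$-module; moreover it is a $\tau$-tilting module, since $|\b{T_U}|=|T_U|=|A|$ (the first equality because $T_U$ has no summand in $\add T[1]$, the second because all basic cluster-tilting objects of $\C$ have $|A|$ indecomposable summands), and it has $\b U$ as a direct summand. Hence $\b{T_U}\in\sttilt_{\b U}A$, and Proposition \ref{prop:sttiltUA} gives $\Fac\b U\subseteq\Fac\b{T_U}\subseteq\lperp{(\tau\b U)}$; in particular $\b{T_U}$ lies in $\lperp{(\tau\b U)}$. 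Since $T_{\b U}=P(\lperp{(\tau\b U)})$ is the unique maximal element of $\sttilt_{\b U}A$, and since $\b{T_U}$ and $T_{\b U}$ are both basic with exactly $|A|$ indecomposable summands, it suffices to prove that $\b{T_U}$ is $\Ext$-projective in $\lperp{(\tau\b U)}$: then $\b{T_U}\in\add P(\lperp{(\tau\b U)})=\add T_{\b U}$, and comparing numbers of indecomposable summands — using also that $\b{X_U}$ and $\b U$ share no indecomposable summand, as $T_U$ is basic — forces $\add\b{T_U}=\add T_{\b U}$, whence $\b{T_U}\cong T_{\b U}$.

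The summand $\b U$ is $\Ext$-projective in $\lperp{(\tau\b U)}$ by Proposition \ref{prop:bongartz-completion}(b), so it remains to show $\Ext^1_A(\b{X_U},N)=0$ for every $N\in\lperp{(\tau\b U)}$. Here I would use the equivalence \eqref{eq:bar-equivalence-2cy} to write $N\cong\b{\tilde N}$ with $\tilde N\in\C$ having no summand in $\add T[1]$, and then translate $\Ext^1_A(\b{X_U},\b{\tilde N})$ into a morphism space in $\C$ by the standard dictionary for the $2$-Calabi-Yau tilted algebra $A=\End_\C(T)$: using a triangle $T_1\to T_0\to X_U\to T_1[1]$ with $T_i\in\add T$, the group $\Ext^1_A(\b{X_U},\b{\tilde N})$ is controlled by $\Hom_\C(X_U,\tilde N[1])$ together with the morphisms factoring through $\add T[1]$ (the kernel of $\b{(-)}$). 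I would then feed in the defining triangle \eqref{eq:bongartz-2cy}: applying $\Hom_\C(-,\tilde N[1])$ and using that $f\colon U'\to T[1]$ is a right $(\add U)$-approximation together with $\Hom_\C(T,T[1])=0$, every morphism $X_U\to\tilde N[1]$ that survives in the computation factors through $g\colon X_U\to U'$ and hence through $\add U$; since $\b U$ is already known to be $\Ext$-projective in $\lperp{(\tau\b U)}$, equivalently $\Ext^1_A(\b U,N)=0$, this forces $\Ext^1_A(\b{X_U},N)=0$. This is the $2$-Calabi-Yau counterpart of the mechanism behind Proposition \ref{prop:bongartz-silting} in the silting setting.

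The main obstacle is precisely this translation: spelling out carefully the relationship between $\Ext$-groups over the $2$-Calabi-Yau tilted algebra $A$ and morphism spaces in $\C$, and in particular keeping track of which morphisms factor through $\add T[1]$. In the silting case the analogous statement could be shortcut through the partial order on silting objects, which has no counterpart on $\ctilt\C$, so here the $\Ext$-projectivity of $\b{X_U}$ must be established directly inside $\C$ from the triangle \eqref{eq:bongartz-2cy}; once that is done, the rest is bookkeeping with summand counts.
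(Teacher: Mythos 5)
Your strategy --- place $\b{T_U}$ inside $\sttilt_{\b U}A$, then show it is $\Ext$-projective in $\lperp{(\tau\b U)}$ so that it lands in $\add T_{\b U}$, then count indecomposable summands --- is a legitimate alternative route, but it is noticeably heavier than what the paper does, and it has a real gap in its central step.

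The paper does \emph{not} bypass a partial-order argument: it uses one, just on the $\tau$-tilting side rather than on $\ctilt\C$. Since $T_{\b U}=P(\lperp{(\tau\b U)})$ is the unique maximal element of $\sttilt_{\b U}A$ for the order $\leqslant$ given by inclusion of $\Fac$'s, and since $\b{T_U}\in\sttilt_{\b U}A$ is already known from the preceding Definition-Proposition, it suffices to show $\b M\in\Fac\b{T_U}$ for every $\b M\in\sttilt_{\b U}A$. That, in turn, reduces to showing every $h\colon T\to M$ in $\C$ factors through $\add T_U$: apply $\Hom_\C(-,M)$ to the triangle \eqref{eq:bongartz-2cy}, use $\Hom_\C(U'[-1],M)=0$ (which holds because $M$ is cluster-tilting with $U'\in\add M$), and conclude $h$ factors through $X_U$. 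No computation of $\Ext^1_A$-groups, no dictionary, no summand counting. Your remark that ``in the silting case the analogous statement could be shortcut through the partial order, which has no counterpart on $\ctilt\C$'' missed that the order $\leqslant$ on $\sttilt_{\b U}A$ is always available and is exactly what should be used here.

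As for your route itself: the step you flag as ``the main obstacle,'' namely establishing $\Ext^1_A(\b{X_U},N)=0$ for all $N$ in $\lperp{(\tau\b U)}$ by translating $\Ext^1_A$ over the $2$-Calabi--Yau tilted algebra into morphism spaces in $\C$, is indeed where the proposal is incomplete and where the real work lies. Such a dictionary exists (Keller--Reiten/Palu-type formulas), but you would additionally need to control which $N\in\lperp{(\tau\b U)}$ correspond to lifts $\tilde N\in\C$ lying in $\lperp{(U[1])}$, and this correspondence is not immediate. None of this is carried out; it is only asserted to be ``bookkeeping.'' In short, your outer scaffold is sound, but the crux is left as a sketch, and the paper's $\Fac$-order argument reaches the same conclusion in a few lines without ever touching $\Ext^1_A$.
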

\begin{proof}
  By Proposition \ref{prop:sttiltUA}, $P(\lperp{(\tau \b{U})})$ is
  the unique maximal element in $\sttilt_{\b{U}} A$.
  Hence to show that $\b{T_U}\cong P(\lperp{(\tau \b{U})})$ we
  only need to show that if $\b{M}$ is a support $\tau$-tilting
  $A$-module such that $\b{U}\in\add \b{M}$, then 
  $\b{M}\in\Fac\b{T_U}$. 
  By definition, this is equivalent to show that there exists an
  exact sequence of $A$-modules
  \[
  \Hom_\C(T,X)\to \Hom_\C(T,M)\to 0
  \]
  with $X\in\add T_U$. 

  Let $f:T'\to M$ be a right $(\add T)$-approximation
  of $M$. By the definition of $X_U$, there exist a triangle 
  \[
  T'\xto{g}X\to U''\to T[1]
  \]
  where $X\in\add X_U$ and $U''\in\add U$. Since
  $\Hom_\C(U'[-1],M)=D\Hom_\C(M,U'[1])=0$
  by the 2-Calabi-Yau property of $\C$, the following diagram commutes:
  \[
  \begin{tikzcd}
    U''[-1]\rar\drar{0}&T'\dar{f}\rar{g}&X\rar\dlar[dotted]{h}& U''\\
    &M
  \end{tikzcd}
  \]
  It follows that there exist a morphism $h:X\to M$ such that
  $f=hg$. Now it is easy to see that the sequence 
  \[
    \Hom_\C(T,X)\xto{h\cdot?}\Hom_\C(T,M)\to0
  \]
  is exact. Indeed, let $ u:T\to M$. since $f$ is a right $(\add
  T)$-approximation of $M$,
  there exist a morphism $v:T\to T'$ such that $u=fv$. It follows that
  \[
  u=fv=(hg)v=h(gv).
  \]
  This shows that $u$ factors through $h$, which is what we needed to
  show.
\end{proof}

We are ready to state the main theorem of this section. 
We keep the notation of the above discussion.

\begin{theorem}
  \label{thm:2-cy-compatibility}
  With the hypotheses of Settings \ref{set:2cy} and
  \ref{set:2cy-2}, we have the following:
  \begin{enumerate}
  \item The algebras $\End_\U(T_U)$ and 
    $C = \End_A(\b{T_U})/\langle e_{\b{U}}\rangle$ are isomorphic,
    where $e_{\b{U}}$ is the idempotent corresponding to the
    projective $\End_A(\b{T_U})$-module $\Hom_A(\b{T_U},\b{U})$.
  \item We have a commutative diagram
    \begin{center}
      \begin{tikzcd}[column sep=huge]
        \ctilt_U\C \arrow{r}{\Hom_\C(T,-)}
        \dar[swap]{\ctred} &
        \sttilt_{\b{U}} A \dar{\taured} \\
        \ctilt\U
        \arrow[swap]{r}{\Hom_\U(T_U,-)} &
        \sttilt C
      \end{tikzcd}    
    \end{center}
    in which each arrow is a bijection.
    The vertical maps are given in Theorem \ref{thm:2cy-reduction}
    and  Theorem \ref{thm:tau-tilting-reduction}.
  \end{enumerate}
\end{theorem}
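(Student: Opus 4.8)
The plan is to follow the proof of Theorem \ref{thm:silting-compatibility} verbatim, substituting Calabi--Yau reduction (Theorem \ref{thm:2cy-reduction}) for silting reduction. The starting observation is that, since $U$ is rigid, the $2$-Calabi--Yau property gives $\Hom_\C(M,U[1])\cong D\Hom_\C(U,M[1])$ for every $M\in\C$, so $\lperp{(U[1])}=\setP{M\in\C}{\Hom_\C(U,M[1])=0}$, and this subcategory plays the role of the subcategory $\UU$ from Section \ref{sec:silting-compatibility}. Both $T_U$ and every $M\in\ctilt_U\C$ lie in $\lperp{(U[1])}$: for $T_U=X_U\oplus U$ this is part of the computation in the preceding Definition-Proposition, using the triangle \eqref{eq:bongartz-2cy}, and for a cluster-tilting $M$ containing $U$ it is clear since $\Hom_\C(M,U[1])\subseteq\Hom_\C(M,M[1])=0$. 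By the construction of Calabi--Yau reduction in \cite[Sec.~4]{iyama_mutation_2008}, the quotient $\U=\lperp{(U[1])}/[U]$ is $2$-Calabi--Yau triangulated and, for $X,Y$ in $\lperp{(U[1])}$, one has a natural isomorphism $\Hom_\U(X,Y)\cong\Hom_\C(X,Y)/[U](X,Y)$; this is the analogue of Theorem \ref{lemma:hom-verdier}.

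The key step is the $2$-Calabi--Yau version of Lemma \ref{lemma:well-defined-silting}: for each $M$ in $\lperp{(U[1])}$ there is a functorial isomorphism
\[
\frac{\Hom_A(\b{T_U},\b{M})}{[\b{U}](\b{T_U},\b{M})}\cong\Hom_\U(T_U,M).
\]
To prove this I would use the equivalence \eqref{eq:bar-equivalence-2cy}, which identifies the left-hand side with $\Hom_\C(T_U,M)\big/\bigl([U](T_U,M)+[T[1]](T_U,M)\bigr)$, together with the morphism formula above, which identifies the right-hand side with $\Hom_\C(T_U,M)/[U](T_U,M)$; so it remains to verify the containment of ideals $[T[1]](T_U,M)\subseteq[U](T_U,M)$. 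As $T_U=X_U\oplus U$, this reduces to $[T[1]](X_U,M)\subseteq[U](X_U,M)$, and applying $\Hom_\C(-,T[1])$ to the triangle \eqref{eq:bongartz-2cy} gives an exact sequence
\[
\Hom_\C(U',T[1])\xto{-\circ g}\Hom_\C(X_U,T[1])\to\Hom_\C(T,T[1])=0,
\]
so every morphism $X_U\to T[1]$ factors through $g\colon X_U\to U'\in\add U$, which yields the claim. Putting $M=T_U$ and using $\b{T_U}\cong T_{\b{U}}$ (Proposition \ref{prop:bongartz-silting}) then identifies $\End_\U(T_U)$ with $\End_A(\b{T_U})/\langle e_{\b{U}}\rangle=C$ as algebras, proving part (a).

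For part (b), I would observe that the purely module-theoretic isomorphism at the core of Proposition \ref{prop:E-C-modules}, namely $\Hom_A(\b{T_U},\f\b{M})\cong\Hom_A(\b{T_U},\b{M})\big/[\b{U}](\b{T_U},\b{M})$, depends only on $\b{T_U}\cong T_{\b{U}}$ being $\Ext$-projective in $\lperp{(\tau\b{U})}$, on $\t\b{M}\in\Fac\b{U}$, and on Lemma \ref{lemma:wakamatsu}, so it carries over unchanged. Combined with the displayed isomorphism, for $M\in\ctilt_U\C$ we obtain $\Hom_\U(T_U,M)\cong\Hom_A(\b{T_U},\f\b{M})=\taured(\b{M})$, see Theorem \ref{thm:tau-tilting-reduction}; since $\ctred(M)$ is just $M$ regarded in $\U$, this is exactly the commutativity of the square. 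The top horizontal map is a bijection by Theorem \ref{thm:cttilt-sttilt}, the left vertical by Theorem \ref{thm:2cy-reduction}, the right vertical by Theorem \ref{thm:tau-tilting-reduction}, and the bottom horizontal by Theorem \ref{thm:cttilt-sttilt} applied to $\U$ with cluster-tilting object $T_U$ together with part (a). I expect the only delicate point to be the bookkeeping with the three ideals $[U]$, $[T[1]]$ and $[\b{U}]$ and confirming that the Iyama--Yoshino morphism formula is available for the objects in play, i.e.\ that $T_U$ and every cluster-tilting object containing $U$ belong to $\lperp{(U[1])}$.
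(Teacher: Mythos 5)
Your proposal follows essentially the same route as the paper: the same ideal containment $[T[1]](T_U,M)\subseteq[U](T_U,M)$ obtained from the triangle \eqref{eq:bongartz-2cy} to establish the analogue of Lemma \ref{lemma:well-defined-silting}, part (a) by setting $M=T_U$, and part (b) by transporting the argument of Proposition \ref{prop:E-C-modules} together with $\b{T_U}\cong T_{\b{U}}$. The extra details you supply (why $T_U$ and cluster-tilting objects containing $U$ lie in $\lperp{(U[1])}$, and the Iyama--Yoshino morphism formula) are exactly what the paper uses implicitly, so the argument is correct as written.
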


We begin with the proof of part (a) of
Theorem \ref{thm:2-cy-compatibility}.

\begin{lemma}
  \label{lemma:well-defined}
  For each $M$ in $\lperp{(U[1])}$ we have an isomorphism of
  vector spaces 
  \[
  \frac{\Hom_A(\b{T_U},\b{M})}{[\b{U}](\b{T_U},\b{M})}
  \cong 
  \Hom_\U(T_U,M).
  \]
\end{lemma}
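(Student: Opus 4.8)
The plan is to follow the argument of Lemma~\ref{lemma:well-defined-silting} in the silting setting; the present case is in fact slightly simpler, since here $\U=\lperp{(U[1])}/[U]$ is an honest additive quotient by an ideal rather than a Verdier localization. First I would use the equivalence~\eqref{eq:bar-equivalence-2cy} to rewrite the left-hand side in terms of $\Hom$-spaces in $\C$. Since $\b{T[1]}=\Hom_\C(T,T[1])=0$, the functor $\b{(-)}$ carries $\add(U\oplus T[1])$ into $\add\b{U}$ and, being an equivalence $\C/[T[1]]\to\mod A$, it identifies the ideal of morphisms factoring through $\add(U\oplus T[1])$ with $[\b{U}]$. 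This yields functorial isomorphisms
\begin{align*}
  \Hom_A(\b{T_U},\b{M}) &\cong \frac{\Hom_\C(T_U,M)}{[T[1]](T_U,M)},\\
  [\b{U}](\b{T_U},\b{M}) &\cong \frac{[U](T_U,M)+[T[1]](T_U,M)}{[T[1]](T_U,M)},
\end{align*}
so that the left-hand side of the lemma is functorially isomorphic to $\Hom_\C(T_U,M)\big/\bigl([U](T_U,M)+[T[1]](T_U,M)\bigr)$.

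The key step --- and the only place where the specific construction of $T_U$ enters --- is to show $[T[1]](T_U,M)\subseteq[U](T_U,M)$. Since $T_U=X_U\oplus U$ this reduces to $[T[1]](X_U,M)\subseteq[U](X_U,M)$, for which it suffices that every morphism $X_U\to T[1]$ factors through $\add U$. I would obtain this by applying the contravariant functor $\Hom_\C(-,T[1])$ to the defining triangle~\eqref{eq:bongartz-2cy}, which gives an exact sequence
\[
  \Hom_\C(U',T[1])\xto{-\circ g}\Hom_\C(X_U,T[1])\longrightarrow\Hom_\C(T,T[1])=0;
\]
hence $-\circ g$ is surjective, and since $U'\in\add U$ the claim follows.

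Putting the two steps together, the left-hand side of the lemma becomes $\Hom_\C(T_U,M)/[U](T_U,M)$. As $T_U$ is rigid with $U\in\add T_U$, we have $T_U\in\lperp{(U[1])}$, and $M\in\lperp{(U[1])}$ by hypothesis, so by the very definition of $\U=\lperp{(U[1])}/[U]$ this space is precisely $\Hom_\U(T_U,M)$, and the whole chain of identifications is natural in $M$. I do not anticipate a serious obstacle; the one point that needs a little care is checking the compatibility of the translations above --- that is, that the image of $[U](T_U,M)$ under the equivalence~\eqref{eq:bar-equivalence-2cy} is exactly $[\b{U}](\b{T_U},\b{M})$ --- which is where $\b{T[1]}=0$ is used.
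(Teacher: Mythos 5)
Your argument is correct and is essentially the paper's own proof: both translate the left-hand side through the equivalence \eqref{eq:bar-equivalence-2cy}, establish $[T[1]](T_U,M)\subseteq[U](T_U,M)$ by applying $\Hom_\C(-,T[1])$ to the triangle \eqref{eq:bongartz-2cy}, and then conclude from the definition of $\U$ as the additive quotient $\lperp{(U[1])}/[U]$. Your extra remarks (the identification of $[\b U]$ via $\b{T[1]}=0$, and the observation that $T_U\in\lperp{(U[1])}$) only make explicit points the paper leaves implicit.
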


\begin{proof}
  By the equivalence \eqref{eq:bar-equivalence-2cy}, 
  we have the following isomorphism
  \[ 
  \frac{\Hom_A(\b{T_U},\b{M})}{[\b{U}](\b{T_U},\b{M})} 
  \cong
  \frac{\frac{\Hom_\C(T_U,M)}{[T[1]](T_U,M)]}}
  {\frac{[U](T_U,M)+[T[1]](T_U,M)}{[S[1]](T_U,M)}}.
  \]
  We claim that $[T[1]](T_U,M) \subseteq [U](T_U,M)$, or
  equivalently $[T[1]](X_U,M) \subseteq [U](X_U,M)$ since
  $T_U=X_U\oplus U$. 
  Apply the contravariant functor 
  $\Hom_\C(-,T[1])$ to the triangle \eqref{eq:bongartz-2cy} to
  obtain an exact sequence
  \[
  \Hom_\C(U',T[1]) \to \Hom_\C(X_U,T[1]) \to \Hom_\C(T,T[1])=0.
  \]
  Hence every morphism $X_U\to T[1]$ factors through $U'$, and
  we have $[T[1]](X_U,M) \subseteq [U](X_U,M)$. 
  Thus we have the required isomorphisms
  \[
  \frac{\Hom_A(\b{T_U},\b{M})}{[\b{U}](\b{T_U},\b{M})} 
  \cong
  \frac{\Hom_\C(T_U,M)}{[U](T_U,M)}
  \cong
  \Hom_\U(T_U,M),
  \]
  so the assertion follows.
\end{proof}

Now part (a) of Theorem \ref{thm:2-cy-compatibility} follows by
putting $M=T_U$ in Lemma \ref{lemma:well-defined}. 
In the remainder we prove Theorem
\ref{thm:2-cy-compatibility}(b).
 
For $X\in\mod A$ we denote by $0\to \t X\to X\to \f X\to 0$ the
canonical sequence of $X$ with respect to the torsion pair
$(\Fac\b{U},\b{U}^\perp)$ in $\mod A$.

\begin{proposition}
  \label{prop:E-C-modules-cy}
  For each $M$ in $\lperp{(U[1])}$ there is an isomorphism of
  $C$-modules
  \[
  \Hom_\U(T_U,M) \cong \Hom_A(\b{T_U},\f\b{M}).
  \]
\end{proposition}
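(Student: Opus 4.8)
The plan is to transcribe the proof of Proposition~\ref{prop:E-C-modules} into the $2$-Calabi--Yau setting, using Lemma~\ref{lemma:well-defined} in place of Lemma~\ref{lemma:well-defined-silting} and the $2$-Calabi--Yau version of Proposition~\ref{prop:bongartz-silting}, namely $\b{T_U}\cong T_{\b{U}}$. By Lemma~\ref{lemma:well-defined} it is enough to construct an isomorphism of $C$-modules
\[
\frac{\Hom_A(\b{T_U},\b{M})}{[\b{U}](\b{T_U},\b{M})}\cong\Hom_A(\b{T_U},\f\b{M}).
\]

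First I would apply the functor $\Hom_A(\b{T_U},-)$ to the canonical sequence $0\to\t\b{M}\xto{i}\b{M}\to\f\b{M}\to0$ for the torsion pair $(\Fac\b{U},\b{U}^\perp)$. Since $\t\b{M}\in\Fac\b{U}\subseteq\lperp{(\tau\b{U})}$, and since $\b{T_U}\cong T_{\b{U}}$ is $\Ext$-projective in $\lperp{(\tau\b{U})}$ by Proposition~\ref{prop:bongartz-silting}, we have $\Ext_A^1(\b{T_U},\t\b{M})=0$; this yields a short exact sequence
\[
0\to\Hom_A(\b{T_U},\t\b{M})\xto{i\circ-}\Hom_A(\b{T_U},\b{M})\to\Hom_A(\b{T_U},\f\b{M})\to0,
\]
hence $\Hom_A(\b{T_U},\f\b{M})\cong\Hom_A(\b{T_U},\b{M})/i(\Hom_A(\b{T_U},\t\b{M}))$. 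It then remains to identify the submodule $i(\Hom_A(\b{T_U},\t\b{M}))$ with $[\b{U}](\b{T_U},\b{M})$, which I would do by two inclusions. For ``$\supseteq$'': any morphism $\b{T_U}\to\b{M}$ factoring through $\add\b{U}\subseteq\Fac\b{U}$ factors through the torsion submodule $i\colon\t\b{M}\hookrightarrow\b{M}$, because $\Fac\b{U}$ is a torsion class. For ``$\subseteq$'': choosing a right $(\add\b{U})$-approximation $r\colon\b{U'}\to\t\b{M}$, which is surjective since $\t\b{M}\in\Fac\b{U}$, I get a short exact sequence $0\to\b{K}\to\b{U'}\xto{r}\t\b{M}\to0$ with $\b{K}\in\lperp{(\tau\b{U})}$ by Lemma~\ref{lemma:wakamatsu}; applying $\Hom_A(\b{T_U},-)$ and using $\Ext_A^1(\b{T_U},\b{K})=0$ shows that $\Hom_A(\b{T_U},\b{U'})\to\Hom_A(\b{T_U},\t\b{M})$ is onto, so every morphism $\b{T_U}\to\t\b{M}$, and hence every element of $i(\Hom_A(\b{T_U},\t\b{M}))$, factors through $\add\b{U}$.

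Finally, all the maps involved are natural in $M$ and left $\End_A(\b{T_U})$-linear, and this action descends to $C=\End_A(\b{T_U})/\langle e_{\b{U}}\rangle$ since $\f\b{M}\in\b{U}^\perp$ forces the summand $\Hom_A(\b{T_U},\b{U})$ of $\End_A(\b{T_U})$ to act as zero on both sides; so the displayed isomorphism is one of $C$-modules. I do not anticipate a real obstacle here: the only delicate point is the compatibility with the $C$-module structure, handled exactly as in the silting case, so the argument is essentially a mechanical repetition of the proof of Proposition~\ref{prop:E-C-modules} once Lemma~\ref{lemma:well-defined} and the $2$-Calabi--Yau Bongartz completion are available.
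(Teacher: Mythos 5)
Your proposal is correct and follows exactly the paper's route: the paper's proof also reduces the statement via Lemma~\ref{lemma:well-defined} to the isomorphism $\Hom_A(\b{T_U},\b{M})/[\b{U}](\b{T_U},\b{M})\cong\Hom_A(\b{T_U},\f\b{M})$ and then repeats verbatim the argument of Proposition~\ref{prop:E-C-modules} (canonical sequence, $\Ext$-projectivity of $\b{T_U}$ in $\lperp{(\tau\b{U})}$, and the Wakamatsu-type argument identifying $i(\Hom_A(\b{T_U},\t\b{M}))$ with $[\b{U}](\b{T_U},\b{M})$). Your explicit verification that the isomorphism is one of $C$-modules is a small extra detail the paper leaves implicit, but otherwise the two proofs coincide.
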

\begin{proof}
  By Lemma \ref{lemma:well-defined} it is enough to show that 
  \[
  \frac{\Hom_A(\b{T_U},\b{M})}{[\b{U}](\b{T_U},\b{M})}
  \cong
  \Hom_A(\b{T_U},\f\b{M}).
  \]
  We can proceed exactly as in the proof of Proposition
  \ref{prop:E-C-modules}.
\end{proof}

We are ready to give the prove Theorem
\ref{thm:2-cy-compatibility}(b).

\begin{proof}[Proof of Theorem \ref{thm:2-cy-compatibility}(b)]
  Let $M\in\ctilt\C$ be such that $U\in\add M$. 
  Since $\tau$-tilting reduction of $\b{M}$ is given by
  $F(\f\b{M})=\Hom_A(T_{\b{U}},\f M)$, see Theorem
  \ref{thm:tau-tilting-reduction}, we only need to show that
  \[
  \Hom_\U(T_U,M) = \Hom_A(T_{\b{U}},\f M).
  \]
  But this is precisely the statement of Proposition
  \ref{prop:E-C-modules-cy} since $\b{T_U} \cong T_{\b{U}}$ by
  Proposition \ref{prop:bongartz-silting}.
\end{proof}

We conclude this section with an example illustrating the
bijections in Theorem \ref{thm:2-cy-compatibility}.

\begin{example}
  Let $\C$ be the cluster category of type $D_4$. Recall that $\C$
  is a 2-Calabi-Yau triangulated category, 
  see \cite{buan_tilting_2006}. 
  The Auslander-Reiten quiver of $\C$ is the following, where
  the dashed edges are to be identified to form a cylinder, 
  see \cite[6.4]{schiffler_geometric_2008}:
  \begin{center}
    $\C:\quad$ \begin{tikzpicture}[commutative diagrams/every diagram]
  \node (P1) at (mesh cs:u=0,v=0,r=0.7) {$\bullet$};
  \node (S2) at (mesh cs:u=-1,v=0,r=0.7) {$\bullet$};
  \node[draw,rectangle] (S3) at (mesh cs:u=-1,v=-1,r=0.35) {$T_4$};
  \node (S4) at (mesh cs:u=0,v=-1,r=0.7) {$U[1]$};

  \node (M1) at (mesh cs:u=1,v=1,r=0.7) {$\bullet$};
  \node[draw,rectangle]  (M2) at (mesh cs:u=0,v=1,r=0.7) {$T_3$};
  \node (M3) at (mesh cs:u=1,v=1,r=0.35) {$\bullet$};
  \node[draw,rectangle] (M4) at (mesh cs:u=1,v=0,r=0.7) {$U$};

  \node (S1) at (mesh cs:u=2,v=2,r=0.7) {$\bullet$};
  \node (I2) at (mesh cs:u=1,v=2,r=0.7) {$\bullet$};
  \node[draw,rectangle]  (I3) at ($(mesh cs:u=1,v=1,r=0.7)+(mesh
  cs:u=1,v=1,r=0.35)$) {$T_2$}; 
  \node (I4) at (mesh cs:u=2,v=1,r=0.7) {$\bullet$};
  
  \node (TP1) at (mesh cs:u=3,v=3,r=0.7) {$\bullet$};
  \node (TS2) at (mesh cs:u=2,v=3,r=0.7) {$T_1$};
  \node (TS3) at ($(mesh cs:u=2,v=2,r=0.7)+(mesh
  cs:u=1,v=1,r=0.35)$) {$\bullet$}; 
  \node (TS4) at (mesh cs:u=3,v=2,r=0.7) {$\bullet$};

  \node (S2a) at (mesh cs:u=3,v=4,r=0.7) {$\bullet$};
  \node[draw,rectangle]  (S3a) at ($(mesh cs:u=3,v=3,r=0.7)+(mesh
  cs:u=1,v=1,r=0.35)$) {$T_4$}; 
  \node (S4a) at (mesh cs:u=4,v=3,r=0.7) {$U[1]$};
 
  \draw[rounded corners=4pt] ($(S2)+(-0.4,0.4)$) --
  ($(I2)+(0.4,0.4)$) --  ($(I3)+(0.4,-0.4)$) --
  ($(I3)+(-0.4,-0.4)$) --
  ($(M4)+(0.4,-0.4)$) --  ($(M4)+(-0.4,-0.4)$) --
  ($(S3)+(0.4,-0.4)$) --
  ($(S3)+(-0.4,-0.4)$) -- cycle;

  \draw[rounded corners=4pt] ($(S2a)+(-0.4,0.4)$) --
  ($(S2a)+(0.4,0.4)$) --
  ($(S3a)+(0.4,-0.4)$) --
  ($(S3a)+(-0.4,-0.4)$) -- cycle;

  \draw[rounded corners=4pt] ($(TS4)+(-0.4,0.4)$) --
  ($(TS4)+(0.4,0.4)$) --
  ($(TS4)+(0.4,-0.4)$) --
  ($(TS4)+(-0.4,-0.4)$) -- cycle;

  \path[commutative diagrams/.cd, every arrow]
  (S2) edge (P1)
  (S3) edge (P1)
  (S4) edge (P1)
  (P1) edge (M2)
  (P1) edge (M3)
  (P1) edge (M4)
  (M2) edge (M1)
  (M3) edge (M1)
  (M4) edge (M1)
  (M1) edge (I2)
  (M1) edge (I3)
  (M1) edge (I4)
  (I2) edge (S1)
  (I3) edge (S1)
  (I4) edge (S1)
  (S1) edge (TS2)
  (S1) edge (TS3)
  (S1) edge (TS4)
  (TS2) edge (TP1)
  (TS3) edge (TP1)
  (TS4) edge (TP1)
  (TP1) edge (S2a)
  (TP1) edge (S3a)
  (TP1) edge (S4a)
  (S2)  edge[commutative diagrams/path, dashed] (S3)
  (S3)  edge[commutative diagrams/path, dashed] (S4)
  (S2a)  edge[commutative diagrams/path, dashed] (S3a)
  (S3a)  edge[commutative diagrams/path, dashed] (S4a);
\end{tikzpicture}

  \end{center}
  We have chosen a cluster-tilting object $T=T_1\oplus T_2\oplus
  T_3\oplus T_4$ and a rigid indecomposable object $U$ in
  $\C$. 
  The Bongartz completion of $U$ with respect to $T$ is given
  by $T_U = U\oplus T_2\oplus T_3\oplus T_4$, and is indicated
  with squares. 
  Also, the ten indecomposable objects of the subcategory
  $\lperp{(U[1])}$ have  been encircled.

  On the other hand, let $A=\End_\C(T)$ and
  $\b{(-)}=\Hom_\C(T,-)$. Then $A$ is isomorphic to algebra given
  by the quiver
  \begin{center}
    $Q'\quad = \quad$
    \begin{tikzcd}
      2 \dar[swap]{x_2} & 1 \lar[swap]{x_1} \\
      3 \rar[swap]{x_3} & 4 \uar[swap]{x_4}
    \end{tikzcd}    
  \end{center}
  with relations $x_1x_2x_3=0$, $x_2x_3x_4=0$, $x_3x_4x_1=0$ and
  $x_4x_1x_2=0$. 
  Thus $A$ is isomorphic to the Jacobian algebra of the quiver
  with potential $(Q',x_1x_2x_3x_4)$. 
  The Auslander-Reiten quiver of $\mod A$ is the following: 
  \begin{center}
    $\mod A:\quad$ \begin{tikzpicture}[commutative diagrams/every diagram]
  \node (P1) at (mesh cs:u=0,v=0,r=0.7) {$\rep{4\\1}$};
  \node[draw,rectangle] (S3) at (mesh cs:u=-1,v=-1,r=0.35) {$\rep{4\\1\\2}$};
  \node (S4) at (mesh cs:u=0,v=-1,r=0.7) {$\rep{1}$};

  \node (M1) at (mesh cs:u=1,v=1,r=0.7) {$\rep{3\\4}$};
  \node[draw,rectangle] (M2) at (mesh cs:u=0,v=1,r=0.7) {$\rep{3\\4\\1}$};
  \node[draw,rectangle] (M4) at (mesh cs:u=1,v=0,r=0.7) {$\rep{\mathbf{4}}$};

  \node (S1) at (mesh cs:u=2,v=2,r=0.7) {$\rep{2\\3}$};
  \node[draw,rectangle] (I3) at ($(mesh cs:u=1,v=1,r=0.7)+(mesh
  cs:u=1,v=1,r=0.35)$) {$\rep{2\\3\\4}$}; 
  \node (I4) at (mesh cs:u=2,v=1,r=0.7) {$\rep{3}$};
  
\node (TP1) at (mesh cs:u=3,v=3,r=0.7) {$\rep{1\\2}$};
  \node (TS2) at (mesh cs:u=2,v=3,r=0.7) {$\rep{1\\2\\3}$};
  \node (TS4) at (mesh cs:u=3,v=2,r=0.7) {$\rep{2}$};

  \node[draw,rectangle] (S3a) at ($(mesh cs:u=3,v=3,r=0.7)+(mesh
  cs:u=1,v=1,r=0.35)$) {$\rep{4\\1\\2}$}; 
  \node (S4a) at (mesh cs:u=4,v=3,r=0.7) {$\rep{1}$};

  \draw[rounded corners=4pt] ($(S3)+(-0.4,0.7)$) --
  ($(M2)+(-0.4,0.7)$) --  ($(M2)+(0.4,0.7)$) --
  ($(M2)+(0.4,-0.7)$) --  ($(S3)+(0.4,-0.7)$) --
  ($(S3)+(-0.4,-0.7)$) -- cycle;

  \draw[rounded corners=8pt] ($(I4)-(0.4,0.2)$) -- ($(S1)+(0,0.5)$) --
  ($(TS4)-(-0.4,0.2)$) -- cycle;

  \draw[rounded corners=4pt] ($(S3a)+(-0.4,0.7)$) --
  ($(S3a)+(0.4,0.7)$) --
  ($(S3a)+(0.4,-0.7)$) -- ($(S3a)+(-0.4,-0.7)$) -- cycle;

  \path[commutative diagrams/.cd, every arrow]
  (S3) edge (P1)
  (S4) edge (P1)
  (P1) edge (M2)
  (P1) edge (M4)
  (M2) edge (M1)
  (M4) edge (M1)
  (M1) edge (I3)
  (M1) edge (I4)
  (I3) edge (S1)
  (I4) edge (S1)
  (S1) edge (TS2)
  (S1) edge (TS4)
  (TS2) edge (TP1)
  (TS4) edge (TP1)
  (TP1) edge (S3a)
  (TP1) edge (S4a)
  (S3)  edge[commutative diagrams/path, dashed] (S4)
  (S3a)  edge[commutative diagrams/path, dashed] (S4a)

  (M1)  edge[commutative diagrams/path, dotted] (P1)
  (TP1)  edge[commutative diagrams/path, dotted] (S1)

  (M4)  edge[commutative diagrams/path, dotted] (S4)
  (I4)  edge[commutative diagrams/path, dotted] (M4)
  (TS4)  edge[commutative diagrams/path, dotted] (I4)
  (S4a)  edge[commutative diagrams/path, dotted] (TS4);
\end{tikzpicture}

  \end{center}
  We have indicated the indecomposable direct summands of the
  Bongartz completion $\b{T_U}$ of $\b{U}=S_4$, with rectangles. 
  The six indecomposable objects of the category
  $\lperp{(\tau \b{U})}\cap \b{U}^\perp$ are encircled. 
  Finally, let $C=\End_A(\b{T_U})/\langle e_{\b{U}}\rangle$.
  Not that $C$ is isomorphic by the algebra by the quiver 
  \begin{center}
    $Q\quad = \quad$
    \begin{tikzpicture}[commutative diagrams/every diagram]
      \node (S2) at (mesh cs:u=0,v=0,r=0.7){$2$};
      \node (P1) at (mesh cs:u=0,v=1,r=0.7){$1$};
      \node (S1) at (mesh cs:u=1,v=1,r=0.7){$3$};
      \path[commutative diagrams/.cd, every arrow, every
      label]
      (P1) edge node [swap]{$x_1$} (S2)
      (S2) edge node [swap]{$x_2$} (S1)
      (S1) edge node [swap]{$x_3$} (P1);
    \end{tikzpicture}        
  \end{center}
  with relations $x_1x_2=0$, $x_2x_3=0$ and $x_3x_1=0$, see
  Example \ref{ex:Nak3}. 
  Thus $C$ is isomorphic to the Jacobian algebra of the quiver
  with potential $(Q,x_1x_2x_3)$, see \cite{derksen_quivers_2008} 
  for definitions. 
  By Theorem \ref{thm:U-modC} we have that 
  $\lperp{(\tau\b{U})}\cap\b{U}^\perp$ is equivalent to 
  $\mod C$.
  The Auslander-Reiten quiver of $\mod C$ is the following, where
  each $C$-module is represented by its radical filtration:  
  \begin{center}
    $\mod C:\quad$ \begin{tikzpicture}[commutative diagrams/every diagram]
  \node (S1a) at (mesh cs:u=-1,v=-1,r=0.7) {$\rep{1}$}; 
  \node[draw,rectangle] (P3) at (mesh cs:u=0,v=-1,r=0.7)
  {$\rep{3\\1}$}; 

  \node (S3) at (mesh cs:u=0,v=0,r=0.7) {$\rep{3}$};

  \node[draw,rectangle] (P2) at (mesh cs:u=0,v=1,r=0.7)
  {$\rep{2\\3}$}; 

  \node (S2) at (mesh cs:u=1,v=1,r=0.7) {$\rep{2}$};

  \node[draw,rectangle] (P1) at (mesh cs:u=2,v=1,r=0.7)
  {$\rep{1\\2}$}; 

  \node (S1) at (mesh cs:u=2,v=2,r=0.7) {$\rep{1}$};

  \node (X1) at ($(mesh cs:u=-1,v=-1,r=0.7)+(mesh
  cs:u=-1,v=1,r=0.35)$) {}; 
  \node (X2) at ($(mesh cs:u=-1,v=-1,r=0.7)+(mesh
  cs:u=1,v=-1,r=0.35)$) {}; 

  \node (Y1) at ($(mesh cs:u=2,v=2,r=0.7)+(mesh
  cs:u=-1,v=1,r=0.35)$) {}; 
  \node (Y2) at ($(mesh cs:u=2,v=2,r=0.7)+(mesh
  cs:u=1,v=-1,r=0.35)$) {}; 

  \path[commutative diagrams/.cd, every arrow]
  (S1a) edge (P3)
  (P3) edge (S3)
  (S3) edge (P2)
  (P2) edge (S2)
  (S2) edge (P1)
  (P1) edge (S1)
  (S1a)  edge[commutative diagrams/path, dashed] (X1)
  (S1a)  edge[commutative diagrams/path, dashed] (X2)
  (S1)  edge[commutative diagrams/path, dashed] (Y1)
  (S1)  edge[commutative diagrams/path, dashed] (Y2)
  (S1)  edge[commutative diagrams/path, dotted] (S2)
  (S2)  edge[commutative diagrams/path, dotted] (S3)
  (S3)  edge[commutative diagrams/path, dotted] (S1a)
  ;

\end{tikzpicture}

  \end{center}

  On the other hand, let  $\U = \lperp{(U[1])}/[U]$. 
  The Auslander-Reiten quiver of $\U$ is the following, note that
  the dashed edges are to be identified to form a M\"obius strip:
  \begin{center}
    $\U:\quad$ \begin{tikzpicture}[commutative diagrams/every diagram]
  \node (P1) at (mesh cs:u=0,v=0,r=0.7) {$\bullet$};
  \node (S2) at (mesh cs:u=-1,v=0,r=0.7) {$\bullet$};
  \node[draw,rectangle] (S4) at (mesh cs:u=0,v=-1,r=0.7) {$\b{T_4}$};

  \node (M1) at (mesh cs:u=1,v=1,r=0.7) {$\bullet$};
  \node[draw,rectangle] (M2) at (mesh cs:u=0,v=1,r=0.7) {$\b{T_3}$};
  \node (M4) at (mesh cs:u=1,v=0,r=0.7) {$\bullet$};

  \node (S1) at (mesh cs:u=2,v=2,r=0.7) {$\bullet$};
  \node (I2) at (mesh cs:u=1,v=2,r=0.7) {$\bullet$};
  \node[draw,rectangle] (I4) at (mesh cs:u=2,v=1,r=0.7) {$\b{T_2}$};
  
  \node[draw,rectangle] (TS2) at (mesh cs:u=2,v=3,r=0.7) {$\b{T_4}$};
  \node (TS4) at (mesh cs:u=3,v=2,r=0.7) {$\bullet$};


  \path[commutative diagrams/.cd, every arrow]
  (S2) edge (P1)

  (S4) edge (P1)
  (P1) edge (M2)

  (P1) edge (M4)
  (M2) edge (M1)

  (M4) edge (M1)
  (M1) edge (I2)

  (M1) edge (I4)
  (I2) edge (S1)

  (I4) edge (S1)
  (S1) edge (TS2)

  (S1) edge (TS4)

  (S2)  edge[commutative diagrams/path, dashed] (S4)
  (TS2)  edge[commutative diagrams/path, dashed] (TS4);
\end{tikzpicture}
  \end{center}
  Observe that $\U$ is equivalent to the cluster category of type
  $A_3$. 
  Moreover, by Thorem \ref{thm:2-cy-compatibility}(a) we have an
  isomorphism between $\End_\U(T_U)$ and $C$.
  By Theorem \ref{thm:2-cy-compatibility}(b) we have a commutative
  diagram  
  \begin{center}
    \begin{tikzcd}[column sep=huge]
      \ctilt_U\C \arrow{r}{\Hom_\C(T,-)}
      \dar[swap]{\ctred} &
      \sttilt_{\b{U}} A
      \dar{\taured} \\ 
      \ctilt \U
      \arrow[swap]{r}{\Hom_\U(T_U,-)} &
      \sttilt C
    \end{tikzcd}
  \end{center}
\end{example}



\bibliographystyle{abbrv}
\bibliography{zotero}

\end{document}